\begin{document}
\frontmatter
\TitelSeite
\pagestyle{empty}
\chapter*{Declaration of Authorship}
\thispagestyle{empty}
I declare that I have authored this thesis independently, that I have not used other than the declared sources and resources and that I have explicitly marked all material which has been quoted either literally or by content from the used sources. The thesis in this form has not been submitted to an examination body and has not been published.
\vspace{.5\baselineskip}

\vspace{4\baselineskip}
\begin{center}
\parbox{.8\textwidth}{Mannheim, 03.04.2017 \hfill Jonas Reiner Brehmer}
\end{center}

\cleardoublepage
\chapter{Acknowledgement}
\thispagestyle{empty}
At first, I would like to express my sincere gratitude to my supervisors for their guidance. The extensive feedback provided by Kirstin Strokorb and the fruitful discussions I had with her significantly improved this thesis. I am grateful to Martin Schlather for his valuable advice as well as his encouragement to focus on statistics during my master program.
Moreover, I would like to thank Ria Grindel, Todor Germanov and Irina Toncheva for helpful comments and discussions and Torsten Pook and Nicholas Schreck for proofreading. Furthermore, I thank Vadim Alekseev for contributing the idea for Example~\ref{Thm:ExCounterBoundedness}.
Finally, I am obliged to my family, who supported me during my studies.

\vspace{1.5\baselineskip}
\begin{center}
\parbox{.9\textwidth}{Mannheim, 03.04.2017}
\parbox{.9\textwidth}{Jonas Reiner Brehmer}
\end{center}

\pagestyle{headings}
\setcounter{tocdepth}{1}
\tableofcontents

\chapter{Introduction}

Many decision makers encounter situations in which they face uncertain future events, for instance rainfall on the next day, value of a stock in one week, or customers during the next month. This uncertainty is usually modelled by a random variable $Y$ with unknown distribution function. The decisions are then guided by forecasts of a real-valued property of $Y$ and since there are usually different possibilities to obtain such forecasts, it is necessary to assess their accuracy. One approach to do this consists of comparing the forecasts to realizations of $Y$ using a scoring function $S$. One of the most important theoretical aspects which has to be considered when performing such a comparison is the concept of elicitability. Elicitability is a property of real-valued (or $\mathbb{R}^k$-valued) mappings defined on a set of distribution functions $\mathcal{F}$. These mappings are called functionals in this thesis and represent statistical properties of a distribution, e.g. mean, variance, or median. A functional $T$ is called elicitable if there exists a scoring function $S$ such that for any $F \in \mathcal{F}$ the expectation $\mathbb{E}S(x,Y)$, where $Y$ is a random variable with distribution $F$, takes its unique minimum at $x=T(F)$. Such a scoring function is then called strictly consistent for $T$. Hence, when a statistical property is modelled by a functional $T$ and different forecasters issue reports of this functional, each report $x$ is assessed by calculating the score $S(x,y)$, where $y$ is a realization of $Y$. It was argued by Gneiting~\cite{GneitingPoints} that in such a setting $T$ should be elicitable and $S$ should be strictly consistent for $T$. This is because for such $S$, and if the reward for lower scores is higher, the forecasters will minimize the expected score in order to maximize their expected payoff. Therefore, the forecasters have an incentive to report the value of $T$ for their subjective distribution function. 

Although many functionals like for example expectations or quantiles are elicitable, other functionals fail to have this property, with the most prominent example being the variance. Hence, when comparing the accuracy of forecasts using scoring functions, it is necessary to check if the related functional is elicitable. Since it is an important task in quantitative risk management to perform comparative backtests in order to compare different risk estimation procedures, this insight led to the opinion that elicitability is a desirable property for a law-invariant risk measure.

The popular risk measure Value at Risk is elicitable, but has the major drawback that it is not coherent in the sense of Artzner et al.~\cite{ArtznerCoherent}. Conversely, the coherent risk measure Expected Shortfall, which was proposed as a replacement for Value at Risk, fails to be elicitable as shown by~\cite{GneitingPoints}. One solution to this dilemma is the use of Expectile Value at Risk, a risk measure which is both coherent and elicitable. Another solution is offered by considering multi-dimensional functionals. For instance, the vector-valued functional consisting of the mean and the variance is elicitable, although the variance alone fails to have this property. Consequently, such non-elicitable functionals which are a component of an elicitable functional are called jointly elicitable. Their existence underlines the important role played by vector-valued functionals and led to the question whether Expected Shortfall is also jointly elicitable.

An affirmative answer was given by Fissler and Ziegel~\cite{FissZieg}, who proved that Expected Shortfall is elicitable if combined with Value at Risk. In fact, they even showed that all spectral risk measures with finite spectrum, i.e. finite convex combinations of Expected Shortfall at different levels, are jointly elicitable. Moreover, they are one of the first to establish important results for vector-valued functionals. Their article constitutes the main motivation for this thesis together with other relevant developments related to elicitability, including, but not limited to, Gneiting~\cite{GneitingPoints}, Lambert~\cite{Lambert}, and Steinwart et al.~\cite{SteinwartPasinWilliam}.
This thesis reviews the most important results, examples, and applications which are found in the recent literature. Moreover, we also contribute our own examples and findings in order to give the reader a well-founded overview of the topic as well as of the most used tools and techniques. The covered material includes necessary and sufficient conditions for strictly consistent scoring functions, several elicitable as well as non-elicitable functionals and the use of elicitability in forecast comparison, regression, and estimation. Since it is beyond the scope of this thesis to elaborate on all aspects of the theory on elicitability, we give references to the literature wherever we do not go into details or omit material. 

The thesis is structured as follows. In the first chapter we introduce notation and present classical results in the theory of elicitability as well as detailed examples of functionals and (strictly) consistent scoring functions. Moreover, we consider two versions of Osband's principle, a result which connects strictly consistent scoring functions to identification functions. In Chapter~\ref{Chapter2} we present key aspects of probabilistic forecasting in order to illustrate its differences to point forecasting. Moreover, we discuss applications of elicitability in regression and estimation and introduce quantiles and expectiles, two types of elicitable functionals which are used to define risk measures in Chapter~\ref{Chapter3}. Moreover, the third chapter introduces the notions of coherent and convex measures of risk and continues by discussing the properties of the three risk measures Value at Risk, Expected Shortfall and Expectile Value at Risk. A proof that spectral risk measures with finite spectrum are jointly elicitable is presented. A discussion of comparative backtesting for risk measures concludes the third chapter. Finally, Chapter~\ref{Chapter4} characterizes strictly consistent scoring functions for vector-valued functionals having elicitable components. Furthermore, two different perspectives on non-elicitable functionals are discussed. A discussion about the presented results and open problems ends the thesis. Auxiliary results are moved to the appendix.

\mainmatter



\chapter{Elicitability and identifiability}
\label{Chapter1}

This chapter is meant to give a short introduction to the concepts of elicitability and identifiability. It starts with the most important definitions needed throughout the thesis and proceeds with some standard results and examples to make the ideas clearer. Finally, a multi-dimensional version of \textit{Osband's principle}, which builds a bridge between elicitability and identifiability, is presented. Most results, and especially the used notation, can be found in Fissler and Ziegel~\cite{FissZieg}. Supplementary material to the article \cite{FissZieg} is presented in~\cite{FissZiegArxiv}.

\section{Definitions and framework} 
\label{Sec:DefinitionsFramework}

We start with the underlying probability framework. Fix a dimension $d$ and a set $\mathsf{O} \subseteq \mathbb{R}^d$ and equip it with the Borel $\sigma$-algebra $\mathcal{O}$, which is the trace of the Borel $\sigma$-algebra of $\mathbb{R}^d$ (see for example Klenke~\cite[Ch. 1.1]{KlenkeProb}). A Borel probability measure on the measurable space $(\mathsf{O}, \mathcal{O})$ is denoted by $P$, and we also refer to it by its cumulative distribution function $F_P$. The function $F_P: \mathbb{R}^d \rightarrow \left[0,1\right]$ is defined via $F_P(x) = P( (-\infty, x] \cap \mathsf{O})$, where $(-\infty, x] = (-\infty, x_1] \times \ldots \times (-\infty, x_d] \subset \mathbb{R}^d$. A class of distribution functions over $(\mathsf{O}, \mathcal{O})$ is denoted by $\mathcal{F}$. Finally, fix another dimension $k$ and a set $\mathsf{A} \subseteq \mathbb{R}^k$. We represent a property of a distribution function $F \in \mathcal{F}$ by using a functional $T : \mathcal{F} \rightarrow \mathsf{A}$ and its image is denoted via $T(\mathcal{F}) := \left\lbrace x \in \mathsf{A} \mid \exists F \in \mathcal{F} : T(F) = x \right\rbrace$. We use the symbols $\mathsf{A}$ and $\mathsf{O}$ because these sets are also called \textit{action domain} and \textit{observation domain}, respectively. An interpretation which justifies these terms is presented in Section~\ref{Sec:ForecastAndBacktest}. \\
\par
Following the probability framework, we now turn to integrability. A function $h: \mathsf{O} \rightarrow \mathbb{R}$ is called $\mathcal{F}$-integrable if it is integrable with respect to all $F \in \mathcal{F}$. Similarly, a function $g: \mathsf{A} \times \mathsf{O} \rightarrow \mathbb{R}$ is said to be $\mathcal{F}$-integrable if $g(x, \cdot)$ is integrable for all $F \in \mathcal{F}$ and all $x \in \mathsf{A}$. For $\mathbb{R}^d$-valued functions, the integral is defined componentwise and such functions are called $\mathcal{F}$-integrable if each component is $\mathcal{F}$-integrable. In order to simplify notation and make clear with respect to which probability measure (or distribution function) a function is integrated, we introduce the mapping 
\begin{equation*}
\bar{g}: \mathsf{A} \times \mathcal{F} \rightarrow \mathbb{R}, \quad (x,F) \mapsto \bar{g}(x,F) = \int_\mathsf{O} g(x,y) \, \mathrm{d} F(y)
\end{equation*}
for an $\mathcal{F}$-integrable function $g$, where the integral $\mathrm{d}F(y)$ denotes the Lebesgue-Stieltjes integral. Since there is a probability measure $P$, of which $F$ is the distribution function, $\mathrm{d}F(y)$ is the Lebesgue integral with respect to $P$. If the set over which the integral is calculated is not specified, the whole space on which the measure is defined is meant. \\
\par
If we take $n \in \mathbb{N}$ and consider $g: \mathsf{A} \times \mathsf{O} \rightarrow \mathbb{R}^n$, we can fix $y \in \mathsf{O}$ or $F \in \mathcal{F}$ to obtain the mappings $g(\cdot, y)$ or $\bar{g}( \cdot , F)$. If $g(\cdot, y)$ is sufficiently smooth, we denote its $i$-th partial derivative via $\partial_i g(\cdot, y)$ for $i = 1, \ldots, k$. Moreover, we define the gradient 
of $g(\cdot, y)$ via $\nabla g(\cdot, y) := ( \partial_1 g(\cdot, y), \ldots, \partial_k g(\cdot, y) )^\top$. The same notation is used for $\bar{g}(\cdot, F)$. A function with domain $\mathsf{A}$ is called (partially) differentiable if its partial derivatives exist for all interior points of $\mathsf{A}$, denoted by $\intr(\mathsf{A})$. It is called continuously differentiable if its partial derivatives exist and are continuous functions for all points in $\intr(\mathsf{A})$. \\
\par
Using the previous notation, we define the main concepts of this thesis, namely elicitability and identifiability. We use the common definitions, which were established in the articles of Gneiting~\cite{GneitingPoints} and Steinwart et al.~\cite{SteinwartPasinWilliam} among others. In contrast to Fissler and Ziegel~\cite{FissZieg}, we suppress the dimensionality of the range of $T$, that is, we call a $k$-dimensional functional elicitable instead of $k$-elicitable and identifiable instead of $k$-identifiable.

\begin{definition}  \label{Def:ScoringFunction}
A \textit{scoring function} is an $\mathcal{F}$-integrable function $S: \mathsf{A} \times \mathsf{O} \rightarrow \mathbb{R}$. It is $\mathcal{F}$-\textit{consistent} for a functional $T: \mathcal{F} \rightarrow \mathsf{A}$ if for all $x \in \mathsf{A}$ and $F \in \mathcal{F}$ we have $\bar{S}(x, F) \geq \bar{S}(T(F), F)$. It is \textit{strictly} $\mathcal{F}$-\textit{consistent} for $T$ if it is $\mathcal{F}$-consistent for $T$ and for all $x \in \mathsf{A}$ and $F \in \mathcal{F}$ the equality $\bar{S}(x,F) = \bar{S}(T(F),F)$ implies $x = T(F)$.
\end{definition}

\begin{definition}[Elicitability]   \label{Def:Elicitability}
A functional $T: \mathcal{F} \rightarrow \mathsf{A} \subseteq \mathbb{R}^k$ is called \textit{elicitable} if there exists a strictly $\mathcal{F}$-consistent scoring function for $T$.
\end{definition}

\begin{definition}
An \textit{identification function} is an $\mathcal{F}$-integrable function $V: \mathsf{A} \times \mathsf{O} \rightarrow \mathbb{R}^k$. It is an $\mathcal{F}$-identification function for a functional $T: \mathcal{F} \rightarrow \mathsf{A} \subseteq \mathbb{R}^k$ if for all $F \in \mathcal{F}$ it holds that $\bar{V}(T(F),F) = 0$. It is a \textit{strict} $\mathcal{F}$-\textit{identification} function for $T$ if for all $x \in \mathsf{A}$ and $F \in \mathcal{F}$ we have $\bar{V}(x,F) = 0$ if and only if  $x = T(F)$.
\end{definition}

\begin{definition}[Identifiability]
A functional $T: \mathcal{F} \rightarrow \mathsf{A} \subseteq	\mathbb{R}^k$ is called \textit{identifiable} if there exists a strict $\mathcal{F}$-identification function for $T$.
\end{definition}

\begin{remark}
Note that $\mathcal{F}$-consistent scoring functions as well as $\mathcal{F}$-identification functions always exist. All constant functions are scoring functions and they are consistent for any functional $T: \mathcal{F} \rightarrow \mathsf{A}$. Similarly, the function which is zero everywhere and defined on $\mathsf{A}$ is a trivial $\mathcal{F}$-identification function for any $T$.
\end{remark}

Wherever necessary, we follow \cite{FissZieg} and impose the following assumption on the functions $S$ and $V$.
\begin{assumption}  \label{Thm:AssumptionLocSV}
Let $V$ be an identification function and $S$ a scoring function for a functional $T: \mathcal{F} \rightarrow \mathsf{A}$. For all $x \in \mathsf{A}$ the functions $V(x,\cdot)$ and $S(x,\cdot)$ are locally bounded. Moreover, for all $y \in \mathsf{O}$  the function $V(\cdot, y)$ is locally Lebesgue-integrable. We say that a property, e.g. boundedness, is satisfied locally if it is satisfied on all compact sets.
\end{assumption}

For an identification function we add the notion of orientation, which is quite intuitive in the one-dimensional case. Let $T: \mathcal{F} \rightarrow \mathsf{A} \subseteq \mathbb{R}$ be a functional and $V: \mathsf{A} \times \mathsf{O} \rightarrow \mathbb{R}$ an $\mathcal{F}$-identification function for $T$. Then $V$ is called \textit{oriented} if for all $x \in \mathsf{A}$ and $F \in \mathcal{F}$
\begin{equation*}
\bar{V}(x, F) > 0 \quad \Longleftrightarrow \quad x > T(F).
\end{equation*}
Generalizing this notion for $\mathsf{A} \subseteq \mathbb{R}^k$ with $k\geq 1$ leads to the following definition which is introduced in Fissler and Ziegel~\cite{FissZiegArxiv}.
\begin{definition}[Orientation] \label{Def:Orientation}
Let $T: \mathcal{F} \rightarrow \mathsf{A} \subseteq \mathbb{R}^k$ be a functional with a strict $\mathcal{F}$-identification function $V: \mathsf{A} \times \mathsf{O} \rightarrow \mathbb{R}^k$. Then $V$ is called \textit{oriented} strict $\mathcal{F}$-identification function if for all $F \in \mathcal{F}$, $v \in \mathbb{S}^{k-1}$ and $s \in \mathbb{R}$ such that $T(F) + sv \in \mathsf{A}$ it holds that
\begin{equation*}
v^\top \bar{V}(T(F) + sv, F) > 0 \quad \Longleftrightarrow \quad s > 0 ,
\end{equation*}
where $\mathbb{S}^{k-1} := \lbrace x \in \mathbb{R}^k \mid \Vert x \Vert = 1 \rbrace$ is the $(k-1)$-sphere in $\mathbb{R}^k$.
\end{definition}
Looking at this definition, we immediately obtain the one-dimensional version of orientation by observing that $\mathbb{S}^0 = \lbrace -1, 1 \rbrace$ and setting $s = x - T(F)$.

\section{Basic results and examples} 

This section presents central properties of $\mathcal{F}$-consistent scoring functions and $\mathcal{F}$-identification functions. Moreover, it contains some examples of functionals which are elicitable and some which fail to have this property. We begin with one of the simplest functionals known to be elicitable and state our own arguments to prove this fact.

\begin{example}[Elicitability of the mean]  \label{Thm:ExMeanIsElicitable}
It is well known that the mean of a random variable $Y$ with finite second moments minimizes the function $f(x) = \mathbb{E}(Y-x)^2$. Therefore, we would suggest that the mean is an elicitable functional. To make this rigorous, let $(\Omega, \mathscr{A}, \mathbb{P})$ be a probability space and $Y : \Omega \rightarrow \mathsf{O} \subseteq \mathbb{R}^d$ a random vector. Define $\mathcal{F}$ to be a class of distribution functions on $ (\mathsf{O},\mathcal{O} )$ for which all marginal distributions have finite second moments and set $\mathsf{A} := \mathbb{R}^d$. Let the mean functional be defined via
\begin{equation} \label{Eqn:MeanFunctionalDef}
T: \mathcal{F} \rightarrow \mathsf{A}, \quad F \mapsto T(F) := \int_{\mathsf{O}} y \, \mathrm{d} F(y)
\end{equation}
and the scoring function via $S: \mathsf{A} \times \mathsf{O} \rightarrow \mathbb{R}$, $(x,y) \mapsto \Vert y-x \Vert^2$. Now fix $x \in \mathsf{A}$, $F \in \mathcal{F}$ and set $t := T(F)$. Then for $Y =^d F$ under $\mathbb{P}$ we obtain
\begin{align*}
\bar{S}(x,F) - \bar{S}(t,F) &= \mathbb{E} \Vert Y-x \Vert^2 - 2 \mathbb{E}(Y-x)^\top (Y-t) + \mathbb{E}\Vert Y-t \Vert^2 \\
&\quad + 2 \mathbb{E}(Y-x)^\top (Y-t) - 2\mathbb{E} \Vert Y-t \Vert^2 \\
&= \Vert x-t \Vert^2 + 2 (x-t)^\top \mathbb{E}(Y-t) = \Vert x-t \Vert^2
\end{align*}
because $t = T(F) = \mathbb{E}Y$. Hence, $\bar{S}(x,F) \geq \bar{S}(T(F),F)$ and equality implies that $x = T(F)$. Consequently, $S$ is a strictly $\mathcal{F}$-consistent scoring function for $T$ and the mean is elicitable for the class $\mathcal{F}$. \\
One question which now naturally arises is: Can we get rid of the assumption of finite second moments? Indeed, this is possible and only the existence of first moments is required if the scoring function $S$ is modified. To do so, recall that a strictly convex and differentiable function $f$ has the property $f(r) - f(s) > \nabla f(s)^\top (r-s)$ for $r \neq s$, a fact which is extensively used in \cite{FissZieg}. So let $\mathcal{F}'$ be a class of distribution functions for which the marginal distributions have finite first moments. Letting $f: \mathsf{A} \rightarrow \mathbb{R}$ be a strictly convex and differentiable function, we redefine $S(x,y) := -f(x)  - \nabla f(x)^\top (y-x)$ and extend the definition of $T$ in (\ref{Eqn:MeanFunctionalDef}) to the new class $\mathcal{F}'$. Again, let $x \in \mathsf{A}$ and $F \in \mathcal{F}'$ be arbitrary. For $Y =^d F$ under $\mathbb{P}$ and $t:= T(F)$ we calculate
\begin{align*}
\bar{S}(x,F) - \bar{S}(t,F) &= f(t) - f(x) - \nabla f(x)^\top \mathbb{E}(Y - x) + \nabla f(t)^\top \mathbb{E}(Y-t) \\
&= f(t) - f(x) - \nabla f(x)^\top \mathbb{E}(Y - x)\\
&= f(t) - f(x) - \nabla f(x)^\top (t-x) \geq 0,
\end{align*}
where $t=\mathbb{E}Y$ is used. Consequently, $\bar{S}(x,F) \geq \bar{S}(T(F),F)$ and due to the strict convexity of $f$, equality holds if and only if $x = T(F)$. Therefore, $S$ is a strictly $\mathcal{F}'$-consistent scoring function for $T$ and the mean is elicitable for the wider class $\mathcal{F}'$, too.
\end{example}

From the previous example it is immediately clear that expectations of transformations of a random variable $Y$, for example $\mathbb{E}\exp (Y)$ or moments of $Y$, are elicitable. Moreover, the next theorem states that this can be extended to ratios of expectations having the same denominator. Similar results can be found in Gneiting~\cite[Thm. 8]{GneitingPoints} for the one-dimensional case and in Frongillo and Kash~\cite[Thm. 13]{FrongilloKash} for the case $ \mathsf{A} \subseteq \mathbb{R}^k$.

\begin{theorem}[Elicitability of ratios of expectations]  \label{Thm:GenExpectElicit}
For $k \in \mathbb{N}$ and a class of distribution functions $\mathcal{F}$ let $h : \mathsf{O} \rightarrow \mathbb{R}^k$ and $q : \mathsf{O} \rightarrow (0, \infty)$ be $\mathcal{F}$-integrable functions. Then the functional $T$ defined via
\begin{equation*}
T : \mathcal{F} \rightarrow \mathsf{A} \subseteq \mathbb{R}^k, \quad F \mapsto T(F) := \left( \frac{ \bar{h}_1 (F)}{ \bar{q}(F) } , \ldots, \frac{ \bar{h}_k (F)}{ \bar{q}(F) } \right)^\top
\end{equation*}
is elicitable. Moreover, $\mathcal{F}$-consistent scoring functions $S : \mathsf{A} \times \mathsf{O} \rightarrow \mathbb{R}$ are given by
\begin{equation*}
S(x,y) = - f(x)q(y) - \nabla f(x)^\top ( h(y) - q(y) x) ,
\end{equation*}
where $f: \mathsf{A} \rightarrow \mathbb{R}$ is a convex function. The function $S$ is strictly consistent if $f$ is strictly convex.
\end{theorem}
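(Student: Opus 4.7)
The plan is to mimic the second part of Example~\ref{Thm:ExMeanIsElicitable} almost verbatim, exploiting the fact that $\bar{q}(F)$ factors out as a positive constant from $\bar{S}(x,F) - \bar{S}(T(F),F)$. First I would fix $x \in \mathsf{A}$, $F \in \mathcal{F}$ and set $t := T(F)$, noting the key relation $\bar{h}(F) = \bar{q}(F)\, t$ that follows directly from the definition of $T$. I would also briefly remark that $S$ is $\mathcal{F}$-integrable because $h$ and $q$ are $\mathcal{F}$-integrable while $f(x)$ and $\nabla f(x)$ are constants for fixed $x$, and that $\bar{q}(F) > 0$ since $q$ takes values in $(0,\infty)$.

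Next I would compute $\bar{S}(x,F)$ by pulling the deterministic factors $f(x)$ and $\nabla f(x)$ outside the integral:
\begin{equation*}
\bar{S}(x,F) = -f(x)\bar{q}(F) - \nabla f(x)^\top\bigl(\bar{h}(F) - \bar{q}(F)\,x\bigr) = \bar{q}(F)\bigl[-f(x) - \nabla f(x)^\top(t-x)\bigr],
\end{equation*}
where in the second equality I substitute $\bar{h}(F) = \bar{q}(F)\,t$. Setting $x=t$ gives $\bar{S}(t,F) = -\bar{q}(F) f(t)$, so
\begin{equation*}
\bar{S}(x,F) - \bar{S}(t,F) = \bar{q}(F)\bigl[f(t) - f(x) - \nabla f(x)^\top(t-x)\bigr].
\end{equation*}

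To finish, I would invoke the standard convex-function inequality (the same one used in Example~\ref{Thm:ExMeanIsElicitable}): for a differentiable convex function $f$ one has $f(t) - f(x) \geq \nabla f(x)^\top(t-x)$, with strict inequality for $t\neq x$ whenever $f$ is strictly convex. Combined with $\bar{q}(F) > 0$, this yields $\bar{S}(x,F) \geq \bar{S}(T(F),F)$, so $S$ is $\mathcal{F}$-consistent. Under strict convexity of $f$, equality forces $x = t = T(F)$, giving strict $\mathcal{F}$-consistency and hence elicitability of $T$.

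The argument is essentially a routine calculation, so there is no real obstacle; the only technical point worth stressing is that the positivity assumption on $q$ is exactly what is needed to preserve the direction of the convexity inequality after the factor $\bar{q}(F)$ has been pulled out. Were $q$ allowed to take the value zero on a set of positive mass, one would have to be careful about the case $\bar{q}(F)=0$ (where $T(F)$ is ill-defined anyway), and were $q$ allowed to change sign, the inequality could reverse.
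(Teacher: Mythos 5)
Your proof is correct and follows essentially the same approach as the paper: both exploit the identity $\bar{h}(F) = \bar{q}(F)\,t$ to reduce the score difference to $\bar{q}(F)\bigl[f(t) - f(x) - \nabla f(x)^\top(t-x)\bigr]$ and then apply the gradient inequality for (strictly) convex functions together with the positivity of $\bar{q}(F)$. The only cosmetic difference is that you first simplify $\bar{S}(x,F)$ alone and then specialize to $x=t$, whereas the paper expands the difference directly, but the algebra and the conclusion are identical.
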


\begin{proof}
At first, we note that $T$ is well-defined since $\bar{q}(F)$ is strictly positive for any $F \in \mathcal{F}$. Moreover, $S$ is $\mathcal{F}$-integrable because $h$ and $q$ have this property. To show consistency, fix $F \in \mathcal{F}$, $x \in \mathsf{A}$ and set $t := T(F) = \bar{h}(F) / \bar{q}(F)$. Similar to Example~\ref{Thm:ExMeanIsElicitable} we calculate
\begin{align*}
\bar{S}(x, F) - \bar{S}(t, F) &= \bar{q}(F) f(t) - \bar{q}(F) f(x) - \nabla f(x)^\top ( \bar{h}(F) - \bar{q}(F) x) \\
&\quad+ \nabla f(t)^\top ( \bar{h}(F) - \bar{q}(F) t) \\
&= \bar{q}(F) (f(t) - f(x)) - \bar{q}(F) \nabla f(x)^\top ( t- x ) \\
&=\bar{q}(F) \big[ f(t) - f(x) - \nabla f(x)^\top (  t- x ) \big] \geq 0 ,
\end{align*}
where the inequality follows from the convexity of $f$. Therefore, $S$ is an $\mathcal{F}$-consistent scoring function for $T$. If $f$ is strictly convex and $x \neq t$ holds, the inequality is strict and $S$ is strictly consistent. This implies that $T$ is elicitable and finishes the proof.
\end{proof}

\begin{remark}  \label{Rem:MeanNecessaryCond}
Under additional regularity assumptions it can be shown that every strictly consistent scoring function for the mean is of the form $S(x,y) = -f(x)  - \nabla f(x)^\top (y-x)$ for some strictly convex function $f$. The precise result for the case $k=1$ can be found in \cite[Thm. 7]{GneitingPoints}, together with a list of references to similar results. This list includes Osband and Reichelstein~\cite[Sec. 3]{OsbandReichel} who also treat the case $k > 1$. A similar characterization can be done for ratios of expectations having the same denominator. In this situation, all strictly consistent scoring functions are of the form presented in Theorem~\ref{Thm:GenExpectElicit}. For the case $k=1$ we refer to \cite{GneitingPoints} again and the case $k\geq 1$ is treated in \cite{FissZieg}, using techniques presented in Section~\ref{Sec:FunctionalsComponents}.
\end{remark}

\begin{example}  \label{Thm:ExInducedFunctional}
An easy way to construct an elicitable functional $T : \mathcal{F} \rightarrow \mathsf{A}$ is possible if an $\mathcal{F}$-integrable function $S : \mathsf{A} \times \mathsf{O} \rightarrow \mathbb{R}$ is given such that for any $F \in \mathcal{F}$ the function $x \mapsto \bar{S}(x,F)$ has a unique minimum. It is then possible to define $T(F) := \argmin_{x \in \mathsf{A}} \bar{S}(x,F)$ and $T$ is by definition an elicitable functional with strictly $\mathcal{F}$-consistent scoring function $S$.
\end{example}

The following lemma shows how strictly $\mathcal{F}$-consistent scoring functions can be manipulated, without destroying their consistency. In particular, we can scale them with positive constants, add certain functions and form convex combinations of scoring functions. Moreover, it is possible to force scoring functions to be positive under some circumstances. The following lemma is a summary of properties which are stated without proof in~\cite{FissZiegArxiv,FissZieg}.

\begin{lemma}  \label{Thm:TransformOfScoring}
Let $T: \mathcal{F} \rightarrow \mathsf{A}$ be a functional and $S, S_1 : \mathsf{A} \times \mathsf{O} \rightarrow \mathbb{R}$ be $\mathcal{F}$-consistent scoring functions for $T$, with $S$ being even strictly $\mathcal{F}$-consistent. Then the following hold true:
\begin{enumerate}[label=(\roman*)]
	\item For any $\lambda > 0$ and any $\mathcal{F}$-integrable function $h: \mathsf{O} \rightarrow \mathbb{R}$ the scoring function
	\begin{equation*}
	S_*(x,y) := \lambda S(x,y) + h(y)
	\end{equation*}
	is again strictly $\mathcal{F}$-consistent for $T$.
	\item The sum
	\begin{equation*}
	S_*(x,y) := S(x,y) + S_1(x,y)
	\end{equation*}
	is a strictly $\mathcal{F}$-consistent scoring function for $T$.
	\item If for all $y \in \mathsf{O}$, $\delta_y \in \mathcal{F}$ and the mapping $h: \mathsf{O} \rightarrow \mathbb{R}$, $y \mapsto S(T(\delta_y), y)$ is $\mathcal{F}$-integrable, then
	\begin{equation*}
	S_*(x,y) := S(x,y) - h(y)
	\end{equation*}
	is a nonnegative strictly $\mathcal{F}$-consistent scoring function for $T$.
	\item Let $(Z, \mathcal{Z}, \nu)$ be a measure space, where $\nu \neq 0$ is a $\sigma$-finite measure. Moreover, let $\lbrace S_z \mid z \in \mathcal{Z} \rbrace$ be a family of strictly $\mathcal{F}$-consistent scoring functions for $T$. If for all $x \in \mathsf{A}$ and $F \in \mathcal{F}$ the mapping $Z \times \mathsf{O} \rightarrow \mathbb{R}$, $(z,y) \mapsto S_z(x,y)$ is $\nu \otimes F$-integrable, then
	\begin{equation*}
	S_*(x,y) := \int_Z S_z(x,y) \, \mathrm{d}\nu(z)
	\end{equation*}
	is again a strictly $\mathcal{F}$-consistent scoring function for $T$.
\end{enumerate}
\end{lemma}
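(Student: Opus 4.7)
The plan is to address the four parts in turn; in each case the core observation is that passing from $S$ to $S_*$ either adds a term independent of the action $x$, a nonnegative difference, or an integral of such, so the minimizer of $\bar{S}_*(\cdot,F)$ coincides with that of $\bar{S}(\cdot,F)$. For (i), I would first note $\mathcal{F}$-integrability of $S_*$ by linearity, then write $\bar{S}_*(x,F) = \lambda\bar{S}(x,F) + \bar{h}(F)$. Since $\bar{h}(F)$ is constant in $x$ and $\lambda>0$, the set of minimizers is unchanged and strict consistency transfers directly. Part (ii) is equally short: $\mathcal{F}$-integrability is immediate, and
\begin{equation*}
\bar{S}_*(x,F) - \bar{S}_*(T(F),F) = \bigl[\bar{S}(x,F) - \bar{S}(T(F),F)\bigr] + \bigl[\bar{S}_1(x,F) - \bar{S}_1(T(F),F)\bigr],
\end{equation*}
where both brackets are $\geq 0$ by consistency and the first is strictly positive whenever $x\neq T(F)$, so $S_*$ is strictly $\mathcal{F}$-consistent.

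For (iii), the trick is to read $h(y) = S(T(\delta_y),y)$ as $\bar{S}(T(\delta_y),\delta_y)$. Because $\delta_y\in\mathcal{F}$ by hypothesis, $\mathcal{F}$-consistency of $S$ applied at the point mass $\delta_y$ yields $S(x,y)=\bar{S}(x,\delta_y)\geq \bar{S}(T(\delta_y),\delta_y)=h(y)$ for every $x\in\mathsf{A}$ and $y\in\mathsf{O}$, hence $S_*\geq 0$. Preservation of strict $\mathcal{F}$-consistency is then a direct instance of (i) with $\lambda=1$ and $-h$ as the $x$-independent offset; $\mathcal{F}$-integrability of $h$ is supplied by the hypothesis, so no extra work is needed.

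For (iv), I would apply Fubini's theorem using the assumed $\nu\otimes F$-integrability to get
\begin{equation*}
\bar{S}_*(x,F) = \int_Z \bar{S}_z(x,F)\,\mathrm{d}\nu(z),
\end{equation*}
and therefore
\begin{equation*}
\bar{S}_*(x,F) - \bar{S}_*(T(F),F) = \int_Z \bigl[\bar{S}_z(x,F) - \bar{S}_z(T(F),F)\bigr]\,\mathrm{d}\nu(z).
\end{equation*}
The integrand is nonnegative everywhere by consistency of each $S_z$, giving $\mathcal{F}$-consistency of $S_*$. For strictness, if $x\neq T(F)$ then the integrand is strictly positive \emph{for every} $z\in Z$, not merely $\nu$-almost everywhere; combined with $\nu\neq 0$ and $\sigma$-finiteness, one can pick a measurable set $A\subseteq Z$ with $0<\nu(A)<\infty$ and a lower bound $c>0$ valid on $A$ (take $A$ to be the sublevel set $\{z:\bar{S}_z(x,F)-\bar{S}_z(T(F),F)\geq 1/n\}$ for $n$ large enough, noting $Z$ is exhausted by these as $n\to\infty$) to conclude the integral is strictly positive.

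The main obstacle is (iv): one must justify the interchange of integrals cleanly from the stated integrability hypothesis and then convert pointwise strict positivity into integral strict positivity, which is where the hypotheses $\nu\neq 0$ and $\sigma$-finiteness enter. The other three parts are essentially bookkeeping around linearity of the integral and an application of consistency to Dirac measures.
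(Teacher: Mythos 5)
Your proof is correct and follows essentially the same route as the paper's: parts (i)--(iii) rely on the same linearity and Dirac-measure observations, and (iv) uses Fubini together with strict consistency of each $S_z$, the only cosmetic difference being that the paper argues the strictness in (iv) contrapositively (a nonnegative integrand with zero integral is $\nu$-a.e.\ zero) whereas you argue it directly. One small misattribution in your closing remark: $\sigma$-finiteness of $\nu$ is what licenses the Fubini interchange, not the passage from pointwise to integral strict positivity, which needs only $\nu\neq 0$ (and in particular you do not need $\nu(A)<\infty$; a set $A$ of positive, possibly infinite, measure on which the integrand exceeds some $c>0$ already gives a strictly positive integral).
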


\begin{proof}
(i): For $x \in \mathsf{A}$ and $F \in \mathcal{F}$ we obtain
\begin{equation*}
\bar{S}_*(x,F) = \lambda \bar{S}(x,F) + \bar{h}(F) \geq \lambda \bar{S}(T(F),F) + \bar{h}(F) = \bar{S}_*(T(F),F)
\end{equation*}
and equality implies $x = T(F)$ due to the strict $\mathcal{F}$-consistency of $S$. \\
(ii): Again take $x \in \mathsf{A}$ and $F \in \mathcal{F}$ to get
\begin{equation*}
\bar{S}_*(x,F) = \bar{S}(x,F) + \bar{S}_1(x,F) \geq \bar{S}(T(F),F) + \bar{S}_1(T(F),F) = \bar{S}_*(T(F),F)
\end{equation*}
with equality implying
\begin{equation*}
\bar{S}(x,F) - \bar{S}(T(F),F) = \bar{S}_1(T(F),F) - \bar{S}_1(x,F).
\end{equation*}
The left-hand side of this equation is nonnegative while the right-hand side is nonpositive. Therefore, the terms on both sides vanish and the strict $\mathcal{F}$-consistency of $S$ implies $x = T(F)$. \\
(iii): The strict $\mathcal{F}$-consistency of $S_*$ follows from part (i). To show that $S_*$ is positive, we pick $x \in \mathsf{A}$ and $y \in \mathsf{O}$ and calculate
\begin{equation*}
S_*(x,y) = S(x,y) - S(T(\delta_y), y) = S(x,y) - \bar{S}(T(\delta_y), \delta_y) \geq S(x,y) - \bar{S}(x, \delta_y) = 0.
\end{equation*}
(iv): Let $x \in \mathsf{A}$ and $F \in \mathcal{F}$ be arbitrary.  The mapping $(z,y) \mapsto S_z(x,y)$ is $\nu \otimes F$-integrable and both $(Z, \mathcal{Z}, \nu)$ and $(\mathsf{O}, \mathcal{O}, F)$ are $\sigma$-finite measure spaces. Therefore, we may use Fubini's theorem (see for instance \cite[Ch. 14.2]{KlenkeProb}) to change the order of integration. This gives
\begin{align*}
\bar{S}_*(x,F) &= \int_\mathsf{O} \int_Z S_z(x,y) \,  \mathrm{d} \nu(z) \,\mathrm{d} F(y) = \int_Z \bar{S}_z(x,F) \,  \mathrm{d} \nu(z) \\
&\geq \int_Z \bar{S}_z(T(F),F) \,  \mathrm{d} \nu(z) = \int_\mathsf{O} \int_Z S_z(T(F),y) \,  \mathrm{d} \nu(z) \,\mathrm{d} F(y) = \bar{S}_*(T(F),F)
\end{align*}
with equality implying
\begin{equation} \label{Eqn:IntZFam}
\int_Z \bar{S}_z(x,F) - \bar{S}_z(T(F),F) \,  \mathrm{d} \nu(z) = 0.
\end{equation}
For any $z \in Z$ the integrand is positive due to the $\mathcal{F}$-consistency of $S_z$. Consequently, we have $\bar{S}_z(x,F) = \bar{S}_z(T(F),F)$ for $\nu$-almost every $z$ and this implies $x = T(F)$.
\end{proof}

\begin{remark}
Note that in part (ii) of Lemma~\ref{Thm:TransformOfScoring} only one of the scoring functions needs to be strictly $\mathcal{F}$-consistent. A similar fact holds true for part (iv), which becomes apparent in Equation~(\ref{Eqn:IntZFam}). More precisely, we only need to require that there is a subset $Z' \subset Z$ with $\nu (Z') >0$ such that the scoring functions $\lbrace S_z \mid z \in Z' \rbrace$ are strictly $\mathcal{F}$-consistent. All other members of the scoring function family need only be $\mathcal{F}$-consistent.
\end{remark}


Next we consider how conditions for the elicitability of $T: \mathcal{F} \rightarrow \mathsf{A}$ react if we modify the set of distribution functions $\mathcal{F}$ or the set $\mathsf{A}$. As the following lemma shows, strict consistency of a function $S$ is preserved if we look at subsets  $\mathsf{A}' \subseteq \mathsf{A}$ or $\mathcal{F}' \subseteq \mathcal{F}$.

\begin{lemma}[Fissler and Ziegel {\cite[Lemma 2.5]{FissZieg}}]
Let $T: \mathcal{F} \rightarrow \mathsf{A} \subset \mathbb{R}^k$ be a functional and $S: \mathsf{A} \times \mathsf{O} \rightarrow \mathbb{R}$ a strictly $\mathcal{F}$-consistent scoring function for $T$. Then the following assertions are true:
\begin{enumerate}[label=(\roman*)]
	\item Let $\mathcal{F}' \subseteq \mathcal{F}$ and set $T':= T_{\vert \mathcal{F}'}$. Then $S$ is also a strictly $\mathcal{F}'$-consistent scoring function for $T'$.
	\item Let $\mathsf{A}' \subseteq \mathsf{A}$ such that $T(\mathcal{F}) \subseteq \mathsf{A}'$ and set $S' := S_{\vert \mathsf{A}' \times \mathsf{O} }$. Then $S'$ is a strictly $\mathcal{F}$-consistent scoring function for $T$.
\end{enumerate}
\end{lemma}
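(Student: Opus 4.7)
The plan is to prove both assertions by directly unwinding the definition of strict $\mathcal{F}$-consistency. In each case two things need to be checked: first, that the (restricted) object still qualifies as a scoring function in the sense of Definition~\ref{Def:ScoringFunction}, i.e.\ it is integrable over the relevant class of distributions; and second, that the inequality $\bar{S}(x,F) \geq \bar{S}(T(F),F)$ together with the equality clause still hold on the smaller domain. Since the statement is essentially a sanity check, I do not expect a genuine obstacle; the main care is to verify that every quantity appearing in the reduced setup is actually well-defined.

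For (i), I would first observe that $S$ being $\mathcal{F}$-integrable trivially implies $S$ is $\mathcal{F}'$-integrable, because $\mathcal{F}' \subseteq \mathcal{F}$ and integrability is required with respect to every member of the class. Next, fix $F \in \mathcal{F}'$ and $x \in \mathsf{A}$. Since $F \in \mathcal{F}$ as well and $T'(F) = T(F)$ by definition of the restriction, the strict $\mathcal{F}$-consistency of $S$ yields $\bar{S}(x,F) \geq \bar{S}(T(F),F) = \bar{S}(T'(F),F)$, with equality forcing $x = T(F) = T'(F)$. This is exactly strict $\mathcal{F}'$-consistency of $S$ for $T'$.

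For (ii), the crucial point is that the hypothesis $T(\mathcal{F}) \subseteq \mathsf{A}'$ guarantees $T(F) \in \mathsf{A}'$ for every $F \in \mathcal{F}$, so that $\bar{S}'(T(F),F)$ is a meaningful quantity after restriction. Integrability is preserved automatically because $S'(x,\cdot) = S(x,\cdot)$ for each $x \in \mathsf{A}'$. Then for arbitrary $x \in \mathsf{A}'$ and $F \in \mathcal{F}$, I would simply note that $\bar{S}'(x,F) = \bar{S}(x,F) \geq \bar{S}(T(F),F) = \bar{S}'(T(F),F)$ by strict $\mathcal{F}$-consistency of $S$, and that equality forces $x = T(F)$, which is the required conclusion for $S'$.

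In short, no non-trivial reasoning is needed; the argument is purely a matter of tracking domains and invoking the defining inequality of strict consistency. The only subtlety worth emphasising is the role played by the assumption $T(\mathcal{F}) \subseteq \mathsf{A}'$ in part~(ii): without it, $T(F)$ could fall outside the domain of $S'$ and the inequality would not even be expressible.
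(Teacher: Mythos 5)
Your proposal is correct and follows essentially the same route as the paper: restrict, note that $T'(F)=T(F)$ (resp.\ $\bar{S}'(x,F)=\bar{S}(x,F)$), and invoke strict $\mathcal{F}$-consistency directly. You add the (useful but implicit in the paper) remarks that $\mathcal{F}$-integrability passes to subclasses and that $T(\mathcal{F})\subseteq \mathsf{A}'$ is exactly what keeps $\bar{S}'(T(F),F)$ well-defined, but the core argument is identical.
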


\begin{proof}
(i): Let $x \in \mathsf{A}$ and $F \in \mathcal{F}'$. Then we also have $F \in \mathcal{F}$ and $T'(F) = T(F)$, which immediately gives the definition of strict $\mathcal{F}'$-consistency for $T'$. \\
(ii): Let $F \in \mathcal{F}$. For any $x \in \mathsf{A}'$ we have $\bar{S}(x,F) = \bar{S}'(x,F)$, which implies that $S'$ is a strictly $\mathcal{F}$-consistent scoring function for $T$.
\end{proof}

Next we present a criterion which can be used to check if a given scoring function is strictly $\mathcal{F}$-consistent for a functional $T$. In particular, the second part shows how it is possible to establish strict $\mathcal{F}$-consistency for a scoring function if its gradient is an oriented strict $\mathcal{F}$-identification function. This already hints at Section~\ref{Sec:Osband}, where we establish a similar relation between elicitability and identifiability. In part (ii) of the following lemma, we assume that the partial derivatives are dominated by an integrable function in order to permit interchanging differentiation and integration. Apart from this assumption, the result and its proof can be found in \cite[Lemma 2.9]{FissZiegArxiv}.

\begin{lemma}  \label{Thm:SufficientCondElicit}
Let $S: \mathsf{A} \times \mathsf{O} \rightarrow \mathbb{R}$ be a scoring function and $T: \mathcal{F} \rightarrow \mathsf{A} \subseteq \mathbb{R}^k$ a functional. Then the following assertions hold:
\begin{enumerate}[label=(\roman*)]
	\item $S$ is strictly $\mathcal{F}$-consistent for $T$ if and only if for all $F \in \mathcal{F}$, $v \in \mathbb{S}^{k-1}$ and $t = T(F)$ the function
	\begin{equation} \label{Eqn:PsiFuncionDef}
	\Psi_{F,v} : \lbrace s \in \mathbb{R} \mid t + s v \in \mathsf{A} \rbrace \rightarrow \mathbb{R}, \quad s \mapsto \bar{S}(t + sv, F)
	\end{equation}
	has a global unique minimum at $s= 0$.
	\item Let $S$ be continuously differentiable in $x$ and suppose there is an $\mathcal{F}$-integrable function $h$ such that the inequality $\sup_{x \in \mathsf{A}} \vert \partial_i S(x,\cdot) \vert \leq h$ holds $F$-almost surely for all $F \in \mathcal{F}$ and $i=1, \ldots, k$. Moreover, set $\mathcal{F}' := T^{-1}(\intr(\mathsf{A})) \subseteq \mathcal{F}$. If $\nabla S : \intr(A) \times \mathsf{O} \rightarrow \mathbb{R}^k$ is an oriented strict $\mathcal{F}'$-identification function for $T_{\vert \mathcal{F}'}$, then $S_{\vert \intr(A) \times \mathsf{O}}$ is a strictly $\mathcal{F}'$-consistent scoring function for $T_{\vert \mathcal{F}'}$.
	\item For all $F \in \mathcal{F}$, let $\bar{S}(\cdot, F)$ be continuously differentiable and define $\mathcal{F}'$ as in (ii). If for all $F \in \mathcal{F}'$ with $t = T(F)$ and all $v \in \mathbb{S}^{k-1}$, $s \in \mathbb{R}$ such that $t + sv \in \intr(A)$ it holds that
	\begin{align} \label{Eqn:CaseNablaS}
	v^\top \nabla \bar{S} (t + sv, F) \, \left\lbrace
	\begin{array}{lc}
	> 0 , &  \text{if } s > 0 \\
	= 0 , &  \text{if } s = 0 \\
	< 0 , &  \text{if } s < 0 
	\end{array} \right.
	\end{align}
	then $S_{\vert \intr(A) \times \mathsf{O}}$ is a strictly $\mathcal{F}'$-consistent scoring function for $T_{\vert \mathcal{F}'}$.
\end{enumerate}
\end{lemma}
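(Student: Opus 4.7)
The statement is a direct reformulation of strict consistency. I would begin by observing that every point $x\in\mathsf{A}$ admits a representation $x = t + sv$ with $t = T(F)$, $v\in\mathbb{S}^{k-1}$ and $s\in\mathbb{R}$: if $x=t$, take $s=0$ and any $v$; otherwise set $s:=\|x-t\|>0$ and $v:=(x-t)/\|x-t\|$. Under this parametrization, $x = t$ is equivalent to $s=0$. Then the inequality $\bar{S}(x,F)\geq \bar{S}(t,F)$ with equality only if $x=t$ translates line by line into $\Psi_{F,v}(s)\geq\Psi_{F,v}(0)$ with equality only at $s=0$. The only subtle point is that in the forward direction one has to check both $s>0$ and $s<0$, but both correspond to genuine elements of $\mathsf{A}$, so nothing is lost.

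\textbf{Plan for part (ii).} Restrict to $\mathcal{F}'=T^{-1}(\mathrm{int}(\mathsf{A}))$ and to $\mathsf{A}':=\mathrm{int}(\mathsf{A})$, and apply part (i) to $S$ on $\mathsf{A}'\times\mathsf{O}$. Fix $F\in\mathcal{F}'$, $v\in\mathbb{S}^{k-1}$, $t:=T(F)\in\mathrm{int}(\mathsf{A})$, and consider $\Psi_{F,v}$ on the (open) set of admissible $s$. The continuous differentiability of $S$ in $x$ together with the domination $\sup_{x\in\mathsf{A}}|\partial_i S(x,\cdot)|\leq h$ (with $h$ being $\mathcal{F}$-integrable) allows me to invoke the standard theorem on differentiation under the integral sign to get
\begin{equation*}
\Psi_{F,v}'(s) \;=\; v^{\top}\nabla\bar{S}(t+sv,F) \;=\; v^{\top}\overline{\nabla S}(t+sv,F).
\end{equation*}
Since $\nabla S$ is an oriented strict $\mathcal{F}'$-identification function for $T_{\vert\mathcal{F}'}$, Definition~\ref{Def:Orientation} yields $v^{\top}\overline{\nabla S}(t+sv,F)>0$ iff $s>0$. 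Strictness of the identification function at $s=0$ gives $\overline{\nabla S}(t,F)=0$, so $\Psi_{F,v}'(0)=0$. For $s<0$ the quantity is non-positive; it cannot equal zero, because the strict identification property would then force $t+sv=t$, a contradiction. Hence $\Psi_{F,v}'$ is negative on $(-\infty,0)$, zero at $0$, and positive on $(0,\infty)$, so $\Psi_{F,v}$ is strictly decreasing then strictly increasing and attains a unique global minimum at $s=0$. Part (i), applied to $(T_{\vert\mathcal{F}'}, S_{\vert\mathrm{int}(\mathsf{A})\times\mathsf{O}})$, then delivers strict $\mathcal{F}'$-consistency.

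\textbf{Plan for part (iii).} This is the cleanest case: the hypothesis (\ref{Eqn:CaseNablaS}) on $\bar{S}$ bypasses the need to interchange differentiation and integration entirely. For fixed $F\in\mathcal{F}'$ and $v\in\mathbb{S}^{k-1}$, the continuous differentiability of $\bar{S}(\cdot,F)$ on $\mathrm{int}(\mathsf{A})$ makes $\Psi_{F,v}$ continuously differentiable on its (open) admissible domain with $\Psi_{F,v}'(s)=v^{\top}\nabla\bar{S}(t+sv,F)$. The assumed sign pattern then shows $\Psi_{F,v}$ is strictly decreasing for $s<0$ and strictly increasing for $s>0$, hence has a unique global minimum at $s=0$. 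Invoking part (i) again concludes.

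\textbf{Main obstacle.} The only non-routine point is the verification in (ii) that the sign of $\Psi_{F,v}'(s)$ for $s<0$ is strictly negative rather than merely non-positive; this needs the strictness (not just orientation) of the identification function, combined with the fact that $t+sv=t$ forces $s=0$ on $\mathbb{S}^{k-1}$. Everything else is bookkeeping: the parametrization argument in (i), the dominated-convergence/Leibniz interchange in (ii), and the direct application of (i) in (iii).
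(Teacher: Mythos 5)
Your overall plan matches the paper's proof: in all three parts you reduce to part (i) by parametrizing $x=t+sv$, in (ii) you use differentiation under the integral sign to identify $\Psi_{F,v}'$ with $v^\top\overline{\nabla S}$, and in (iii) you use the given sign pattern directly. The one place where your argument goes wrong is exactly the point you single out as the ``only non-routine point'' in part (ii): the case $s<0$. You write that $v^\top\overline{\nabla S}(t+sv,F)$ ``cannot equal zero, because the strict identification property would then force $t+sv=t$.'' That inference is invalid. The scalar $v^\top\overline{\nabla S}(t+sv,F)=0$ does not imply the \emph{vector} $\overline{\nabla S}(t+sv,F)=0$; the vector could be nonzero and merely orthogonal to $v$. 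So strict identification gives you nothing here.

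The correct fix needs no new input: apply Definition~\ref{Def:Orientation} with $v$ replaced by $-v\in\mathbb{S}^{k-1}$ and $s$ replaced by $-s$. Since $T(F)+(-s)(-v)=T(F)+sv$, the equivalence $(-v)^\top\overline{\nabla S}(T(F)+sv,F)>0 \iff -s>0$ rearranges to $v^\top\overline{\nabla S}(T(F)+sv,F)<0 \iff s<0$. In other words, the orientation definition, being quantified over all unit vectors, already encodes the full three-case sign pattern of (\ref{Eqn:CaseNablaS}), which is precisely what the paper invokes when it says this follows ``according to Definition~\ref{Def:Orientation}.'' Once you make this substitution your proof is complete and essentially identical to the paper's.
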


\begin{proof}
(i): At first, we define $D_{F,v} := \lbrace s \in \mathbb{R} \mid T(F) + sv \in \mathsf{A} \rbrace$ and let $S$ be a strictly $\mathcal{F}$-consistent scoring function for $T$. For any $F \in \mathcal{F}$, $v \in \mathbb{S}^{k-1}$ and $r \in D_{F,v}$ we get $\Psi_{F,v} (r) = \bar{S}(T(F) + rv, F) \geq \bar{S}(T(F),F) = \Psi_{F,v} (0)$ so $\Psi_{F,v}$ has a minimum at $r = 0$. Moreover, $\Psi_{F,v} (r) = \Psi_{F,v} (0)$ implies $T(F) + rv = T(F)$ due to the strict $\mathcal{F}$-consistency of $S$, so the minimum is unique. To show the converse implication, take any $x \in \mathsf{A}$, $F \in \mathcal{F}$ and represent $x$ via  $T(F) + rv = x$ for some $r \in \mathbb{R}$ and $v \in \mathbb{S}^{k-1}$. Now $\bar{S}(x,F) = \Psi_{F,v} (r) \geq \Psi_{F,v} (0) = \bar{S}(T(F),F)$ and equality implies $x = T(F)$, since the minimum of $\Psi_{F,v}$ is unique. \\
(ii): To show this, we apply part (i). To this end, fix $F \in \mathcal{F}'$ and $v \in \mathbb{S}^{k-1}$ and define $D_{F,v} := \lbrace s \in \mathbb{R} \mid T(F) + sv \in \intr(\mathsf{A}) \rbrace$ as above. To simplify notation, write $S$ instead of $S_{\vert \intr(\mathsf{A}) \times \mathsf{O}}$. Applying Theorem~\ref{Thm:AppendixDiff} from the appendix, we see that the function $\Psi_{F,v}$ as defined in (\ref{Eqn:PsiFuncionDef}) is continuously differentiable on $D_{F,v}$ and for $s \in D_{F,v}$ satisfies
\begin{equation*}
\Psi_{F,v}' (s) = \frac{\mathrm{d}}{\mathrm{d}s} \bar{S}(T(F) + sv, F) = \int_\mathsf{O} \frac{\mathrm{d}}{\mathrm{d}s} S(t + sv, y) \, \mathrm{d} F(y) = v^\top \overline{\nabla S}(t + sv,F) .
\end{equation*}
By assumption, $\nabla S$ is an oriented strict $\mathcal{F}'$-identification function for $T_{\vert \mathcal{F}'}$, so $\Psi_{F,v}'(0) = 0$, $\Psi_{F,v}'(s) > 0$ for $s > 0$ and $\Psi_{F,v}'(s) < 0$ for $s<0$ according to Definition~\ref{Def:Orientation}. Therefore, $s=0$ is a unique minimum of $\Psi_{F,v}$ and by (i) $S_{\vert \intr(A) \times \mathsf{O}}$ is a strictly $\mathcal{F}'$-consistent scoring function for $T_{\vert \mathcal{F}'}$. \\
(iii): Again we apply (i). Let $F \in \mathcal{F}'$ and $v \in \mathbb{S}^{k-1}$ be arbitrary and define $D_{F,v}$ as well as $\Psi_{F,v}$ as in (ii). The function $\Psi_{F,v}$ is continuously differentiable by assumption and $\Psi_{F,v}'(s) = v^\top \nabla \bar{S}(T(F) + sv, F)$. Due to Equation~(\ref{Eqn:CaseNablaS}), $\Psi_{F,v}$ has a unique minimum at $s=0$ and an application of (i) finishes the proof.
\end{proof}

The next proposition gives an important necessary condition for the strict $\mathcal{F}$-consistency of a functional $T$. The result allows us to clarify the question if, similar to the mean, the variance functional is elicitable (see also Example~\ref{Thm:ExMeanIsElicitable}). The result is found in Gneiting~\cite[Thm. 6]{GneitingPoints}, Lambert~\cite{Lambert}, and Fissler and Ziegel~\cite[Proposition 2.14]{FissZiegArxiv} and the latter credit Kent Osband for stating this result in his doctoral thesis. It is also called the `convexity of level sets'-property. 

\begin{prop}[Convexity of level sets] \label{Thm:LevelSetCond}
Let $T : \mathcal{F} \rightarrow \mathsf{A} \subseteq \mathbb{R}^k$ be an elicitable functional. Then for all $F_0, F_1 \in \mathcal{F}$ such that $t := T(F_0) = T(F_1)$ and all $\lambda \in (0,1)$ such that $F_\lambda := (1-\lambda) F_0 + \lambda F_1 \in \mathcal{F}$ we have $T(F_\lambda) = t$.
\end{prop}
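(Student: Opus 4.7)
The plan is to exploit the fact that $\bar{S}(x,\cdot)$ is linear in the distribution $F$, combined with the strict $\mathcal{F}$-consistency applied to $F_0$, $F_1$, and $F_\lambda$ simultaneously. Let $S$ be a strictly $\mathcal{F}$-consistent scoring function for $T$, which exists by elicitability, and set $s := T(F_\lambda)$. The goal is to show $s = t$.

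First, I would record the linearity identity that drops out of the definition of $\bar{S}$: for any $x \in \mathsf{A}$,
\begin{equation*}
\bar{S}(x, F_\lambda) = (1-\lambda)\bar{S}(x, F_0) + \lambda \bar{S}(x, F_1),
\end{equation*}
which follows from writing the Lebesgue--Stieltjes integral against $F_\lambda$ as a convex combination (and relies on $S(x,\cdot)$ being integrable with respect to $F_0$ and $F_1$, which is part of being $\mathcal{F}$-integrable). Apply this identity with $x = s$ and with $x = t$.

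Next I would play the two consistencies off one another. Strict $\mathcal{F}$-consistency of $S$ at $F_0$ (with $T(F_0)=t$) gives $\bar{S}(s,F_0) \geq \bar{S}(t,F_0)$, and similarly $\bar{S}(s,F_1) \geq \bar{S}(t,F_1)$, with equality in either case only when $s=t$. Taking the convex combination with weights $1-\lambda$ and $\lambda$ yields $\bar{S}(s, F_\lambda) \geq \bar{S}(t, F_\lambda)$. On the other hand, strict $\mathcal{F}$-consistency at $F_\lambda \in \mathcal{F}$ (with $T(F_\lambda)=s$) gives the reverse inequality $\bar{S}(t, F_\lambda) \geq \bar{S}(s, F_\lambda)$. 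Hence equality holds throughout, so
\begin{equation*}
(1-\lambda)\bigl(\bar{S}(s, F_0) - \bar{S}(t, F_0)\bigr) + \lambda\bigl(\bar{S}(s, F_1) - \bar{S}(t, F_1)\bigr) = 0.
\end{equation*}

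Both bracketed terms are nonnegative, and since $\lambda \in (0,1)$ both coefficients are strictly positive, so each summand must vanish. In particular $\bar{S}(s,F_0) = \bar{S}(t,F_0) = \bar{S}(T(F_0),F_0)$, and the strict $\mathcal{F}$-consistency of $S$ at $F_0$ forces $s = T(F_0) = t$, which is the claim. The conceptual crux is the linearity of $\bar S$ in $F$; the only mild subtlety is making sure the integrability conditions let us split the integral over $F_\lambda$ into an honest convex combination, which is immediate because $S$ is assumed $\mathcal{F}$-integrable and $F_0, F_1, F_\lambda \in \mathcal{F}$.
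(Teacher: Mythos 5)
Your argument is correct and essentially mirrors the paper's: both rely on linearity of $\bar{S}(x,\cdot)$ in the distribution, apply consistency at $F_0$ and $F_1$ to bound $\bar{S}(\cdot,F_\lambda)$ from below by $\bar{S}(t,F_\lambda)$, and then invoke strict consistency to force equality. The only cosmetic difference is that you conclude by applying strict consistency at $F_0$ after isolating the two nonnegative summands, whereas the paper sets $x = T(F_\lambda)$ in the same chain of inequalities and applies strict consistency at $F_\lambda$; both finishing moves are equally valid.
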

\begin{proof}
Since $T$ is elicitable, we select a strictly $\mathcal{F}$-consistent scoring function and denote it by $S$. For any $x \in \mathsf{A}$ the linearity of the integral gives
\begin{align*}
\bar{S}(x, F_\lambda) &= (1-\lambda) \bar{S}(x,F_0) + \lambda \bar{S}(x, F_1) \\
&\geq (1-\lambda) \bar{S}(T(F_0),F_0) + \lambda \bar{S}(T(F_1),F_1) = \bar{S}(t, F_\lambda) \geq \bar{S}(T(F_\lambda), F_\lambda)
\end{align*}
and setting $x = T(F_\lambda)$, gives $t = T(F_\lambda)$ due to the strict $\mathcal{F}$-consistency of $S$.
\end{proof}

\begin{remark}  \label{Rem:LevelSetSufficient}
In the one-dimensional case, the necessary condition of convex level sets is also a sufficient condition under certain assumptions. For finite probability spaces this is shown by Lambert~\cite[Thm. 5]{Lambert}, who assumes that $T$ is continuous and on no open set in $\mathcal{F}$ constant. The result is extended to arbitrary probability spaces by Steinwart et al.~\cite{SteinwartPasinWilliam}. They additionally assume that all probability measures $(P_F)_{F \in \mathcal{F}}$ admit a bounded density with respect to some finite measure $\mu$ and equip this space with the total variation norm. For $k> 1$ sufficient conditions are still unknown, since a counterexample of Frongillo and Kash~\cite[Example 1]{FrongilloKash} shows that convex level sets do not suffice. That some additional assumptions on the functional as used in \cite{Lambert} and \cite{SteinwartPasinWilliam} are indeed necessary is demonstrated by Heinrich~\cite{HeinrichMode}, who shows that the mode functional fails to be elicitable, although having convex level sets.
\end{remark}

In the next example we present our own calculations to check the level set condition for the variance. 

\begin{example}[Non-elicitability of the variance]  \label{Thm:ExVariNotElicitable}
For $\mathsf{A}:= [0,\infty)$, $\mathsf{O}:=\mathbb{R}$ and some class of distribution functions $\mathcal{F}$ let the variance functional be defined in the same manner as in \cite[Corollary 2.16]{FissZiegArxiv}, by setting
\begin{equation}  \label{Eqn:VariFunctionalDef}
T:\mathcal{F} \rightarrow \mathsf{A}, \quad F \mapsto T(F) := \int_{\mathsf{O}} y^2 \, \mathrm{d} F(y) - \left( \int_{\mathsf{O}} y \, \mathrm{d} F(y) \right)^2.
\end{equation}
Naturally, we restrict $\mathcal{F}$ to all distributions having finite second moments. In order to establish the non-elicitability of $T$ and in view of Proposition~\ref{Thm:RevelationPrinciple}, it suffices to find two distributions $F_1$ and $F_2$ with $T(F_1) = T(F_2) = t$ and $\lambda \in (0,1)$ such that $T(F_\lambda) \neq t$. To this end, fix a measure space $(\Omega, \mathscr{A}, \mathbb{P})$ and let $Y_1 =^d F_1$ and $Y_2 =^d F_2$ be random variables on it such that $\Var (Y_1) = \Var (Y_2)$. Moreover, let $B: \Omega \rightarrow \lbrace 0,1 \rbrace$ be a random variable having a Bernoulli distribution with parameter $\lambda \in (0,1)$ under $\mathbb{P}$. Assuming that $B$ is independent of $Y_1,  Y_2$, we calculate for any $x \in \mathsf{O}$ the distribution function of $Z_\lambda := BY_1 + (1-B)Y_2 $, given by
\begin{align*}
\mathbb{P}( BY_1 + (1-B)Y_2 \leq x) &= \mathbb{P}(BY_1 + (1-B)Y_2 \leq x  , \, B = 1) \\
&\quad+ \mathbb{P}( BY_1 + (1-B)Y_2 \leq x , \, B = 0) \\
&=\mathbb{P}(Y_1 \leq x) \mathbb{P}(B=1) + \mathbb{P}(Y_2 \leq x) \mathbb{P}(B= 0) \\
&=\lambda F_1(x) + (1- \lambda) F_2(x).
\end{align*}
Therefore, $Z_\lambda$ has distribution function $F_\lambda := \lambda F_1 + (1-\lambda) F_2$ under $\mathbb{P}$. Hence, in order to study the elicitability of (\ref{Eqn:VariFunctionalDef}), we have to check for which random variables $Y_1$, $Y_2$, and $B$ it holds that $ \Var (Y_1) = \Var (Y_2) = t$ implies $\Var (Z_\lambda) = t$. \\
We begin by calculating the expectation and variance of $Z_\lambda$ in terms of expectation and variance of $Y_1$, $Y_2$, and $B$. We obtain the formulas
\begin{align*}
\mathbb{E} Z_\lambda &= \lambda \mathbb{E}Y_1 + (1- \lambda) \mathbb{E}Y_2 \\
\Var (Z_\lambda) &= \mathbb{E}B^2Y_1^2 + 2 \mathbb{E}B(1-B) Y_1 Y_2 + \mathbb{E}(1-B)^2 Y_2^2 - (\mathbb{E}Z_\lambda )^2 \\
&= \lambda \mathbb{E}Y_1^2 + (1-\lambda) \mathbb{E}Y_2^2 - (\lambda \mathbb{E}Y_1 + (1-\lambda) \mathbb{E}Y_2 )^2 \\
&= \lambda \Var (Y_1) + (1-\lambda) \Var (Y_2) + \lambda (1-\lambda) (\mathbb{E}Y_1 - \mathbb{E}Y_2 )^2
\end{align*}
and realize that the variance cannot be elicitable if the expectations of $Y_1$ and $Y_2$ do not coincide. In the situation of differing expectations we always have that $\Var(Z_\lambda)$ and $\Var (Y_1)$ differ by $\lambda (1-\lambda) (\mathbb{E}Y_1 - \mathbb{E}Y_2 )^2 > 0$. Consequently, if a class $\mathcal{F}$ contains distributions $F_1, F_2$ such that they have equal variance but different expectation and $\lambda F_1 + (1-\lambda)F_2 \in \mathcal{F}$ for a $\lambda \in (0,1)$, the variance functional (\ref{Eqn:VariFunctionalDef}) is not elicitable relative to $\mathcal{F}$
\end{example}

\begin{remark}  \label{Rem:FnotElicitable}
In light of the previous example we will from now on say that the variance is not elicitable. With this statement we \textit{do not} want to say that it is impossible to find a class $\mathcal{F}$ such that the functional (\ref{Eqn:VariFunctionalDef}) is elicitable. It rather means that for classes which are `reasonably' rich it is not possible to elicit the variance. Which class $\mathcal{F}$ is rich enough to say that some functional $T$ is not elicitable, when it, strictly speaking, only fails to be elicitable if defined on $\mathcal{F}$, is of course subjective. Nevertheless, if we keep this fact in mind, we can safely speak of a functional as being not elicitable in the following.
\end{remark}

\begin{example}  \label{Thm:ExVariElicitableForCenter}
To clarify the results of Example~\ref{Thm:ExVariNotElicitable} and the previous remark we look again at the variance functional defined in (\ref{Eqn:VariFunctionalDef}). If we change $\mathcal{F}$ to be the class of all \textit{centered} distribution functions with existing second moments, the variance is identical to the second moment for all $F \in \mathcal{F}$. Since Theorem~\ref{Thm:GenExpectElicit} shows that all moments are elicitable, we see that the variance functional is elicitable relative to this class of distribution functions.
\end{example}

The previous examples emphasize that the choice of the class $\mathcal{F}$ is significant and elicitability cannot be studied without making this choice. Another important aspect is that if a functional is not elicitable for some class $\mathcal{F}$, it nevertheless can be elicitable in combination with some other functionals. More precisely, it can be part of a vector of functionals which is elicitable. This fact is mathematically explained by the revelation principle, which is stated and proved in~\cite[Prop. 2.13]{FissZiegArxiv} and~\cite[Thm. 4]{GneitingPoints}. Both sources credit Kent Osband.

\begin{prop}[Revelation principle] \label{Thm:RevelationPrinciple}
Choose $\mathsf{A}, \mathsf{A}' \in \mathbb{R}^k$ and let $g: \mathsf{A} \rightarrow \mathsf{A}'$ be a bijection with inverse $g^{-1}$. Let $T: \mathcal{F} \rightarrow \mathsf{A}$ and $T_g : \mathcal{F} \rightarrow \mathsf{A}', F\mapsto T_g(F) := g(T(F))$ be functionals. Then the following is true:
\begin{enumerate}[label=(\roman*)]
	\item $T$ is identifiable if and only if $T_g$ is identifiable. A function $V: \mathsf{A} \times \mathsf{O} \rightarrow \mathbb{R}^k$ is a strict $\mathcal{F}$-identification function for $T$ if and only if $V_g : \mathsf{A}' \times \mathsf{O} \rightarrow \mathbb{R}^k$, $(x,y) \mapsto V_g(x,y) := V( g^{-1} (x), y)$ is a strict $\mathcal{F}$-identification function for $T_g$.
	\item $T$ is elicitable if and only if $T_g$ is elicitable. A function $S: \mathsf{A} \times \mathsf{O} \rightarrow \mathbb{R}$ is a strictly $\mathcal{F}$-consistent scoring function for $T$ if and only if $S_g : \mathsf{A}' \times \mathsf{O} \rightarrow \mathbb{R}$, $(x,y) \mapsto S_g(x,y) := S( g^{-1}(x), y)$ is a strictly $\mathcal{F}$-consistent scoring function for $T_g$.
\end{enumerate}
\end{prop}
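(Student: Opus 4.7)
The plan is to exploit the fact that $g$ is a bijection with inverse $g^{-1}$, so the relation $T_g = g \circ T$ can be inverted as $T = g^{-1} \circ T_g$. The key observation driving both parts is that for any $F \in \mathcal{F}$ and any $x \in \mathsf{A}'$, the equivalence $x = T_g(F) \Longleftrightarrow g^{-1}(x) = T(F)$ holds. Both parts (i) and (ii) then follow by a direct translation via $g^{-1}$ after noting that $V_g$ and $S_g$ integrate against $F$ exactly as $V$ and $S$ do at the point $g^{-1}(x)$.

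For part (i), I would first verify that $V_g$ is indeed an $\mathcal{F}$-integrable function from $\mathsf{A}' \times \mathsf{O}$ to $\mathbb{R}^k$; this is immediate since $V$ is $\mathcal{F}$-integrable and evaluating $V$ at $g^{-1}(x)$ for fixed $x \in \mathsf{A}'$ yields an $\mathcal{F}$-integrable function of $y$. Next, by linearity of the integral,
\begin{equation*}
\bar{V}_g(x, F) = \int_{\mathsf{O}} V(g^{-1}(x), y) \, \mathrm{d}F(y) = \bar{V}(g^{-1}(x), F).
\end{equation*}
Assume $V$ is a strict $\mathcal{F}$-identification function for $T$. Then $\bar{V}_g(x, F) = 0$ iff $\bar{V}(g^{-1}(x), F) = 0$ iff $g^{-1}(x) = T(F)$ iff $x = g(T(F)) = T_g(F)$, proving that $V_g$ is a strict $\mathcal{F}$-identification function for $T_g$. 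The converse direction is symmetric: given a strict $\mathcal{F}$-identification function $W$ for $T_g$, the function $W_{g^{-1}}(x, y) := W(g(x), y)$ serves as a strict $\mathcal{F}$-identification function for $T$ by the same computation, and one checks $(V_g)_{g^{-1}} = V$, giving the equivalence.

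Part (ii) proceeds along the same lines but uses the minimum characterization from Definition~\ref{Def:ScoringFunction}. Again $\bar{S}_g(x, F) = \bar{S}(g^{-1}(x), F)$, so as $x$ ranges over $\mathsf{A}'$, $g^{-1}(x)$ ranges bijectively over $\mathsf{A}$. If $S$ is strictly $\mathcal{F}$-consistent for $T$, then for any $x \in \mathsf{A}'$ and $F \in \mathcal{F}$,
\begin{equation*}
\bar{S}_g(x, F) = \bar{S}(g^{-1}(x), F) \geq \bar{S}(T(F), F) = \bar{S}(g^{-1}(T_g(F)), F) = \bar{S}_g(T_g(F), F),
\end{equation*}
with equality iff $g^{-1}(x) = T(F)$, i.e. iff $x = T_g(F)$. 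The converse again uses that $(S_g)_{g^{-1}} = S$ together with the symmetric computation. The elicitability/identifiability equivalences then follow from Definitions~\ref{Def:Elicitability} and the identifiability analogue. No step is really an obstacle here; the only point demanding mild care is to state the bijective correspondence in both directions, rather than merely producing one strictly consistent object from the other, so that the claim about \emph{all} strict identification functions and strictly consistent scoring functions is actually justified.
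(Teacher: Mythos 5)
Your proof is correct and follows essentially the same route as the paper: establish $\bar{V}_g(x,F)=\bar{V}(g^{-1}(x),F)$ and $\bar{S}_g(x,F)=\bar{S}(g^{-1}(x),F)$, then translate the defining properties through the bijection $g$ and obtain the converse by symmetry. The only difference is cosmetic: you explicitly note the integrability of $V_g$, $S_g$ and spell out the symmetric converse, where the paper simply remarks that one implication suffices since $g$ is a bijection.
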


\begin{proof}
(i): It suffices to show one implication since $g$ is a bijection. Moreover, by the definition of identifiability, we only need to show that $V_g$ is a strict $\mathcal{F}$-identification function if $V$ is. So let $F \in \mathcal{F}$, $x' \in \mathsf{A}'$ and set $x = g^{-1}(x')$. Then we have that $\bar{V}_g(x',F) = 0 \Leftrightarrow \bar{V}(g^{-1}(x'),F) = 0 \Leftrightarrow \bar{V}(x,F) = 0$ and the identification property yields that this is equivalent to $x= T(F) \Leftrightarrow x' = T_g(F)$. \\
(ii): As in (i) it suffices to show that $S_g$ is a strictly $\mathcal{F}$-consistent scoring function if $S$ is. So let $F \in \mathcal{F}$, $x' \in \mathsf{A}'$ and set $x = g^{-1}(x')$. This yields the inequality
\begin{align*}
\bar{S}_g(x',F) = \bar{S}(g^{-1}(x'),F) = \bar{S}(x,F) &\geq  \bar{S}(T(F),F) \\
&= \bar{S}( g^{-1} ( g (T(F) ) ),F) = \bar{S}_g(T_g(F),F),
\end{align*}
and equality implies $x = T(F)$, which gives $x' = g(x) = g(T(F)) = T_g(F)$. 
\end{proof}

\begin{remark}  \label{Rem:RevelationPrinciple}
The revelation principle states that elicitability is preserved if a functional is transformed using a bijective mapping. For example, we obtain that if $T$ is elicitable, so is $-T$. Moreover, if we consider the one-dimensional case,  $aT + c$ (with $a \neq 0$) is elicitable and if $T$ is nonnegative, also $\sqrt{T}$ or $\vert T \vert$ are elicitable. The same holds for identifiability and consistent scoring functions as well as identification functions can be calculated using Proposition~\ref{Thm:RevelationPrinciple}.
\end{remark}

In the following example we use the revelation principle to show that mean and variance are jointly elicitable. We apply the same technique as~\cite[Cor. 2.16]{FissZiegArxiv}.

\begin{example}[Joint elicitability of mean and variance]  \label{Thm:ExMAndVElicitable}
It is well known, that the representation $\Var (Y) = \mathbb{E}Y^2 - (\mathbb{E}Y)^2$ holds. A bijection between the first two moments and the pair (expectation, variance) is thus immediately seen. To be more precise, let $T_1$ be the mean functional defined in (\ref{Eqn:MeanFunctionalDef}) and $T_2$ the second moment functional defined by setting $k=1$, $h(x) = x^2$ and $q(x) = 1$ in Theorem~\ref{Thm:GenExpectElicit}. Denote the variance functional defined in (\ref{Eqn:VariFunctionalDef}) via $T_{\Var}$.  Let $\mathcal{F}$ be a class of distribution functions on $\mathsf{O} := \mathbb{R}$ such that second moments exist for all members. Define the sets $\mathsf{A} := \lbrace (x_1, x_2) \mid x_2 \geq x_1^2 \rbrace \subset \mathbb{R}^2$ and $\mathsf{A}' := \mathbb{R} \times [0, \infty) \subset \mathbb{R}^2$ with a bijection $g: \mathsf{A} \rightarrow \mathsf{A}'$ given by $(x_1, x_2) \mapsto (x_1, x_2 - x_1^2)$. The inverse of $g$ is given by $g^{-1} : \mathsf{A}' \rightarrow  \mathsf{A}$, $(x_1, x_2) \mapsto (x_1, x_2 + x_1^2)$. The functional we are interested in, namely $(T_1, T_{\Var})^\top$, can now be written as $g((T_1, T_2))$ and hence it is elicitable due to the revelation principle \ref{Thm:RevelationPrinciple}. Why $(T_1, T_2)^\top$ is elicitable is rigorously proved in Lemma~\ref{Thm:AllCompElicitable} (i). We calculate a strictly $\mathcal{F}$-consistent scoring function for $(T_1, T_{\Var})^\top$ by taking a strictly $\mathcal{F}$-consistent scoring function $S$ for $(T_1, T_2)^\top$ similar to Example~\ref{Thm:ExMeanIsElicitable}, that is
\begin{align*}
S(x_1, x_2, y) := -f_1 (x_1) - f_1'(x_1) (y - x_1) - f_2 (x_2) - f_2'(x_2) (y^2 - x_2)
\end{align*}
for two differentiable strictly convex functions $f_1, f_2$. The strict consistency of $S$ for $(T_1, T_2)^\top$ follows from Lemma~\ref{Thm:AllCompElicitable}. If we now apply the revelation principle once again, we obtain that
\begin{align*}
S_g(x_1, x_2, y) = &\,\,S( g_1^{-1} (x_1, x_2), g_2^{-1} (x_1, x_2), y) \\
= &- f_1(x_1) - f_1'(x_1) (y - x_1) - f_2(x_2 + x_1^2) \\
&- f_2'(x_2 + x_1^2) (y^2- (x_2 + x_1^2) )
\end{align*}
is a strictly $\mathcal{F}$-consistent scoring function for $(T_1, T_{\Var})^\top$. We could also have employed the second part of Proposition~\ref{Thm:RevelationPrinciple} to show that the functional $(T_1, T_{\Var})^\top$ is identifiable and calculate a strict $\mathcal{F}$-identification function.
\end{example}

\begin{remark}
One question which comes to mind regarding the revelation principle \ref{Thm:RevelationPrinciple} and Example~\ref{Thm:ExMAndVElicitable} is the following: Can we always find a vector of elicitable functionals such that we can bijectively map it onto a vector of functionals which contains (at least) one component not being elicitable? Or more generally put, is every non-elicitable functional part of an elicitable vector of functionals? The fact that skewness and kurtosis of a random variable are not elicitable, but can be part of an elicitable vector (together with the necessary moments), is an encouraging result in this direction. It can be proved using the revelation principle in the same manner as Example~\ref{Thm:ExMAndVElicitable}. Nevertheless, the examples variance, skewness and kurtosis are rather simple functionals, and a similar result fails to hold for more complex ones. The most prominent case is the functional pair Value at Risk and Expected Shortfall, where the latter is not elicitable. Details are presented in Subsections~\ref{Sec:ValueAtRisk} and~\ref{Sec:ExpectedShortfall}.
\end{remark}

This section concludes with a result showing that vectors of functionals are elicitable if all components are elicitable. That the converse implication is not true is shown by Examples~\ref{Thm:ExVariNotElicitable} and \ref{Thm:ExMAndVElicitable}. The first part of the result is proved in Fissler and Ziegel~\cite[Lemma 2.6]{FissZieg} and we prove two similar statements for identifiability.

\begin{lemma}  \label{Thm:AllCompElicitable}
Let $n \geq 1$ and choose functionals $T_i : \mathcal{F} \rightarrow \mathsf{A}_i \subseteq \mathbb{R}^{k_i}$ for $i= 1, \ldots, n$ and $k_1, \ldots, k_n \in \mathbb{N}$. For $k := \sum_{i=1}^{n} k_i$ and $\mathsf{A} := \mathsf{A}_1 \times \ldots \times \mathsf{A}_n \subseteq \mathbb{R}^k$ we define a $k$-dimensional functional $T: \mathcal{F} \rightarrow \mathsf{A}$ via $T(F) := (T_1(F), \ldots, T_n(F) )^\top$. Then the following assertions hold:
\begin{enumerate}[label=(\roman*)]
	\item $T$ is elicitable if all $(T_i)_{i=1, \ldots, n}$ are elicitable.
	\item $T$ is identifiable if all $(T_i)_{i=1, \ldots, n}$ are identifiable.
	\item $T$ has an oriented strict $\mathcal{F}$-identification function if all $(T_i)_{i=1, \ldots, n}$ have such a function.
\end{enumerate}
\end{lemma}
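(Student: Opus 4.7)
My plan is to handle the three parts by always building the candidate object for $T$ block-wise from the corresponding objects for the components $T_i$, and then reducing every statement to the componentwise hypotheses.

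For part (i), given strictly $\mathcal{F}$-consistent scoring functions $S_i \colon \mathsf{A}_i \times \mathsf{O} \to \mathbb{R}$ for each $T_i$, I would define
\begin{equation*}
S \colon \mathsf{A} \times \mathsf{O} \to \mathbb{R}, \qquad S(x,y) := \sum_{i=1}^n S_i(x_i, y),
\end{equation*}
where $x = (x_1,\ldots,x_n)^\top$ with $x_i \in \mathsf{A}_i$. By linearity of the integral $\bar S(x,F) = \sum_i \bar S_i(x_i, F)$, and since each summand is minimized at $x_i = T_i(F)$, the sum is minimized at $x = T(F)$. Equality in $\bar S(x,F) \geq \bar S(T(F),F)$ forces equality in every summand (each is nonnegative after subtracting $\bar S_i(T_i(F),F)$), and strict consistency of $S_i$ then gives $x_i = T_i(F)$ for all $i$, i.e.\ $x = T(F)$. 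Part~(ii) works by the same block construction with the identification functions $V_i \colon \mathsf{A}_i \times \mathsf{O} \to \mathbb{R}^{k_i}$: stack them into $V(x,y) := (V_1(x_1,y)^\top, \ldots, V_n(x_n,y)^\top)^\top \in \mathbb{R}^k$, and observe that $\bar V(x,F) = 0$ if and only if every block $\bar V_i(x_i, F)$ vanishes, if and only if $x_i = T_i(F)$ for every $i$, hence $x = T(F)$.

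Part~(iii) is the step I expect to require the most care, since the orientation property from Definition~\ref{Def:Orientation} mixes all components through the single direction $v \in \mathbb{S}^{k-1}$ and the single scalar~$s$. Using the same stacked $V$ as in~(ii), I would write $v = (v_1,\ldots,v_n)^\top$ with $v_i \in \mathbb{R}^{k_i}$ and note that $T(F) + sv \in \mathsf{A}$ is equivalent to $T_i(F) + s v_i \in \mathsf{A}_i$ for every $i$, since $\mathsf{A} = \mathsf{A}_1 \times \cdots \times \mathsf{A}_n$. Then
\begin{equation*}
v^\top \bar V(T(F) + sv, F) = \sum_{i=1}^n v_i^\top \bar V_i(T_i(F) + s v_i, F).
\end{equation*}
For indices with $v_i = 0$ the summand vanishes. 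For indices with $v_i \neq 0$, I would rescale via $\tilde v_i := v_i / \lVert v_i \rVert \in \mathbb{S}^{k_i-1}$ and $\tilde s_i := s\lVert v_i\rVert$, so that
\begin{equation*}
v_i^\top \bar V_i(T_i(F) + s v_i, F) = \lVert v_i \rVert\, \tilde v_i^\top \bar V_i\bigl(T_i(F) + \tilde s_i \tilde v_i, F\bigr),
\end{equation*}
and apply the orientation of $V_i$ to conclude this expression has the same sign as $\tilde s_i$, which equals the sign of $s$. Since $v \in \mathbb{S}^{k-1}$ forces at least one $v_i \neq 0$, the overall sum has the strict sign of $s$ when $s \neq 0$ and equals $0$ when $s = 0$. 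That is exactly the orientation condition for $V$, so $V$ is an oriented strict $\mathcal{F}$-identification function for $T$, completing the proof.
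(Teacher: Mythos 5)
Your constructions for parts (i) and (ii) are exactly the paper's: sum the scoring functions, stack the identification functions, and note that the resulting inequality and vanishing conditions decouple componentwise. For part (iii), however, your argument is more careful than what the paper actually writes. At the step after the displayed sum $\sum_i v_i^\top \bar{V}_i(T_i(F)+sv_i,F)$, the paper argues by comparing $v_i$ and $\bar{V}_i$ directly to zero (``$\bar{V}_i(\cdots) > 0 \Leftrightarrow v_i > 0$''), which only parses when $k_i = 1$; the lemma permits arbitrary $k_i$, and then the orientation hypothesis on $V_i$ only constrains unit directions $\tilde v_i \in \mathbb{S}^{k_i-1}$. Your rescaling $\tilde v_i := v_i/\Vert v_i\Vert$, $\tilde s_i := s\Vert v_i\Vert$ is precisely what is needed to make $V_i$'s orientation applicable, and the observation that $\tilde s_i$ has the same sign as $s$ then gives the conclusion. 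The only thing worth spelling out is the reverse direction of the biconditional: for $s<0$, applying orientation of $V_i$ to the direction $-\tilde v_i$ with scalar $-\tilde s_i > 0$ shows each summand with $v_i \neq 0$ is strictly negative, and for $s=0$ strict identification gives $\bar{V}(T(F),F)=0$, so in neither case can $v^\top\bar{V}$ be positive. Your phrase that the sum ``has the strict sign of $s$'' implicitly covers this, but since orientation is defined as a biconditional it deserves an explicit line.
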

\begin{proof}
(i): For $i \in I_n := \lbrace 1, \ldots, n \rbrace$ let $S_i : \mathsf{A}_i \times \mathsf{O} \rightarrow \mathbb{R}$ be a strictly $\mathcal{F}$-consistent scoring function for $T_i$. Define the scoring function $S$ for $T$ via
\begin{equation}  \label{Eqn:JointScoringFunction}
S: \mathsf{A} \times \mathsf{O} \rightarrow \mathbb{R}, \quad (x,y) \mapsto S(x_1, \ldots, x_n,y) := \sum_{i=1}^{n} S_i (x_i,y). 
\end{equation}
Now for any $F \in \mathcal{F}$, $x \in \mathsf{A}$ we obtain
\begin{equation*}
\bar{S}(x_1, \ldots, x_n,F) - \bar{S}(T_1(F), \ldots, T_n(F), F) = \sum_{i=1}^{n} \bar{S}_i(x_i,F) - \bar{S}_i (T_i(F), F) \geq 0
\end{equation*}
and every summand is positive, so if equality holds, we have $x_i = T_i(F)$ for all $i \in I_n$. This shows that $S$ is a strictly $\mathcal{F}$-consistent scoring function for $T$. \\
(ii): As above, for $i \in I_n$ let $V_i : \mathsf{A}_i \times \mathsf{O} \rightarrow \mathbb{R}^{k_i}$ be a strict $\mathcal{F}$-identification function for $T_i$. Concatenate all $V_i$ to define the identification function
\begin{equation}  \label{Eqn:JointIdentificationFunction} 
V : \mathsf{A} \times \mathsf{O} \rightarrow \mathbb{R}^k, \quad  (x,y) \mapsto V(x_1, \ldots, x_n,y) := (V_1(x_1,y), \ldots, V_n(x_n,y))^\top .
\end{equation}
For any $F \in \mathcal{F}$, $x \in \mathsf{A}$ we have $\bar{V}(x_1, \ldots, x_n, F) = 0$ if and only if $\bar{V}_i(x_i,F) = 0$ for all $i \in I_n$    and this is equivalent to $x_i = T_i(F)$ for all $i \in I_n$. Hence, $V$ is a strict $\mathcal{F}$-identification function for $T$. \\
(iii): For $i \in I_n$ let $V_i : \mathsf{A}_i \times \mathsf{O} \rightarrow \mathbb{R}^{k_i}$ be an oriented strict $\mathcal{F}$-identification function for $T_i$ and define $V$ as in (\ref{Eqn:JointIdentificationFunction}). For $F \in \mathcal{F}$, $v \in \mathbb{S}^{k-1}$ and $s \in \mathbb{R}$ such that $T(F) + sv \in \mathsf{A}$ holds we calculate
\begin{equation*}
v^\top \bar{V}(T(F) + sv, F) = \sum_{i=1}^{n} v_i^\top \bar{V}_i (T_i(F) + s v_i , F) .
\end{equation*}
If $s > 0$, it follows for any $i \in I_n$ that $\bar{V_i}(T_i(F) + s v_i , F) > 0 \, \Leftrightarrow \, v_i > 0$ since $V_i$ is oriented. Similarly, $s < 0$ implies that $\bar{V_i}(T_i(F) + s v_i , F) > 0 \, \Leftrightarrow \, v_i < 0$ holds for any $i \in I_n$, so $V$ is an oriented $\mathcal{F}$-identification function for $T$.
\end{proof}

\begin{remark}
The previous lemma states that functionals which have elicitable (identifiable) components are elicitable (identifiable). Moreover, its proof yields strictly consistent scoring function in (\ref{Eqn:JointScoringFunction}) and (oriented) strict identification functions in (\ref{Eqn:JointIdentificationFunction}).
\end{remark}

\section{Osband's principle} 
\label{Sec:Osband}

In this section, two versions of Osband's principle which provide a connection between scoring and identification functions for a functional $T$ are proved. The motivation for this is as follows. If $S$ is a strictly $\mathcal{F}$-consistent scoring function, the mapping $x \mapsto \bar{S}(x, F)$  attains its minimum at $T(F)$. Simultaneously, the mapping $x \mapsto \bar{V}(x,F)$ vanishes at $T(F)$ for every strict $\mathcal{F}$-identification function $V$. Consequently, we are tempted to think of $\bar{V}(\cdot,F)$ as the derivative of $\bar{S}(\cdot,F)$ with respect to $x$, which necessarily vanishes for a local minimum. Indeed, we have 
\begin{equation}  \label{Eqn:OsbandsPrinciple}
 \nabla \bar{S}(x,F) = h(x) \bar{V}(x,F)
\end{equation}
for a matrix-valued function $h$ and the precise result is stated in Theorem~\ref{Thm:OsbandPrinciple1}. If we further assume that integration and differentiation can be interchanged, we might even think that $hV$ is the derivative of $S$ with respect to $x$. A statement of this type is presented in Theorem~\ref{Thm:OsbandPrinciple2}. Both results are important tools to investigate the structure of strictly consistent scoring functions for a functional $T$. In many cases, there is a straightforward way to define a strict identification function for $T$. Under certain regularity assumptions, Equation~(\ref{Eqn:OsbandsPrinciple}) can then be used to calculate a strictly consistent scoring function for $T$ and such an application can be found in Section~\ref{Sec:FunctionalsComponents}. For both versions of Osband's principle, we follow the proofs as presented in Fissler and Ziegel~\cite{FissZiegArxiv} and add more details. A version of Osband's principle in the one-dimensional case can also be found in Steinwart et al.~\cite{SteinwartPasinWilliam}. 

\subsection{First version of Osband's principle} 

This subsection starts with a version of Osband's principle on the level of expectations as stated in (\ref{Eqn:OsbandsPrinciple}). Following \cite{FissZieg}, we state some assumptions concerning the identification function $V$ and the functions $\bar{V}(\cdot, F) : \mathsf{A} \rightarrow \mathbb{R}^k$, $x \mapsto \bar{V}(x,F)$, which are needed below. We denote the convex hull of a set $M$ by $\conv (M)$.

\phantomsection
\begin{assumption2}[V1] \label{As:V1}
For every $x \in \intr(A) \subseteq \mathbb{R}^k$ there are $F_1, \ldots, F_{k+1} \in \mathcal{F}$ such that
\begin{equation*}
0 \in \intr ( \conv (\bar{V}(x,F_1), \ldots, \bar{V}(x, F_{k+1}) ) ) .
\end{equation*}
\end{assumption2}

\begin{assumption2}[V2] \label{As:V2}
For every $F \in \mathcal{F}$ the mapping $\bar{V}( \cdot, F)$ is continuous.
\end{assumption2}

\begin{assumption2}[V3] \label{As:V3}
For every $F \in \mathcal{F}$ the mapping $\bar{V}( \cdot, F)$ is continuously  differentiable.
\end{assumption2}

\begin{remark}
Assumption (\nameref{As:V1}) is a richness assumption which guarantees that the functional attains enough different values and is not `trapped' in a linear subspace of $\mathbb{R}^k$. The Assumptions~(\nameref{As:V2}) and (\nameref{As:V3}) provide enough regularity for the function $h$ appearing in (\ref{Eqn:OsbandsPrinciple}). If for all $y \in \mathsf{O}$ the mapping $x \mapsto V(x,y)$ is continuously differentiable, Assumption~(\nameref{As:V3}) holds, as long as the partial derivatives are dominated by an integrable function. For an illustration see Theorem~\ref{Thm:AppendixDiff} and Example~\ref{Thm:AppendixExCounterDiff} from the appendix.
\end{remark}

For the scoring function $S$ and the functions $\bar{S}(\cdot, F) : \mathsf{A} \rightarrow \mathbb{R}$,  $x \mapsto \bar{S}(x,F)$ we also require the same regularity assumptions as in \cite{FissZieg}:

\phantomsection
\begin{assumption2}[S1] \label{As:S1}
For every $F \in \mathcal{F}$ the mapping $\bar{S}(\cdot, F)$ is continuously differentiable.
\end{assumption2}

\begin{assumption2}[S2] \label{As:S2}
For every $F\in \mathcal{F}$ the mapping $\bar{S}(\cdot, F)$ is continuously differentiable and the gradient is locally Lipschitz continuous. 
Furthermore, $\bar{S}(\cdot, F)$ is twice continuously differentiable at $t=T(F) \in \intr(\mathsf{A})$.
\end{assumption2}

Using these assumptions, we prove the first version of Osband's principle, which is concerned with the connection between $\bar{S}(\cdot,F)$ and $\bar{V}(\cdot,F)$.

\begin{theorem}[Fissler and Ziegel {\cite[Thm. 3.2]{FissZieg}}] \label{Thm:OsbandPrinciple1}
Let $\mathcal{F}$ be a convex class of distribution functions on $\mathsf{O} \subseteq \mathbb{R}^d$ and $T: \mathcal{F} \rightarrow \mathsf{A} \subseteq \mathbb{R}^k$ a surjective elicitable and identifiable functional with strict $\mathcal{F}$-identification function $V$ and strictly $\mathcal{F}$-consistent scoring function $S$. Assuming (\nameref{As:V1}) and (\nameref{As:S1}), there exists a function $h: \intr (\mathsf{A}) \rightarrow \mathbb{R}^{k \times k}$ such that
\begin{equation*}
\partial_l \bar{S}(x,F) = \sum_{m=1}^{k} h_{lm} (x) \bar{V}_m (x, F)
\end{equation*}
holds  for all $x \in \intr (\mathsf{A})$ and $F \in \mathcal{F}$. If additionally (\nameref{As:V2}) holds, then $h$ is continuous and if (\nameref{As:V3}) and (\nameref{As:S2}) also hold, then $h$ is locally Lipschitz continuous.
\end{theorem}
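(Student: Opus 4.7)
My plan is to fix $x \in \intr(\mathsf{A})$ and an index $l \in \{1,\dots,k\}$, and to show directly that $\partial_l \bar S(x, F)$ is a well-defined $\mathbb{R}$-linear function of $\bar V(x, F)$. The coefficients of this linear function will be the desired row $h_{l\cdot}(x)$, and all of the work of the theorem concentrates in $x$; regularity in $x$ is then read off a linear system.

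First I would establish well-definedness: whenever $F_1, F_2 \in \mathcal{F}$ satisfy $\bar V(x, F_1) = \bar V(x, F_2) =: v$, then $\partial_l \bar S(x, F_1) = \partial_l \bar S(x, F_2)$. Linearity of the integral combined with convexity of $\mathcal{F}$ makes $D_x := \{\bar V(x, F) : F \in \mathcal{F}\}$ a convex subset of $\mathbb{R}^k$, and assumption (V1) forces $D_x$ to contain a neighborhood of $0$. Hence for every sufficiently small $\mu \in (0,1)$ there exists $H \in \mathcal{F}$ with $\bar V(x, H) = -\tfrac{\mu}{1-\mu}\,v$. The convex combinations $G_i := \mu F_i + (1-\mu) H$ lie in $\mathcal{F}$ and satisfy $\bar V(x, G_i) = 0$, so strict identifiability gives $T(G_i) = x \in \intr(\mathsf{A})$. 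Since $x$ is then the unique minimum of $\bar S(\cdot, G_i)$ and (S1) supplies $C^1$ regularity, the gradient vanishes there, yielding $\mu\,\nabla \bar S(x, F_i) + (1-\mu)\,\nabla \bar S(x, H) = 0$ for $i = 1, 2$. Comparing $i = 1$ and $i = 2$ gives $\nabla \bar S(x, F_1) = \nabla \bar S(x, F_2)$, in particular the $l$-th components agree.

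Having well-definedness, I would define $\phi_l : D_x \to \mathbb{R}$ by $\phi_l(\bar V(x, F)) := \partial_l \bar S(x, F)$. Surjectivity of $T$ furnishes some $F^{\star}$ with $T(F^{\star}) = x$, and the same gradient argument yields $\phi_l(0) = 0$. Linearity of the integral immediately gives
\begin{equation*}
\phi_l\bigl(\alpha u + (1-\alpha) w\bigr) = \alpha\, \phi_l(u) + (1-\alpha)\, \phi_l(w) \qquad \text{for } \alpha \in [0,1],\ u,w \in D_x,
\end{equation*}
so $\phi_l$ preserves convex combinations. Since $D_x$ is convex and contains $0$ in its interior, a standard argument (scale via $\phi_l(\lambda v) = \lambda \phi_l(v)$ using $0 = \tfrac12 v + \tfrac12(-v)$ and then deduce additivity from the midpoint identity) shows $\phi_l$ extends uniquely to an $\mathbb{R}$-linear functional on $\mathbb{R}^k$. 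Denoting its components by $h_{lm}(x)$ yields
\begin{equation*}
\partial_l \bar S(x, F) = \sum_{m=1}^{k} h_{lm}(x)\, \bar V_m(x, F), \qquad F \in \mathcal{F}.
\end{equation*}

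For the regularity claims, I would use (V1) to pick indices $i_1, \dots, i_k$ so that $\bar V(x_0, F_{i_j})$ form a basis of $\mathbb{R}^k$ at some fixed $x_0 \in \intr(\mathsf{A})$; this is possible because their convex hull contains $0$ in its interior. Under (V2) the matrix $M(x) := [\bar V(x, F_{i_1}) \mid \cdots \mid \bar V(x, F_{i_k})]$ is continuous in $x$ and stays invertible on a neighborhood of $x_0$; there the identity above becomes the $k \times k$ linear system $h_{l\cdot}(x)\, M(x) = (\partial_l \bar S(x, F_{i_1}), \dots, \partial_l \bar S(x, F_{i_k}))$. Continuity of $h$ then follows from continuity of $\bar V(\cdot, F_{i_j})$ and $\nabla \bar S(\cdot, F_{i_j})$ together with the smoothness of matrix inversion; local Lipschitz continuity under (V3) and (S2) follows identically since both sides of the linear system are then locally Lipschitz. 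The main obstacle is the well-definedness step: its entire substance is extracting the auxiliary distribution $H$ from (V1) and balancing the parameter $\mu$ so that $G_i = \mu F_i + (1-\mu) H$ sits in $\mathcal{F}$ with $\bar V(x, G_i) = 0$. Once that trick is available, linearity of $\phi_l$ is essentially a tautological consequence of convex combinations, and the regularity statements reduce to elementary continuity properties of $M(x)^{-1}$.
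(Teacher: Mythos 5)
Your proof is correct, and the existence step takes a genuinely different route from the paper, while the regularity steps coincide with the paper's argument essentially verbatim.

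For the existence of $h$, the paper works with the augmented matrices
$\mathbb{V}_G := (\bar V(x,G),\bar V(x,F_1),\ldots,\bar V(x,F_{k+1}))$ and
$\mathbb{W}_G$ (obtained by prepending the row $(\partial_l\bar S(x,G),\ldots)$),
proves $\ker\mathbb{V}_G = \ker\mathbb{W}_G$ by passing a general kernel vector to one with nonnegative entries via a strictly positive kernel element, and then reads off the coefficients $h_{lm}(x)$ from rank–nullity applied to $\mathbb{W}_G$. You instead isolate the one fact that all of this is encoding — that $\partial_l\bar S(x,\cdot)$ factors through $\bar V(x,\cdot)$ — and prove it directly: given $\bar V(x,F_1)=\bar V(x,F_2)=v$, you pick $H\in\mathcal{F}$ with $\bar V(x,H)=-\tfrac{\mu}{1-\mu}v$ (possible since $D_x=\{\bar V(x,F):F\in\mathcal{F}\}$ is convex and contains $0$ in its interior by (V1) and convexity of $\mathcal{F}$), form $G_i=\mu F_i+(1-\mu)H$, observe $\bar V(x,G_i)=0\Rightarrow T(G_i)=x\Rightarrow\nabla\bar S(x,G_i)=0$ by strict identification plus (S1), and conclude $\nabla\bar S(x,F_1)=\nabla\bar S(x,F_2)$. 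The resulting map $\phi_l$ on $D_x$ is affine with $\phi_l(0)=0$ and $0\in\intr(D_x)$, hence extends uniquely to a linear functional, whose coefficients are $h_{l\cdot}(x)$. This is the same "balance a convex combination until $\bar V$ vanishes" trick that powers the paper's kernel argument, but you have recast the linear-algebra bookkeeping as the well-definedness of a quotient map followed by an affine-to-linear extension; the payoff is a shorter and conceptually more transparent argument, at the cost of an extraction step (turning "affine on a convex neighborhood of $0$, vanishing at $0$" into "linear") that you gesture at but would need to spell out — it is genuinely routine (express a small $v$ as a convex combination of $\pm r e_i$ and $0$, use affinity and $\phi_l(-u)=-\phi_l(u)$), so this is not a gap, just an expansion to write out. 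Uniqueness of $h$ and the continuity and local Lipschitz claims follow exactly as you say, by solving the linear system $h_{l\cdot}(x)M(x)=(\partial_l\bar S(x,F_{i_1}),\ldots,\partial_l\bar S(x,F_{i_k}))$ on a neighborhood where the $\bar V$-vectors stay linearly independent, which is the paper's argument as well.
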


\begin{proof}
Following the proof of \cite{FissZieg}, we need to show two things: Firstly, we prove the existence of a function $h$ which only depends on $x \in \intr (\mathsf{A})$. Secondly, we establish continuity of $h$ under Assumption~(\nameref{As:V2}) and local Lipschitz continuity under the Assumptions~(\nameref{As:V3}) and (\nameref{As:S2}). \\
To show the existence of $h$, fix $l \in \lbrace 1, \ldots, k \rbrace$ and $x \in \intr (\mathsf{A})$. Using the identifiability of $T$, the strict $\mathcal{F}$-consistency of $S$ and Assumption~(\nameref{As:S1}), we have for all $F \in \mathcal{F}$ the implications
\begin{equation} \label{Eqn:FocImpVS}
\bar{V}(x,F) = 0 \, \Rightarrow \, x = T(F) \, \Rightarrow \, \nabla \bar{S}(x, F) = 0 ,
\end{equation}
since $T(F)$ is the local and global minimum of the continuously differentiable function $\bar{S}(\cdot,F)$. Invoking (\nameref{As:V1}), there exist $F_1, \ldots, F_{k+1} \in \mathcal{F}$ such that 
\begin{equation} \label{Eqn:ZeroInConvHull}
0 \in \intr ( \conv ( \bar{V}(x,F_1), \ldots, \bar{V}(x, F_{k+1}) ) )
\end{equation}
holds and thus the matrix $( \bar{V}(x,F_1), \ldots, \bar{V}(x, F_{k+1}) ) \in \mathbb{R}^{k \times k+1}$ has maximal rank $k$. If it had rank $j <k$, then its columns would span a linear subspace of $\mathbb{R}^k$ having dimension $j$, a contradiction to (\ref{Eqn:ZeroInConvHull}). Now let $G \in \mathcal{F}$ be arbitrary and define the matrix
\begin{equation*}
\mathbb{V}_G := (\bar{V}(x,G), \bar{V}(x,F_1), \ldots, \bar{V}(x, F_{k+1}) ) \in \mathbb{R}^{k \times k+2},
\end{equation*}
which also has full rank $k$. Moreover, if we consider matrices as linear mappings, the matrix $\mathbb{V}_G$ has the same kernel as the matrix
\begin{equation*}
\mathbb{W}_G := \begin{pmatrix}
\partial_l \bar{S}(x,G) & \partial_l \bar{S}(x,F_1) & \ldots & \partial_l \bar{S}(x,F_{k+1}) \\ 
  & \mathbb{V}_G &  & 
\end{pmatrix}
\in \mathbb{R}^{k+1 \times k+2}.
\end{equation*}
To show this, observe that by the definition of $\mathbb{W}_G$ and ignoring its first row, we immediately get $\ker (\mathbb{W}_G) \subseteq \ker (\mathbb{V}_G)$. Moreover, the kernels of both matrices have to be true subspaces of $\mathbb{R}^{k+2}$ since they map into the spaces $\mathbb{R}^k$ and $\mathbb{R}^{k+1}$. Therefore, in order to show the other inclusion, take $\theta \in \ker ( \mathbb{V}_G) \backslash \lbrace 0 \rbrace$ which satisfies $\theta_i \geq 0$ for all $i = 1, \ldots, k+2$. If we define $\sigma := \sum_{i=1}^{k+2} \theta_i$, convexity of $\mathcal{F}$ gives
\begin{equation*}
\partial_l \bar{S}(x,G) \theta_1 + \sum_{i=1}^{k+1} \partial_l \bar{S}(x, F_i) \theta_{i+1} = \sigma \partial_l \bar{S} \left( x, \frac{\theta_1}{\sigma} G + \sum_{i=1}^{k+1} \frac{\theta_{i+1}}{\sigma} F_i \right) = 0 ,
\end{equation*}
where the last equality uses the Implication~(\ref{Eqn:FocImpVS}) and the fact that $\theta \in \ker ( \mathbb{V}_G)$, which implies
\begin{equation*}
 \sigma \bar{V}\left( x,  \frac{\theta_1}{\sigma} G + \sum_{i=1}^{k+1} \frac{\theta_{i+1}}{\sigma} F_i \right)= \mathbb{V}_G \theta = 0.
\end{equation*}
Consequently, we obtain that all elements of $\ker (\mathbb{V}_G)$  with nonnegative components are contained in $\ker (\mathbb{W}_G)$. Now let $\theta \in \ker (\mathbb{V}_G)$ be arbitrary. Due to (\ref{Eqn:ZeroInConvHull}), there exists a linear combination of $\bar{V}(x,G), \bar{V}(x,F_1), \ldots, \bar{V}(x, F_{k+1})$ which is zero and the coefficients of this linear combination can be chosen strictly positive (for a constructive proof, see Lemma~\ref{Thm:AppendixConvexHull}). This gives a $\theta^* \in \ker (\mathbb{V}_G)$ having strictly positive components. Scaling $\theta^*$ with a real-valued number $r> 0$ such that $\theta + r\theta^*$ has nonnegative components gives $\theta + r \theta^* \in \ker (\mathbb{W}_G)$ by the above arguments. Hence, $\mathbb{W}_G \theta = \mathbb{W}_G( \theta + r \theta^*) = 0$ and it follows that $\theta \in \ker (\mathbb{W}_G)$ and finally $\ker (\mathbb{V}_G) = \ker (\mathbb{W}_G)$. \\
The rank-nullity theorem for linear mappings (see Liesen and Mehrmann~\cite[Thm. 10.9]{LiesenMehrmann}) gives
\begin{align*}
k+2 = \dim (\mathbb{R}^{k+2}) &= \dim ( \imge (\mathbb{W}_G )) + \dim ( \ker(\mathbb{W}_G )) \\
k+2 = \dim (\mathbb{R}^{k+2}) &=  \dim ( \imge (\mathbb{V}_G )) + \dim ( \ker(\mathbb{V}_G )) 
\end{align*}
and since $\mathbb{V}_G$ has maximal rank, $ \dim ( \ker(\mathbb{V}_G )) = 2$ must hold. Consequently, $\dim ( \imge (\mathbb{W}_G) ) = k$ showing that $\mathbb{W}_G$ has rank $k$. This allows us to represent the first row of $\mathbb{W}_G$ as a linear combination of all other rows. Call the unique coefficients of this linear combination $h_{l1}(x) , \ldots, h_{lk}(x)$. They do not depend on $G$, since they must also hold for the columns $2, \ldots, k+2$, which are fixed for every choice of $G \in \mathcal{F}$. We obtain
\begin{equation*}
\partial_l \bar{S}(x,G) = \sum_{m=1}^{k} h_{lm}(x) \bar{V}_m (x,G)
\end{equation*}
and repeat the previous steps of the proof for all $l = 1, \ldots, k$, where each $l$ gives a vector $(h_{l1}(x) , \ldots, h_{lk}(x)) \in \mathbb{R}^{1 \times k}$. Concatenating all vectors to a matrix $h(x) \in \mathbb{R}^{k \times k}$ we finally have
\begin{equation} \label{Eqn:OsbandMatrix}
\nabla \bar{S}(x,G) \, = \, h(x) \bar{V}(x,G)
\end{equation}
and the first part of the theorem is proved. \\
To prove that more regularity can be imposed on $h$, let Assumption~(\nameref{As:V2}) hold and fix $x \in \intr (\mathsf{A})$. Moreover, choose $F_1, \ldots, F_k \in \mathcal{F}$ such that $\bar{V}(x,F_1), \ldots, \bar{V}(x, F_k)$ are linearly independent, which is possible due to Assumption~(\nameref{As:V1}). Define the matrix-valued mapping 
\begin{equation} \label{Eqn:MappingMathbbV}
\mathbb{V}: \intr (\mathsf{A}) \rightarrow \mathbb{R}^{k \times k} , \quad z \mapsto \mathbb{V}(z) := ( \bar{V}(z,F_1), \ldots, \bar{V}(z, F_k) ) ,
\end{equation}
which is continuous due to the continuity of $\bar{V}(\cdot, F)$. The matrix $\mathbb{V}(x)$ is invertible and due to Lemma~\ref{Thm:AppendixLinearInd} there is an open neighborhood $U_x$ of $x$ such that $\mathbb{V}(z)$ is invertible for all $z \in U_x$. Consequently, Identity~(\ref{Eqn:OsbandMatrix}) leads to a representation of $h$ for all $z \in U_x$ given by
\begin{equation} \label{Eqn:RepOfh}
h(z) = ( \nabla \bar{S}(z,F_1), \ldots , \nabla \bar{S}(z,F_k) ) \, \mathbb{V}(z)^{-1}.
\end{equation}
Since the inversion of a matrix is a continuous mapping (see Lemma~\ref{Thm:AppendixInverseCont}), we obtain that $h$ is continuous in every $z \in U_x$ due to the assumptions imposed on $\bar{S}$ and $\bar{V}$. \\
Finally, let Assumptions~(\nameref{As:V3}) and (\nameref{As:S2}) hold, fix $x \in \intr (\mathsf{A})$ and choose $F_1, \ldots, F_k \in \mathcal{F}$ such that $\bar{V}(x,F_1), \ldots, \bar{V}(x, F_k)$ are linearly independent. Defining $\mathbb{V}$ as in (\ref{Eqn:MappingMathbbV})  and performing the same steps as in the previous part of the proof, we again arrive at the Representation~(\ref{Eqn:RepOfh}) of $h$ for all points in a neighborhood $U_x$ of $x$. Now let $U_x^S$ be an open neighborhood of $x$ such that for all $i = 1, \ldots, k$ the mapping $z \mapsto \nabla \bar{S}(z, F_i)$ is Lipschitz continuous in $U_x^S$. Similarly, take an open set $U_x^V$ containing $x$ such that the mapping $z \mapsto \mathbb{V}(z)^{-1}$ is Lipschitz continuous in $U_x^V$. This is possible since $\mathbb{V}$ is continuously differentiable and inverting a matrix is a continuously differentiable operation by Lemma~\ref{Thm:AppendixInverseCont}. Consequently, we obtain that $h$ is Lipschitz continuous in the open set $U_x \cap U_x^S \cap U_x^V$, which shows that $h$ is locally Lipschitz continuous.
\end{proof}

\begin{remark}
Note that the function $h$ provided by Theorem~\ref{Thm:OsbandPrinciple1} is uniquely determined. To see this, recall that $h$ is defined using the unique coefficients $h_{l1}(x) , \ldots, h_{lk}(x)$ in the previous proof.
\end{remark}

In order to complete this subsection, we consider how the function $h$ of Theorem~\ref{Thm:OsbandPrinciple1} behaves under the revelation principle stated in Proposition~\ref{Thm:RevelationPrinciple}.  The result is the following corollary, which is mentioned in \cite[Remark 3.6]{FissZiegArxiv} without proof.

\begin{cor}
Let the assumptions of Theorem~\ref{Thm:OsbandPrinciple1} hold, and for $\mathsf{A}' \subseteq \mathbb{R}^k$ let $g: \mathsf{A} \rightarrow \mathsf{A}'$ be a continuously differentiable bijection. Moreover, define the functional $T_g := g \circ T$ and the functions $S_g$ and $V_g$ in the same way as in Proposition~\ref{Thm:RevelationPrinciple}. Then there exists a matrix-valued function $h_g$ such that for all $x' \in \intr (\mathsf{A}')$ and $F \in \mathcal{F}$ we have
\begin{equation*}
\partial_l \bar{S}_g (x',F) = \sum_{m=1}^{k} (h_g)_{lm} (x') \, (\bar{V}_g )_m (x',F),
\end{equation*}
where $h_g$ relates to the function $h$ of Theorem~\ref{Thm:OsbandPrinciple1} via
\begin{equation*}
(h_g)_{lm} (x') = \sum_{i=1}^{k} \partial_l (g^{-1})_i (x') \, h_{im} ( g^{-1} (x') ).
\end{equation*}
\end{cor}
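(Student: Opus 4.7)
The plan is to derive the claimed identity by applying the chain rule to the composition $\bar{S}_g(\cdot,F) = \bar{S}(g^{-1}(\cdot),F)$ and then invoking Osband's first principle (Theorem~\ref{Thm:OsbandPrinciple1}) on the original pair $(S,V)$.

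First I would invoke Proposition~\ref{Thm:RevelationPrinciple} to guarantee that $S_g$ is a strictly $\mathcal{F}$-consistent scoring function for $T_g$ and $V_g$ is a strict $\mathcal{F}$-identification function for $T_g$, so that $\partial_l \bar{S}_g$ and $(\bar{V}_g)_m$ are the correct analogues of $\partial_l \bar S$ and $\bar V_m$ after the change of variables. Since $g$ is a continuously differentiable bijection and the very formula in the statement contains $\partial_l (g^{-1})_i(x')$, I would tacitly assume (or enforce via the inverse function theorem, given that $g$ has invertible Jacobian) that $g^{-1}$ is differentiable and that $g$ maps $\intr(\mathsf{A})$ onto $\intr(\mathsf{A}')$, so that $g^{-1}(x') \in \intr(\mathsf{A})$ whenever $x' \in \intr(\mathsf{A}')$.

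The core step is a one-line application of the chain rule to $\bar{S}_g(x',F) = \bar{S}(g^{-1}(x'),F)$, which gives
\begin{equation*}
\partial_l \bar{S}_g(x',F) = \sum_{i=1}^{k} \partial_l (g^{-1})_i(x') \, \partial_i \bar{S}(g^{-1}(x'),F).
\end{equation*}
Applying Theorem~\ref{Thm:OsbandPrinciple1} at the interior point $g^{-1}(x')$ rewrites each factor $\partial_i \bar{S}(g^{-1}(x'),F)$ as $\sum_{m=1}^{k} h_{im}(g^{-1}(x')) \, \bar{V}_m(g^{-1}(x'),F)$. Using the identity $\bar{V}_m(g^{-1}(x'),F) = (\bar{V}_g)_m(x',F)$, which holds by the very definition of $V_g$, and swapping the two finite sums, the claimed decomposition $\partial_l \bar{S}_g(x',F) = \sum_m (h_g)_{lm}(x') (\bar V_g)_m(x',F)$ with $(h_g)_{lm}(x') = \sum_i \partial_l(g^{-1})_i(x') \, h_{im}(g^{-1}(x'))$ drops out by matching coefficients of $(\bar{V}_g)_m(x',F)$.

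The only real obstacle is the regularity/domain issue: the right-hand side is only meaningful provided $g^{-1}$ is differentiable at $x'$ and interior points are preserved, so this must either be added as an explicit hypothesis or extracted from the continuously differentiability of $g$ together with invertibility of its Jacobian. Once this is taken care of, everything else reduces to a mechanical chain-rule computation, and uniqueness of $h_g$ follows from the same argument that gave uniqueness of $h$ in the remark following Theorem~\ref{Thm:OsbandPrinciple1}.
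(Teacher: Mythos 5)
Your proposal is correct and follows essentially the same route as the paper: chain rule applied to $\bar{S}_g(x',F) = \bar{S}(g^{-1}(x'),F)$, substitution via Theorem~\ref{Thm:OsbandPrinciple1} at the point $g^{-1}(x')$, identification of $\bar{V}_m(g^{-1}(x'),F)$ with $(\bar{V}_g)_m(x',F)$, and uniqueness of $h_g$ to pin down the coefficient formula. Your added remarks about differentiability of $g^{-1}$ and the preservation of interior points are a useful precision that the paper leaves implicit, but they do not change the argument.
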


\begin{proof}
The revelation principle as stated in Proposition~\ref{Thm:RevelationPrinciple} implies that $T_g$ is elicitable and identifiable whenever $T$ is. Furthermore, let $h$ be the function of Theorem~\ref{Thm:OsbandPrinciple1}. Differentiating $\bar{S}_g(\cdot,F)$ with respect to the component $l$ using the chain rule of calculus gives 
\begin{align*}
\partial_l \bar{S}_g (x',F) &= \sum_{i=1}^{k} \partial_i \bar{S}(g^{-1} (x') , F) \, \partial_l (g^{-1})_i (x') \\
&= \sum_{m=1}^{k} \sum_{i=1}^{k} \partial_l (g^{-1})_i (x') h_{im} (g^{-1} (x') ) \, \bar{V}_m(g^{-1} (x'), F)
\end{align*}
for any $x' \in \intr (\mathsf{A}')$ and $F \in \mathcal{F}$. From this we obtain the first formula of the corollary and since the function $h_g$ is unique, it must have the claimed representation.
\end{proof}

\subsection{Pointwise version of Osband's principle} 

This subsection shows that Equation~(\ref{Eqn:OsbandsPrinciple}) also holds pointwise, i.e. with $F$ replaced by $y$. The result is the second version of Osband's principle, which directly connects the functions $S$ and $V$. In order to prove it, two further assumptions, which are also given in Fissler and Ziegel~\cite{FissZiegArxiv}, have to be added.

\phantomsection
\begin{assumption2}[F1] \label{As:F1}
For every $y \in \mathsf{O}$ there exists a sequence $(F_n)_{n \in \mathbb{N}} \subset \mathcal{F}$ of distributions which converges weakly to the Dirac-measure $\delta_y$ and such that for all $n \in \mathbb{N}$ the support of $F_n$ is contained in a compact set $K \subseteq \mathsf{O}$.
\end{assumption2}

\begin{assumption2}[VS1] \label{As:VS1}
The complement of the set
\begin{equation*}
C := \lbrace (x,y) \in \mathsf{A} \times \mathsf{O} \mid V(x, \cdot) \text{ and } S(x, \cdot) \text{ are continuous at } y \rbrace
\end{equation*}
has $(k+d)$-dimensional Lebesgue-measure zero.
\end{assumption2}

Besides, we add another condition on the function $V$. This assumption is not mentioned in \cite{FissZiegArxiv}, but we think that it is necessary in order to prove the next theorem. We refer to Subsection~\ref{Sec:JustifyAssumption} for a detailed discussion. The assumption is the following:

\begin{assumption2}[B1]  \label{As:B1}
The function $V$ is locally bounded.
\end{assumption2}

The next theorem is a version of \cite[Prop. 3.4]{FissZieg} with two modifications. At first, we add Assumption~(\nameref{As:B1}) for reasons explained in  Subsection~\ref{Sec:JustifyAssumption}. Secondly, it is required in \cite{FissZieg} that $\intr (\mathsf{A})$ is a star domain instead of a hyperrectangle, see also Remark~\ref{Rem:WhyHyperrectangle}.

\begin{theorem}  \label{Thm:OsbandPrinciple2}
Let $\mathcal{F}$ be convex, assume that $\intr (\mathsf{A}) \subseteq \mathbb{R}^k$ is a hyperrectangle and let $T: \mathcal{F} \rightarrow \mathsf{A}$ be a surjective elicitable and identifiable functional with strict $\mathcal{F}$-identification function $V$ and strictly $\mathcal{F}$-consistent scoring function $S$. Suppose assumptions (\nameref{As:V1}), (\nameref{As:V2}), (\nameref{As:S1}), (\nameref{As:F1}), (\nameref{As:VS1}), (\nameref{As:B1}), and \ref{Thm:AssumptionLocSV} hold. Then for almost all $(x,y) \in \mathsf{A} \times \mathsf{O}$ the function $S$ is of the form
\begin{align}  \label{Eqn:OsbandsPrincipleII}
S(x,y) = \sum_{r=1}^{k} \sum_{m=1}^{k} \int_{z_r}^{x_r} &h_{rm} (x_1, \ldots, x_{r-1}, v, z_{r+1}, \ldots, z_k) \\
\times &V_m (x_1, \ldots, x_{r-1}, v, z_{r+1}, \ldots, z_k, y) \, \mathrm{d} v + a(y) ,   \nonumber
\end{align}
where $(z_1, \ldots, z_k) \in \intr (\mathsf{A})$ and $a: \mathsf{O} \rightarrow \mathbb{R}$ is some $\mathcal{F}$-integrable function and $h$ is the unique matrix-valued function provided by Theorem~\ref{Thm:OsbandPrinciple1}. On the level of the expected score $\bar{S}(x,F)$, Equation~(\ref{Eqn:OsbandsPrincipleII}) holds for all $x \in \intr (\mathsf{A})$ and all $F \in \mathcal{F}$.
\end{theorem}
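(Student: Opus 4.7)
The plan is to obtain~\eqref{Eqn:OsbandsPrincipleII} first at the level of expected scores by integrating the gradient formula of Theorem~\ref{Thm:OsbandPrinciple1} along a coordinate-axis polygonal path, and then to descend to the pointwise statement by using Assumption~(\nameref{As:F1}) to approximate Dirac measures weakly. The hyperrectangle hypothesis on $\intr(\mathsf{A})$ is precisely what makes the path construction canonical and keeps every path segment inside $\intr(\mathsf{A})$.

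First, invoke Theorem~\ref{Thm:OsbandPrinciple1} (whose hypotheses (\nameref{As:V1}) and (\nameref{As:S1}) hold) to obtain a continuous matrix-valued function $h$ with $\partial_r \bar{S}(x,F) = \sum_m h_{rm}(x)\bar{V}_m(x,F)$ for all $x \in \intr(\mathsf{A})$ and $F \in \mathcal{F}$. Fix $z = (z_1,\ldots,z_k) \in \intr(\mathsf{A})$. Since $\intr(\mathsf{A})$ is a hyperrectangle, the coordinate-axis polygonal path from $z$ to any $x \in \intr(\mathsf{A})$ remains in $\intr(\mathsf{A})$, so the fundamental theorem of calculus (valid by (\nameref{As:S1})) together with the Osband~I identity yields
\begin{equation*}
\bar{S}(x,F) - \bar{S}(z,F) = \sum_{r=1}^{k}\sum_{m=1}^{k} \int_{z_r}^{x_r} h_{rm}(x_1,\ldots,x_{r-1},v,z_{r+1},\ldots,z_k)\, \bar{V}_m(x_1,\ldots,x_{r-1},v,z_{r+1},\ldots,z_k, F)\, \mathrm{d}v.
\end{equation*}

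Next define $\tilde{S}(x,y)$ to be the double sum on the right-hand side of \eqref{Eqn:OsbandsPrincipleII} without the $a(y)$ term. The continuity of $h$ and the local boundedness of $V$ provided by (\nameref{As:B1}) make the inner integrals absolutely convergent over the compact path segments, so Fubini's theorem permits swapping the line integral with the integration against $F$, giving $\bar{\tilde{S}}(x,F) = \bar{S}(x,F) - \bar{S}(z,F)$ for every $x \in \intr(\mathsf{A})$ and every $F \in \mathcal{F}$. Setting $a(y) := S(z,y)$, which is $\mathcal{F}$-integrable by Definition~\ref{Def:ScoringFunction} and satisfies $\tilde{S}(z,y) = 0$, the previous identity rearranges to
\begin{equation*}
\int_{\mathsf{O}} \bigl[ S(x,y') - \tilde{S}(x,y') - a(y') \bigr]\, \mathrm{d}F(y') = 0 \qquad \text{for all } x \in \intr(\mathsf{A}),\, F \in \mathcal{F},
\end{equation*}
which is already the second (expected-score) assertion of the theorem.

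To descend to the pointwise statement, fix $y \in \mathsf{O}$ and apply (\nameref{As:F1}) to select $F_n \in \mathcal{F}$ with common support in a compact set $K \subseteq \mathsf{O}$ and $F_n$ converging weakly to $\delta_y$. For almost every $(x,y)$, Assumption~(\nameref{As:VS1}) together with (\nameref{As:B1}), Assumption~\ref{Thm:AssumptionLocSV}, and a dominated-convergence argument for the defining integrals of $\tilde{S}$ guarantee that the map $y' \mapsto S(x,y') - \tilde{S}(x,y') - a(y')$ is continuous at $y$ and bounded on $K$. A Portmanteau-type argument then gives
\begin{equation*}
0 = \int_{\mathsf{O}} \bigl[S(x,y') - \tilde{S}(x,y') - a(y')\bigr]\, \mathrm{d}F_n(y') \,\longrightarrow\, S(x,y) - \tilde{S}(x,y) - a(y),
\end{equation*}
i.e.\ \eqref{Eqn:OsbandsPrincipleII} for almost every $(x,y)$. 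The main obstacle is this final descent: making the Portmanteau limit rigorous requires continuity of the full integrand at $y$ and uniform boundedness on $K$ simultaneously, which is exactly why (\nameref{As:VS1}), (\nameref{As:B1}) and Assumption~\ref{Thm:AssumptionLocSV} are needed together (and explains the addition of (\nameref{As:B1}) discussed in Subsection~\ref{Sec:JustifyAssumption}); a secondary technical point is the justification of Fubini in the middle paragraph, which likewise rests on local boundedness of $h$ and $V$.
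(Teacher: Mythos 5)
Your proposal is correct and follows essentially the same route as the paper's proof: Theorem~\ref{Thm:OsbandPrinciple1} plus coordinate-axis (telescoping) path integration, Fubini, and passage to the pointwise statement via the weak approximation $F_n \to \delta_y$ from Assumption~(\nameref{As:F1}), with (\nameref{As:VS1}), (\nameref{As:B1}) and Assumption~\ref{Thm:AssumptionLocSV} supplying the continuity and local boundedness needed for the final limit. One small caveat: setting $a(y) := S(z,y)$ for a fixed base point $z$ is cleaner than the paper's implicit limit definition of $a$, but for an arbitrary fixed $z$ Assumption~(\nameref{As:VS1}) does not guarantee that $S(z,\cdot)$ is continuous at a.e.~$y$ (the slice $\{z\}\times\mathsf{O}$ is a null set in $\mathsf{A}\times\mathsf{O}$, so VS1 says nothing about it); you should either pick $z$ from the full-measure set obtained by applying Fubini to the null set $C^c$, or define $a(y)$ as the $n\to\infty$ limit of $\bar{S}(z,F_n)$ as the paper does and identify it a.e.\ afterward.
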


\begin{remark}  \label{Rem:WhyHyperrectangle}
As mentioned above, \cite[Prop. 3.4]{FissZieg} require $\intr (\mathsf{A})$ to be a star domain and $z \in \intr(\mathsf{A})$ to be a star point. Looking at the representation of $S$, we see that all integrals evaluate $h$ and $V$ on line segments between the points $(x_1, \ldots, x_{r-1}, z_r, z_{r+1}, \ldots, z_k)$ and $(x_1, \ldots, x_{r-1}, x_r, z_{r+1}, \ldots, z_k)$ for $r \in \lbrace 1, \ldots, k \rbrace$. These lines are edges of a hyperrectangle which is defined by the two corners $x$ and $z$, see also Figure~\ref{Fig:Edges} for an illustration. Consequently, we think that it is insufficient to work with a star domain $\intr (\mathsf{A})$ since such a domain does not necessarily contain all hyperrectangles spanned by its points. Since $h$, $V$ and $S$ might not be defined outside of $\intr (\mathsf{A})$, Representation~(\ref{Eqn:OsbandsPrincipleII}) might be invalid and thus we propose to assume that $\intr (\mathsf{A})$ is a hyperrectangle. Nevertheless, we think that the result can also be proved if $\intr (\mathsf{A})$ is a star domain or more generally path-connected. In this case we need to integrate along paths instead of line segments which increases the complexity of (\ref{Eqn:OsbandsPrincipleII}) and makes it difficult to apply the theorem. Hence, we do not pursue this approach.
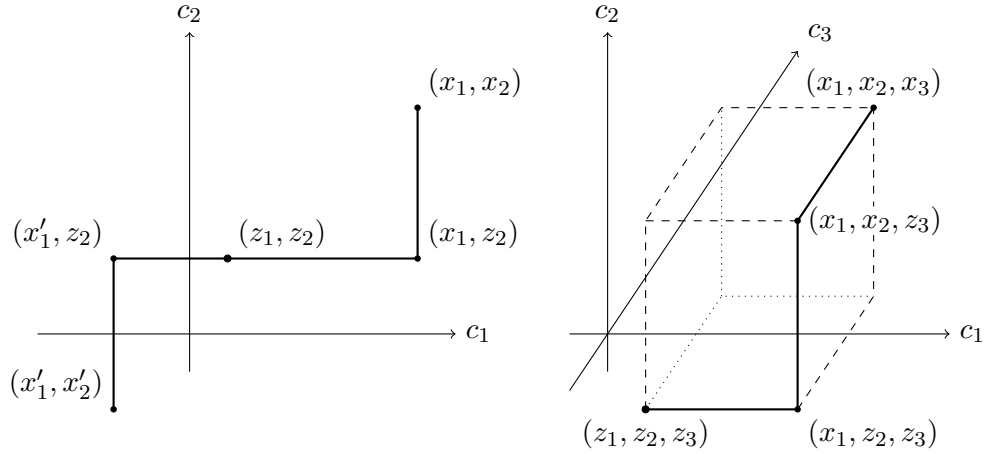
\begin{figure}[h]
\centering
\begin{tikzpicture}
\coordinate (x) at (3,3);
\coordinate (z) at (0.5,1);
\coordinate (xz) at (3,1);
\coordinate (xx) at (-1,-1);
\coordinate (xxz) at (-1,1);

\draw[->] (0,-0.5) -- (0,4) node[above] {$c_2$};
\draw[->] (-2,0) -- (3.5,0) node[right] {$c_1$};

\filldraw [black] (x) circle (1pt);
\draw (x) node[above right] {$(x_1, x_2)$};
\filldraw [black] (z) circle (1.3pt);
\draw (z) node[above right] {$(z_1, z_2)$};
\filldraw [black] (xz) circle (1pt);
\draw (xz) node[above right] {$(x_1, z_2)$};
\filldraw [black] (xx) circle (1pt);
\draw (xx) node[above left] {$(x_1', x_2')$};
\filldraw [black] (xxz) circle (1pt);
\draw (xxz) node[above left] {$(x_1', z_2)$};
\draw[thick] (xx) -- (xxz) -- (z) -- (xz) -- (x);

\begin{scope}[xshift=5.5cm]

\coordinate (x3) at (3.5,3);
\coordinate (z3) at (0.5,-1);
\coordinate (zx1) at (2.5,-1);
\coordinate (zx2) at (2.5,1.5);
\coordinate (h1) at (0.5,1.5);
\coordinate (h2) at (1.5,3);
\coordinate (h3) at (3.5,0.5);
\coordinate (hh) at (1.5,0.5);

\draw[->] (0,-0.5) -- (0,4) node[above] {$c_2$};
\draw[->] (-0.5,0) -- (4.5,0) node[right] {$c_1$};
\draw[->] ($-0.5*(1,1.5)$) -- ($ 2.5*(1,1.5)$) node[above right] {$c_3$};

\draw[dotted] (z3) -- (hh) -- (h3);
\draw[dotted] (hh) -- (h2);
\draw[dashed] (z3) -- (h1) -- (h2) -- (x3);
\draw[dashed] (h1) -- (zx2);
\draw[dashed] (zx1) -- (h3) -- (x3);

\filldraw [white] ($(zx2) + (1,0)$) circle (5.2pt);

\filldraw [black] (x3) circle (1pt);
\draw (x3) node[above] {$(x_1, x_2, x_3)$};
\filldraw [black] (z3) circle (1.3pt);
\draw (z3) node[below] {$(z_1, z_2, z_3)$};
\filldraw [black] (zx1) circle (1pt);
\draw (zx1) node[below right] {$(x_1, z_2, z_3)$};
\filldraw [black] (zx2) circle (1pt);
\draw (zx2) node[right] {$(x_1, x_2, z_3)$};

\draw[thick] (z3) -- (zx1) -- (zx2) -- (x3);

\end{scope}
\end{tikzpicture}
\caption{Illustration of Theorem~\ref{Thm:OsbandPrinciple2}. The solid lines are the edges along which the integrals in (\ref{Eqn:OsbandsPrincipleII}) are calculated, in the two-dimensional and three-dimensional situation. The label $c_i$ represents the $i$-th coordinate. The dashed or dotted lines illustrate the cuboid which is spanned by $(z_1,z_2,z_3)$ and $(x_1,x_2,x_3)$.}
\label{Fig:Edges}
\end{figure}
\end{remark}

\begin{proof}
We proceed similar to \cite{FissZiegArxiv}. At first, Assumption~\ref{Thm:AssumptionLocSV} ensures that $V(\cdot, y)$ is locally integrable for all $y \in \mathsf{O}$. Hence, select some point $z \in \intr (\mathsf{A})$ and define the function $H : \intr(\mathsf{A} ) \times \mathsf{O} \rightarrow \mathbb{R}$ via
\begin{align*}
H(x,y) := \sum_{r=1}^{k} \sum_{m=1}^{k} \int_{z_r}^{x_r} &h_{rm} (x_1, \ldots, x_{r-1}, v, z_{r+1}, \ldots, z_k) \\
\times &V_m (x_1, \ldots, x_{r-1}, v, z_{r+1}, \ldots, z_k, y) \, \mathrm{d} v ,
\end{align*}
where  $h$ is the continuous matrix-valued function from Theorem~\ref{Thm:OsbandPrinciple1}. The next step is to show continuity of $H(x, \cdot)$. Let the set $C$ be defined as in Assumption~(\nameref{As:VS1}) and note that $(x,y) \in C^c$ does not guarantee continuity of $H (x, \cdot)$ in $y$. Hence, define the set $\mathsf{A}_y := \lbrace x \in \mathsf{A} \mid S(x,\cdot) \text{ or } V(x, \cdot) \text{ not continuous in } y \rbrace$ for any $y \in \mathsf{O}$ and notice that $ x \in \mathsf{A}_y \Leftrightarrow (x,y) \in C^c$ holds true. Consequently, there is a set $N \subset \mathsf{O}$ having Lebesgue measure zero such that for all $y \in N^c$ the set $\mathsf{A}_y$ is a Lebesgue null set. We thus select $(x_0, y_0) \in C^c$ such that $y_0 \in N^c$ is satisfied and observe that the functions $S(x, \cdot)$ and $V(x, \cdot)$ are continuous in $y_0$ for a.e. $x \in \mathsf{A}$ and in particular for $x=x_0$. The values $s \in \mathbb{R}$ such that $(x_1, \ldots, x_{r-1}, s, x_{r+1}, \ldots, x_k)^\top \in \mathsf{A}_{y_0}$ is satisfied form a null set for a.e. $x \in \mathsf{A}$, hence, we have for a.e. $x \in \mathsf{A}$ that $V_m (x_1, \ldots, x_{r-1}, s, x_{r+1}, \ldots ,x_k, \cdot)$ is continuous in $y_0$ for a.e. $s \in \mathbb{R}$ and $m,r \in \lbrace 1, \ldots, k \rbrace$.

We continue by proving that $H(x, \cdot)$ is continuous in $y_0$ for a.e. $x \in \intr(\mathsf{A})$ and locally bounded for all $x \in \intr(\mathsf{A} )$. At first observe that by Assumption (\nameref{As:B1}) and the continuity of $h$, for any two compact sets $K_1 \subset \mathsf{A}$ and $K_2 \subset \mathsf{O}$ we have that 
\begin{equation}  \label{Eqn:BoundednesshV}
\vert h_{rm}(x) V_m(x, y) \vert \leq M < \infty \quad \text{for } r,m \in \lbrace 1, \ldots, k \rbrace \text{ and } (x,y) \in K_1 \times K_2. 
\end{equation}
Hence, for any $x \in \intr (\mathsf{A})$ we have that $H(x,\cdot)$ is bounded on compacts. To show continuity, we choose a sequence $(y_n)_{n \in \mathbb{N}}$ which is contained in a compact neighborhood of $y_0$ and satisfies $y_n \rightarrow y_0$ as $n \rightarrow \infty$. For a.e. $x \in \intr( \mathsf{A})$ we have $h_{rm}(x) V_m(x, y_n) \rightarrow h_{rm}(x) V_m(x, y_0)$ due to the choice of $y_0$ and the arguments following it. Using (\ref{Eqn:BoundednesshV}) again, we see that the sequence $(h_{rm}(x) V_m(x, y_n))_{n \in \mathbb{N}}$ is bounded by a constant if $x$ is in a compact set. Since all integrals belonging to $H$ are taken over compact sets,  dominated convergence implies $H(x,y_n) \rightarrow H(x,y_0)$ for a.e. $x \in \intr (\mathsf{A})$ as $n \rightarrow \infty$.

The final step of the proof is to pass from $\bar{S}(x,F)$ to $S(x,y_0)$. Using a telescoping argument, we obtain for all $x \in \intr (\mathsf{A})$, $F \in \mathcal{F}$
\begin{align}
\bar{S}(x,F) - \bar{S}(z,F) &= \sum_{r=1}^{k} \bar{S}(x_1, \smalldots, x_r, z_{r+1}, \smalldots, z_k,F) - \bar{S}(x_1, \smalldots, x_{r-1}, z_r, \smalldots, z_k,F) \nonumber \\
&= \sum_{r=1}^{k} \int_{z_r}^{x_r} \partial_r \bar{S}(x_1, \ldots, x_{r-1}, v, z_{r+1}, \ldots, z_k, F) \, \mathrm{d} v  \label{Eqn:RepSandH}  \\
&= \bar{H}(x, F)  \nonumber
\end{align}
due to Osband's principle \ref{Thm:OsbandPrinciple1} and the Fubini-Tonelli theorem. Employing Assumption~(\nameref{As:F1}) gives a sequence $(F_n)_{n \in \mathbb{N}} \subset \mathcal{F}$ which converges weakly to $\delta_{y_0}$ and the support of this sequence lies in some compact set $K$. We use a Skorohod representation (see Theorem~\ref{Thm:SkorohodRep}) to get a probability space $(\Omega, \mathscr{A}, \mathbb{P})$ such that $Y_n, Y: \Omega \rightarrow \mathsf{O}$ are random variables which satisfy $Y_n =^d F_n$ under $\mathbb{P}$, $Y \equiv y_0$ and $Y_n \rightarrow Y$ for $n \rightarrow \infty$ almost surely. Next, we apply the continuous mapping theorem for almost sure convergence (see for instance van der Vaart \cite[Thm. 2.3]{vanderVaart}). Let $\mathsf{D}_{S,H}$ be a measurable set which contains all discontinuities of $S(x,\cdot)$ and $H(x,\cdot)$, which means it cannot include $y_0$. Consequently, we have $\mathbb{P}( Y \in \mathsf{D}_{S,H}) = \delta_{y_0} (\mathsf{D}_{S,H}) = 0$, implying $S(x,Y_n) \rightarrow S(x,y_0)$ and $H(x, Y_n) \rightarrow H(x,y_0)$ almost surely and for a.e. $x \in \intr (\mathsf{A})$. Since $S(x, \cdot)$ and $H(x, \cdot)$ are locally bounded, they are bounded on the compact set $K$ in which the $Y_n$ take values a.s. and hence dominated convergence implies $\mathbb{E} S(x,Y_n)  \rightarrow S(x,y_0)$ and $\mathbb{E} H(x,Y_n) \rightarrow H(x, y_0)$ for $n \rightarrow \infty$. 

The sequence $\mathbb{E}(S(x, Y_n) - H(x, Y_n)) = \bar{S}(x,F_n) - \bar{H}(x, F_n)$ is equal to $\bar{S}(z,F_n)$ by Equation~(\ref{Eqn:RepSandH}) and thus does not depend on $x$. Therefore, the limit is also independent of $x$ and we may define the function $a$ via
\begin{equation*}
a(y_0) := \underset{n \rightarrow \infty}{\lim} \mathbb{E}(S(x, Y_n) - H(x, Y_n)) = S(x,y_0) - H(x,y_0).
\end{equation*}
Since $S(x,Y_n) = H(x,Y_n) + (S(x,Y_n) - H(x,Y_n))$  and both sides must have the same limit, the Representation~(\ref{Eqn:OsbandsPrincipleII}) holds for our choice of $y_0$. Finally, we repeat all previous steps for different $y_0$ and this gives the identity $a(y) = S(x,y) - H(x,y)$ for a.e. $y \in \mathsf{O}$, implying that $a$ is an $\mathcal{F}$-integrable function.
\end{proof}

\subsection{Examples showing the necessity of Assumption (\nameref{As:B1})} 
\label{Sec:JustifyAssumption}

This section contains two examples which are designed to show that Assumption~(\nameref{As:B1}) or related assumptions are needed to prove Theorem~\ref{Thm:OsbandPrinciple2}. The crucial point in the proof is the local boundedness of $H(x, \cdot)$. Assumption~\ref{Thm:AssumptionLocSV} ensures that $V(x, \cdot)$ is locally bounded for all $x \in \mathsf{A}$. However, $H(x,\cdot)$ contains an integral of such a function. Therefore, it remains unclear how local boundedness of $H(x,\cdot)$ can be ensured without using (\nameref{As:B1}) or a similar statement. The first example proves that integration over a family of bounded functions does not always lead to a locally bounded function. 

\begin{example} \label{Thm:ExCounterBoundedness}
Define the function $f: (0,1] \times [0,1] \rightarrow \mathbb{R}$ via
\begin{equation*}
f(x,y) = \frac{1}{x^y} \, \mathbbm{1}_{[0,1)} (y) ,
\end{equation*}
where $\mathbbm{1}$ represents the indicator function. The function $f(x, \cdot)$ is displayed in Figure~\ref{Fig:ExCounterBoundedness} for different values of $x$. For any $x \in (0,1]$ the mapping $y \mapsto f(x,y)$ is bounded and for any $y \in [0,1] $ it holds that $x \mapsto f(x,y)$ is integrable. Hence, we define the integrated function
\begin{equation*}
F: [0,1] \rightarrow \mathbb{R}, \quad y \mapsto F(y) := \int_{0}^{1} f(x,y) \, \mathrm{d}x .
\end{equation*}

\begin{figure}[ht]
\centering
\includegraphics[width= 0.8\textwidth, trim= 0mm 0mm 0mm 10mm, clip]{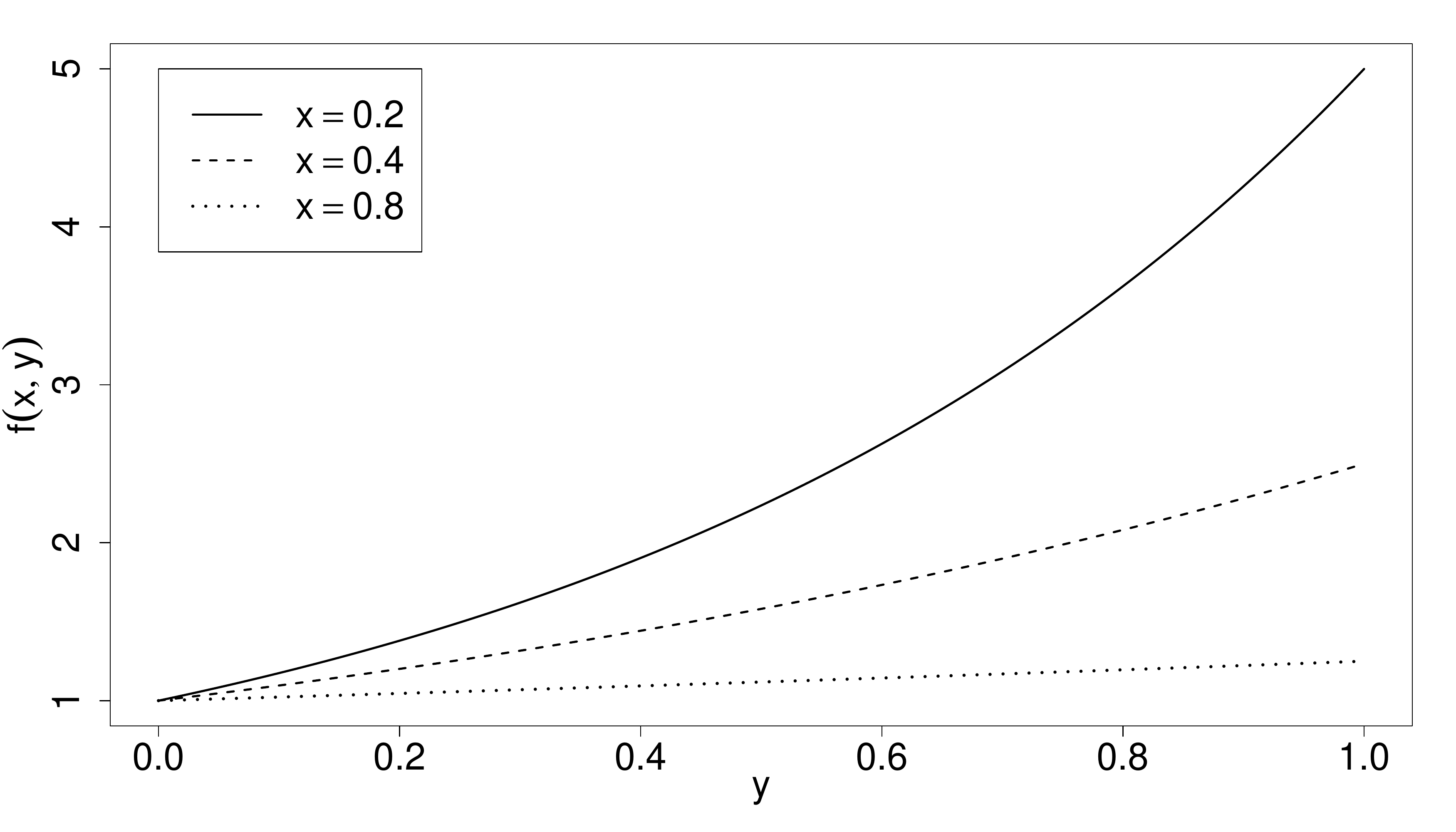}
\caption{Plot of the function $f(x,y)$ from Example~\ref{Thm:ExCounterBoundedness} for different values of $x$.}
\label{Fig:ExCounterBoundedness}
\end{figure}

$F$ fails to be locally bounded, which becomes obvious by calculating it explicitly. For $y=1$ the function $f$ vanishes and so we have $F(1) = 0$. For all other values of $y$ we obtain
\begin{align*}
F(y) = \int_{0}^{1} \frac{1}{x^y} \, \mathrm{d}x  =  \left. \frac{1}{1-y} x^{1-y} \, \right|_{x=0}^{x=1} = \frac{1}{1-y}  ,
\end{align*}
which immediately shows that $F$ fails to be bounded on all compact intervals $[a,1]$ for $0 \leq a <1$.
\end{example}

The second example shows that integration over a family of bounded continuous functions does not necessarily lead to a continuous function. Consequently, it is unclear how continuity of $H(x, \cdot)$ at $y_0$ can be secured without employing Assumption~(\nameref{As:B1}) or a similar statement. 

\begin{example} \label{Thm:ExCounterContinuity}
Define the function $f : (0,1] \times \mathbb{R} \rightarrow \mathbb{R}$ via
\begin{equation*}
f(x,y) := \frac{y}{x^2} \, \mathbbm{1}_{ [0, x/2) } (y) + \left(\frac{1}{x} - \frac{y}{x^2} \right) \mathbbm{1}_{[x/2, x]} (y) .
\end{equation*}
This is a family of hat functions, which is displayed in Figure~\ref{Fig:ExCounterContinuity} for some values of $x$. For any $x \in (0,1]$ the mapping $y \mapsto f(x,y)$ is bounded and continuous, and for any $y \in \mathbb{R}$ we see that $x \mapsto f(x,y)$ is integrable. As in the previous example, we define the integrated function $F$ via
\begin{equation*}
F: \mathbb{R} \rightarrow \mathbb{R}, \quad y \mapsto F(y) := \int_{0}^{1} f(x,y) \, \mathrm{d}x .
\end{equation*}

\begin{figure}[ht]
\centering
\includegraphics[width= 0.8\textwidth, trim= 0mm 0mm 0mm 10mm, clip]{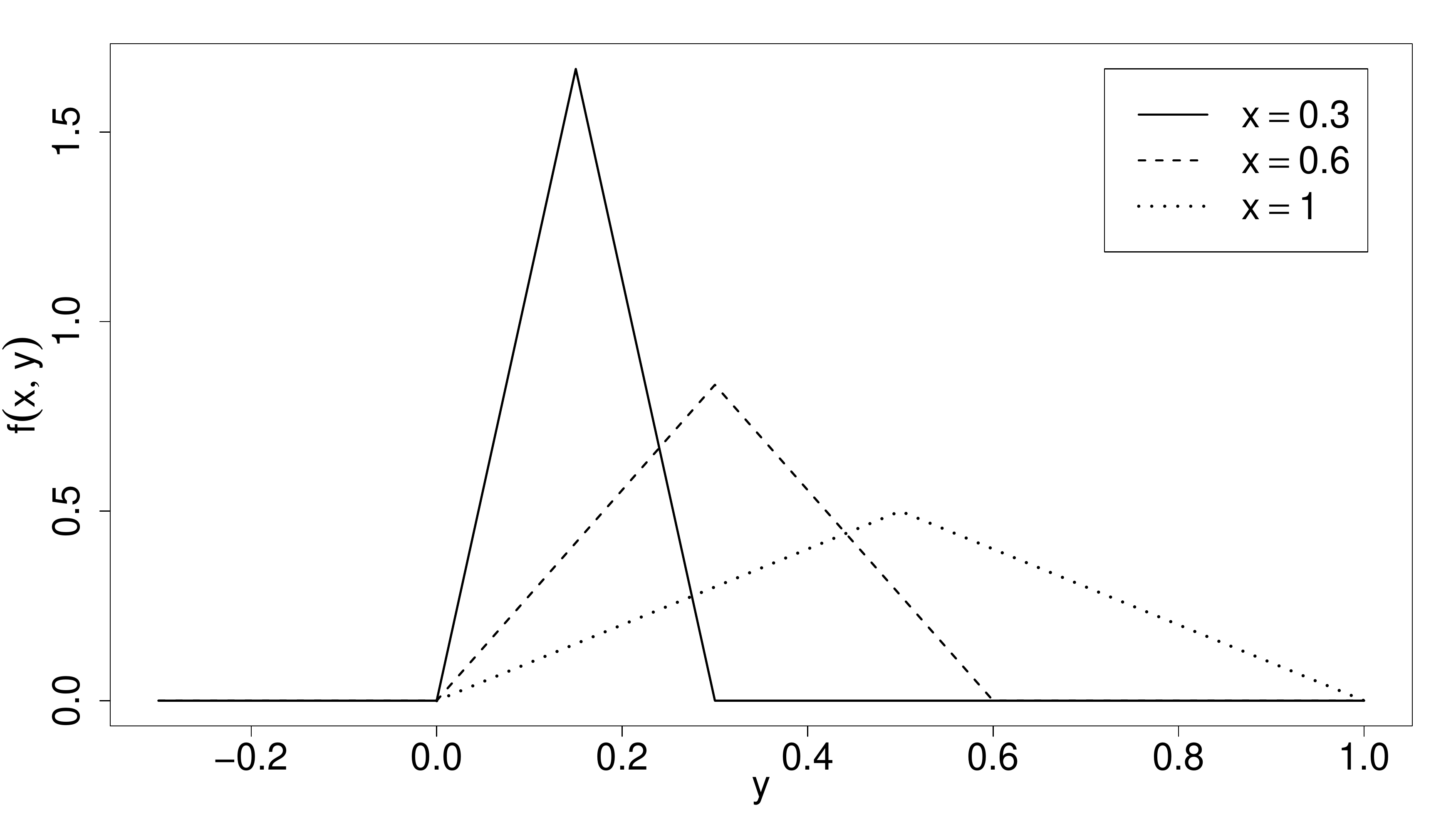}
\caption{Plot of the function $f(x,y)$ from Example~\ref{Thm:ExCounterContinuity} for different values of $x$.}
\label{Fig:ExCounterContinuity}
\end{figure}

$F$ fails to be continuous in $0$, which is shown by calculating the integral. For any $y \leq 0$ or $y \geq 1$ the function $f(x,y)$ vanishes and therefore $F$ is zero for these values of $y$. For $0 < y < 1/2$ we calculate
\begin{align*}
F(y) = \int_{2y}^{1} \frac{y}{x^2} \, \mathrm{d}x + \int_{y}^{2y} \frac{1}{x} - \frac{y}{x^2} \, \mathrm{d}x = \left. - \frac{y}{x} \, \right|_{x=2y}^{x=1} + \ln (x) + \left. \frac{y}{x} \, \right|_{x=y}^{x=2y} = -y + \ln (2),
\end{align*}
and similarly for $1/2 \leq y < 1$ we obtain
\begin{align*}
F(y) = \int_{y}^{1} \frac{1}{x} - \frac{y}{x^2} \, \mathrm{d}x = \ln (x) + \left. \frac{y}{x} \, \right|_{x=y}^{x=1} = y - \ln (y) - 1. 
\end{align*}
All in all, we have the representation
\begin{equation*}
F(y) = \left\lbrace
\begin{array}{ll}
-y + \ln(2), & \, \text{if } y \in (0,1/2] \\
y - \ln(y) - 1, & \, \text{if } y \in (1/2, 1) \\
0, & \, \text{else } 
\end{array}
\right. 
\end{equation*}
which shows that $F$ is not continuous at $y = 0$.
\end{example}

\chapter{Applications of elicitability}
\label{Chapter2}

This chapter presents three areas in the field of mathematical statistics where elicitability, and sometimes also identifiability, plays a central role. These three topics include parameter estimation, generalized regression, and forecast comparison. They are included in order to show why the concept of elicitability plays a non-trivial role in statistics and has attracted attention by researchers. However, the sections contain only short introductions to the mentioned concepts.  In the following, if not stated otherwise, all random variables are defined on a measure space $(\Omega, \mathscr{A}, \mathbb{P})$.

\section{Forecast comparison} 
\label{Sec:ForecastAndBacktest}

This section considers a decision maker who needs information concerning a real-valued (or $\mathbb{R}^d$-valued) random variable $Y$ which has some unknown distribution function $F \in \mathcal{F}$. To get this information, the decision maker asks several forecasters who can form an opinion about the distribution of $Y$ for their best forecasts. After some assessment of the quality of their forecasts, the forecasters are rewarded with money. While the forecasters' aim is to maximize the expected payoff, the decision maker wants to obtain the most accurate forecast. \\
Stated more technically, $n$ forecasters issue reports $R_1, \ldots, R_n$ which are compared to a realization $y$ of $Y$ using a scoring function $S$. The scoring function is chosen by the decision maker and determines which forecasts are the best or most accurate. If $R_j$ and $R_i$ satisfy $S(R_j, y) < S(R_i,y)$, the forecast $R_j$ is considered better than $R_i$ and the $j$-th forecaster's payoff is higher than the $i$-th forecaster's payoff. Hence, forecasters who minimize their (expected) score maximize their (expected) payoff. \\
\par
We begin by introducing probabilistic forecasting, a setting for which elicitability is irrelevant. Nevertheless, we consider it in order to understand how the necessity for elicitability emerges when moving from probabilistic forecasting to point forecasting. At the end of this section, we show the relevance of elicitability in deciding between different point forecasts.

\subsection{Probabilistic forecasting} 

This subsection is a short introduction to probabilistic forecasting based on material of Gneiting and Raftery~\cite{GneitingRaftery}. In the setting of probabilistic forecasting, each forecaster reports a probability measure $P \in \mathcal{P}$, where $\mathcal{P}$ is a convex class of probability measures on the measurable space $(\Omega, \mathscr{A})$. The forecasts are rated using a scoring rule $S: \mathcal{P} \times \Omega \rightarrow \mathbb{R}$. As introduced in Section~\ref{Sec:DefinitionsFramework}, a measurable function $G: \Omega \rightarrow \mathbb{R}$ is called $\mathcal{P}$-integrable if it is integrable with respect to all $P \in \mathcal{P}$ and we use $\bar{G}(P)$ as short notation for $\int_\Omega G(\omega) \, \mathrm{d}P(\omega)$. \\
\par
One problem which the decision maker wants to avoid is that the forecasters do not report the probability measure they assume to be correct. This could be because other probability measures maximize their expected payoff. In order to incentivize the forecasters to report truthfully, the decision maker should communicate the scoring rule which is used for payoff calculation to the forecasters. Moreover, this scoring rule should be \textit{proper}. Propriety means for the forecasters that the minimization of the expected score (which is equivalent to the maximization of the expected payoff) is achieved by reporting the probability measure they believe in. The mathematical definition according to \cite{GneitingRaftery} is the following.

\begin{definition}
A scoring rule is any real-valued function $S: \mathcal{P} \times \Omega \rightarrow \mathbb{R}$ such that for all $P \in \mathcal{P}$ the mapping $\omega \mapsto S(P,\omega)$ is  $\mathcal{P}$-integrable. The scoring rule $S$ is called $\mathcal{P}$-proper if $\bar{S}(Q,Q) \leq \bar{S}(P,Q)$ holds for all $P,Q \in \mathcal{P}$. It is strictly $\mathcal{P}$-proper if it is $\mathcal{P}$-proper and for any $P,Q \in \mathcal{P}$ the equality $\bar{S}(Q,Q) = \bar{S}(P,Q)$ implies $Q=P$.
\end{definition}

\begin{remark}
The definition of proper scoring rules is slightly modified to make it consistent with scoring functions as in Definition~\ref{Def:ScoringFunction}. In particular, the orientation of the scoring rules is changed, i.e. we look at minima instead of maxima. Moreover, \cite{GneitingRaftery} only require $S$ to be quasi-integrable, meaning that it can take values in $\bar{\mathbb{R}} := \mathbb{R} \cup \lbrace -\infty, \infty \rbrace$. Hence, a definition of regular scoring rules is added in \cite{GneitingRaftery} which is not needed here.
\end{remark}

\begin{definition}
Let $G: \mathcal{P} \rightarrow \mathbb{R}$ be a convex function. A function $G^* (P, \cdot) : \Omega \rightarrow \mathbb{R}$ is called a \textit{subtangent} of $G$ in the point $P \in \mathcal{P}$ if it is $\mathcal{P}$-integrable and satisfies
\begin{equation}  \label{Eqn:SubtangentInequality}
G(Q) \geq G(P) + \int_\Omega G^* (P, \omega) \, \mathrm{d}(Q-P)(\omega)
\end{equation}
for all $Q \in \mathcal{P}$.
\end{definition}

Using the subtangent definition, it is possible to completely characterize all proper scoring rules. The following characterization is due to Gneiting and Raftery~\cite[Thm. 1]{GneitingRaftery} and its proof is modified to fit our definition.

\begin{theorem}
A scoring rule $S : \mathcal{P} \times \Omega \rightarrow \mathbb{R}$ is (strictly) $\mathcal{P}$-proper if and only if there exists a (strictly) convex, real-valued function $G: \mathcal{P} \rightarrow \mathbb{R}$ such that
\begin{equation}  \label{Eqn:ProperScoringRep}
S(P,\omega) = \bar{G}^*(P,P) - G^*(P, \omega) - G(P)
\end{equation}
for all $P \in \mathcal{P}$, $\omega \in \Omega$ and a subtangent $G^*(P, \cdot) : \Omega \rightarrow \mathbb{R}$ of $G$ at $P$.
\end{theorem}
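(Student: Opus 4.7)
The plan is to prove both directions by a direct calculation that puts the subtangent inequality at the center, mirroring the Gneiting--Raftery argument but adapted to the minimization convention used in this thesis.

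For sufficiency, I would assume $S$ admits the stated representation and, for arbitrary $P, Q \in \mathcal{P}$, compute $\bar{S}(P,Q) - \bar{S}(Q,Q)$ by integrating the representation against $Q$. The diagonal term simplifies, since $\bar{S}(Q,Q) = \bar{G}^*(Q,Q) - \bar{G}^*(Q,Q) - G(Q) = -G(Q)$, whereas $\bar{S}(P,Q) = \bar{G}^*(P,P) - \bar{G}^*(P,Q) - G(P)$ by linearity of the integral in its second argument. Subtracting and regrouping yields
\begin{equation*}
\bar{S}(P,Q) - \bar{S}(Q,Q) = G(Q) - G(P) - \int_{\Omega} G^*(P,\omega) \, \mathrm{d}(Q-P)(\omega),
\end{equation*}
which is nonnegative by the subtangent inequality (\ref{Eqn:SubtangentInequality}). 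This is exactly $\mathcal{P}$-propriety.

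For necessity, starting from a $\mathcal{P}$-proper $S$, I would set $G(P) := -\bar{S}(P,P)$ and $G^*(P, \omega) := -S(P,\omega)$. Convexity of $G$ falls out of propriety: for $P_\lambda = (1-\lambda)P_0 + \lambda P_1$, linearity gives $\bar{S}(P_\lambda, P_\lambda) = (1-\lambda)\bar{S}(P_\lambda, P_0) + \lambda \bar{S}(P_\lambda, P_1)$, and propriety forces each term on the right to be at least $\bar{S}(P_i, P_i)$, yielding $G(P_\lambda) \leq (1-\lambda)G(P_0) + \lambda G(P_1)$. The subtangent inequality for $G^*(P,\cdot)$ at $P$ rewrites, after multiplying through by $-1$, as $\bar{S}(P,Q) \geq \bar{S}(Q,Q)$, which is again propriety. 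Finally, since $\bar{G}^*(P,P) = -\bar{S}(P,P) = G(P)$, the right-hand side of (\ref{Eqn:ProperScoringRep}) collapses to $-G^*(P,\omega) = S(P,\omega)$, verifying the representation pointwise.

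The one step that requires care, and which I expect to be the main obstacle, is the equivalence in the strict case. The direction from strict propriety to strict convexity of $G$ is immediate: the inequalities $\bar{S}(P_\lambda, P_i) > \bar{S}(P_i, P_i)$ are now strict whenever $P_0 \neq P_1$ and $\lambda \in (0,1)$, so the convexity bound for $G(P_\lambda)$ becomes strict. The converse, however, needs an auxiliary argument, since the definition of a subtangent does not a priori enforce strict inequality at points $Q \neq P$ even when $G$ is strictly convex. To handle this, I would plug $Q_\mu := (1-\mu)P + \mu Q$ into the subtangent inequality, use linearity in $\mu$ on the right-hand side, and combine with the strict convexity bound $G(Q_\mu) < (1-\mu)G(P) + \mu G(Q)$ for $\mu \in (0,1)$ to deduce $\int_\Omega G^*(P,\omega)\,\mathrm{d}(Q-P)(\omega) < G(Q) - G(P)$ whenever $P \neq Q$. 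Feeding this back into the sufficiency calculation then upgrades $\mathcal{P}$-propriety to strict $\mathcal{P}$-propriety, closing the equivalence.
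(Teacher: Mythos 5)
Your proposal is correct and follows essentially the same route as the paper's proof: sufficiency by computing $\bar{S}(P,Q)-\bar{S}(Q,Q)$ and invoking the subtangent inequality, necessity by setting $G(P):=-\bar{S}(P,P)$, $G^*(P,\cdot):=-S(P,\cdot)$ and extracting convexity from propriety. The only cosmetic difference is in the strict direction: you prove strict propriety directly by sliding a parameter $\mu$ along the segment from $P$ to $Q_\mu$ and letting strict convexity force a strict subtangent gap, whereas the paper argues by contraposition at the midpoint $\tfrac{1}{2}P+\tfrac{1}{2}Q$; both rest on the identical device of testing the subtangent bound against a convex combination, so the two arguments are interchangeable.
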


\begin{remark}
Representation (\ref{Eqn:ProperScoringRep}) shows that (strictly) proper scoring rules are similar to (strictly) consistent scoring functions for expectations which are treated in Example~\ref{Thm:ExMeanIsElicitable}. Both functions consist of a convex function ($f$ or $G$) and a `subtangent term'. In the scoring function case, the subtangent inequality becomes $f(r) \geq f(s) + \nabla f(s)^\top (r-s)$ and $\nabla f$ can be interpreted as the subtangent of $f$. In fact, the definition of a subtangent can also be used for real-valued functions, and scoring functions as in Example~\ref{Thm:ExMeanIsElicitable} can be defined using this concept.
\end{remark}

\begin{proof}
Let $S$ be as stated in the theorem and $G$ be convex. Then $S$ is a scoring rule, since $G^*$ is per definitionem $\mathcal{P}$-integrable. For any $Q,P \in \mathcal{P}$ we employ the subtangent Inequality~(\ref{Eqn:SubtangentInequality}) and the Representation~(\ref{Eqn:ProperScoringRep}) to obtain
\begin{align*}
\bar{S}(P,Q) &= \bar{G}^* (P,P) - \bar{G}^*(P,Q) - G(P) \\
&= - \bar{G}^*(P, Q-P) - G(P) \geq -G(Q) = \bar{S}(Q,Q) ,
\end{align*}
which shows propriety of $S$. If $\bar{S}(P,Q) = \bar{S}(Q,Q)$ holds, we have $ \bar{G}^*(P,P) + G(Q) = G(P) + \bar{G}^*(P,Q)$ and this together with the subtangent property of $G^*(P, \cdot)$ leads to
\begin{align*}
G\left( \frac{1}{2}P + \frac{1}{2} Q \right) &\geq G(P) + \bar{G}^* \left( P, \frac{1}{2}P + \frac{1}{2}Q - P \right) \\
&= G(P) - \frac{1}{2} (\bar{G}^*(P,P) - \bar{G}^*(P,Q) ) = \frac{1}{2} G(P) + \frac{1}{2} G(Q).
\end{align*}
Therefore, if $G$ is strictly convex, we must have $P = Q$ and hence $S$ is strictly $\mathcal{P}$-proper whenever $G$ is strictly convex. For the converse implication, let $S$ be a $\mathcal{P}$-proper scoring rule and define the mapping $G: \mathcal{P} \rightarrow \mathbb{R}$, $P \mapsto - \bar{S}(P,P)$. For all $P, Q \in \mathcal{P}$ and $\lambda \in (0,1)$ we have
\begin{align*}
\lambda G(P) + (1-\lambda) G(Q) &= -\lambda \bar{S}(P,P) - (1-\lambda) \bar{S}(Q,Q) \\
&\geq - \lambda \bar{S}(\lambda P + (1-\lambda) Q,P) - (1-\lambda) \bar{S}(\lambda P + (1-\lambda) Q, Q) \\
&= - \bar{S}(\lambda P + (1-\lambda) Q,\lambda P + (1-\lambda) Q) = G(\lambda P + (1-\lambda) Q),
\end{align*}
so $G$ is convex. Whenever $S$ is strictly proper and $P \neq Q$, the inequality is strict and hence $G$ is strictly convex. Moreover, $G^*(P, \omega) := -S(P, \omega)$ is a subtangent of $G$ at the point $P \in \mathcal{P}$ due to the propriety of $S$ and the fact that $G(P) = \bar{G}^*(P,P)$. Using these choices of $G$ and $G^*$ the Representation~(\ref{Eqn:ProperScoringRep}) follows.
\end{proof}

Before turning to the next subsection, we illustrate the close connection between proper scoring rules and consistent scoring functions. For this statement we interpret distribution functions in $\mathcal{F}$ as probability measures on $\mathsf{O}$. 

\begin{theorem}[Gneiting {\cite[Thm. 3]{GneitingPoints}}]
Let $S$ be an $\mathcal{F}$-consistent scoring function for a functional $T$. Then the function $R : \mathcal{F} \times \mathsf{O} \rightarrow \mathbb{R}$ defined via $R(F, y) := S(T(F), y)$ is an $\mathcal{F}$-proper scoring rule. 
\end{theorem}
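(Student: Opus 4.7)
The plan is to unwind the definition of $R$ and reduce propriety of $R$ directly to $\mathcal{F}$-consistency of $S$. First I would check that $R$ is a scoring rule in the sense required, i.e., that for every $F \in \mathcal{F}$ the function $y \mapsto R(F,y) = S(T(F), y)$ is $\mathcal{F}$-integrable. This is immediate from the fact that $T(F) \in \mathsf{A}$ and $S$ is a scoring function, so $S(T(F), \cdot)$ is integrable against every member of $\mathcal{F}$.

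Next, I would verify the propriety inequality $\bar{R}(Q,Q) \leq \bar{R}(P,Q)$ for arbitrary $P, Q \in \mathcal{F}$. Expanding the definitions,
\begin{equation*}
\bar{R}(P,Q) = \int_{\mathsf{O}} S(T(P), y) \, \mathrm{d} Q(y) = \bar{S}(T(P), Q),
\end{equation*}
and analogously $\bar{R}(Q,Q) = \bar{S}(T(Q), Q)$. Now apply the $\mathcal{F}$-consistency of $S$ with $x = T(P) \in \mathsf{A}$ and $F = Q \in \mathcal{F}$, which yields $\bar{S}(T(P), Q) \geq \bar{S}(T(Q), Q)$. Combining the two identities gives exactly $\bar{R}(P,Q) \geq \bar{R}(Q,Q)$, so $R$ is $\mathcal{F}$-proper.

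There is essentially no obstacle here; the statement is a one-line consequence of unwinding the definitions and using that $T(F)$ always lies in $\mathsf{A}$, so that it is a legitimate input to $\bar{S}(\cdot, F)$. I would note in passing that the same argument additionally shows that if $S$ is strictly $\mathcal{F}$-consistent and $T$ is injective, then $R$ is strictly $\mathcal{F}$-proper, since $\bar{R}(P,Q) = \bar{R}(Q,Q)$ would force $T(P) = T(Q)$ and hence $P = Q$; this strengthening is not part of the stated theorem but clarifies why the result is natural.
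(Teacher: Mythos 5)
Your proof is correct and matches the paper's approach exactly: the paper gives no formal proof, stating only that the result follows immediately from the definitions and noting, as you do, that strict propriety requires strict consistency together with injectivity of $T$.
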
 

The result follows immediately from the used definitions. Note that strict propriety can only be achieved using strict consistency and an injective functional $T$.

\subsection{The need for elicitability in point forecasting}

This subsection considers point forecasting and the difference to probabilistic forecasting. In theory, there is no compelling reason why points should be used instead of whole probability distributions, since the decision maker can always extract all necessary information from a probabilistic forecast. However, point forecasts are widely used in practice and one reason for this is that they are simpler and easier to communicate.\\
\par
In our setting, which is adopted from Gneiting~\cite{GneitingPoints}, the distributions of the class $\mathcal{F}$ are all defined on some \textit{observation domain} $\mathsf{O}$. The decision maker asks the forecasters to report points lying in an \textit{action domain} $\mathsf{A}$. The scoring function $S : \mathsf{A} \times \mathsf{O} \rightarrow \mathbb{R}$ is used to assess the forecasts. Similar to probabilistic forecasting and as pointed out in \cite{GneitingPoints}, the decision maker has to communicate to the forecasters how forecast accuracy is measured, to make them report truthfully. However, since only points are allowed as forecasts, there are now two possible things the decision maker can communicate:

\begin{enumerate}
\item The scoring function $S$. If $S$ is chosen such that $\bar{S}(\cdot, F)$ has a unique minimum for every $F \in \mathcal{F}$, then the  situation is similar to probabilistic forecasting. The forecasters will maximize their expected payoff by reporting the minimum of $\bar{S}(\cdot, F)$, where $F$ is their subjective distribution function. 
\item A functional $T$. The functional tells the forecasters how to translate their opinions of the distribution $F$ into real numbers which they can report. If $T$ is elicitable and $S$ is a strictly $\mathcal{F}$-consistent scoring function for $T$, then $T(F)$ is the unique minimizer of $\bar{S}(\cdot, F)$. Consequently, the forecasters will maximize their expected payoff by reporting $T(F)$, where $F$ is their subjective distribution function.
\end{enumerate}

The second point shows that elicitability of $T$ is crucial to incentivize the forecasters to report truthfully. It guarantees that the choices of the forecasters are in line with the request of the decision maker. It should be remarked that this is also what the term elicitability suggests. An \textit{elicitable} functional enables the decision maker to \textit{elicit} truthful reports of the functional value. Finally, note that the possibilities 1. and 2. are equivalent solutions to the same problem. It is the decision maker's choice to determine which functional is needed to make good decisions or which scoring functions captures best the losses incurred by using inaccurate forecasts.

\subsection{Comparing forecasts using statistical tests} 
\label{Sec:CompBacktest}

This subsection shows how elicitability of a functional $T$ can be used to decide between different forecasts for $T$, based on a set of realizations of $Y$. In the following, we consider a situation where two forecasters issue $n$ point forecasts $(\hat{x}_t)_{t=1,\ldots,n}$ and $(\hat{z}_t)_{t=1,\ldots,n}$. For simplicity, we assume that the times for which the forecasts are made are simply $t= 1, \ldots, n$ and realizations of $Y$ denoted by $(y_t)_{t=1, \ldots, n}$ are given. Moreover, we let $S$ be a scoring function which is strictly $\mathcal{F}$-consistent for $T$. In order to decide which set of forecasts is `better' we can compute the \textit{mean score difference} given by
\begin{equation}  \label{Eqn:MeanScoreDiff}
\frac{1}{n} \sum_{t=1}^{n} S( \hat{x}_t, y_t) - \frac{1}{n} \sum_{t=1}^{n} S(\hat{z}_t , y_t)
\end{equation}
and check whether it is negative or positive. Based on a law of large numbers argument and the strict consistency of $S$, we can argue that a positive value supports $(\hat{z}_t)$ while a negative value supports $( \hat{x}_t)$. However, since  consistency of $S$ is a statement about $\bar{S}( \cdot, F)$ for $F \in \mathcal{F}$, a simple comparison of score values, which can only be an approximation to $\bar{S}$, is unpleasant. In particular, positive or negative values in (\ref{Eqn:MeanScoreDiff}) can be sheer coincidence. This issue leads directly to statistical tests based on the mean score difference which were introduced by Diebold and Mariano~\cite{DieboldMariano}. Since these Diebold-Mariano tests (DM tests in the following) are designed in \cite{DieboldMariano} to handle dependent realizations of $Y$, we present some concepts of time series analysis before we return to them. \\
\par
For the rest of this section, let $(Y_t)_{t \in \mathbb{Z}}$ be a sequence of random variables on the probability space $(\Omega, \mathscr{A}, \mathbb{P})$. For more details on the following concepts, we refer to Shumway and Stoffer~\cite{ShumwayTimeSeries} as well as Durrett~\cite[Sec. 7.C]{DurrettProb}.

\begin{definition}
A sequence of random variables $(Y_t)_{t \in \mathbb{Z}}$ is called \textit{strictly stationary}, if for any $k \in \mathbb{N}$, $h \in \mathbb{Z}$ and $t_1, \ldots, t_k \in \mathbb{Z}$ the distributions of $(Y_{t_1}, \ldots, Y_{t_k} )$ and $(Y_{t_1 + h} , \ldots, Y_{t_k + h} )$ coincide.
\end{definition}

\begin{definition}
For a sequence of random variables $(Y_t)_{t \in \mathbb{Z}}$ define the $\sigma$-algebras $\mathscr{A}_t^\infty := \sigma (Y_t, Y_{t+1}, \ldots)$ and $\mathscr{A}_{-\infty}^t := \sigma (Y_t, Y_{t-1}, \ldots)$. For $k \in \mathbb{N}$ call
\begin{equation*}
\alpha (k) := \sup \lbrace \vert \mathbb{P} (A \cap B) - \mathbb{P}(A) \mathbb{P}(B) \vert \mid t \in \mathbb{Z}, A \in \mathscr{A}_{-\infty}^t , B \in \mathscr{A}_{t+k}^\infty \rbrace
\end{equation*}
the \textit{strong mixing coefficient} of  $(Y_t)_{t \in \mathbb{Z}}$ at lag $k$. Then $(Y_t)_{t \in \mathbb{Z}}$ is called \textit{strongly mixing} if $\alpha (k) \rightarrow 0$ for $k \rightarrow \infty$.
\end{definition}

Combining strict stationarity with the strong mixing condition makes it possible to extend the classical central limit theorem from independent to dependent sequences of random variables. Several results and conditions concerning such an extension can be found in \cite[Ch. 7]{DurrettProb} together with proofs. We confine ourselves to the following theorem, which is a combination of \cite[Thm. 7.8]{DurrettProb} and the remark thereafter.

\begin{theorem}   \label{Thm:CLTforStationary}
Let $(Y_t)_{t \in \mathbb{Z}}$ be a stationary and strongly mixing sequence with mixing coefficient $\alpha$ and zero mean. Moreover, assume there is some $\delta >0$ such that $\mathbb{E} \vert Y_t \vert^{2 + \delta} < \infty$ and $\sum_{k=1}^{\infty} \alpha (k)^{\delta / (2 + \delta)} < \infty$ hold. Then we have
\begin{equation*}
\sigma^2_Y := \mathbb{E}Y_0 + 2 \sum_{t=1}^{\infty} \mathbb{E} Y_0 Y_t < \infty
\end{equation*}
and if $\sigma^2_Y > 0$ we get
\begin{equation*}
\frac{1}{\sqrt{n}} \sum_{t=1}^{n} Y_t \, \rightarrow^d \, \mathcal{N}(0, \sigma^2_Y )
\end{equation*}
for $n \rightarrow \infty$.
\end{theorem}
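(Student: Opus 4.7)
The plan is to follow the classical strategy for central limit theorems under strong mixing, which rests on two pillars: a covariance inequality of Ibragimov--Rio type and Bernstein's large-block/small-block decomposition.

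First I would establish a covariance bound: for $X$ measurable with respect to $\mathscr{A}_{-\infty}^{t}$ and $Y$ measurable with respect to $\mathscr{A}_{t+k}^{\infty}$ with finite $(2+\delta)$-moments, one has
\begin{equation*}
|\mathrm{Cov}(X,Y)| \;\leq\; C\, \alpha(k)^{\delta/(2+\delta)}\, \|X\|_{2+\delta}\, \|Y\|_{2+\delta},
\end{equation*}
for a universal constant $C$. The standard derivation uses truncation together with Hölder's inequality applied to the definition of the strong mixing coefficient. Applying this to $X = Y_0$ and $Y = Y_k$ and combining with stationarity gives $|\mathbb{E} Y_0 Y_k| \leq C \alpha(k)^{\delta/(2+\delta)} \|Y_0\|_{2+\delta}^2$. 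The summability hypothesis on $\alpha$ then shows that the series defining $\sigma^2_Y$ converges absolutely, so the first claim holds.

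For the CLT itself, I would use Bernstein's blocking technique. Choose integers $p = p_n$ and $q = q_n$ tending to infinity with $q/p \to 0$, $p/n \to 0$, and $(n/p)\alpha(q) \to 0$; such a choice is possible since $\alpha(k)\to 0$. Split the index set $\{1,\dots,n\}$ into alternating big blocks of length $p$ and small blocks of length $q$, and write $S_n = \sum_j U_j + \sum_j V_j + R_n$ where $U_j$ sums over big blocks, $V_j$ over small blocks, and $R_n$ is a negligible terminal remainder. Using stationarity and the covariance bound, I would show $\mathrm{Var}(\sum_j V_j)/n \to 0$, so the small-block contribution vanishes in $L^2$.

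Next I would invoke the $\alpha$-mixing approximation for characteristic functions (Volkonskii--Rozanov): denoting by $\varphi_n$ the characteristic function of $n^{-1/2}\sum_j U_j$ and by $\widetilde{\varphi}_n$ the same function computed under the product measure with the same marginals, one has $|\varphi_n(t) - \widetilde{\varphi}_n(t)| \leq C(n/p)\alpha(q) \to 0$, so the big blocks behave asymptotically as if independent. Finally I would apply the Lindeberg--Feller CLT to the independent copies: the asymptotic variance of $p^{-1/2}U_j$ converges to $\sigma^2_Y$ by the covariance bound (telescoping stationarity argument), and the Lyapunov condition with exponent $2+\delta$ is satisfied thanks to $\mathbb{E}|Y_0|^{2+\delta} < \infty$ together with $p/n \to 0$. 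The main obstacle I anticipate is the technical bookkeeping in the blocking step: one must simultaneously calibrate $p$ and $q$ so that the small blocks are negligible, the mixing error is negligible, and the Lyapunov condition holds; verifying all three limits and the precise form of the mixing approximation for characteristic functions is the delicate part of the argument.
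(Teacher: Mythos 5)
The paper does not contain a proof of this theorem; it cites it as a combination of a result of Durrett and the remark following it. Your sketch reproduces the classical Bernstein-blocking strategy for $\alpha$-mixing CLTs, which is the textbook approach and the one underlying the cited reference. The covariance inequality, the calibration $q/p\to 0$, $p/n\to 0$, $(n/p)\alpha(q)\to 0$, and the characteristic-function approximation are all correctly identified and in the right order.

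However, there is a genuine gap in the final Lyapunov step. You write that the Lyapunov condition ``is satisfied thanks to $\mathbb{E}|Y_t|^{2+\delta}<\infty$ together with $p/n\to 0$,'' but this requires a bound on $\mathbb{E}|U_1|^{2+\delta}$ for a block sum of length $p$, and the naive triangle inequality in $L^{2+\delta}$ only gives $\mathbb{E}|U_1|^{2+\delta}\leq p^{2+\delta}\,\mathbb{E}|Y_0|^{2+\delta}$. Plugging this into the Lyapunov ratio over roughly $n/p$ blocks yields a quantity of order $p^{1+\delta}/n^{\delta/2}$, which forces $p = o\bigl(n^{\delta/(2+2\delta)}\bigr)$. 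That constraint is incompatible with the simultaneous requirement $(n/p)\alpha(q)\to 0$ with $q<p$ when $\alpha$ decays only just fast enough to satisfy $\sum_k\alpha(k)^{\delta/(2+\delta)}<\infty$: since $\alpha$ is nonincreasing, this yields at best $\alpha(k)=o\bigl(k^{-(2+\delta)/\delta}\bigr)$, and a short exponent count shows no admissible pair $(p_n,q_n)$ meets all three limits. The standard fix, which your sketch omits, is either to invoke a Rosenthal-type moment inequality for partial sums of mixing sequences, which sharpens the bound to $\mathbb{E}|U_1|^{2+\delta}\lesssim p^{(2+\delta)/2}$ and makes the Lyapunov ratio $\asymp (p/n)^{\delta/2}\to 0$ unconditionally, or to truncate the $Y_t$ at a fixed level $M$, prove the CLT for the bounded truncated sequence (where Lyapunov is trivial because $|U_1|\leq 2Mp$), and then let $M\to\infty$ using the covariance inequality to show the truncated variance converges to $\sigma_Y^2$ and the contribution of the tails vanishes. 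Without one of these two ingredients the argument as you have outlined it does not close. (Separately, the paper's displayed formula for $\sigma_Y^2$ has a typo: the first term should be $\mathbb{E}Y_0^2$, since $\mathbb{E}Y_0=0$ by assumption.)
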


The conditions imposed on  $(\alpha (k)^{\delta / (2 + \delta)})_{k \in \mathbb{N}}$ and  $\mathbb{E} \vert Y_t \vert^{2 + \delta}$ imply in particular that the sequence $ (\mathbb{E}Y_0 Y_t)_{t \in \mathbb{N}}$ converges absolutely (see for instance \cite[Thm. 7.7]{DurrettProb}). Therefore, if $(Y_t)_{t \in \mathbb{Z}}$ satisfies the requirements of the theorem, $\sigma^2_Y$ is the spectral density $f_Y$ of $(Y_t)_{t \in \mathbb{Z}}$ at zero and can be estimated consistently using estimators for $f_Y(0)$. For a discussion on consistent estimation of $f_Y$, using lag windows as well as spectral windows, we refer to Shumway and Stoffer~\cite[Sec. 4.5]{ShumwayTimeSeries}. If we are in the situation of Theorem~\ref{Thm:CLTforStationary} and have a sequence of estimators $(\hat{\sigma}^2_n)_{n \in \mathbb{N}}$ which is consistent for $\sigma^2_Y$ (meaning $\hat{\sigma}^2_n \rightarrow \sigma^2_Y$ in probability), we apply Slutsky's theorem (see for instance van der Vaart~\cite[Lemma 2.8]{vanderVaart}) to obtain
\begin{equation}   \label{Eqn:DMTestAsymptotic}
\frac{1}{ \sqrt{ \hat{\sigma}^2_n n}} \sum_{t=1}^{n} Y_t \, \rightarrow^d \, \mathcal{N}(0, 1)
\end{equation}
for $n \rightarrow \infty$. This convergence allows for tests based on asymptotic normality, even for dependent sequences of random variables.\\
\par
Returning to the forecast comparison setting we let $(Y_t)_{t \in \mathbb{Z}}$ be a stationary sequence of random variables and $(\hat{x}_t)_{t \in \mathbb{Z}}$ and $(\hat{z}_t)_{t \in \mathbb{Z}}$ infinite sequences of forecasts. Again we assume that we sample at the time points $t=1, \ldots, n$. Using the time series $D_t := S(\hat{x}_t, Y_t) - S(\hat{z}_t, Y_t)$ for $t \in \mathbb{Z}$ and remembering (\ref{Eqn:MeanScoreDiff}), we define $\mathsf{K}_n := \frac{1}{n} \sum_{t=1}^{n} D_t$ in order to compare the two forecast sequences. If we think $(\hat{z}_t)$ are superior forecasts, we should test the null hypothesis $H_0:$ \textit{The forecasts $(\hat{x}_t)$ are at least as good as the forecasts $(\hat{z}_t)$}. Using scoring functions, this translates to the hypothesis $H_0 : \mathbb{E} \mathsf{K}_n \leq 0$. In order to test if $(\hat{x}_t)$ is superior we would naturally reverse the inequality and for equal accuracy we would use $H_0 : \mathbb{E} \mathsf{K}_n = 0$.

The final step made in \cite{DieboldMariano} is now to choose a sequence of estimators $( \hat{\sigma}_n^2)_{n \in \mathbb{N}}$ which consistently estimates $\sigma_D^2$ and to define the test statistic $ \sqrt{n /\hat{\sigma}_n^2 } \, \mathsf{K}_n$. Under the assumptions of Theorem~\ref{Thm:CLTforStationary}, the hypothesis $\mathbb{E}\mathsf{K}_n = 0$ and using the argument in (\ref{Eqn:DMTestAsymptotic}), this test statistic is approximately normal if $n$ is large enough. Therefore, one- or two-sided regions of rejection can be calculated using the quantiles of the standard normal distribution. Note that the assumption of stationarity as well as the condition on the strong mixing coefficient in Theorem~\ref{Thm:CLTforStationary} are only necessary for $(Y_t)_{t \in \mathbb{Z}}$ because both properties carry over to $(D_t)_{t \in \mathbb{Z}}$. The moment condition $\mathbb{E} \vert D_t \vert^{2 + \delta} < \infty$  has to be ensured by the choice of $S$.

\subsection{Remarks} 
\label{Sec:ForecastRankRemark}

The previous subsections considered forecast comparison for point and probabilistic forecasting and the role played by elicitability.  We conclude this section by also looking at problems or limitations which have to be taken into account when comparing forecasts. For brevity, we only sketch the arguments. \\
\par
\textbf{Applicability of Diebold-Mariano tests.} A DM test in the setting of the previous subsection has one important drawback: It remains unclear if Theorem~\ref{Thm:CLTforStationary} is still applicable if the forecasting sequences $(\hat{x}_t)_{t \in \mathbb{Z}}$ and $(\hat{z}_t)_{t \in \mathbb{Z}}$ are not deterministic, but random. In particular, the presented framework cannot handle a situation where the forecast for time $t$ is calculated using $Y_{t-1}, \ldots, Y_{t-m}$ for some $m \in \mathbb{N}$, since it is not clear how the distribution and dependence structure of $(D_t)_{t \in \mathbb{Z}}$ react. However, such an approach is quite common in practice. As stated in Giacomini and White~\cite{GiacominiWhite}, the classical DM test compares forecasting models, not \textit{forecasting procedures}. According to the definition of \cite{GiacominiWhite}, the latter includes the forecasting model together with an estimation method and a choice which part of the data is used. \\
An extension of the considered test to such a framework is done in \cite{GiacominiWhite}. The authors consider a setting where forecasts can be measurable functions of realizations and estimated parameters based on a finite and fixed time window in the past. They are able to show asymptotic normality of the test statistic $\mathsf{K}_n$ under similar conditions as in Theorem~\ref{Thm:CLTforStationary}, see~\cite[Thm. 4]{GiacominiWhite}. Moreover, they are able to drop the stationarity condition as long as the estimator $\hat{\sigma}_n^2$ is modified such that it is consistent for non-stationary processes. \\
\par
\textbf{Unconditional versus conditional forecast comparison.} Consider a test for equal forecast accuracy. The hypothesis in the DM test then states that the mean score differences have expected value equal to zero. This is equivalent to saying that the forecast accuracy is equal \textit{on average}. As pointed out in \cite{GiacominiWhite}, this might be inappropriate in some situations. If one forecast outperforms the other when $Y_{t-1}$ was small and vice versa when $Y_{t-1}$ was large, a classical DM test cannot detect this. In contrast, the conditional tests proposed in \cite{GiacominiWhite} take information up to time $t$ into account and are designed to detect if forecast performance is predictable. Due to this difference, the DM test considered here (which could be called unconditional test) should only guide a decision maker in selecting a forecast for an unspecified future date for which no information is available.\\
\par
\textbf{Scoring functions and ordering.} Even if the integrated score function $x \mapsto \bar{S}(x, F)$ is considered, the inequality $\bar{S}(x, F) < \bar{S}(z,F)$ does not imply that $x$ is closer (in Euclidean distance) to the true value $t=T(F)$, since consistency only requires $\bar{S}(\cdot, F)$ to take its minimum in $t$.  Apart from that, there can be local minima which cause the integrated score to be small for  arbitrary values $x \neq t$. Therefore, the concept of \textit{(strict) order sensitivity} is introduced by Lambert~\cite[Def. 2]{Lambert}, which requires $\bar{S}(\cdot, F)$ to be (strictly) decreasing for $x \leq t$ and (strictly) increasing for $x \geq t$. However, even this property does not exclude the problem mentioned at the beginning. If $\bar{S}(\cdot, F)$ is very steep for $x \leq t$ and rather flat for $x \geq t$, it is still possible that $\bar{S}(x, F) < \bar{S}(z,F)$ holds for $z < t < x$ and $\vert z- t \vert < \vert x - t\vert$. Therefore, the best solution to the ordering problem is probably to consider multiple scoring functions (see also the next paragraph). \\
\par
\textbf{Choice of scoring function.} Until now, the choice of the scoring function $S$ was not discussed, although there are often many possible choices. It is therefore a priori not clear which function one should use for forecast comparison. It could be the case, that forecaster 1 outperforms forecaster 2 for a scoring function $S_1$ but the opposite relation holds for another scoring function $S_2$. For quantiles and expectiles (see Definitions~\ref{Def:quantilefunction} and~\ref{Def:Expectiles}) a solution is proposed by Ehm et al.~\cite{EhmGneiting}. For these  functionals they show that all scoring functions which satisfy certain regularity conditions are convex combinations of a class of extremal functions. These extremal functions are simple and can be parametrized by $\theta \in \mathbb{R}$. As a consequence, they propose to plot the scores of the extremal functions for some interval in order to perform a graphical check if one forecaster dominates the other for all parameters. If one forecast is more accurate for all extremal functions, the same holds true for the whole class of scoring functions under consideration.

\section{Quantile and expectile regression} 

This section introduces quantiles and expectiles of distribution functions and shows how least squares regression can be generalized using these characteristics together with strictly consistent scoring functions. Moreover, both concepts are used to define measures of risk in the next chapter, see Subsections~\ref{Sec:ValueAtRisk} and~\ref{Sec:ExpectileVaR}.

\subsection{Quantile regression}  
\label{Sec:QuantileReg}

Quantiles are closely connected to distribution functions. For a value $\alpha \in (0,1)$, the $\alpha$-quantile of a random variable is the threshold which is exceeded with probability $1-\alpha$ or less. Hence, it can be interpreted as the inverse of the distribution function and in certain cases this is indeed true. For general distribution functions the definition is as follows.

\begin{definition}  \label{Def:quantilefunction}
For a univariate distribution function $F$ and $\alpha \in (0,1)$ define the functions
\begin{align*}
F^{\leftarrow}   : \, &(0,1) \rightarrow \mathbb{R}, \quad \alpha \mapsto \inf \lbrace x \in \mathbb{R} \mid F(x) \geq \alpha \rbrace , \\
F^{\rightarrow}  : \, &(0,1) \rightarrow \mathbb{R}, \quad \alpha \mapsto \inf \lbrace x \in \mathbb{R} \mid F(x)  > \alpha \rbrace
\end{align*}
and call the first \textit{lower} and the second \textit{upper quantile function} of $F$. Moreover, any element of the interval $[F^{\leftarrow}(\alpha), F^{\rightarrow} (\alpha) ]$ is called an $\alpha$\textit{-quantile} of $F$. If $F^{\leftarrow}(\alpha) = F^{\rightarrow} (\alpha) = x$ holds, $x$ is the \textit{unique} $\alpha$\textit{-quantile} of $F$.
\end{definition}

The main focus of this section lies on the lower quantile function. The upper quantile function is needed if it is convenient to use the set of quantiles $[F^{\leftarrow}(\alpha), F^{\rightarrow} (\alpha) ]$. If $F$ is strictly increasing, both functions coincide and every quantile is unique. The following lemma lists some well-known properties of quantile functions which are needed below.

\begin{lemma}   \label{Thm:QuantileLemma}
For $\alpha \in (0,1)$, $x \in \mathbb{R}$ and distribution functions $F$ and $G$ the following hold:
\begin{enumerate}[label=(\roman*)]
	\item $ F^{\leftarrow} (\alpha) \leq x \, \Leftrightarrow \, \alpha \leq F(x)$.
	\item $ F( F^{\leftarrow} (\alpha) ) \geq \alpha$ with equality if $F$ is continuous.
	\item $ F^{\leftarrow} (F(x)) \leq x$ with equality if $F$ is strictly increasing.
	\item $F^{\leftarrow}$ is left-continuous and $F^{\rightarrow}$ is right-continuous.
	\item If $X =^d F$ and $g(X) =^d G$ for an increasing left-continuous function $g$, then $G^{\leftarrow}(\alpha) =  g( F^{\leftarrow} (\alpha))$.
\end{enumerate}
\end{lemma}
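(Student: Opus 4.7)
My plan is to prove the five items roughly in the order stated, since each later item leans on the earlier ones, and to use only the defining infimum of $F^{\leftarrow}$ together with the right-continuity and monotonicity of the distribution function $F$.

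For (i), the forward direction follows from monotonicity: if $\alpha \leq F(x)$ then $x$ lies in the set $\{y : F(y) \geq \alpha\}$, so the infimum $F^{\leftarrow}(\alpha)$ is at most $x$. For the converse I would first establish (ii), so that I can apply $F$ to the inequality $F^{\leftarrow}(\alpha) \leq x$ and conclude $\alpha \leq F(F^{\leftarrow}(\alpha)) \leq F(x)$. To prove (ii), pick a decreasing sequence $y_n \downarrow F^{\leftarrow}(\alpha)$ with $F(y_n) \geq \alpha$ (possible by the definition of the infimum) and invoke right-continuity of $F$ to pass to the limit; for the equality under continuity, any strict inequality $F(F^{\leftarrow}(\alpha)) > \alpha$ would by continuity give some $y < F^{\leftarrow}(\alpha)$ with $F(y) \geq \alpha$, contradicting the infimum.

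For (iii), the inequality is immediate from (i) with $\alpha = F(x)$. Under strict monotonicity the hypothesis $F^{\leftarrow}(F(x)) < x$ would combine with (ii) to yield $F(x) \leq F(F^{\leftarrow}(F(x))) < F(x)$, a contradiction. For (iv), monotonicity of $F^{\leftarrow}$ gives one inequality for any monotone sequence $\alpha_n \uparrow \alpha$; for the reverse, set $\beta := \lim F^{\leftarrow}(\alpha_n) \leq F^{\leftarrow}(\alpha)$ and apply (ii) to get $F(F^{\leftarrow}(\alpha_n)) \geq \alpha_n$, so by monotonicity of $F$, $F(\beta) \geq \alpha_n$ for every $n$, hence $F(\beta) \geq \alpha$; part (i) then forces $F^{\leftarrow}(\alpha) \leq \beta$. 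The argument for right-continuity of $F^{\rightarrow}$ is entirely analogous, using the strict inequality in its defining set.

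The main obstacle is (v), where I need a clean relationship between $g$ and its generalized inverse $g^{-1}(y) := \sup\{z : g(z) \leq y\}$. The key observation is that left-continuity of $g$ makes this supremum attained, which yields the equivalence $g(x) \leq y \iff x \leq g^{-1}(y)$ and therefore $G(y) = F(g^{-1}(y))$. With this in hand I would prove the two inequalities separately. For ``$\leq$'', apply $G$ to $g(F^{\leftarrow}(\alpha))$, use monotonicity of $g$ and (ii) to get $G(g(F^{\leftarrow}(\alpha))) \geq F(F^{\leftarrow}(\alpha)) \geq \alpha$, and conclude $G^{\leftarrow}(\alpha) \leq g(F^{\leftarrow}(\alpha))$ from the definition of the infimum. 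For ``$\geq$'', take any $y < g(F^{\leftarrow}(\alpha))$; monotonicity of $g$ together with the attained supremum gives $g^{-1}(y) < F^{\leftarrow}(\alpha)$, and (i) applied contrapositively yields $F(g^{-1}(y)) < \alpha$, i.e.\ $G(y) < \alpha$. Hence no $y < g(F^{\leftarrow}(\alpha))$ lies in $\{z : G(z) \geq \alpha\}$, forcing $G^{\leftarrow}(\alpha) \geq g(F^{\leftarrow}(\alpha))$. The delicate point throughout is to keep track of where left-continuity (of $g$) versus right-continuity (of $F$) is actually used, and not to assume $g$ is strictly increasing or continuous anywhere else.
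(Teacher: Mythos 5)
The paper offers no proof of its own for this lemma; it simply refers the reader to van der Vaart (Lemma 21.1) and McNeil et al.\ (Prop.\ A.3, A.5). Your argument is a correct and essentially self-contained version of what those references supply: (i)--(iii) and the left-continuity of $F^{\leftarrow}$ bootstrap off right-continuity of $F$ in the standard way, and (v) is handled cleanly via the generalized inverse $g^{-1}(y) = \sup\{z : g(z) \leq y\}$, using left-continuity of $g$ precisely to make that supremum attained and thereby obtain $G = F \circ g^{-1}$.

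Two small spots deserve tightening. In (ii), ``pick a decreasing sequence $y_n \downarrow F^{\leftarrow}(\alpha)$ with $F(y_n) \geq \alpha$'' is not literally guaranteed by the infimum alone (the infimum might be attained, or the approximating sequence might not be monotone); cleanest is to take $y_n := F^{\leftarrow}(\alpha) + 1/n$ and note that since $y_n > \inf\{x : F(x)\geq\alpha\}$ there is $w_n < y_n$ with $F(w_n) \geq \alpha$, hence $F(y_n) \geq \alpha$, and then use right-continuity. In (iv), the claim that $F^{\rightarrow}$ is ``entirely analogous'' overstates the symmetry: the analogue of (ii) fails for $F^{\rightarrow}$ (one does \emph{not} always have $F(F^{\rightarrow}(\alpha)) > \alpha$, since $\{x : F(x) > \alpha\}$ need not be closed). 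A direct argument is better: with $\alpha_n \downarrow \alpha$, $\gamma := \lim F^{\rightarrow}(\alpha_n) \geq F^{\rightarrow}(\alpha)$; if $\gamma > F^{\rightarrow}(\alpha)$, pick $z \in (F^{\rightarrow}(\alpha), \gamma)$, observe $F(z) > \alpha$ since $z$ exceeds the infimum, hence $F(z) > \alpha_n$ for large $n$, so $F^{\rightarrow}(\alpha_n) \leq z < \gamma$, contradicting $F^{\rightarrow}(\alpha_n) \geq \gamma$. Neither of these affects the validity of your overall plan.
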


\begin{proof}
The first three properties are checked straightforward, see for instance van der Vaart~\cite[Lemma 21.1]{vanderVaart}. For (iv) and (v) see McNeil et al.~\cite[Prop. A.3, A.5]{McNeilRisk}.
\end{proof}

\begin{remark}
Looking at the properties (ii) and (iii) of Lemma~\ref{Thm:QuantileLemma}, we see that $F^{\leftarrow}$ and $F^{\rightarrow}$ are the usual inverse function of $F$ if $F$ is continuous and strictly increasing everywhere.
\end{remark}

In Chapter~\ref{Chapter1} it is shown that expectations and ratios of expectations are elicitable. The next step is to prove the same for quantiles. In contrast to previous examples, we cannot represent the $\alpha$-quantile $q$ of $F$ via an expectation. However, if $F$ is continuous and $Y$ is a random variable such that $Y =^d F$ under $\mathbb{P}$, the equation
\begin{equation*}
\mathbb{E}  \mathbbm{1}_{\lbrace Y \leq x \rbrace} =  F(x) = \alpha
\end{equation*}
is solved by any $[F^{\leftarrow}(\alpha), F^{\rightarrow} (\alpha) ]$. For classes $\mathcal{F}$ of continuous distributions having unique $\alpha$-quantiles this fact can be used to construct the oriented strict $\mathcal{F}$-identification function $V(x,y) = \mathbbm{1}_{\lbrace y \leq x \rbrace} - \alpha$. Continuity is vital for this statement because the next example shows that for a convex class $\mathcal{F}$ which contains a continuous distribution and a specific Dirac measure there cannot exist a strict $\mathcal{F}$-identification function.

\begin{example}
Let $\alpha \in (0,1)$ and a convex $\mathcal{F}$ be given and let $V$ be a strict $\mathcal{F}$-identification function for $T(F):=F^{\leftarrow}(\alpha)$. Assume that there exists an $F \in \mathcal{F}$ which is continuous in $F^{\leftarrow}(\alpha)$. Furthermore, suppose there is an $\varepsilon >0$ such that $q:= F^{\leftarrow}(\alpha + \varepsilon)$ is well defined and $\delta_q \in \mathcal{F}$. We define $F_\lambda := \lambda F + (1-\lambda) \delta_q$ with $\lambda < \alpha /( \alpha + \varepsilon)$ and observe that 
\begin{equation*}
F_\lambda (q-) = \lambda F(q) < \frac{\alpha}{\alpha + \varepsilon} (\alpha + \varepsilon) = \alpha \quad \text{and} \quad F_\lambda(q) = \lambda (\alpha + \varepsilon) + (1-\lambda) > \alpha
\end{equation*}
hold, hence $F^{\leftarrow}_\lambda (\alpha) = q$. This leads to 
\begin{equation*}
0 = \bar{V}(q, F_\lambda) = \lambda \bar{V}(q,F) + (1-\lambda) V(q,q) = \lambda \bar{V}(q,F),
\end{equation*}
since all quantiles of $\delta_q$ are $q$, implying $V(q,q) = \bar{V}(q, \delta_q) = 0$. Because $V$ is a strict identification function, we obtain $q= F^{\leftarrow}(\alpha) <F^{\leftarrow}(\alpha + \varepsilon)$, a contradiction.
\end{example}

Although strict identification functions may not exist for certain classes $\mathcal{F}$, it is possible to construct a strictly consistent scoring function, as the following theorem shows. It is well-known and similar results are stated in Gneiting~\cite{GneitingQuantiles,GneitingPoints} without proofs, hence we add our own proof. Note that the result does not need $F$ to be continuous, but requires that the $\alpha$-quantile is unique instead.

\begin{theorem}  \label{Thm:QuantilesElicitable}
Fix $\alpha \in (0,1)$ and let $\mathcal{F}$ be a class of distribution functions having unique $\alpha$-quantiles. Then the functional $T : \mathcal{F} \rightarrow \mathbb{R}$, $F \mapsto F^{\leftarrow} (\alpha)$ is elicitable with respect to $\mathcal{F}$. An $\mathcal{F}$-consistent scoring function for $T$ is given by
\begin{equation*}
S(x,y) := (\mathbbm{1}_{\lbrace y \leq x \rbrace} - \alpha ) (g(x) - g(y) )
\end{equation*}
for any $\mathcal{F}$-integrable increasing function $g$. If $g$ is strictly increasing, then $S$ is strictly consistent.
\end{theorem}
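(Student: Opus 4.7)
The plan is to fix $F \in \mathcal{F}$ with $t = T(F)$ the unique $\alpha$-quantile, and show $\bar{S}(x, F) \geq \bar{S}(t, F)$ with equality forcing $x = t$ when $g$ is strictly increasing. By uniqueness of the $\alpha$-quantile, together with Lemma~\ref{Thm:QuantileLemma}(ii) and the definitions of $F^{\leftarrow}$ and $F^{\rightarrow}$, one has $F(s) < \alpha$ for all $s < t$, $F(s) > \alpha$ for all $s > t$, and $F(t) \geq \alpha$; these sign conditions on $F - \alpha$ are the engine of the proof.

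First I would handle the case $x > t$ by splitting the integral $\int [S(x,y) - S(t,y)]\, \mathrm{d}F(y)$ into the three regions $\{y \leq t\}$, $\{t < y \leq x\}$, $\{y > x\}$ and plugging in the corresponding indicator values. After collecting terms one arrives at the clean decomposition
\[
\bar{S}(x, F) - \bar{S}(t, F) = (g(x) - g(t))(F(t) - \alpha) + \int_{(t, x]} (g(x) - g(y))\, \mathrm{d}F(y),
\]
and both summands are manifestly nonnegative because $g$ is increasing and $F(t) \geq \alpha$. For $x < t$ the analogous case split does not yield two manifestly nonnegative terms, so I would instead derive, either by a symmetric direct computation or via a Fubini argument rewriting $g(y) - g(x) = \int_{(x,y]} \mathrm{d}g(u)$ and swapping orders, the representation
\[
\bar{S}(x, F) - \bar{S}(t, F) = \int_{(x, t]} (\alpha - F(u-))\, \mathrm{d}g(u),
\]
whose integrand is nonnegative since $F(u-) \leq F(u) < \alpha$ for $u < t$ and $F(t-) \leq \alpha$. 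This establishes $\mathcal{F}$-consistency.

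For strict consistency under strictly increasing $g$, the $x > t$ case proceeds by showing both summands of the decomposition cannot simultaneously vanish: vanishing of the integral (since $g(x) > g(y)$ strictly whenever $y < x$) forces $F$ to have no mass on $(t, x)$, so $F(s) = F(t)$ there; vanishing of the first summand forces $F(t) = \alpha$. Combined, $F(s) = \alpha$ on $[t, x)$, contradicting $F^{\rightarrow}(\alpha) = t$. For $x < t$ strictness follows directly from the Fubini formula: strict monotonicity of $g$ gives $\mathrm{d}g((x, t)) = g(t-) - g(x) > 0$, and the integrand $\alpha - F(u-)$ is strictly positive on $(x, t)$, so the integral is strictly positive.

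The main obstacle is the $x < t$ direction: the most natural direct algebraic split does not cleanly collapse to two nonnegative terms, which forces one to the Fubini identity; this in turn requires handling the left-limit $F(u-)$ (rather than $F(u)$) carefully at potential common jumps of $F$ and $g$, and justifying the swap of integration order for the relevant Lebesgue--Stieltjes integrals.
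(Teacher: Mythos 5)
Your decomposition for $x>t$ is exact and cleaner than the paper's: the paper also arrives at the bound $(F(t)-\alpha)(g(x)-g(t))$, but only after replacing $\mathbb{E}\mathbbm{1}_{\{t<Y\leq x\}}g(Y)$ by the cruder bound $(F(x)-F(t))g(x)$, whereas you split off the nonnegative remainder $\int_{(t,x]}(g(x)-g(y))\,\mathrm{d}F(y)$ explicitly. The genuine divergence is in the $x<t$ direction. The paper stays entirely elementary: it isolates the atom $\mathbb{P}(Y=t)g(t)$, bounds $\mathbb{E}\mathbbm{1}_{\{x<Y<t\}}g(Y)\geq\mathbb{P}(x<Y<t)g(x)$ pointwise, and collapses everything to $(\alpha-F(t-))(g(t)-g(x))\geq 0$, a manipulation that needs no regularity on $g$ beyond monotonicity. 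You instead invoke Fubini to rewrite the difference as $\int_{(x,t]}(\alpha-F(u-))\,\mathrm{d}g(u)$, which is an elegant single nonnegative integral and makes the strictness argument immediate. The price is the issue you yourself flag: the rewrite $g(t)-g(y)=\int_{(y,t]}\mathrm{d}g(u)$ presumes that the Lebesgue--Stieltjes measure reproduces increments of $g$, which holds for right-continuous $g$ but not for a general increasing $g$ (the theorem only assumes monotonicity); if $g$ and $F$ share a jump the identity acquires correction terms. That wrinkle is avoidable — starting from your own exact $x<t$ identity $\bar{S}(x,F)-\bar{S}(t,F)=(g(t)-g(x))(\alpha-F(x))-\int_{(x,t]}(g(t)-g(y))\,\mathrm{d}F(y)$, one can simply drop the zero contribution from $\{y=t\}$ and bound $g(y)\geq g(x)$ on $(x,t)$ to recover $(g(t)-g(x))(\alpha-F(t-))\geq 0$ with no measure-theoretic subtlety — but as written your Fubini route needs either a right-continuity hypothesis on $g$ or an explicit handling of the correction terms, whereas the paper's version does not. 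Your strictness arguments in both directions are otherwise correct and match the paper's contradiction with $F^{\leftarrow}(\alpha)=t$ and $F^{\rightarrow}(\alpha)=t$.
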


\begin{remark}
Under additional assumptions it is also possible to show that all strictly consistent scoring functions for the $\alpha$-quantile can be represented as in Theorem~\ref{Thm:QuantilesElicitable}, see also Remark~\ref{Rem:MeanNecessaryCond}. Again we refer to \cite{GneitingQuantiles} for details.
\end{remark}

\begin{proof} 
Fix $\alpha \in (0,1)$, $F \in \mathcal{F}$ and let $Y =^d F$ under $\mathbb{P}$. Moreover, let $g$ be an increasing and $\mathcal{F}$-integrable function (this is always possible, for example by choosing a bounded $g$) and set $t:= T(F) = F^{\leftarrow} (\alpha)$. Firstly, define the left-hand limit $F(t-):=\lim_{s \nearrow t} F(s)$ and observe that $F(t-) \leq \alpha$ due to the definition of $t$. Consequently, the inequality
\begin{equation}  \label{Eqn:DistFunctionInequality}
\alpha + \mathbb{P}(Y = t) - F(t) = \alpha - \mathbb{P}(Y <t) = \alpha - F(t-) \geq 0
\end{equation}
holds. Now fix any $x \in \mathbb{R}$ such that $x <t$ and calculate
\begin{align}
\bar{S}(x,F) - \bar{S}(t,F) &= (F(x) -\alpha) g(x) - (F(t) - \alpha)g(t) + \mathbb{E} (\mathbbm{1}_{\lbrace Y \leq t \rbrace} - \mathbbm{1}_{\lbrace Y \leq x \rbrace} ) g(Y) \nonumber \\
&= (F(x) -\alpha) g(x) - (F(t) - \alpha)g(t) \nonumber \\
 &\quad+ \mathbb{E}\mathbbm{1}_{\lbrace x < Y <t \rbrace} g(Y) + \mathbb{P}(Y = t) g(t) \nonumber \\
&\geq (F(x) - \alpha + \mathbb{P}(x < Y < t)) g(x) \label{Eqn:QuantIneq1} \\
&\quad - (F(t) - \alpha - \mathbb{P}(Y= t) ) g(t) \nonumber \\
&= (\alpha - F(t) + \mathbb{P}(Y=t) ) (g(t) - g(x) ) \geq 0 \label{Eqn:QuantIneq2}
\end{align}
using Inequality~(\ref{Eqn:DistFunctionInequality}) and the monotonicity of $g$. If we have $x > t$, then 
\begingroup
\allowdisplaybreaks[0]
\begin{align}
\bar{S}(x,F) - \bar{S}(t,F) &= (F(x) -\alpha) g(x) - (F(t) - \alpha)g(t) - \mathbb{E} \mathbbm{1}_{\lbrace t < Y \leq x \rbrace}  g(Y) \nonumber \\
&\geq (F(x) -\alpha) g(x) - (F(t) - \alpha)g(t) - (F(x) - F(t)) g(x) \label{Eqn:QuantIneq3}\\
&=(F(t) - \alpha) (g(x) - g(t) ) \geq 0,  \nonumber
\end{align}
\endgroup
so $S$ is $\mathcal{F}$-consistent. Now we check if the inequalities are strict as soon as $g$ is strictly increasing. We start with the second display and assume that equality holds for every line. Then we have $F(t) = \alpha$ as well as $\mathbb{P}(t < Y < x) = 0$ due to the strict monotonicity of $g$ and the fact that (\ref{Eqn:QuantIneq3}) is an equality. This implies that for $z:= (x+t)/ 2$ we have
\begin{equation*}
\alpha = F(t) = \mathbb{P}( Y \leq t) + \mathbb{P}( t < Y \leq z) = F(z) ,
\end{equation*}
which is a contradiction to the uniqueness of the $\alpha$-quantile. We now turn to the first display and distinguish two possible cases for the limit $F(t-)$. Considering $\alpha > F(t-)$, we have a strict inequality in (\ref{Eqn:DistFunctionInequality}) and together with the strict monotonicity of $g$ the inequality in (\ref{Eqn:QuantIneq2}) is strict. If we suppose $\alpha = F(t-)$ and equality in (\ref{Eqn:QuantIneq1}), the latter implies $\mathbb{P}(x < Y <t) = 0$. As a consequence we have for $z:= (x+t)/ 2$
\begin{equation*}
\alpha = \mathbb{P}(Y <t) = \mathbb{P}(Y \leq z ) + \mathbb{P}( z < Y < t) = F(z) ,
\end{equation*}
which is again a contradiction to $F^{\leftarrow}(\alpha) = t > z$. Since we discussed all cases, $S$ is a strictly $\mathcal{F}$-consistent scoring function for $T$ if $g$ is strictly increasing.
\end{proof}

The following example shows why the uniqueness of the $\alpha$-quantile is essential for Theorem~\ref{Thm:QuantilesElicitable} to hold. Moreover, it shows a way to drop this requirement by slightly modifying the definition of elicitability.

\begin{example}  \label{Thm:ExQuantilesElicitable}
Define a distribution function $G$ on the interval $[- \frac{1}{2}, 1]$ via its density $\phi_G$, which is given by
\begin{equation*}
\phi_G (x) :=  \mathbbm{1}_{ [- \frac{1}{2} ,0) } (x) +  \mathbbm{1}_{ [\frac{1}{2}, 1] } (x).
\end{equation*}
Loosely speaking, $G$ represents a uniform distribution having a gap in the middle. For $\alpha = \frac{1}{2}$, any $ q \in [0, \frac{1}{2}]$ satisfies $G (q) = \frac{1}{2}$ and is thus an $\frac{1}{2}$-quantile. Choosing a strictly increasing $g$ and defining the scoring function $S$ as in Theorem~\ref{Thm:QuantilesElicitable}, the identity $\bar{S}(0,G) = \bar{S}(q,G)$ holds for any $q \in [0, \frac{1}{2}]$. Hence, $S$ is only $\mathcal{F}$-consistent, but not \textit{strictly} $\mathcal{F}$-consistent if $G \in \mathcal{F}$. \\
A possible solution to this problem is the modification of the functional and the scoring functions such that both are set-valued. This modification is used and discussed in Gneiting~\cite{GneitingPoints} and Fissler and Ziegel~\cite[Remark 2.3]{FissZieg}, among others. To be more precise, fix $\alpha \in (0,1)$ and define the quantile functional for the $\alpha$-quantile via
\begin{equation*}
T: \mathcal{F} \rightarrow \mathscr{P}(\mathbb{R}) , \quad F \mapsto [ F^{\leftarrow} (\alpha) , F^{\rightarrow} (\alpha) ] 
\end{equation*}
such that the functional now maps $F$ to the whole set of $\alpha$-quantiles. A function $S$ is now considered strictly $\mathcal{F}$-consistent if for any $t \in T(F)$, $x \in \mathbb{R}$, we have $\bar{S}(x,F) \geq \bar{S}(t,F)$ and the equality $\bar{S}(x,F) = \bar{S}(t,F)$ implies $x \in T(F)$. Using this definition, the scoring function $S$ of Theorem~\ref{Thm:QuantilesElicitable} is again strictly $\mathcal{F}$-consistent if $g$ is strictly increasing. In order to show this, we fix $\alpha \in (0,1)$, choose any $t \in T(F) = [ F^{\leftarrow} (\alpha) , F^{\rightarrow} (\alpha) ]$ and $x \notin T(F)$ and perform the same calculations for $\bar{S}(x,F) - \bar{S}(t,F) $ as in the proof of Theorem~\ref{Thm:QuantilesElicitable}. Since (\ref{Eqn:DistFunctionInequality}) continues to hold for any $t \in T(F)$, we obtain consistency. To see that even strict consistency holds, we inspect the occurring inequalities again: We assume $t < x$ and suppose that equality holds in the second display, which in particular implies $\mathbb{P}(t < Y <x) = 0$ due to (\ref{Eqn:QuantIneq3}). Hence, for all $\varepsilon>0$ satisfying $\varepsilon < x-t$ we have $F(x-\varepsilon) = F(t) = \alpha$, which is a contradiction to $x \notin T(F)$. Now assume that $t > x$ holds and note that the inequality $\alpha > F(t-)$ can only occur if $t= F^{\leftarrow}(\alpha)$, so in this case the argument is the same as above and Inequality~(\ref{Eqn:QuantIneq2}) is strict. Therefore, assume $\alpha = F(t-)$ and observe that equality in (\ref{Eqn:QuantIneq1}) implies $\mathbb{P}(x < Y < t) = 0$. Consequently, $F(t-\varepsilon) = F(t-) = \alpha$ holds for all $\varepsilon>0$ satisfying $\varepsilon < t-x$, which is again a contradiction to $x \notin T(F)$.
\end{example}

Having established elicitability of quantiles, we now consider an application of this property in regression. We begin with considering linear regression. To this end, let $Y$ be a real-valued and $X$ an $\mathbb{R}^p$-valued random variable. The basic concept of linear regression is based on modelling the mean of the random variable $Y \mid X = x$ as a linear function in $x$, that is $ \mathbb{E}(Y \mid X = x) = x^\top \beta$ for some $\beta \in \mathbb{R}^p$ which has to be estimated. To compute this based on data sets $y \in \mathbb{R}^k$ and $x \in \mathbb{R}^{k \times p}$, the optimization problem
\begin{equation}  \label{Eqn:RegressionMean}
\underset{\beta}{\min} \, \sum_{i=1}^{k} (y_i - x_i^\top \beta )^2
\end{equation}
is solved for an optimal $\beta^*$. Using the theory of Chapter~\ref{Chapter1}, we see why it makes sense to do so. As shown in Example~\ref{Thm:ExMeanIsElicitable}, the function $S(x,y) := (y-x)^2$ is a strictly $\mathcal{F}$-consistent scoring function if the distributions in $\mathcal{F}$ have finite second moments. Consequently, Equation~(\ref{Eqn:RegressionMean}) states that $\beta$ is selected by minimizing the expected score using a scoring function which is strictly consistent for the mean. It is therefore natural to use this minimization problem and choose the scoring function according to the functional we want to apply to the distribution $Y \mid X = x$.
\par
One famous example of this approach is \textit{quantile regression}, which is introduced in Koenker and Bassett~\cite{KoenkerBassett} and compiled in Koenker~\cite{KoenkerQuantRegression}. To illustrate the idea, choose $\alpha \in (0,1)$ and a strictly consistent scoring function $S$ in order to model the $\alpha$-quantile of $ Y \mid X =x$ via $x^\top \beta$. Using Theorem~\ref{Thm:QuantilesElicitable}, the minimization of the empirical score can be stated as
\begin{equation}  \label{Eqn:RegressionQuant}
\underset{\beta}{\min} \, \sum_{i=1}^{k} (\mathbbm{1}_{\lbrace y_i \leq x_i^\top \beta \rbrace} - \alpha) (g(x_i^\top \beta) - g(y_i) )
\end{equation}
for a strictly increasing function $g$. Choosing $\alpha = \frac{1}{2}$ replaces mean regression by median regression, which is a rather old idea to decrease the sensitivity to outliers (see \cite[Sec. 1.2]{KoenkerQuantRegression}).\\
\par
The following paragraph discusses some basics of quantile regression. The first problem which lies in replacing (\ref{Eqn:RegressionMean}) by (\ref{Eqn:RegressionQuant}) is the method of solving the minimization problem. The scoring functions for quantiles are more complicated than quadratic functions and in particular not differentiable. This problem is tackled in \cite[Sec. 1.3]{KoenkerQuantRegression} by choosing $g(x) = x$, which leads to the `pinball' scoring function. The minimization problem for linear quantile regression becomes
\begin{equation*}
\underset{\beta}{\min} \, (1-\alpha) \underset{ \lbrace y_i \leq x_i^\top \beta \rbrace }{\sum} \vert y_i - x_i^\top \beta \vert + \alpha \underset{ \lbrace y_i > x_i^\top \beta \rbrace }{\sum} \vert y_i - x_i^\top \beta \vert ,
\end{equation*}
which can be efficiently solved using linear programming techniques. After the calculation of $\beta^*$ becomes computationally cheap, quantile regression can be used to get a more detailed impression of a dataset. For example,  regression coefficients and plots of regression lines (or curves) can be inspected for different choices of $\alpha$. It can then be compared how the center as well as the lower and upper tail of the distribution $Y \mid X = x$ vary. Another nice property of quantile regression is equivariance with respect to an increasing bijection $h$. This means that if the data $y \in \mathbb{R}^k$ is transformed using $h$, the estimator $\beta^*$ is transformed accordingly. For instance, if quantile regression is performed and $x_i^\top \hat{\beta}$ is interpreted as the $\alpha$-quantile of $h(Y) \mid X = x_i$, Lemma~\ref{Thm:QuantileLemma} (v) states that it is reasonable to interpret $h^{-1}(x_i^\top \hat{\beta})$ as the $\alpha$-quantile of $Y \mid X = x_i$. For details we refer to \cite[Sec. 2.2.3]{KoenkerQuantRegression}.

\subsection{Expectile regression}  
\label{Sec:ExpectileRegression}

Expectiles can be understood as generalizations of quantiles, but simultaneously the mean can also be obtained as a special case. Their definition as well as their name are due to Newey and Powell~\cite{NeweyPowell}, who introduce them in order to define an asymmetric version of the least squares estimation method for linear regression. Using the asymmetric least squares coefficients, they test the error distribution for symmetry and homoscedasticity. Apart from this application, expectiles are also used in risk management, see Chapter~\ref{Chapter3}. 

\begin{definition}  \label{Def:Expectiles}
Let $F$ be a distribution function having a finite first moment. Then for $\tau \in (0,1)$ an $x \in \mathbb{R}$ satisfying
\begin{equation}  \label{Eqn:ExpectileDefinition}
\tau \int_{x}^{\infty} y-x \, \mathrm{d} F(y) = (1-\tau) \int_{-\infty}^{x} x-y \, \mathrm{d}F(y)
\end{equation}
is called a $\tau$\textit{-expectile} of $F$ and denoted via $e_\tau (F)$.
\end{definition}

\begin{lemma}  \label{Thm:ExpectileExists}
For $\tau \in (0,1)$ and $F$ having finite expectation there is exactly one $\tau$-expectile.
\end{lemma}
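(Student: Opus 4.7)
The plan is to recast the defining equation (\ref{Eqn:ExpectileDefinition}) as $g(x)=0$ for the function
\begin{equation*}
g(x) := \tau \int_{x}^{\infty} (y-x) \, \mathrm{d}F(y) - (1-\tau) \int_{-\infty}^{x} (x-y) \, \mathrm{d}F(y),
\end{equation*}
and then to establish that $g$ is continuous, strictly decreasing, and changes sign, which via the intermediate value theorem yields a unique zero. With $Y=^d F$ under $\mathbb{P}$ and using the short notation $(a)^+ := \max(a,0)$, we have $g(x) = \tau \, \mathbb{E}(Y-x)^+ - (1-\tau)\, \mathbb{E}(x-Y)^+$; the finite first moment assumption guarantees that both expectations are finite for every $x$, so $g$ is well-defined.

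First I would verify continuity of $g$. For any convergent sequence $x_n \to x$ we have the pointwise limits $(Y-x_n)^+ \to (Y-x)^+$ and $(x_n-Y)^+ \to (x-Y)^+$, and on any compact neighborhood of $x$ these are dominated by $|Y| + C$ for a suitable constant $C$, which is $\mathbb{P}$-integrable by assumption. Dominated convergence then gives continuity of $g$. Next I would check the boundary behavior: as $x \to -\infty$, the identity $\mathbb{E}(Y-x)^+ = \int_x^\infty y\,\mathrm{d}F(y) - x(1-F(x))$ shows that the first term tends to $+\infty$, while the bound $\mathbb{E}(x-Y)^+ \leq \int_{-\infty}^{x}|y|\,\mathrm{d}F(y) + |x|F(x) \leq 2\int_{-\infty}^{x}|y|\,\mathrm{d}F(y)$ (using $|x| \leq |y|$ on $\{y\leq x\}$ for $x < 0$) shows the second term tends to $0$. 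Hence $g(x) \to +\infty$. A symmetric argument gives $g(x) \to -\infty$ as $x \to +\infty$.

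The key step is strict monotonicity of $g$. For $x_1 < x_2$, a case analysis on the three events $\{Y \leq x_1\}$, $\{x_1 < Y \leq x_2\}$, $\{Y > x_2\}$ yields
\begin{equation*}
(Y-x_2)^+ - (Y-x_1)^+ \in [-(x_2-x_1), 0] \quad \text{and} \quad (x_2-Y)^+ - (x_1-Y)^+ \in [0, x_2-x_1],
\end{equation*}
with the first difference strictly negative on $\{Y > x_1\}$ and the second strictly positive on $\{Y < x_2\}$. Taking expectations, both $\tau\bigl(\mathbb{E}(Y-x_2)^+ - \mathbb{E}(Y-x_1)^+\bigr) \leq 0$ and $-(1-\tau)\bigl(\mathbb{E}(x_2-Y)^+ - \mathbb{E}(x_1-Y)^+\bigr) \leq 0$. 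If $\mathbb{P}(Y > x_1) > 0$ the first summand is strictly negative; otherwise $Y \leq x_1 < x_2$ almost surely, forcing $\mathbb{P}(Y < x_2) = 1$ and making the second summand strictly negative. Either way $g(x_2) < g(x_1)$, so $g$ is strictly decreasing. Combined with continuity and the sign change, the intermediate value theorem provides a unique $x \in \mathbb{R}$ with $g(x) = 0$.

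The main obstacle is the monotonicity argument: one cannot differentiate $g$ under the integral without assuming a density for $F$, so the two-sided case distinction based on whether $F$ has mass to the right of $x_1$ (or to the left of $x_2$) is essential to cover degenerate situations, for instance when $F$ is a Dirac measure. The limit computations also require care at $-\infty$ because both sides of (\ref{Eqn:ExpectileDefinition}) involve integrals that depend on $x$ both through the integrand and through the domain of integration, but the bound $|x|F(x) \leq \int_{-\infty}^{x}|y|\,\mathrm{d}F(y)$ for $x < 0$ circumvents this.
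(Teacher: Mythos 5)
Your proof is correct and follows essentially the same route as the paper (which itself follows Newey and Powell): show the defining function is continuous via dominated convergence, strictly decreasing, and diverges to $\pm\infty$ at $\mp\infty$, then conclude by the intermediate value theorem. One small improvement on your side: the paper obtains strictness only when $\mathbb{P}(Y>b)>0$ or $\mathbb{P}(Y<a)>0$ and then informally restricts to the interior of the support, whereas your two-sided case distinction (if there is no mass right of $x_1$ then $Y\leq x_1<x_2$ a.s., forcing the second term to be strict) handles degenerate distributions such as Dirac measures cleanly without that restriction.
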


\begin{proof}
This proof follows \cite[Thm. 1]{NeweyPowell}. Fix $\tau \in (0,1)$, $F$ having finite first moments and let $Y$ be a random variable on $(\Omega, \mathscr{A}, \mathbb{P})$ such that $Y =^d F$. Define the functions $G_1, G_2 : \mathbb{R} \rightarrow \mathbb{R}_+$ via
\begin{equation*}
G_1(x) := \mathbb{E} \mathbbm{1}_{ (x, \infty) }(Y) (Y-x)  \quad \text{and} \quad G_2(x) := \mathbb{E} \mathbbm{1}_{ (-\infty, x) }(Y) (x-Y)
\end{equation*}
in order to represent the left- and right-hand side of Equation~(\ref{Eqn:ExpectileDefinition}). Now fix $x \in \mathbb{R}$ and let $(x_n)_{n \in \mathbb{N}}$ be a sequence converging to $x$. Then there are $z_1, z_2 \in \mathbb{R}$ such that $ Y-x_n \leq Y-z_1$ and $x_n - Y \leq z_2 -Y $ hold for all $n \in \mathbb{N}$. Using this and the fact that $\mathbb{E}\vert Y \vert < \infty$ we apply dominated convergence and obtain that both $G_1$ and $G_2$ are continuous. Moreover, for $a < b$ we have the inequality
\begin{equation*}
G_1(a) \geq \mathbb{E} \mathbbm{1}_{(b, \infty)}(Y) (Y-a) \geq \mathbb{E} \mathbbm{1}_{(b, \infty)} (Y)(Y-b) = G_1(b)
\end{equation*}
so $G_1$ is decreasing. Similarly, we have $G_2(a) \leq G_2(b)$ hence $G_2$ is increasing. The inequalities for $G_1$ or $G_2$ are strict if $\mathbb{P}(Y \in (b, \infty))$ or $\mathbb{P}(Y \in (-\infty, a))$ are strictly positive, respectively. Moreover, we use $\mathbb{E}\vert Y \vert < \infty$ to compute the limits $\lim_{x \rightarrow -\infty} G_1(x) = \infty$ and $\lim_{x \rightarrow \infty} G_2(x) = \infty$. Similarly, dominated convergence and a positive sequence $(x_n)_{n \in \mathbb{N}}$ with $x_n \rightarrow \infty$ can be used to show
\begin{equation*}
0 \leq \underset{x \rightarrow \infty}{\lim} G_1(x) = \underset{n \rightarrow \infty}{\lim} \mathbb{E} \mathbbm{1}_{ (x_n, \infty) }(Y) (Y-x_n)  \leq \underset{n \rightarrow \infty}{\lim} \mathbb{E} \mathbbm{1}_{(x_n, \infty)} (Y) Y = 0.
\end{equation*}
In the same way $\lim_{x \rightarrow -\infty} G_2(x) = 0$ is proved. All in all, strict monotonicity and the intermediate value theorem give a unique  $x$ in the interior of the support of $Y$ such that $\tau G_1(x) = (1-\tau) G_2(x)$. This value is the unique $\tau$-expectile of $F$.
\end{proof}

In the following, if a random variable $X$ has distribution function $F$, we also denote the $\tau$-expectile via $e_\tau(X)$. The term `expectile' highlights the fact that expectiles share properties of expectations as well as quantiles. The latter becomes obvious when considering a continuous distribution function $F$. In this case, the $\alpha$-quantile $q$ satisfies the equation $\alpha (1- F(q)) = (1-\alpha) F(q)$, which is similar to the expectile Identity~(\ref{Eqn:ExpectileDefinition}). Moreover, expectiles and quantiles share the three properties stated in the next lemma. To prove them for quantiles, we use part (v) of Lemma~\ref{Thm:QuantileLemma} for the first property and Lemma~\ref{Thm:AppendixQuantileLR} for the third one. The second follows from the definition of the quantile. The properties and their proof can also be found in \cite{NeweyPowell}.

\begin{lemma}  \label{Thm:ExpectileProperties}
Let $Y =^d F$ be a random variable with finite first moment. Then the following hold:
\begin{enumerate}[label=(\roman*)]
\item For $s \in \mathbb{R}_+$, $t \in \mathbb{R}$ and $\tilde{Y} := sY + t$ we have $e_\tau (\tilde{Y}) = s e_\tau (Y) + t$.
\item For $\tau_1 \leq \tau_2$ we have $e_{\tau_1} (Y) \leq e_{\tau_2} (Y)$.
\item $e_\tau (-Y) = - e_{1-\tau} (Y)$.
\end{enumerate}
\end{lemma}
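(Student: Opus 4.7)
The plan is to exploit the uniqueness of the $\tau$-expectile established in Lemma~\ref{Thm:ExpectileExists}: to prove that some point $z$ equals $e_\tau(\tilde Y)$ (or $e_\tau(-Y)$), I only have to verify that $z$ satisfies the defining Equation~(\ref{Eqn:ExpectileDefinition}) for the corresponding distribution. Throughout, I will use the compact reformulation
\begin{equation*}
\tau \, \mathbb{E}(Y-x)_+ = (1-\tau) \, \mathbb{E}(x-Y)_+,
\end{equation*}
which is just (\ref{Eqn:ExpectileDefinition}) rewritten, and the auxiliary functions $G_1(x) = \mathbb{E}(Y-x)_+$ and $G_2(x) = \mathbb{E}(x-Y)_+$ from the proof of Lemma~\ref{Thm:ExpectileExists}.

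For (i), I set $x := e_\tau(Y)$ and compute $(\tilde Y - (sx+t))_+ = (sY + t - sx - t)_+ = s(Y-x)_+$ using $s \geq 0$, and analogously $((sx+t) - \tilde Y)_+ = s(x-Y)_+$. Multiplying the defining identity for $e_\tau(Y)$ by $s$ thus shows that $sx + t$ solves the defining equation for $\tilde Y$, so uniqueness gives $e_\tau(\tilde Y) = s\,e_\tau(Y) + t$. For (iii), I set $x := e_{1-\tau}(Y)$, so $(1-\tau)\mathbb{E}(Y-x)_+ = \tau\mathbb{E}(x-Y)_+$. Plugging $-x$ into the defining equation for $-Y$ gives $\tau \mathbb{E}(-Y-(-x))_+ = \tau \mathbb{E}(x-Y)_+$ and $(1-\tau)\mathbb{E}((-x)-(-Y))_+ = (1-\tau)\mathbb{E}(Y-x)_+$, and these two sides agree by the choice of $x$; uniqueness yields $e_\tau(-Y) = -e_{1-\tau}(Y)$.

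For (ii), the natural tool is the function $h_\tau(x) := \tau G_1(x) - (1-\tau) G_2(x)$, whose unique zero is exactly $e_\tau(Y)$. From Lemma~\ref{Thm:ExpectileExists} we know that $G_1$ is decreasing and $G_2$ is increasing (strictly so on the interior of the support), hence $h_\tau$ is decreasing in $x$ for every fixed $\tau$. On the other hand, for each fixed $x$ the map $\tau \mapsto h_\tau(x) = \tau(G_1(x) + G_2(x)) - G_2(x)$ is non-decreasing since $G_1(x) + G_2(x) \geq 0$. Given $\tau_1 \leq \tau_2$, set $e_i := e_{\tau_i}(Y)$; then
\begin{equation*}
h_{\tau_1}(e_2) \;\leq\; h_{\tau_2}(e_2) \;=\; 0 \;=\; h_{\tau_1}(e_1),
\end{equation*}
and the monotonicity of $h_{\tau_1}$ in its argument forces $e_1 \leq e_2$, which is the claim. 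The only subtlety, and the one place where I would be careful, is that $h_{\tau_1}$ is guaranteed to be strictly decreasing only on the interior of the support of $Y$; but since $e_{\tau_1}(Y)$ lies in that interior (as shown in the proof of Lemma~\ref{Thm:ExpectileExists}), comparing $h_{\tau_1}(e_2)$ with $h_{\tau_1}(e_1) = 0$ still yields $e_1 \leq e_2$, which is all we need.
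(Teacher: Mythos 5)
Your proof is correct and follows essentially the same route as the paper: parts (i) and (iii) verify the defining equation~(\ref{Eqn:ExpectileDefinition}) at the claimed point and invoke uniqueness from Lemma~\ref{Thm:ExpectileExists}, while part (ii) rests on the same two monotonicity observations (in $\tau$ and in $x$) that the paper makes. Your introduction of $h_\tau(x) = \tau G_1(x) - (1-\tau)G_2(x)$ and the explicit chain $h_{\tau_1}(e_2) \leq h_{\tau_2}(e_2) = 0 = h_{\tau_1}(e_1)$ merely spells out a step the paper leaves to the reader.
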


\begin{proof}
To see (i), observe that $\tilde{Y} < e_\tau(\tilde{Y})  \Leftrightarrow Y < e_\tau(Y)$ since $s$ is positive and $\tilde{Y} - e_\tau(\tilde{Y}) = s (Y - e_\tau(\tilde{Y}))$. For (ii) consider the expectile Identity~(\ref{Eqn:ExpectileDefinition}) and observe that the left-hand side is increasing in $\tau$ while the right-hand side is decreasing. The opposite monotonicity properties hold in $x$, as shown in the proof of Lemma~\ref{Thm:ExpectileExists}, and thus $e_{\tau_1}(Y) \leq e_{\tau_2}(Y)$ holds for $\tau_1 \leq \tau_2$. In order to show (iii), take Identity~(\ref{Eqn:ExpectileDefinition}) for $-Y$ and plug in $x=e_{1-\tau} (Y)$. This gives
\begin{align*}
\tau \mathbb{E}( -Y + e_{1-\tau} (Y) )^+ &= \tau \mathbb{E} (e_{1-\tau} (Y) - Y )^+ \\
&= (1- \tau) \mathbb{E}(Y - e_{1-\tau} (Y) )^+ = (1-\tau) \mathbb{E}( - e_{1-\tau} (Y) - (-Y) )^+ 
\end{align*}
and because expectiles are unique, the relation $e_\tau (-Y) = - e_{1-\tau} (Y)$ follows.  
\end{proof}

To look at the previous discussion in a different light, recall that the median is the $\frac{1}{2}$-quantile and is thus the quantile which represents some notion of a `center' of the distribution. Similarly, the mean is the $\frac{1}{2}$-expectile and describes a different notion of center. We thus say that the expectiles generalize the mean in the same way as the quantiles generalize the median. As a consequence, we use similar techniques to elicit expectiles as we use for quantiles and expectations in Theorem~\ref{Thm:QuantilesElicitable} and Example~\ref{Thm:ExMeanIsElicitable}. As before, the derivation of an identification function is straightforward, see also Gneiting~\cite[Table 9]{GneitingPoints}.

\begin{lemma}  \label{Thm:ExpectileIdentifiable}
Fix $\tau \in (0,1)$ and let $\mathcal{F}$ be a class of distribution functions having finite first moments. Then an oriented strict $\mathcal{F}$-identification function for the functional $T: \mathcal{F} \rightarrow \mathbb{R}$, $F \mapsto e_\tau (F)$ is given by $V(x,y) := \vert \mathbbm{1}_{\lbrace y \leq x \rbrace} - \tau \vert (x-y)$.
\end{lemma}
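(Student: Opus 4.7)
The plan is to compute $\bar V(x,F)$ in closed form, recognize it as the defining equation of the $\tau$-expectile, and then exploit monotonicity together with uniqueness (Lemma~\ref{Thm:ExpectileExists}) to deduce orientation. First I would split the integrand according to the sign of $y-x$: on $\{y \leq x\}$ the prefactor $|\mathbbm{1}_{\{y\leq x\}} - \tau|$ equals $1-\tau$, while on $\{y>x\}$ it equals $\tau$. This yields
\[
\bar V(x,F) \;=\; (1-\tau)\int_{-\infty}^{x}(x-y)\,\mathrm d F(y) \;-\; \tau\int_{x}^{\infty}(y-x)\,\mathrm d F(y) \;=\; (1-\tau)G_2(x) - \tau G_1(x),
\]
with $G_1,G_2$ as in the proof of Lemma~\ref{Thm:ExpectileExists}. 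Integrability of $V$ follows directly from $|V(x,y)|\le|x|+|y|$ and the assumption that $\mathcal{F}$ has finite first moments.

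The equation $\bar V(x,F)=0$ is now literally the expectile Identity~(\ref{Eqn:ExpectileDefinition}). By Lemma~\ref{Thm:ExpectileExists} the only $x\in\mathbb{R}$ that satisfies this identity is $x = e_\tau(F)$, which establishes the strict identification property.

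For orientation, I would invoke the monotonicity analysis carried out inside the proof of Lemma~\ref{Thm:ExpectileExists}: $G_1$ is decreasing and $G_2$ is increasing on $\mathbb{R}$, so $\bar V(\cdot,F) = (1-\tau)G_2-\tau G_1$ is non-decreasing, and it is continuous by the same dominated convergence argument given there. Combined with the fact that $e_\tau(F)$ is the unique zero, this forces $\bar V(x,F)>0$ for every $x>e_\tau(F)$ and $\bar V(x,F)<0$ for every $x<e_\tau(F)$: indeed, if $\bar V(x,F)=0$ at some $x>e_\tau(F)$, monotonicity and continuity would force $\bar V(\cdot,F)\equiv 0$ on $[e_\tau(F),x]$, meaning every point of that interval is a $\tau$-expectile, contradicting uniqueness. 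In the one-dimensional version of Definition~\ref{Def:Orientation} with $\mathbb{S}^0=\{-1,1\}$ and $s = v(x - e_\tau(F))$, this is exactly the orientation condition $v^\top\bar V(e_\tau(F)+sv,F) > 0 \Leftrightarrow s>0$.

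There is no genuine obstacle: the only mildly delicate point is that strict monotonicity of $\bar V(\cdot,F)$ can fail on gaps in the support of $F$, which is why I route the orientation argument through uniqueness of the expectile rather than trying to prove strict monotonicity everywhere.
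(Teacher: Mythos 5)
Your argument is correct and matches the paper's proof: both recognize $\bar V(x,F)=0$ as the expectile identity and invoke Lemma~\ref{Thm:ExpectileExists} for strict identification, and both use the monotonicity of $G_1$ and $G_2$ from that lemma's proof for orientation. Your routing of the orientation step through uniqueness of the expectile is slightly more careful than the paper's passage directly from weak monotonicity to a strict inequality, which silently relies on $e_\tau(F)$ lying in the interior of the support of $F$ so that $G_2$ is strictly increasing there.
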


\begin{proof}
For $\tau \in (0,1)$ and $F \in \mathcal{F}$ given, fix a random variable $Y =^d F$. Rearranging Equation~(\ref{Eqn:ExpectileDefinition}) gives
\begin{align}  \label{Eqn:ExpectileIdentification}
\tau \mathbb{E} \mathbbm{1}_{\lbrace Y > x \rbrace} (Y - x) &= (1 - \tau) \mathbb{E} \mathbbm{1}_{\lbrace Y < x \rbrace} (x-Y) \\
\Leftrightarrow \qquad \mathbb{E} \vert \mathbbm{1}_{\lbrace Y \leq x \rbrace} - \tau \vert	(x-Y) &= 0 , \nonumber
\end{align}
showing that $V$ is a strict $\mathcal{F}$-identification function. To prove its orientation, recall from the proof of Lemma~\ref{Thm:ExpectileExists} that the left-hand side of (\ref{Eqn:ExpectileDefinition}) is decreasing while the right-hand side is increasing in $x$. Hence, $x > e_\tau (F)$ implies that ``$<$'' replaces ``$=$'' in Equation~(\ref{Eqn:ExpectileIdentification}), which shows $\bar{V}(x,F) > 0$. For $x < e_\tau(F)$ the same arguments give $\bar{V}(x,F) < 0$, showing that $V$ is oriented.
\end{proof}

Inspired by the previous results, the first approach for a strictly $\mathcal{F}$-consistent scoring function is
\begin{equation*}
S(x,y) := \vert \mathbbm{1}_{\lbrace y \leq x \rbrace} - \tau \vert (y-x)^2
\end{equation*}
and $S$ is indeed strictly consistent. However, it requires all distributions in $\mathcal{F}$ to have finite second moments. This requirement can be relaxed in the same way as in Example~\ref{Thm:ExMeanIsElicitable}, as shown in the following theorem. The result and its proof are part of \cite[Thm. 10]{GneitingPoints}.

\begin{theorem}  \label{Thm:ExpectileElicitable}
Fix $\tau \in (0,1)$ and let $\mathcal{F}$ be a class of distribution functions having finite first moments. Then the functional $T: \mathcal{F} \rightarrow \mathbb{R}$, $F \mapsto e_\tau (F)$ is elicitable and an $\mathcal{F}$-consistent scoring function is given by
\begin{equation*}
S(x,y) = \vert \mathbbm{1}_{\lbrace y \leq x \rbrace} - \tau \vert (f(y) - f(x) - f'(x) (y-x) ),
\end{equation*}
where $f$ is a convex and $\mathcal{F}$-integrable function. If $f$ is strictly convex, $S$ is strictly consistent.
\end{theorem}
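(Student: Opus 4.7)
The plan is to mirror the Bregman-divergence argument used for the mean in Example~\ref{Thm:ExMeanIsElicitable} and Theorem~\ref{Thm:GenExpectElicit}, now with the asymmetric weight $w(x,y) := |\mathbbm{1}_{y \leq x} - \tau|$ present. Write $\phi(x,y) := f(y) - f(x) - f'(x)(y-x)$, so that $S(x,y) = w(x,y) \phi(x,y)$. Fix $F \in \mathcal{F}$, let $Y =^d F$, and set $t := e_\tau(F)$. Two ingredients drive the calculation: the algebraic identity
\[
\phi(x,y) - \phi(t,y) = \phi(x,t) + (f'(t) - f'(x))(y-t),
\]
which follows by direct expansion, and the expectile identification identity $\mathbb{E}[w(t,Y)(Y-t)] = 0$ from Lemma~\ref{Thm:ExpectileIdentifiable}.

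Substituting the first identity into $\bar{S}(x,F) - \bar{S}(t,F) = \mathbb{E}[w(x,Y)\phi(x,Y) - w(t,Y)\phi(t,Y)]$ and using the second to eliminate a term of the form $(f'(t) - f'(x))\mathbb{E}[w(t,Y)(Y-t)]$, I expect after rearrangement the clean decomposition
\[
\bar{S}(x,F) - \bar{S}(t,F) = \phi(x,t)\, \mathbb{E}[w(t,Y)] + \mathbb{E}\bigl[(w(x,Y) - w(t,Y))\phi(x,Y)\bigr].
\]
The first summand is nonnegative, with $\phi(x,t) > 0$ for $x \neq t$ under strict convexity of $f$, while $\mathbb{E}[w(t,Y)] \geq \min(\tau, 1-\tau) > 0$ always. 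The second summand is sign-indefinite: writing $w(x,y) - w(t,y) = (1-2\tau)(\mathbbm{1}_{y \leq x} - \mathbbm{1}_{y \leq t})$, it is supported on the interval $I$ with endpoints $t$ and $x$, and when the sign of $1-2\tau$ is favorable it is nonnegative outright.

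The hard part will be the opposite-sign regime. Here I would exploit the monotonicity $\phi(x,y) \leq \phi(x,t)$ for every $y \in I$, which follows from the representation $\phi(x,y) = \bigl|\int_x^y (f'(u) - f'(x))\,\mathrm{d}u\bigr|$ together with the monotonicity of $f'$. Bounding the cross-term via this inequality and combining with the algebraic relation $\mathbb{E}[w(x,Y)] - \mathbb{E}[w(t,Y)] = (1-2\tau)(F(x) - F(t))$, the sign-indefinite piece gets absorbed into the first summand and one arrives at
\[
\bar{S}(x,F) - \bar{S}(t,F) \geq \phi(x,t)\, \mathbb{E}[w(x,Y)] \geq 0.
\]
Strict inequality for $x \neq t$ under strict convexity of $f$ then follows immediately since $\phi(x,t) > 0$ and $\mathbb{E}[w(x,Y)] \geq \min(\tau, 1-\tau) > 0$. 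As an alternative route in the smoother case $f \in C^2$, one could bypass the case analysis entirely by computing $\partial_x S(x,y) = f''(x) V(x,y)$ with $V$ the oriented strict identification function of Lemma~\ref{Thm:ExpectileIdentifiable}, and then invoking Lemma~\ref{Thm:SufficientCondElicit}(ii); the monotonicity of $\phi(x,\cdot)$ across $I$ is precisely what rescues the argument when $f$ need not be $C^2$.
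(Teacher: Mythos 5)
Your decomposition is correct, and the argument is sound; the route differs from the paper's in how the bookkeeping is organized. The paper partitions $\Omega$ into the three events $A = \{Y \leq x\}$, $B = \{x < Y \leq t\}$, $C = \{Y > t\}$, expands $\bar{S}(x,F)-\bar{S}(t,F)$ summand by summand, applies the expectile identity~(\ref{Eqn:ExpectileDefABC}) to cancel the $(f'(t)-f'(x))$-terms, and on the middle region $B$ uses the pointwise convexity bound $f'(x)(Y-t)\geq f'(Y)(Y-t)$ to produce a nonnegative remainder. You instead keep the calculation global: add and subtract $w(t,Y)\phi(x,Y)$, apply the same algebraic identity and the identification Lemma~\ref{Thm:ExpectileIdentifiable}, and arrive at the two-term expression $\phi(x,t)\,\mathbb{E}[w(t,Y)] + \mathbb{E}[(w(x,Y)-w(t,Y))\phi(x,Y)]$. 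The cross-term, supported on the interval between $x$ and $t$, is then absorbed via the uniform bound $\phi(x,y)\leq\phi(x,t)$ — a monotonicity fact that follows from the integral representation $\phi(x,y)=\int_x^y(f'(u)-f'(x))\,\mathrm{d}u$ — together with the simple relation $\mathbb{E}[w(x,Y)]-\mathbb{E}[w(t,Y)]=(1-2\tau)(F(x)-F(t))$. What your approach buys is a single closing inequality $\bar{S}(x,F)-\bar{S}(t,F)\geq\phi(x,t)\,\mathbb{E}[w(x,Y)]$ (or $\geq\phi(x,t)\,\mathbb{E}[w(t,Y)]$ in the favorable sign regime) that makes strict consistency immediate, without the paper's nested case analysis over both $x<t$ vs.\ $x>t$ and the sign structure inside $B$. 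What it costs is essentially nothing; the ingredients are the same (Bregman identity, expectile identification, monotonicity of $f'$), just rearranged. Your closing remark on the $C^2$ case — $\partial_x S(x,y) = f''(x)\,V(x,y)$ with $V$ the oriented identification function, then Lemma~\ref{Thm:SufficientCondElicit}(ii) — is also correct and is the standard shortcut when one may assume twice differentiability and a domination hypothesis on the partial derivatives.
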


\begin{remark}
Under additional assumptions it is also possible to show that all strictly consistent scoring functions for the $\tau$-expectile can be represented as in Theorem~\ref{Thm:ExpectileElicitable}, see also Remark~\ref{Rem:MeanNecessaryCond}. For details we refer to \cite[Thm. 10]{GneitingPoints}.
\end{remark}

\begin{proof}
For $\tau \in (0,1)$ and $F \in \mathcal{F}$ given, let $Y$ be a random variable on $(\Omega, \mathscr{A}, \mathbb{P})$ with $Y =^d F$ under $\mathbb{P}$. Moreover, set $t:= e_\tau (F)$, take $x \in \mathbb{R}$ and choose an $\mathcal{F}$-integrable convex function $f$. The latter can always be done since the first moment exists for all $F \in \mathcal{F}$. Now consider the case $x < t$ and define the sets $A:= \lbrace Y \in (-\infty, x] \rbrace$, $B:= \lbrace Y \in (x,t] \rbrace$ and $C:= \lbrace Y \in (t, \infty) \rbrace$. The difference $\bar{S}(x,F) - \bar{S}(t, F)$ can then be split up into three parts by using $A \uplus B \uplus C = \Omega$. Note that Equation~(\ref{Eqn:ExpectileDefinition}) immediately implies
\begin{equation}  \label{Eqn:ExpectileDefABC}
\tau \mathbb{E} \mathbbm{1}_C (Y -t) = (1-\tau) \mathbb{E} \mathbbm{1}_{A \cup B} (t -Y) .
\end{equation}
Additionally, define the function
\begin{equation*}
g(y,x) := f(y) - f(x) - f'(x) (y-x),
\end{equation*}
which is nonnegative due to the convexity of $f$. Moreover, we obtain the equality
\begin{equation}  \label{Eqn:ExpectilegTransform}
g(y,x) - g(y,t) = g(t,x) + (f'(t) - f'(x)) (y-t)
\end{equation}
for any $x,y,t \in \mathbb{R}$. Using these preparations, we now calculate 
\begin{align*}
\bar{S}(x,F) - \bar{S}(t,F) &= \mathbb{E} \vert \mathbbm{1}_{\lbrace Y \leq x \rbrace} - \tau \vert g(Y,x) - \mathbb{E} \vert \mathbbm{1}_{\lbrace Y \leq t \rbrace} - \tau \vert g(Y,t) \\
&= (1- \tau) \mathbb{E}( g(Y,x) - g(Y,t) ) \mathbbm{1}_A + \tau \mathbb{E} (g(Y,x) - g(Y,t) ) \mathbbm{1}_C \\
	&\quad + \mathbb{E} (\tau g(Y,x) - (1-\tau) g(Y,t) ) \mathbbm{1}_B \\
&= (1-\tau) \left[ g(t,x) \mathbb{P}(A) + (f'(t) - f'(x)) \mathbb{E}(Y-t) \mathbbm{1}_A \right] \\
	&\quad + \tau \left[ g(t,x) \mathbb{P}(C) + (f'(t) - f'(x)) \mathbb{E}(Y-t) \mathbbm{1}_C \right] \\
	&\quad + \mathbb{E}[ \tau g(Y,x) - (1-\tau)(f(Y) - f(t) - f'(x)(Y-t)) ] \mathbbm{1}_B \\
	&\quad +  (1-\tau) (f'(t) - f'(x)) \mathbb{E}(Y-t)\mathbbm{1}_B  ,
\end{align*}
where (\ref{Eqn:ExpectilegTransform}) is used in the last step. Due to Identity~(\ref{Eqn:ExpectileDefABC}), all expectations which are multiplied by $(f'(t) - f'(x))$ vanish. Moreover, $x <Y \leq t$ holds on $B$ and thus $f'(x) (Y-t) \geq f'(Y) (Y-t)$ follows from the convexity of $f$. All in all, this implies 
\begin{align*}
\bar{S}(x,F) - \bar{S}(t,F) &= (1-\tau) g(t,x) \mathbb{P}(A) + \tau g(t,x) \mathbb{P}(C) \\
&\quad + \mathbb{E}[ \tau g(Y,x) + (1-\tau)(f(t) - f(Y) + f'(x)(Y-t)) ] \mathbbm{1}_B \\
&\geq g(t,x) [ (1-\tau) \mathbb{P}(A) + \tau \mathbb{P}(C) ] \\ 
&\quad + \mathbb{E}( \tau g(Y,x) + (1-\tau) g(t,Y) ) \mathbbm{1}_B \geq 0,
\end{align*}
which shows $\mathcal{F}$-consistency in the case $t< x$. If we have $t > x$ instead, we proceed as follows. Firstly, switch the roles of $t$ and $x$ in the definitions of $A$, $B$ and $C$ and do the same calculations as above. Since then $t <Y \leq x$ on $B$ and because $f'(x) (Y-t) \geq f'(Y) (Y-t)$ still holds, the case $t>x$ is also done. Whenever $f$ is chosen strictly convex, $f'$ is strictly increasing implying $g(y,x) > 0$ for $y \neq x$ and $f'(x) (Y-t) > f'(Y) (Y-t)$ on $B$. Consequently, at least one of the inequalities in our calculations needs to be strict because not all three sets $A$, $B$ and $C$ can have probability zero. Hence, strict convexity of $f$ implies strict consistency of $S$.
\end{proof}

Similar to least squares or quantile regression, we can now use any $\mathcal{F}$-consistent scoring functions $S$ which has the form given in Theorem~\ref{Thm:ExpectileElicitable} to perform \textit{expectile regression}. This is done by Newey and Powell~\cite{NeweyPowell} who follow Koenker and Bassett~\cite{KoenkerBassett} and chose the strictly convex function $f(x) = x^2$. As discussed in the previous subsection, they model the $\tau$-expectile of $Y \mid X = x$ as a linear function $x^\top \beta$ and solve the minimization problem
\begin{equation*}
\underset{\beta}{\min} \, (1-\tau) \underset{ \lbrace y_i \leq x_i^\top \beta \rbrace }{\sum} (y_i - x_i^\top \beta)^2 + \tau \underset{ \lbrace y_i > x_i^\top \beta \rbrace }{\sum}  (y_i - x_i^\top \beta)^2
\end{equation*}
for given data $y \in \mathbb{R}^k$ and $x \in \mathbb{R}^{k \times p}$. Although \cite{NeweyPowell} use expectile regression to examine the errors of an ordinary least squares regression, it can also be used to get an impression of the data. Similar to quantile regression, expectile regressions can be performed and plotted for several values of $\tau$ to see how the center and the tails behave. Moreover, expectiles have also an equivariance property, but only for affine transformations, as shown in part (i) of Lemma~\ref{Thm:ExpectileProperties}.
\par 
It is sometimes remarked that $\tau$-expectiles for $\tau \neq \frac{1}{2}$ are not easy to interpret. One interpretation presented by Ehm et al.~\cite{EhmGneiting} is the following: In a situation with a tax rate for gains and a deduction rate for losses, one should invest an amount $\theta$ in a start up company having payoff distribution $F$ only if a certain expectile of $F$ exceeds $\theta$. Another interpretation can be found in a risk measurement context, see Subsection~\ref{Sec:ExpectileVaR}.

\section{M-estimation} 

One of the most important parametric methods in estimation is maximum likelihood where the estimate is chosen such that it maximizes the likelihood function. If the likelihood function is differentiable, the parameter for which the derivative of the likelihood function vanishes is selected. One natural extension is to use functions which differ from the likelihood function, but lead to estimators having better statistical properties. This approach is called M-estimation and this section presents some results and examples which can be found in Huber and Ronchetti~\cite{HuberRonchetti} as well as van der Vaart~\cite{vanderVaart}. In the following, let $Y_1, \ldots, Y_n$ be random variables on $(\Omega, \mathscr{A}, \mathbb{P})$ taking values in $\mathsf{O} \subset \mathbb{R}^d$ and let $\Theta \subset \mathbb{R}^k$ be the parameter space.

\begin{definition}  \label{Def:MEstimator}
Given $n \in \mathbb{N}$, observations $Y_1, \ldots Y_n$ and a function $m : \Theta \times \mathsf{O} \rightarrow \mathbb{R}$ an estimator $\hat{\theta}(Y_1, \ldots, Y_n)$ which maximizes the function
\begin{equation*}
M_n : \mathsf{\Theta} \rightarrow \mathbb{R} , \quad \theta \mapsto M_n (\theta) := \frac{1}{n} \sum_{i=1}^{n} m(\theta, Y_i)
\end{equation*}
is called an \textit{M-estimator}.
\end{definition}

Similar to maximum likelihood estimation, it is sometimes possible to define an M-estimator to be a root of a certain equation. Loosely speaking, this corresponds to setting the derivative of $m$ with respect to $\theta$ equal to zero. Following \cite{vanderVaart}, we use a different term for such estimator.

\begin{definition}  \label{Def:ZEstimator}
Given $n \in \mathbb{N}$, observations $Y_1, \ldots Y_n$ and a function $\psi : \Theta \times \mathsf{O} \rightarrow \mathbb{R}^k$ an estimator $\hat{\theta}(Y_1, \ldots, Y_n)$ which is a root of the function
\begin{equation*}
Z_n : \Theta \rightarrow \mathbb{R}^k, \quad \theta \mapsto Z_n(\theta) := \frac{1}{n} \sum_{i=1}^{n} \psi (\theta, Y_i)
\end{equation*}
is called a \textit{Z-estimator}.
\end{definition}

\begin{remark}  \label{Rem:MEstimateElicConnection}
The connection of these two definitions to elicitability and identifiability becomes clear if the symbols $\frac{1}{n} \sum$ are replaced by $\mathbb{E}$ (which holds approximately for large $n$ if the law of large numbers is satisfied). Then, in the framework of Chapter~\ref{Chapter1}, $m$ can be interpreted as a scoring function (up to the sign) and $\psi$ can be interpreted as an identification function. Similarly, the estimate $\hat{\theta}_n$ then represents the value of a functional $T$. Below we see that such a correspondence is indeed justified under certain conditions. Moreover, we argue that in order to estimate the value $T(F)$ for a functional $T$, it is sensible to use M-estimators for elicitable and Z-estimators for identifiable functionals.
\end{remark}

The following three examples illustrate the concepts of M- and Z-estimation. The first example shows how maximum likelihood fits in the framework of M-estimation, see also van der Vaart~\cite[Example 5.3]{vanderVaart}.

\begin{example}
Let $Y_1, \ldots, Y_n$ be i.i.d. random variables having density $y \mapsto f(\theta, y)$, where $\theta \in \Theta$ parametrizes the distribution. The maximum likelihood method maximizes the likelihood function $\theta \mapsto \prod_{i=1}^{n} f(\theta, Y_i)$ or equivalently the log-likelihood $\theta \mapsto \sum_{i=1}^{n} \log f(\theta, Y_i)$. Consequently, the resulting estimator is an M-estimator for the choice $m(\theta, y) := \log f(\theta, y)$. If $f$ is differentiable with respect to $\theta$, it is possible to set $\psi (\theta , y) := \nabla_\theta \log f(\theta, y)$ and obtain a Z-estimator.
\end{example}

The following two examples are concerned with estimation of a location parameter and similar arguments can be found in \cite[Example 5.4]{vanderVaart}.

\begin{example}
In light of Remark~\ref{Rem:MEstimateElicConnection} and Example~\ref{Thm:ExMeanIsElicitable}, one idea to estimate the mean of a distribution is to choose $m(\theta,y) = -(y - \theta)^2$ or $\psi(\theta,y) = y- \theta$ since the first is a scoring (up to the sign) and the latter an identification function for the mean. Plugging these functions in and computing the solution gives $\hat{\theta}_n = \frac{1}{n} \sum_{i=1}^{n} Y_n$, which shows that this choice is reasonable.

Naturally, the same can be done to estimate the median. In this case choose $m(\theta, y) = - \vert y - \theta \vert$ or $\psi(\theta, y) = \sign ( \theta - y)$ and assume for simplicity that $n \in \mathbb{N}$ is odd. As above, computing the solution leads to $\hat{\theta}_n = Y_{(n+1)/2 : n}$, where $Y_{k:n}$ denotes the $k$-th order statistic of $Y_1, \ldots, Y_n$. This is indeed the empirical median.
\end{example}

\begin{example}
For estimates of location parameters it makes sense to define $\psi (\theta, y) := \varphi (y - \theta)$ because this guarantees the property of equivariance for the Z-estimator related to $\psi$. If the data are shifted by a vector $r \in \mathbb{R}^d$, then a shift of the estimator by $r$ will solve the new equation, i.e. we have $\hat{\theta}(Y_1 + r, \ldots, Y_n + r) = \hat{\theta}(Y_1, \ldots, Y_n) + r$ for $r \in \mathbb{R}$. One famous example for Z-estimators constructed in this manner are Huber's $K$-estimators, which for $K>0$ are defined via $\psi (\theta, y) := \varphi_K (y - \theta)$ with $\varphi_K$ given by
\begin{align*}
	\varphi_K (x) := \, \left\lbrace
	\begin{array}{ll}
	-K , &  \text{if } x \leq -K \\
	x , &  \text{if } \vert x \vert \leq K \\
	K , &  \text{if } x \geq K . 
	\end{array} \right.
\end{align*}
These estimators can be used to scale between the mean and the median. Recalling Definition~\ref{Def:ZEstimator} and the role of $\psi$ shows that for $K \rightarrow \infty$ the mean is obtained while for $K \rightarrow 0$ the median emerges. Moreover, it is also possible to choose
\begin{equation*}
m(\theta, y) := \vert \theta - y \vert^2 \mathbbm{1}_{\lbrace \vert \theta - y \vert \leq K \rbrace} + (2K \vert \theta - y \vert  - K^2) \mathbbm{1}_{\lbrace \vert \theta - y \vert > K \rbrace} 
\end{equation*}
in order to characterize these estimators in the framework of Definition~\ref{Def:MEstimator}, see also~\cite[Example 5.4]{vanderVaart}.
\end{example}

For simplicity, we consider only one-dimensional M- or Z-estimators in the following, i.e. we fix $k=1$ and $\Theta \subset \mathbb{R}$. Recall that an estimator $\hat{\theta}_n$ is called consistent for $\theta_0$ if $\hat{\theta}_n \rightarrow \theta_0$ in probability as $n \rightarrow \infty$. The next step is to show that a sequence of M-estimators $(\hat{\theta}_n)_{n \in \mathbb{N}}$ is consistent for $\theta_0$, where $\theta_0$ is the maximizer of some asymptotic function $M : \Theta \rightarrow \mathbb{R}$. The following theorem shows that this is indeed possible. Moreover, it is not needed that $\hat{\theta}_n$ maximizes $M_n$ but only that it nearly maximizes $M_n$. This means that for all $n \in \mathbb{N}$ we have
\begin{equation}  \label{Eqn:MestAlmostMax}
M_n (\hat{\theta}_n ) \geq \underset{\theta \in \Theta}{\sup} \, M_n(\theta) - \delta_n ,
\end{equation}
where $(\delta_n)_{n \in \mathbb{N}} >0$ is a sequence of random variables such that $\delta_n \rightarrow 0$ in probability.

\begin{theorem}[van der Vaart {\cite[Thm. 5.7]{vanderVaart}}]  \label{Thm:MEstimatorI}
Let $M_n : \Theta \rightarrow \mathbb{R}$ be random functions, $M : \Theta \rightarrow \mathbb{R}$ a deterministic function and $(\hat{\theta}_n)_{n \in \mathbb{N}}$ a sequence of estimators which satisfies $M_n(\hat{\theta}_n) \geq M_n (\theta_0) - \delta_n$, where $(\delta_n)_{n \in \mathbb{N}} >0$ is a sequence of random variables satisfying $\delta_n \rightarrow 0$ in probability. If for every $\varepsilon > 0$ we have
\begin{enumerate}[label=(\roman*)]
	\item $\sup_{\theta \in \Theta} \vert M_n(\theta) - M(\theta) \vert \rightarrow 0$ in probability and
	\item $\sup_{ \vert \theta - \theta_0 \vert > \varepsilon} M(\theta) < M(\theta_0)$
\end{enumerate}
then $\hat{\theta}_n \rightarrow \theta_0$ in probability.
\end{theorem}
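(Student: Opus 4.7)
The plan is to argue by showing that the event $E_n := \{|\hat{\theta}_n - \theta_0| > \varepsilon\}$ is contained in an event of vanishing probability. The key is that condition (ii) forces a strictly positive vertical gap between $M(\theta_0)$ and $M(\hat{\theta}_n)$ whenever $E_n$ occurs, while a telescoping argument will bound this gap by the uniform approximation error plus the slack $\delta_n$, both of which tend to $0$ in probability.

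First I would fix $\varepsilon > 0$ and set $\eta := M(\theta_0) - \sup_{|\theta - \theta_0| > \varepsilon} M(\theta)$, which is strictly positive by (ii). On $E_n$ this directly yields $M(\hat{\theta}_n) \leq M(\theta_0) - \eta$, hence $M(\theta_0) - M(\hat{\theta}_n) \geq \eta$. Next I would write the telescoping identity
\begin{equation*}
M(\theta_0) - M(\hat{\theta}_n) = \bigl[M(\theta_0) - M_n(\theta_0)\bigr] + \bigl[M_n(\theta_0) - M_n(\hat{\theta}_n)\bigr] + \bigl[M_n(\hat{\theta}_n) - M(\hat{\theta}_n)\bigr]
\end{equation*}
and estimate the three brackets separately: the outer two are each at most $\|M_n - M\|_\Theta := \sup_{\theta \in \Theta} |M_n(\theta) - M(\theta)|$ in absolute value, while the middle bracket is at most $\delta_n$ by the near-maximizer hypothesis $M_n(\hat{\theta}_n) \geq M_n(\theta_0) - \delta_n$. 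Summing up gives the deterministic bound $M(\theta_0) - M(\hat{\theta}_n) \leq 2 \|M_n - M\|_\Theta + \delta_n$, valid on the whole sample space.

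Combining both estimates shows the inclusion $E_n \subseteq \{2\|M_n - M\|_\Theta + \delta_n \geq \eta\}$, whose probability tends to zero by (i) together with $\delta_n \to 0$ in probability. The conceptually delicate step is recognizing why hypothesis (ii) is indispensable: uniform convergence of $M_n$ to $M$ alone never pins down the location of the maximizer, so the argument must convert the horizontal deviation $|\hat{\theta}_n - \theta_0| > \varepsilon$ into a uniformly positive vertical gap $\eta > 0$ before the telescoping bound can do any work. Everything else is routine bookkeeping of three approximation errors.
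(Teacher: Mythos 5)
Your proof is correct and follows essentially the same route as the paper: the same three-term telescoping decomposition of $M(\theta_0) - M(\hat{\theta}_n)$, the same bound $2\sup_\theta|M_n(\theta)-M(\theta)| + \delta_n$, and the same use of (ii) to extract a uniform gap $\eta(\varepsilon)>0$. One small point in your favor: you keep the middle bracket one-sided, $M_n(\theta_0)-M_n(\hat{\theta}_n) \leq \delta_n$, which is exactly what the near-maximizer hypothesis delivers, whereas the paper writes $|M_n(\theta_0)-M_n(\hat{\theta}_n)| \leq \delta_n$, a bound that is not actually implied (and fortunately also not needed, since $M(\theta_0)-M(\hat{\theta}_n)\geq 0$).
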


\begin{proof}
We follow \cite{vanderVaart} and use the assumption on $(\hat{\theta}_n)_{n \in \mathbb{N}}$ together with (i) to show that $M(\hat{\theta}_n)$ converges to $M(\theta_0)$ in probability. In detail we have
\begin{align*}
\vert M(\theta_0) - M(\hat{\theta}_n) \vert &\leq \vert M(\theta_0) - M_n(\theta_0) \vert  + \vert M_n(\theta_0) - M_n(\hat{\theta}_n) \vert  + \vert M_n(\hat{\theta}_n) - M(\hat{\theta}_n) \vert \\
&\leq 2 \, \underset{\theta \in \Theta}{\sup} \vert M_n(\theta) - M(\theta) \vert + \delta_n
\end{align*}
and both terms converge to zero in probability by assumption. Due to condition (ii), we have for any $\varepsilon > 0$ that $\vert \hat{\theta}_n - \theta_0 \vert > \varepsilon$ implies $M(\hat{\theta}_n)  < M(\theta_0)$. Consequently, there is an $\eta(\varepsilon) > 0$ such that $M(\hat{\theta}_n) < M(\theta_0) - \eta(\varepsilon)$. This gives
\begin{equation*}
\mathbb{P} ( \vert \hat{\theta}_n - \theta_0 \vert  > \varepsilon) \leq \mathbb{P} ( M(\theta_0) - M(\hat{\theta}_n) > \eta(\varepsilon) ) \rightarrow 0
\end{equation*}
for $n \rightarrow \infty$, by using the convergence of the first part of the proof.
\end{proof}

\begin{remark}  \label{Rem:EstOuterMeasure}
In condition (i) of Theorem~\ref{Thm:MEstimatorI} it is implicitly assumed that the mapping $G \mapsto \sup_{\theta \in \Theta} \vert G(\theta) - M(\theta) \vert$ is measurable. If this is not the case, the theorem can still be proved if the convergence in (i) holds in \textit{outer measure}, i.e. there is a sequence $(A_n)_{n \in \mathbb{N}} \subset \mathscr{A}$ such that $ \lbrace \sup_{\theta \in \Theta} \vert M_n(\theta) - M(\theta) \vert > \varepsilon \rbrace \subset A_n$ holds for all $n \in \mathbb{N}$ and $\mathbb{P}(A_n) \rightarrow 0$ as $n \rightarrow \infty$. For more details we refer to van der Vaart~\cite[Sec. 18.2]{vanderVaart}.
\end{remark}

Theorem~\ref{Thm:MEstimatorI} is now used to analyze M-estimators. To this end, let $T: \mathcal{F} \rightarrow \Theta$ be an elicitable functional and $S$ a strictly $\mathcal{F}$-consistent scoring function for $T$. Fix $F \in \mathcal{F}$ and let $Y_i$, $i = 1, \ldots, n$ be i.i.d. with $Y_i =^d F$. Moreover, define the functions $M_n(\theta) : = - \frac{1}{n} \sum_{i=1}^{n} S(\theta, Y_i)$ and  $M(\theta) := -\bar{S}(\theta, F) $. This is indeed a reasonable choice because the law of large numbers guarantees $M_n(\theta) \rightarrow M(\theta)$ in probability for any $\theta \in \Theta$. If the convergence $M_n \rightarrow M$ is even uniform in $\theta$ and $M$ satisfies condition (ii), then a straightforward way to estimate $\theta_0 := T(F)$ (the unique maximum of $M$) consistently, is using a $\hat{\theta}_n$ which nearly maximizes $M_n$, i.e. it satisfies condition~(\ref{Eqn:MestAlmostMax}).\\
\par
The condition of uniform convergence in probability is closely connected to the Glivenko-Cantelli theorem (see for instance Klenke~\cite[Thm. 5.23]{KlenkeProb}), as pointed out in \cite{vanderVaart}. More precisely, a class of integrable functions $\mathcal{C}$ is called a $\mathbb{P}$-Glivenko-Cantelli class if the convergence
\begin{equation*}
\underset{f \in \mathcal{C}}{\sup} \, \Big| \frac{1}{n} \sum_{i=1}^{n} f(X_i) - \mathbb{E}f(X) \Big| \rightarrow 0, \quad n \rightarrow \infty
\end{equation*}
holds $\mathbb{P}$-almost surely (where again the outer measure might be necessary, as mentioned in Remark~\ref{Rem:EstOuterMeasure}). The following lemma summarizes sufficient conditions mentioned in \cite{vanderVaart} which ensure that both requirements of Theorem~\ref{Thm:MEstimatorI} hold in the situation where $m(\theta, y) = -S(\theta, y)$ and $M(\theta) = -\bar{S}(\theta,F)$.

\begin{lemma}
Let $\Theta \subset \mathbb{R}^k$ be compact and let $S$ be such that $\theta \mapsto S(\theta, y)$ is continuous for every $y \in \mathsf{O}$. Suppose there is an $\mathcal{F}$-integrable $g$ such that $\vert S(\theta, \cdot) \vert \leq g$  almost surely for any $\theta$. Then the conditions (i) and (ii) of Theorem~\ref{Thm:MEstimatorI} are satisfied. 
\end{lemma}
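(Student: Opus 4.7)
The plan is to verify conditions~(i) and~(ii) of Theorem~\ref{Thm:MEstimatorI} separately. Condition~(ii) reduces to a continuity and compactness argument combined with the strict $\mathcal{F}$-consistency of $S$, while condition~(i) is a classical uniform law of large numbers for a continuous parametric family dominated by an integrable envelope.

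For condition~(ii), I would first establish continuity of $M$ on $\Theta$. This follows from dominated convergence: for any sequence $\theta_n \to \theta$, pointwise continuity of $S(\cdot, y)$ gives $S(\theta_n, y) \to S(\theta, y)$, and the bound $|S(\theta_n, \cdot)| \leq g$ with $g$ being $\mathcal{F}$-integrable justifies interchanging the limit with the integral. Setting $\theta_0 := T(F)$, strict $\mathcal{F}$-consistency of $S$ yields $M(\theta) < M(\theta_0)$ for every $\theta \in \Theta \setminus \{\theta_0\}$. Since $K_\varepsilon := \{\theta \in \Theta : |\theta - \theta_0| \geq \varepsilon\}$ is closed inside the compact set $\Theta$, it is itself compact, and the continuous function $M$ attains its supremum on $K_\varepsilon$ at some $\theta^\star \neq \theta_0$. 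Strict consistency then gives $\sup_{\theta \in K_\varepsilon} M(\theta) = M(\theta^\star) < M(\theta_0)$.

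For condition~(i), the plan is a bracketing argument. For each $\theta \in \Theta$ and $\delta > 0$, consider the upper and lower envelopes $u_\delta(\theta, y) := \sup\{S(\theta', y) : \theta' \in B(\theta, \delta) \cap \Theta\}$ and $\ell_\delta(\theta, y)$ defined analogously with an infimum. Continuity of $S(\cdot, y)$ together with the dominating function $g$ allow dominated convergence to push $\delta \downarrow 0$ and conclude that $\int (u_\delta(\theta, \cdot) - \ell_\delta(\theta, \cdot)) \, \mathrm{d}F$ can be made arbitrarily small by choosing $\delta = \delta(\theta)$ sufficiently small. Compactness of $\Theta$ then provides a finite subcover by balls $B(\theta_i, \delta(\theta_i))$, reducing the uniform convergence to the ordinary law of large numbers applied to the finitely many bracket functions $u_{\delta(\theta_i)}(\theta_i, \cdot)$ and $\ell_{\delta(\theta_i)}(\theta_i, \cdot)$. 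Sandwiching $S(\theta, \cdot)$ between the appropriate brackets on each ball delivers $\sup_{\theta \in \Theta} |M_n(\theta) - M(\theta)| \to 0$ in probability.

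The main obstacle I foresee is the measurability of the sup and inf appearing in $u_\delta$ and $\ell_\delta$ when taken over the uncountable set $B(\theta, \delta) \cap \Theta$. This can be handled by replacing the balls with their intersection with a fixed countable dense subset of $\Theta$, where the sup and inf agree with the pointwise versions by continuity of $S(\cdot, y)$, or alternatively by interpreting the convergence in terms of outer probability as indicated in Remark~\ref{Rem:EstOuterMeasure}.
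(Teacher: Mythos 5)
Your proof is correct. Both parts of the argument go through: for condition~(ii) the direct argument (the set $K_\varepsilon = \{\theta \in \Theta : |\theta - \theta_0| \geq \varepsilon\}$ is compact, $M$ is continuous by dominated convergence, so the supremum over $K_\varepsilon$ is attained at some $\theta^\star \neq \theta_0$, and strict $\mathcal{F}$-consistency forces $M(\theta^\star) < M(\theta_0)$) is equivalent to the paper's sequential-compactness contradiction and arguably a touch cleaner. For condition~(i) you unpack the standard bracketing proof of the Glivenko--Cantelli property for a parametric class that is continuous in the index and dominated by an integrable envelope, whereas the paper simply cites the corresponding result from van~der~Vaart (Example~19.8) without proof. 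So the content is the same; you have substituted the full bracketing argument for the citation. One small bookkeeping remark: since $M_n(\theta) = -\frac{1}{n}\sum_i S(\theta, Y_i)$ and $M(\theta) = -\bar{S}(\theta, F)$ carry a sign change, the roles of $u_\delta$ and $\ell_\delta$ swap when you pass from $S$ to $m = -S$; this does not affect the conclusion but should be kept consistent when writing out the sandwich inequality. Your remark about measurability of the envelopes and the fallback to outer probability matches Remark~\ref{Rem:EstOuterMeasure} and is the right way to close that gap.
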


\begin{proof}
Due to the assumptions on $S$, the class $\lbrace S(\theta, \cdot) \mid \theta \in \Theta \rbrace$ has the Glivenko-Cantelli property (see \cite[Example 19.8]{vanderVaart} for a detailed proof) and hence, condition (i) is satisfied. Furthermore, observe that since $g$ dominates $S$, we apply dominated convergence to show that $M$ is continuous. Now suppose condition (ii) is not satisfied. Then there is an $\varepsilon' > 0$ and a (deterministic) sequence $(\theta_n)_{n \in \mathbb{N}}$ such that $\vert \theta_n - \theta_0 \vert > \varepsilon'$ and $M(\theta_n) \rightarrow c \geq M(\theta_0)$. Since $\Theta$ is compact, we choose a subsequence $(\theta_{n_k} )_{k \in \mathbb{N}}$ such that $\theta_{n_k} \rightarrow \tilde{\theta}$ for some $\tilde{\theta} \in \Theta$. Due to the continuity of $M$, we conclude $M(\tilde{\theta}) = c \geq M(\theta_0)$, but at the same time we have $\vert \tilde{\theta} - \theta_0 \vert \geq \varepsilon' > 0$, contradicting the fact that $\theta_0$ is the unique maximum of $M$.
\end{proof}

The assumptions imposed on $S$ as well as condition (ii) of Theorem~\ref{Thm:MEstimatorI} are rather strong. In order to proof consistency in a more general setting we consider Z-estimators according to Definition~\ref{Def:ZEstimator}. The following proposition and its proof can be found in Huber and Ronchetti~\cite[Prop. 3.1]{HuberRonchetti} and van der Vaart~\cite[Lemma 5.10]{vanderVaart}.

\begin{prop}  \label{Thm:MEstimatorII}
For any $n \in \mathbb{N}$ let $Z_n : \Theta \rightarrow \mathbb{R}$ be a decreasing random function and $Z: \Theta \rightarrow \mathbb{R}$ a deterministic function such that $Z_n(\theta) \rightarrow Z(\theta)$ in probability for all $\theta \in \Theta$. Moreover, let $\theta_0$ be the unique point such that for any $\varepsilon >0$ it holds that $Z(\theta_0 - \varepsilon) > 0 > Z(\theta_0 + \varepsilon)$. If $(\hat{\theta}_n)_{n \in \mathbb{N}}$ is a sequence satisfying $Z_n(\hat{\theta}_n) \rightarrow 0$ in probability, then $\hat{\theta}_n \rightarrow \theta_0$ in probability.
\end{prop}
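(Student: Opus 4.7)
The plan is to exploit the monotonicity of $Z_n$ to convert the hypothesis $Z_n(\hat{\theta}_n) \to 0$ into a location statement on $\hat{\theta}_n$. Heuristically, if $\hat{\theta}_n$ lay above $\theta_0 + \varepsilon$, monotonicity would push $Z_n(\hat{\theta}_n)$ below $Z_n(\theta_0 + \varepsilon)$, which converges to a strictly negative number; the symmetric situation occurs on the other side. Turning this contrapositive into a quantitative bound and invoking the union bound gives the desired convergence in probability.

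Concretely, I would fix $\varepsilon > 0$ and set $a := Z(\theta_0 - \varepsilon) > 0$, $b := Z(\theta_0 + \varepsilon) < 0$, and $c := \tfrac{1}{2}\min(a, -b) > 0$. The three events
\begin{align*}
A_n &:= \{ Z_n(\theta_0 - \varepsilon) > a - c \}, \\
B_n &:= \{ Z_n(\theta_0 + \varepsilon) < b + c \}, \\
C_n &:= \{ |Z_n(\hat{\theta}_n)| < c \}
\end{align*}
all satisfy $\mathbb{P}(A_n), \mathbb{P}(B_n), \mathbb{P}(C_n) \to 1$: the first two by the assumed pointwise convergence in probability of $Z_n$ at the fixed arguments $\theta_0 \pm \varepsilon$, and the third by the hypothesis $Z_n(\hat{\theta}_n) \to 0$ in probability.

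The key step is the inclusion $\{ |\hat{\theta}_n - \theta_0| > \varepsilon \} \subseteq A_n^c \cup B_n^c \cup C_n^c$. Indeed, on $A_n \cap B_n \cap C_n$ the case $\hat{\theta}_n > \theta_0 + \varepsilon$ leads, via monotonicity, to $Z_n(\hat{\theta}_n) \leq Z_n(\theta_0 + \varepsilon) < b + c \leq -c$, which contradicts $C_n$; the case $\hat{\theta}_n < \theta_0 - \varepsilon$ leads analogously to $Z_n(\hat{\theta}_n) \geq Z_n(\theta_0 - \varepsilon) > a - c \geq c$, again contradicting $C_n$. Applying the union bound and letting $n \to \infty$ yields $\mathbb{P}(|\hat{\theta}_n - \theta_0| > \varepsilon) \to 0$, which is the claim.

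The proof is essentially bookkeeping once the three events above are in place, so I do not anticipate a serious obstacle. The only subtlety is a careful reading of "decreasing": it suffices to take it in the weak sense, because we only apply it in the strict regimes $\hat{\theta}_n > \theta_0 + \varepsilon$ and $\hat{\theta}_n < \theta_0 - \varepsilon$, and the resulting non-strict inequalities for $Z_n(\hat{\theta}_n)$ still combine with the strict bounds from $A_n$ and $B_n$ to contradict $C_n$. No continuity hypothesis on $Z_n$ or identifiability argument near $\theta_0$ beyond the stated sign condition is needed.
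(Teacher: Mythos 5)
Your proof is correct and follows essentially the same route as the paper's: both use monotonicity of $Z_n$ to force a sign contradiction on $Z_n(\hat{\theta}_n)$ when $\hat{\theta}_n$ leaves the $\varepsilon$-neighborhood of $\theta_0$, with your threshold $c$ playing exactly the role of the paper's $\eta < \min(Z(\theta_0-\varepsilon), |Z(\theta_0+\varepsilon)|)$. The paper bounds $\mathbb{P}(\hat{\theta}_n < \theta_0 - \varepsilon)$ and $\mathbb{P}(\hat{\theta}_n > \theta_0 + \varepsilon)$ separately by splitting on the event $\{Z_n(\theta_0 \pm \varepsilon) \gtrless \eta\}$, whereas you organize the same estimate as a union bound over three good events, but this is only a difference of bookkeeping, not of substance.
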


\begin{proof}
Take $\varepsilon >0$ and set $\eta := \eta(\varepsilon) < \min (  Z(\theta_0 - \varepsilon ), \vert Z(\theta_0 + \varepsilon ) \vert )$. Then the monotonicity of $Z_n$ implies for any $n \in \mathbb{N}$ the inequality
\begin{align*}
\mathbb{P}(\hat{\theta}_n < \theta_0 - \varepsilon) &\leq \mathbb{P} ( Z_n(\hat{\theta}_n) \geq Z_n(\theta_0 - \varepsilon) ) \\
&= \mathbb{P}( Z_n (\hat{\theta}_n) \geq Z_n( \theta_0 - \varepsilon), \, Z_n (\theta_0 - \varepsilon) > \eta ) \\
&\quad + \mathbb{P}( Z_n (\hat{\theta}_n) \geq Z_n( \theta_0 - \varepsilon), \, Z_n (\theta_0 - \varepsilon) \leq \eta ) \\
&\leq \mathbb{P} ( Z_n(\hat{\theta}_n) > \eta ) + \mathbb{P}( Z_n(\theta_0 - \varepsilon) \leq \eta)
\end{align*}
and analogously
\begin{equation*}
\mathbb{P}(\hat{\theta}_n > \theta_0 + \varepsilon) \leq \mathbb{P}( Z_n(\hat{\theta}_n) < - \eta ) + \mathbb{P}( Z_n(\theta_0 + \varepsilon) \geq - \eta) .
\end{equation*}
Finally, it follows that $\mathbb{P}( \vert \hat{\theta}_n - \theta_0 \vert > \varepsilon)$ vanishes for $n \rightarrow \infty$, by applying both previous inequalities and the fact that $Z_n(\hat{\theta}_n) \rightarrow 0$ as well as $Z_n( \theta_0 \pm \varepsilon) \rightarrow Z( \theta_0 \pm \varepsilon)$ in probability.
\end{proof}

Similar to the discussion above, Proposition~\ref{Thm:MEstimatorII} can now be used to analyze Z-estimators. To this end, let $T: \mathcal{F} \rightarrow \Theta$ be an identifiable functional with strict $\mathcal{F}$-identification function $V$, fix $F \in \mathcal{F}$ and let $Y_i$, $i = 1, \ldots, n$ be i.i.d. with $Y_i =^d F$. It is then sensible to define $Z_n(\theta) := \frac{1}{n} \sum_{i=1}^{n} V(\theta, Y_i)$ as well as $Z(\theta) := \bar{V}(\theta, F)$. Due to the identification property of $V$, $\theta_0 := T(F)$ is the unique root of $Z$ and for all $\theta \in \Theta$ the convergence $Z_n(\theta) \rightarrow Z(\theta)$ follows from the law of large numbers. If $V$ can be chosen such that $\theta \mapsto V(\theta, y)$ is decreasing for any $y \in \mathsf{O}$, then $Z_n$ is decreasing as well and $Z(\theta_0 - \varepsilon) > 0 > Z(\theta_0 + \varepsilon)$ is also satisfied. Therefore, $T(F)$ can be consistently estimated using a Z-estimator. Compared to the assumptions imposed on $S$, the consistency of $\hat{\theta}_n$ is now ensured with only the monotonicity of $V$. \\
\par
This section considers M-estimation and Z-estimation in the one-dimensional case only. However, it is possible to generalize consistency results like Theorem~\ref{Thm:MEstimatorI} or Proposition~\ref{Thm:MEstimatorII} to higher dimensions using a larger collection of assumptions. For detailed results and more references we refer to Huber and Ronchetti~\cite{HuberRonchetti}. Apart from that, it is also possible to prove asymptotic normality for M- or Z-estimators, see for instance \cite{HuberRonchetti} as well as van der Vaart~\cite{vanderVaart}.

\chapter{Elicitability in risk management}
\label{Chapter3}

This chapter introduces coherent and convex measures of risk and presents three risk measures which are popular in practice or academia. Moreover, law-invariant risk measures are interpreted as functionals on a set of distribution functions $\mathcal{F}$ in order to analyze which of them are elicitable. The recent result of Fissler and Ziegel~\cite{FissZieg} that Value at Risk and Expected Shortfall are jointly elicitable is proved. Finally, it is explained why elicitability is a desirable property for backtesting risk measure estimates.

\section{Risk measures and their properties} 

In this section we introduce risk measures and the properties of coherence and convexity, which are considered desirable properties for measures of risk. The majority of the presented material is based on the book chapters by F\"ollmer and Schied~\cite[Ch. 4]{FoellmerSchied} and McNeil et al.~\cite[Ch. 8]{McNeilRisk}. Informally, a risk measure is a mapping which assigns a number $\rho (X)$ to a random variable $X$. Usually, $X$ is interpreted as a financial position and $\rho(X)$ as stating the riskiness of $X$. If $\rho (X)$ is interpreted in terms of money, $\rho$ is also called a monetary measure of risk. \\
\par
In the following, we fix some probability space $(\Omega, \mathscr{A}, \mathbb{P})$ which supports all random variables appearing in this section. Moreover, for $p \geq 1$ we define $\mathcal{L}^p (\Omega, \mathscr{A}, \mathbb{P})$ to be the vector space of random variables $X$ on $(\Omega, \mathscr{A}, \mathbb{P})$ such that $\mathbb{E} \vert X \vert^p$ is finite. Similarly, $\mathcal{L}^0(\Omega, \mathscr{A}, \mathbb{P})$ is called the space of all random variables and $\mathcal{L}^\infty (\Omega, \mathscr{A}, \mathbb{P})$ the space of all random variables which are $\mathbb{P}$-almost surely bounded. To shorten notation, we suppress  $(\Omega, \mathscr{A}, \mathbb{P})$ and write $\mathcal{L}^p$ instead of $\mathcal{L}^p (\Omega, \mathscr{A}, \mathbb{P})$, for instance.

\begin{definition}
Let $\mathcal{X}$ be a collection of random variables on $(\Omega, \mathscr{A})$ which forms a vector space and is closed under addition of constants. Then a mapping 
\begin{equation*}
\rho : \mathcal{X} \rightarrow \mathbb{R}
\end{equation*}
is called a \textit{measure of risk} or just a \textit{risk measure}.
\end{definition}

\begin{remark}  \label{Rem:DefinitionRisk}
There exist several conventions to define risk measures and each represents a way of interpreting $X$ and $\rho(X)$. Firstly, $X$ can be thought of as either positions in a portfolio or losses resulting from these positions. In the first case, high values of $X$ are desirable, while in the latter low values are. Secondly, high values of $\rho$ can express a high as well as a low level of risk. We use the following convention: Elements of  $\mathcal{X}$ are interpreted as portfolio values and higher values of $\rho$ represent riskier positions. 
\end{remark}

\begin{remark}
Note that the definition of measures of risk does not need a probability measure on $(\Omega, \mathscr{A})$. This is only needed for special choices of $\mathcal{X}$, for example subsets of  $\mathcal{L}^p$, $\mathcal{L}^0$, or $\mathcal{L}^\infty$. Moreover, if a probability measure $\mathbb{P}$ on $(\Omega, \mathscr{A})$ is fixed, it is possible to define $\rho(X)$ via the distribution of $X$ under $\mathbb{P}$. This approach leads to \textit{law-invariant} risk measures an  is essential in Subsection~\ref{Sec:SpectralRiskMeasures} as well as in Section~\ref{Sec:VaRESandEVaR}.
\end{remark}

When looking at $\rho(X)$ instead of $X$ (or its distribution), most of the information contained in $X$ is usually lost. Therefore, this approach needs some justification. There exist two main motivations for studying measures of risk:
\begin{itemize}
\item If $\rho (X)$ is interpreted as a simple number of riskiness, a comparison of two different portfolios is simple, while a comparison in terms of distributions or random variables can be arbitrarily difficult.
\item If $\rho (X)$ is interpreted as an amount of cash, this amount can be thought of to be necessary to protect the portfolio $X$ against losses. In this case, $\rho (X)$ plays the role of a capital buffer required by some financial regulator or a margin demanded by a counterparty in trading.
\end{itemize}

In both interpretations, we use the convention that higher values of $\rho$ represent riskier positions. Since a risk measure $\rho$ is just a real-valued mapping on $\mathcal{X}$, it is reasonable to impose more structure on it. This is done by introducing the properties of coherence and convexity. We proceed chronologically and start with coherent measures of risk.

\subsection{Coherent measures of risk} 

Coherent measures of risk arise from an axiomatic approach by Artzner et al.~\cite{ArtznerCoherent} to find an appropriate $\rho$. The approach uses a finite sample space $\Omega$, but is extended to general $\Omega$ by Delbaen~\cite{DelbaenCoherent}. Due to their simplicity and intuitive interpretation, the axioms of the following definition are the most prominent characteristics of risk measures.

\begin{definition}  \label{Def:CoherentRisk}
Let $\rho : \mathcal{X} \rightarrow \mathbb{R}$ be a measure of risk. If for all $X, Y \in \mathcal{X}$ the risk measure $\rho$ satisfies the following four properties, it is called \textit{coherent}.
\begin{enumerate}[label=\arabic*)]
\item Monotonicity: If $X \leq Y$, then $\rho(Y) \leq \rho(X)$.
\item Positive homogeneity: If  $\lambda \geq 0$, then $\rho(\lambda X) = \lambda \rho(X)$.
\item Translation invariance: If $c \in \mathbb{R}$, then $\rho (X + c) = \rho(X) - c$.
\item Subadditivity: $ \rho (X + Y) \leq \rho (X) + \rho (Y)$.
\end{enumerate}
\end{definition}

\begin{remark}
In \cite{ArtznerCoherent}, the definition of coherent measures of risk is stated for $\mathcal{X} = L^{\infty}$, where $L^\infty$ is the space of all equivalence classes with respect to $\mathbb{P}$-almost sure equality of $\mathcal{L}^\infty$. In \cite{DelbaenCoherent}, it is shown that it is impossible to extend this definition to the space $L^0$. More precisely, if $ (\Omega, \mathscr{A}, \mathbb{P})$ is an atomless probability space, there is no coherent risk measure on $L^0 (\Omega, \mathscr{A}, \mathbb{P})$. This problem can be solved if the definition of risk measures is extended such that $\rho$ takes values in $\mathbb{R} \cup \lbrace \infty \rbrace$. We do not use such an extension and only consider measures of risk which are real-valued.
\end{remark}

As noted above and suggested by the name, the properties in Definition~\ref{Def:CoherentRisk} are selected because they allow for a coherent interpretation. In particular, the following arguments can be used to justify them (see also \cite{ArtznerCoherent}):
\begin{enumerate}[label=\arabic*)]
\item This axiom is natural, since it states that positions with higher values \textit{regardless of what happens} are less risky.
\item Positive homogeneity means that if a position is increased or decreased by a certain factor, then the risk of the position is scaled by the same factor.
\item A constant is usually interpreted to be a cash position. Therefore, if cash is added to a position, the risk of this position is reduced by the amount of added cash.
\item The subadditivity property has received the most attention. Since the sum of two positions can never be riskier than the risk of the two positions combined, it is supposed to represent the effects of diversification. The importance of subadditivity can also be seen in the way of \cite{ArtznerCoherent}: If the inverse inequality would hold, companies would be incentivized to split up in order to reduce risk.
\end{enumerate}

Acceptance sets, which are also introduced in~\cite{ArtznerCoherent}, are closely related to measures of risk. Loosely speaking, the acceptance set of a risk measure contains all positions having an acceptable level of risk. This level of risk can be set by a risk manager or a financial regulator, for example.

\begin{definition}  \label{Def:AcceptanceSet}
Let $\rho$ be a monotonic and translation invariant measure of risk. Then the set 
\begin{equation*}
\mathcal{A}_\rho := \lbrace X \in \mathcal{X} \mid \rho(X) \leq 0 \rbrace
\end{equation*}
is called \textit{acceptance set} of $\rho$.
\end{definition}

When thinking in terms of a monetary measure of risk $\rho$, the set $\mathcal{A}_\rho$ contains all positions for which no additional cash amount has to be added to make them acceptable. Conversely, if a set of positions $\mathcal{A}$ is given, it also defines a risk measure $\rho$, as follows: For $X \in \mathcal{X}$ the value $\rho (X)$ is the minimal amount of cash such that $X + \rho(X) \in \mathcal{A}$ is satisfied. For details we refer to \cite{FoellmerSchied}.

\subsection{Convex measures of risk} 

The next step in the theory of risk measures is to relax the conditions 2) and 4) of Definition~\ref{Def:CoherentRisk}. This is done by F\"ollmer and Schied~\cite{FoellmerSchiedConvex} and the reasoning behind this is that positive homogeneity is problematic in some situations. For example, if the size of a position increases dramatically, there might not be enough supply or demand to liquidate the positions if necessary. Hence, it is reasonable to suggest that a decline in market liquidity leads to a case where $\rho(\lambda X) > \lambda \rho(X)$ for large $\lambda$. The following definition generalizes coherent measures of risk.

\begin{definition}
Let $\rho : \mathcal{X} \rightarrow \mathbb{R}$ be a measure of risk. If for all $X, Y \in \mathcal{X}$ the risk measure $\rho$ satisfies the following three properties, it is called \textit{convex}.
\begin{enumerate}[label=\arabic*)]
\item Monotonicity: If $X \leq Y$, then $\rho(Y) \leq \rho(X)$.
\item Translation invariance: If $c \in \mathbb{R}$, then $\rho (X + c) = \rho(X) - c$.
\item Convexity: If $\lambda \in [0,1]$, then $\rho ( \lambda X + (1-\lambda) Y) \leq \lambda \rho(X) + (1-\lambda) \rho(Y)$.
\end{enumerate}
\end{definition}

Note that every coherent measure of risk is also a convex measure of risk, since subadditivity together with positive homogeneity implies convexity.  We now check how a convex risk measure $\rho$ behaves for a position $\lambda X$. For simplicity, assume that $\rho(0) \leq 0$ holds. Then convexity of $\rho$ implies
\begin{equation*}
\rho (\lambda X) = \rho ( (1- \lambda) \cdot 0 + \lambda X) \leq \lambda \rho(X)
\end{equation*}
for $\lambda \in (0,1)$ and similarly 
\begin{equation*}
\lambda \rho(X) \leq \rho (\lambda X) + (\lambda - 1) \rho (0)
\end{equation*}
for $\lambda >1$. Therefore, convex risk measures satisfy the inequalities
\begin{equation} \label{Eqn:PosHomogeneityNot}
\rho (\lambda X) \leq \lambda \rho (X) \quad \text{for } \lambda \in [0,1] \quad \text{ and } \quad \rho (\lambda X) \geq \lambda \rho (X) \quad \text{for } \lambda \geq 1
\end{equation}
instead of positive homogeneity. This subsection ends with the following example, which discusses a well-known risk measure mentioned in F\"ollmer and Schied~\cite[Example 4.13]{FoellmerSchied} called the \textit{entropic measure of risk}.

\begin{example}
Fix $\alpha > 0$, a probability measure $\mathbb{P}$, and let $\mathbb{E} \exp(-\alpha X)$ be finite for any $X \in \mathcal{X}$. Then the mapping
\vspace{-0.8mm}
\begin{equation*}
\vspace{-0.8mm}
\rho : \mathcal{X} \rightarrow \mathbb{R}, \quad X \mapsto  \alpha^{-1} \log \mathbb{E} \exp (-\alpha X)
\end{equation*}
is a convex measure of risk. Indeed, monotonicity and translation invariance follow from the properties of $\log$ and $\exp$. Convexity follows by applying translation invariance, the monotonicity of the logarithm and the convexity of the mapping $x \mapsto \exp(- \alpha x)$. In detail, this results in
\begingroup
\allowdisplaybreaks[0]
\begin{align*}
\alpha \rho ( \lambda X &+ (1-\lambda) Y) - \lambda \rho (X) - (1- \lambda) \rho (Y) \\
&= \alpha \rho \big( \lambda (X + \rho (X) ) + (1-\lambda) (Y + \rho (Y) ) \big) \\
&= \log \mathbb{E} \exp \big( - \alpha \lambda (X + \rho(X)) - \alpha (1- \lambda) (Y + \rho (Y) ) \big) \\
&\leq \log \big[ \lambda \mathbb{E} \exp ( - \alpha (X + \rho(X)) ) + (1-\lambda) \mathbb{E} \exp( - \alpha (Y + \rho(Y) )) \big] \\
&\leq \log \max \big( \mathbb{E} \exp (- \alpha (X + \rho(X)) ) \, , \, \mathbb{E} \exp( - \alpha (Y + \rho(Y) )) \big) \\
&= \max \big( \log \mathbb{E} \exp (- \alpha (X + \rho(X)) ) \, , \, \log \mathbb{E} \exp( - \alpha (Y + \rho(Y) ))  \big) \\
&= \alpha \max ( \rho ( X + \rho(X)) , \rho ( Y + \rho (Y) ) ) = 0 ,
\end{align*}
\endgroup
where the last equality follows from the fact that $\rho(X + \rho(X)) =0$ for any translation invariant risk measure. This shows that $\rho$ is a convex risk measure, however, it is not a coherent measure of risk, because it does not satisfy positive homogeneity. In order to show this, let $X$ have a Bernoulli distribution with parameter $p=\frac{1}{2}$. Then positive homogeneity of $\rho$ would imply that $\lambda \log( 1/2 \, e^{-\alpha} + 1/2) = \log (1/2 \, e^{- \alpha \lambda} + 1/2)$ holds for any choice of $\alpha , \lambda >0$. But this is not true, if for example $\alpha = 1$ and $\lambda = 2$ are selected, then ``$<$'' holds, hence $\rho$ cannot be positive homogeneous. This is in accordance with the results in~(\ref{Eqn:PosHomogeneityNot}).
\end{example}

\subsection{Spectral measures of risk} 
\label{Sec:SpectralRiskMeasures}

Spectral measures of risk are introduced by Acerbi~\cite{AcerbiSpectral} and Kusuoka~\cite{KusuokaSpectral}. The latter derives spectral risk measures while characterizing all law-invariant, coherent, and comonotone measures of risk. Acerbi constructs reasonable risk measures by using convex combinations of other risk measures. This section follows the approach of \cite{AcerbiSpectral} and introduces the concept of comonotone risk measures afterwards. In the following it is essential to fix a probability measure $\mathbb{P}$ on $(\Omega, \mathscr{A})$, since we consider measures of risk $\rho$ for which $\rho(X)$ only depends on the law of $X$. These risk measures are called \textit{law-invariant} and they satisfy $\rho (X) = \rho(Y)$ whenever $X$ and $Y$ have the same law under $\mathbb{P}$. The next lemma is stated and proved in \cite[Prop. 2.2]{AcerbiSpectral}.

\begin{lemma}  \label{Thm:ConvexCombiRisk}
Let $(Z, \mathcal{Z}, \nu)$ be a measurable space, $\nu$ a probability measure and $(\rho_z)_{z \in Z}$ a family of risk measures such that $z \mapsto \rho_z (X)$ is integrable w.r.t. $\nu$ for any $X \in \mathcal{X}$. Define the measure of risk $\rho : \mathcal{X} \rightarrow \mathbb{R}$ via
\begin{equation*}
\rho (X) := \int_Z \rho_z (X) \, \mathrm{d} \nu (z),
\end{equation*}
then the following hold:
\begin{enumerate}[label=(\roman*)]
\item If $\rho_z$ is coherent for all $z \in Z$, then $\rho$ is coherent.
\item If $\rho_z$ is convex for all $z \in Z$, then $\rho$ is convex.
\end{enumerate}
\end{lemma}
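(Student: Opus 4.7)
The plan is to verify each defining property of a coherent (respectively convex) risk measure for $\rho$ by pointwise application of that property to each $\rho_z$, followed by integration against the probability measure $\nu$. The integrability assumption $z \mapsto \rho_z(X) \in L^1(\nu)$ guarantees that $\rho(X) \in \mathbb{R}$, so $\rho$ is indeed a measure of risk in the sense of the chapter.

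For part (i), I would dispatch the four axioms in order. Monotonicity: if $X \leq Y$, then $\rho_z(Y) \leq \rho_z(X)$ for every $z \in Z$ by coherence of $\rho_z$, and integrating against $\nu$ preserves the inequality. Positive homogeneity: for $\lambda \geq 0$, the identity $\rho_z(\lambda X) = \lambda \rho_z(X)$ holds pointwise, and $\lambda$ pulls out of the $\nu$-integral. Translation invariance: for $c \in \mathbb{R}$ we have $\rho_z(X+c) = \rho_z(X) - c$ pointwise, and since $\nu$ is a \emph{probability} measure we obtain
\begin{equation*}
\rho(X+c) = \int_Z \rho_z(X) \, \mathrm{d}\nu(z) - c \int_Z 1 \, \mathrm{d}\nu(z) = \rho(X) - c .
\end{equation*}
Subadditivity follows similarly: $\rho_z(X+Y) \leq \rho_z(X) + \rho_z(Y)$ pointwise, and integration preserves the inequality and splits the sum by linearity.

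For part (ii), the argument is identical for monotonicity and translation invariance. For the convexity axiom, fix $\lambda \in [0,1]$ and $X,Y \in \mathcal{X}$; then $\rho_z(\lambda X + (1-\lambda)Y) \leq \lambda \rho_z(X) + (1-\lambda)\rho_z(Y)$ holds for every $z$, and integrating against $\nu$ gives $\rho(\lambda X + (1-\lambda) Y) \leq \lambda \rho(X) + (1-\lambda)\rho(Y)$.

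There is no real obstacle here; the only subtle point worth flagging is the role of $\nu(Z) = 1$, which is essential for translation invariance (otherwise $\rho(X+c) = \rho(X) - c\,\nu(Z)$ would only yield translation invariance up to a scaling constant), and implicitly ensures $\rho$ inherits the correct normalisation. All other steps are direct consequences of monotonicity and linearity of the Lebesgue integral.
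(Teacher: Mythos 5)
Your proof is correct and takes essentially the same approach as the paper: each axiom of coherence (respectively convexity) is verified by applying the corresponding property of $\rho_z$ pointwise in $z$ and then integrating against $\nu$, with the normalisation $\nu(Z)=1$ doing the work for translation invariance. The paper states this in two sentences; you have simply written out the routine verification in full.
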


\begin{proof}
Monotonicity, positive homogeneity, subadditivity and convexity all carry over from $\rho_z$ to $\rho$ due to the properties of the integral. Translation invariance follows from the fact that $\nu (Z) = 1$.
\end{proof}

There exist two equivalent definitions of spectral measures of risk and their construction is motivated by Lemma~\ref{Thm:ConvexCombiRisk}. We start with the one used in McNeil et al.~\cite[Sec. 8.2]{McNeilRisk} and continue with a lemma showing an equivalent representation. For a random variable $X$ with distribution function $F$, we denote its lower quantile function by $F^{\leftarrow}_X$, see also Definition~\ref{Def:quantilefunction}. 

\begin{definition}  \label{Def:SpectralRiskMeasure}
Let $\phi : [0,1] \rightarrow \mathbb{R}_+$ be a positive, integrable, and decreasing function such that $\int \phi(s) \, \mathrm{d}s =1$ holds. Then the mapping
\begin{equation*}
\rho_\phi : \mathcal{X} \rightarrow \mathbb{R}, \quad X \mapsto \rho_\phi (X) := - \int_{0}^{1} F^{\leftarrow}_X(s) \phi (s) \, \mathrm{d}s
\end{equation*}
is called the \textit{spectral measure of risk} associated to the \textit{risk aversion function} $\phi$.
\end{definition}

Although spectral risk measures are formed as a convex combination, it is not possible to apply Lemma~\ref{Thm:ConvexCombiRisk} to show coherence or convexity since the mappings $X \mapsto - F^{\leftarrow}_X (s)$, $s \in (0,1)$ are not convex risk measures (see Example~\ref{Thm:ExVaRnotConvex}). They are called \textit{Value at Risk} and their properties are discussed in Subsection~\ref{Sec:ValueAtRisk}. However, the following lemma shows that spectral measures of risk can be represented as convex combinations of the coherent measure of risk \textit{Expected Shortfall}, which is considered in Subsection~\ref{Sec:ExpectedShortfall}. The result and its proof can be found in \cite{AcerbiSpectral} as well as in~\cite[Prop. 8.18]{McNeilRisk}.

\begin{lemma}  \label{Thm:SpectralRiskAlternativeRep}
Let $\rho_\phi : \mathcal{X} \rightarrow \mathbb{R}$ be a spectral measure of risk with risk aversion function $\phi$. Moreover, for any probability measure $\nu$ on $([0,1], \mathcal{B}([0,1]))$ define a risk measure $\rho_\nu : \mathcal{X} \rightarrow \mathbb{R}$ via
\begin{equation*}
\rho_\nu (X) := \int_{[0,1]} \mathrm{ES}_\alpha (X) \, \mathrm{d}\nu (\alpha),
\end{equation*}
where
\begin{equation*}
\mathrm{ES}_\alpha (X) := - \frac{1}{\alpha} \int_{0}^{\alpha} F^{\leftarrow}_X (u) \, \mathrm{d}u .
\end{equation*}
Then for any $\phi$ there is a $\nu$ such that we have $\rho_\phi = \rho_\nu$ and the converse implication is also correct. 
\end{lemma}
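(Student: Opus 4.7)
The plan is to reduce the claimed equivalence to a bijection between two classes of measures on $[0,1]$. Starting from
\begin{equation*}
\rho_\nu(X) = -\int_{[0,1]} \frac{1}{\alpha}\int_0^\alpha F^{\leftarrow}_X(u)\,\mathrm{d}u\,\mathrm{d}\nu(\alpha),
\end{equation*}
I would apply Fubini--Tonelli (which is justified because $F^{\leftarrow}_X$ is monotone, hence the integrand has constant sign on suitable pieces, and because $\int_0^1 \alpha^{-1}\nu([u,1])\,\mathrm{d}u$ is finite by the computation below) to obtain
\begin{equation*}
\rho_\nu(X) = -\int_0^1 F^{\leftarrow}_X(u)\,\psi_\nu(u)\,\mathrm{d}u, \qquad \psi_\nu(u) := \int_{[u,1]} \frac{1}{\alpha}\,\mathrm{d}\nu(\alpha).
\end{equation*}
Comparing with Definition~\ref{Def:SpectralRiskMeasure}, the equality $\rho_\phi = \rho_\nu$ on $\mathcal{X}$ amounts to $\phi(u) = \psi_\nu(u)$ for almost every $u \in (0,1)$.

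Next I would check that both directions of the correspondence $\phi \leftrightarrow \nu$ are well defined. For the direction $\nu \mapsto \phi$, set $\phi(u) := \psi_\nu(u)$. Then $\phi$ is nonnegative (as $1/\alpha > 0$ and $\nu$ is a measure), decreasing (since $[u,1]$ shrinks in $u$), and by Fubini
\begin{equation*}
\int_0^1 \phi(u)\,\mathrm{d}u = \int_{[0,1]} \frac{1}{\alpha}\int_0^\alpha \mathrm{d}u\,\mathrm{d}\nu(\alpha) = \nu([0,1]) = 1,
\end{equation*}
so $\phi$ is a risk aversion function in the sense of Definition~\ref{Def:SpectralRiskMeasure} (with the tacit assumption $\nu(\{0\}) = 0$, which is necessary for $\rho_\nu$ to be finite in any case).

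For the harder direction $\phi \mapsto \nu$, I would use the Lebesgue--Stieltjes correspondence between decreasing right-continuous functions and locally finite positive measures. Replacing $\phi$ by its right-continuous modification $\tilde\phi(u) := \lim_{v \searrow u}\phi(v)$ (which agrees with $\phi$ almost everywhere by monotonicity), the function $u \mapsto \tilde\phi(u)$ corresponds to a unique positive Borel measure $\mu$ on $[0,1]$ determined by $\mu([u,1]) = \tilde\phi(u)$. I then define
\begin{equation*}
\mathrm{d}\nu(\alpha) := \alpha\,\mathrm{d}\mu(\alpha),
\end{equation*}
which is a positive Borel measure on $[0,1]$ supported in $(0,1]$. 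A second Fubini computation gives
\begin{equation*}
\nu([0,1]) = \int_{[0,1]} \alpha\,\mathrm{d}\mu(\alpha) = \int_{[0,1]} \int_0^\alpha \mathrm{d}u\,\mathrm{d}\mu(\alpha) = \int_0^1 \mu([u,1])\,\mathrm{d}u = \int_0^1 \phi(u)\,\mathrm{d}u = 1,
\end{equation*}
so $\nu$ is a probability measure, and reversing the previous calculation yields $\psi_\nu(u) = \mu([u,1]) = \tilde\phi(u) = \phi(u)$ almost everywhere.

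The main technical obstacle is the Stieltjes step: one has to check that the decreasing function $\phi$ (only required to be integrable, not continuous) really does determine the measure $\mu$ via $\mu([u,1]) = \tilde\phi(u)$, and that the almost-everywhere modification does not affect the identity $\rho_\phi = \rho_{\tilde\phi}$. Both points follow from standard measure theory, but they are the place where care is needed; after that, the rest of the argument is two applications of Fubini and bookkeeping.
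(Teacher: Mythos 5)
Your approach is essentially the paper's: rewrite $\rho_\nu$ via Fubini as $-\int_0^1 F^{\leftarrow}_X(u)\,\psi_\nu(u)\,\mathrm{d}u$ and then match $\phi$ with $\psi_\nu$ through the Lebesgue--Stieltjes correspondence. The organizational differences are harmless and arguably cleaner than the paper's: you package the argument as a bijection $\phi \leftrightarrow \nu$, and you verify $\nu([0,1]) = 1$ with a second Fubini instead of the integration-by-parts step the paper uses. One point, however, is stated incorrectly and is exactly where the paper is careful. You define $\mu$ by the relation $\mu([u,1]) = \tilde\phi(u)$ for the \emph{right}-continuous modification $\tilde\phi$. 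But for any finite Borel measure $\mu$ the map $u \mapsto \mu([u,1])$ is \emph{left}-continuous (by continuity from above along $[u_n,1] \downarrow [u,1]$ as $u_n \uparrow u$), so it cannot agree pointwise with a right-continuous $\tilde\phi$ unless $\tilde\phi$ is continuous. The genuine Stieltjes relation is $\mu((u,1]) = \tilde\phi(u) - \tilde\phi(1)$; to obtain $\tilde\phi(u) = \tilde\mu((u,1])$ one must also add a point mass $\tilde\phi(1)\,\delta_1$, which is precisely the paper's $\tilde\mu := \mu + \phi_r(1)\delta_1$ construction and in turn gives $\nu$ an atom of mass $\phi_r(1)$ at $1$ (corresponding to the mean component of the spectral measure). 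Your a.e.\ conclusion survives, since $\mu([u,1])$ and $\mu((u,1])$ differ only on the at most countable atom set of $\mu$, a Lebesgue-null set, but the defining relation as you wrote it is not one a finite measure can satisfy, and the possible mass at $1$ is silently dropped; both need to be fixed to turn the sketch into a proof.
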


\begin{proof}
At first, let $\rho_\phi$ be given. Since $\phi$ is decreasing, there exists a right-continuous version, which is denoted by $\phi_r$ and coincides with $\phi$ for a.e. $u \in [0,1]$. Now define the Lebesgue-Stieltjes measure w.r.t. $\phi_r$ on intervals via $\mu ( (u,1]) = \phi_r (u) - \phi_r(1)$ and define another measure $\tilde{\mu} := \mu + \phi_r (1) \delta_1$, which leads to the identity $\phi_r(u) =  \tilde{\mu}( (u,1] )$ for any $u \in [0,1)$. Using the Fubini-Tonelli theorem, we calculate
\begin{align*}
\rho_\phi (X) &= - \int_{0}^{1} F^{\leftarrow}_X (u) \phi_r (u) \, \mathrm{d} u = - \int_{0}^{1} F^{\leftarrow}_X (u) \int_{(u,1]} \mathrm{d} \tilde{\mu} (\alpha) \mathrm{d}u \\
&= \int_{[0,1]} \alpha  \left( - \frac{1}{\alpha} \int_{0}^{1} \mathbbm{1}_{(u,1]} (\alpha) F^{\leftarrow}_X (u) \, \mathrm{d}u \right) \mathrm{d} \tilde{\mu}(\alpha) = \int_{[0,1]} \mathrm{ES}_\alpha (X) \, \mathrm{d} \nu (\alpha) ,
\end{align*}
where the measure $\nu$ is defined via $\mathrm{d}\nu (\alpha) = \alpha \, \mathrm{d}\tilde{\mu} (\alpha) $. Using integration by parts for Lebesgue-Stieljes integrals (see for instance Hewitt~\cite{Hewitt}) gives
\begin{align*}
\nu ( [0,1] ) = \int_{[0,1]} \alpha \, \mathrm{d} \mu(\alpha) + \phi_r (1) = -\phi_r (1) - \int_{0}^{1} - \phi_r (u) \, \mathrm{d} u + \phi_r (1) = 1,
\end{align*}
showing that $\nu$ is a probability measure on $([0,1], \mathcal{B}([0,1]))$. For the converse implication, let $\rho_\nu$ be given and define the function $\phi (u) := \int_{ [u,1] } \frac{1}{\alpha} \, \mathrm{d} \nu (\alpha)$, which is positive and decreasing on $(0,1]$. As above we interchange the order of integration to obtain $\rho_\nu (X) = \rho_\phi(X)$. Moreover, we have
\begin{equation*}
\int_{ [0,1] } \frac{1}{\alpha} \int_{0}^{1} \mathbbm{1}_{[0, \alpha]} ( u) \, \mathrm{d}u \, \mathrm{d}\nu(\alpha ) = \int_{[0,1]} \mathrm{d}\nu (\alpha) = 1 
\end{equation*}
and using the Fubini-Tonelli theorem again this implies $\int \phi (u) \, \mathrm{d}u = 1$.
\end{proof}

In the following, the measure $\nu$ is called the \textit{spectrum} of $\rho_\nu$. As mentioned above, Kusuoka~\cite{KusuokaSpectral} characterizes spectral measures of risk  as law-invariant, coherent, and comonotone measures of risk. Therefore, the rest of this subsection introduces comonotone risk measures and shows that spectral risk measures are comonotone. Both of the following definitions are part of \cite[Def. 6]{KusuokaSpectral}.

\begin{definition}
Two random variables $X$ and $Y$ on $(\Omega, \mathscr{A}, \mathbb{P})$ are called \textit{comonotonic} if the inequality
\begin{equation*}
( X(\omega) - X(\omega^\prime)) (Y(\omega) - Y(\omega^\prime) ) \geq 0
\end{equation*}
holds $ \mathrm{d} \mathbb{P}(\omega) \otimes \mathrm{d}\mathbb{P}(\omega^\prime)$-almost surely.
\end{definition}

\begin{definition}
Let $\rho : \mathcal{X} \rightarrow \mathbb{R}$ be a measure of risk. If for all comonotonic $X, Y \in \mathcal{X}$ it holds that
\begin{equation*}
\rho (X + Y) = \rho(X) + \rho (Y),
\end{equation*}
then $\rho$ is called a \textit{comonotone} measure of risk.
\end{definition}

\begin{remark}  \label{Rem:ComonotoneInterpret}
The concept of a comonotone measure of risk is closely connected to subadditivity, which formalizes the effect of diversification. Although it is reasonable to assume that diversification does not increase risk, it seems doubtful to assume that it decreases risk in all cases. For instance, if two positions $X$ and $Y$ move up and down collectively, it is not plausible that adding them to one position $X+Y$ reduces risk. The concept of two positions moving together is formalized by comonotonic random variables, and a comonotone measure of risk will not show a decrease in risk if such random variables are added.
\end{remark}

The next step is to show that all spectral measures of risk are comonotone. We begin with a lemma which gives equivalent conditions for comonotonicity. It collects results from \cite{KusuokaSpectral} and \cite{McNeilRisk}.

\begin{lemma}  \label{Thm:ComonotonicEquivalent}
Let $X$ and $Y$ be random variables. Then the following are equivalent:
\begin{enumerate}[label=(\roman*)]
\item $X$ and $Y$ are comonotonic.
\item $F_{X,Y}(a, b) = \min( F_X (a), F_Y(b))$, where $F_{X,Y}$ is the joint distribution function of $X$ and $Y$.
\item $(X,Y) =^d ( F^{\leftarrow}_X (U) , F^{\leftarrow}_Y(U) )$ for a random variable $U =^d \mathcal{U}([0,1])$.
\end{enumerate}
\end{lemma}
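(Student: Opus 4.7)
The plan is to establish the cycle (iii) $\Rightarrow$ (ii) $\Rightarrow$ (i) together with the direct implication (ii) $\Rightarrow$ (iii), and finally close the loop via (i) $\Rightarrow$ (ii), which will be the main non-trivial step.

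First I would prove (ii) $\Leftrightarrow$ (iii). Let $U \sim \mathcal{U}([0,1])$ and compute the joint distribution function of $(F_X^\leftarrow(U), F_Y^\leftarrow(U))$ at $(a,b) \in \mathbb{R}^2$. Using Lemma~\ref{Thm:QuantileLemma}~(i), we have $\lbrace F_X^\leftarrow(U) \leq a\rbrace = \lbrace U \leq F_X(a) \rbrace$ and likewise for $Y$, hence
\begin{equation*}
\mathbb{P}(F_X^\leftarrow(U) \leq a,\, F_Y^\leftarrow(U) \leq b) = \mathbb{P}(U \leq \min(F_X(a), F_Y(b))) = \min(F_X(a), F_Y(b)).
\end{equation*}
Thus $(F_X^\leftarrow(U), F_Y^\leftarrow(U))$ has joint distribution function equal to the Fréchet upper bound. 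Since joint distribution functions determine the law on $\mathbb{R}^2$, condition (ii) is exactly saying that $(X,Y) =^d (F_X^\leftarrow(U), F_Y^\leftarrow(U))$, i.e.\ condition (iii).

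Next I would show (iii) $\Rightarrow$ (i). Since the event in (i) is measurable with respect to the law of $(X,Y)$ on the product space $(\Omega \times \Omega, \mathscr{A} \otimes \mathscr{A}, \mathbb{P} \otimes \mathbb{P})$, I may replace $(X,Y)$ by $(F_X^\leftarrow(U), F_Y^\leftarrow(U))$ without loss of generality. Since both $F_X^\leftarrow$ and $F_Y^\leftarrow$ are non-decreasing, for any $u, u' \in [0,1]$ the factors $F_X^\leftarrow(u) - F_X^\leftarrow(u')$ and $F_Y^\leftarrow(u) - F_Y^\leftarrow(u')$ share the same sign, so their product is non-negative. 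This gives (i) pointwise.

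The hard part is (i) $\Rightarrow$ (ii). Fix $a,b \in \mathbb{R}$ and define the events $A := \lbrace X \leq a, Y > b \rbrace$ and $B := \lbrace X > a, Y \leq b \rbrace$. For any $\omega \in A$ and $\omega' \in B$ we have $X(\omega) < X(\omega')$ and $Y(\omega) > Y(\omega')$, so the product $(X(\omega) - X(\omega'))(Y(\omega) - Y(\omega'))$ is strictly negative. By assumption~(i) the set of such $(\omega,\omega')$ has $\mathbb{P} \otimes \mathbb{P}$-measure zero, hence
\begin{equation*}
\mathbb{P}(A) \, \mathbb{P}(B) = (\mathbb{P} \otimes \mathbb{P})(A \times B) = 0.
\end{equation*}
Since $\mathbb{P}(A) = F_X(a) - F_{X,Y}(a,b)$ and $\mathbb{P}(B) = F_Y(b) - F_{X,Y}(a,b)$, at least one of these differences vanishes, so $F_{X,Y}(a,b) \in \lbrace F_X(a), F_Y(b) \rbrace$. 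Combining this with the Fréchet upper bound $F_{X,Y}(a,b) \leq \min(F_X(a), F_Y(b))$, which always holds, forces $F_{X,Y}(a,b) = \min(F_X(a), F_Y(b))$, i.e.\ condition (ii). Since $a,b$ were arbitrary, this completes the cycle.
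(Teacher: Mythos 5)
Your proof is correct and follows essentially the same approach as the paper: the key quantile-function computation via Lemma~\ref{Thm:QuantileLemma}~(i), the pointwise monotonicity argument for the quantile transforms, and the product-measure-zero argument on $A \times B$. The only notable presentational difference is that you close (i)~$\Rightarrow$~(ii) by invoking the Fréchet upper bound $F_{X,Y}(a,b) \leq \min(F_X(a), F_Y(b))$, which is a slightly cleaner shortcut than the paper's case analysis, and you fold (ii)~$\Leftrightarrow$~(iii) into a single computation rather than treating one direction separately.
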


\begin{proof}
We start with ``(i)$\Rightarrow$(ii)'' and use the same idea as \cite[Prop. 17]{KusuokaSpectral}. Fix arbitrary $a, b \in \mathbb{R}$ and define the sets $A := \lbrace X \leq a \rbrace \cap \lbrace Y > b \rbrace$ as well as $B:= \lbrace Y \leq b \rbrace \cap \lbrace X > a \rbrace$. For any $(\omega , \omega^\prime) \in A \times B$ we have $( X(\omega) - X(\omega^\prime)) (Y(\omega) - Y(\omega^\prime) ) < 0$ and hence the comonotonicity of $X$ and $Y$ implies $0 = (\mathbb{P} \otimes \mathbb{P}) (A \times B) = \mathbb{P}(A) \mathbb{P}(B)$. So $A$ or $B$ is a null set. If $A$ is a null set, this gives $\mathbb{P}(X \leq a) = \mathbb{P}(X \leq a, Y \leq b) \leq \mathbb{P}(Y \leq b)$ and repeating the same argument for the case where $B$ is a null set shows (ii). To prove ``(ii)$\Rightarrow$(iii)'' we proceed as in \cite[Prop. 7.18]{McNeilRisk}. For $a,b \in \mathbb{R}$ and   $U =^d \mathcal{U}([0,1])$ we obtain
\begin{align*}
F(a,b) &= \min( F_X (a), F_Y(b)) = \mathbb{P}( U \leq \min( F_X (a), F_Y(b)) ) \\
&= \mathbb{P}( U \leq F_X(a) , U \leq F_Y (b) ) = \mathbb{P}( F^{\leftarrow}_X (U) \leq a , F^{\leftarrow}_Y (U) \leq b) ,
\end{align*}
where Lemma~\ref{Thm:QuantileLemma} (i) is used in the last step. Finally, we show ``(iii)$\Rightarrow$(i)''. Observe that it is enough to prove that $(X - \tilde{X}) (Y - \tilde{Y}) \geq 0 $ holds $(\mathbb{P} \otimes \mathbb{P})$-a.s. where $(\tilde{X}, \tilde{Y})$ and $(X,Y)$ are i.i.d. random vectors. Hence, take independent $U_1, U_2 =^d \mathcal{U}([0,1])$, which implies that the events
\begin{equation*}
\big\lbrace (X - \tilde{X}) (Y - \tilde{Y}) < 0 \big\rbrace \quad \text{and} \quad \big\lbrace (F^{\leftarrow}_X(U_1) - F^{\leftarrow}_X (U_2)) (F^{\leftarrow}_Y(U_1) - F^{\leftarrow}_Y (U_2)) < 0 \big\rbrace
\end{equation*}
have the same probability. Since $F^{\leftarrow}_X$ and $F^{\leftarrow}_Y$ are by definition increasing, the latter set cannot have positive probability and thus $X$ and $Y$ are comonotonic.
\end{proof}

Using the previous lemma, we now show that quantiles are additive if they correspond to comonotonic random variables. The result and its proof are presented in \cite[Prop. 7.20]{McNeilRisk}. It is then used to show that spectral risk measures are comonotone.

\begin{lemma}  \label{Thm:QuantAdditiveComon}
Fix $\alpha \in (0,1)$ and let $X$ and $Y$ be comonotonic random variables. Then
\begin{equation*}
F^{\leftarrow}_{X + Y} (\alpha) = F^{\leftarrow}_X (\alpha) + F^{\leftarrow}_Y (\alpha).
\end{equation*}
\end{lemma}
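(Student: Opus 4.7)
The plan is to reduce the claim to the transformation formula for quantiles recorded in Lemma~\ref{Thm:QuantileLemma}(v), using the coupling representation of comonotonic pairs from Lemma~\ref{Thm:ComonotonicEquivalent}(iii).

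First I would fix a random variable $U$ uniformly distributed on $[0,1]$ and invoke Lemma~\ref{Thm:ComonotonicEquivalent}(iii) to obtain
\begin{equation*}
(X,Y) \,=^d\, \bigl(F^{\leftarrow}_X(U),\, F^{\leftarrow}_Y(U)\bigr),
\end{equation*}
which, by applying the continuous (in fact, additive) map $(a,b)\mapsto a+b$ to both sides, yields $X+Y \,=^d\, g(U)$ with
\begin{equation*}
g:[0,1]\to\mathbb{R},\qquad g(u) := F^{\leftarrow}_X(u) + F^{\leftarrow}_Y(u).
\end{equation*}

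Next I would verify that $g$ is increasing and left-continuous. Both properties pass from $F^{\leftarrow}_X$ and $F^{\leftarrow}_Y$ to their sum: monotonicity is immediate from the fact that a sum of increasing functions is increasing, and left-continuity follows since left-hand limits commute with finite sums, using that $F^{\leftarrow}_X$ and $F^{\leftarrow}_Y$ are left-continuous by Lemma~\ref{Thm:QuantileLemma}(iv).

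Finally, I would apply Lemma~\ref{Thm:QuantileLemma}(v) to $U =^d \mathcal{U}([0,1])$ (whose lower quantile function is the identity on $(0,1)$) and to the increasing left-continuous function $g$. This gives
\begin{equation*}
F^{\leftarrow}_{X+Y}(\alpha) \;=\; g(\alpha) \;=\; F^{\leftarrow}_X(\alpha) + F^{\leftarrow}_Y(\alpha),
\end{equation*}
which is the desired identity. There is no real obstacle beyond the routine check of left-continuity of $g$; the entire argument rests on recognising that comonotonicity precisely provides the coupling needed to turn $F^{\leftarrow}_X(\cdot)+F^{\leftarrow}_Y(\cdot)$ into a valid quantile transformation of $U$.
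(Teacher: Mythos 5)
Your proof is correct and follows essentially the same route as the paper: you define the same sum-of-quantile-functions map (the paper calls it $H$, you call it $g$), invoke Lemma~\ref{Thm:ComonotonicEquivalent}(iii) for the coupling $(X,Y) =^d (F^{\leftarrow}_X(U), F^{\leftarrow}_Y(U))$, check monotonicity and left-continuity via Lemma~\ref{Thm:QuantileLemma}(iv), and conclude with Lemma~\ref{Thm:QuantileLemma}(v) and $F^{\leftarrow}_U = \mathrm{id}$. The only difference is cosmetic: you spell out the left-continuity check a bit more explicitly than the paper does.
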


\begin{proof}
At first, define the mapping $H : (0,1) \rightarrow \mathbb{R}$, $u \mapsto F^{\leftarrow}_X (u) + F^{\leftarrow}_Y (u)$, which is increasing and left-continuous due to Lemma~\ref{Thm:QuantileLemma} (iv). Moreover, by property (iii) of Lemma~\ref{Thm:ComonotonicEquivalent} we have $(X,Y) =^d ( F^{\leftarrow}_X (U) , F^{\leftarrow}_Y(U) )$ for $U =^d \mathcal{U}([0,1])$. Consequently, part (v) of Lemma~\ref{Thm:QuantileLemma} gives
\begin{align*}
F^{\leftarrow}_{X+Y} (\alpha) = F^{\leftarrow}_{H(U)} (\alpha) = H ( F^{\leftarrow}_U (\alpha)) = H( \alpha ) =   F^{\leftarrow}_X (\alpha) + F^{\leftarrow}_Y (\alpha)
\end{align*}
for any $\alpha \in (0,1)$.
\end{proof}

\begin{theorem}
Every spectral measure of risk $\rho_\phi$ is coherent and comonotone.
\end{theorem}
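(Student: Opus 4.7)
My plan is to reduce everything to properties of the single-parameter family $\mathrm{ES}_\alpha$. By Lemma~\ref{Thm:SpectralRiskAlternativeRep} there exists a probability measure $\nu$ on $([0,1],\mathcal{B}([0,1]))$ such that $\rho_\phi(X) = \int_{[0,1]} \mathrm{ES}_\alpha(X)\,\mathrm{d}\nu(\alpha)$. Consequently, once I show that each $\mathrm{ES}_\alpha$ is itself coherent and comonotone, Lemma~\ref{Thm:ConvexCombiRisk} immediately yields coherence of $\rho_\phi$, and linearity of the integral carries comonotone additivity through to $\rho_\phi$ as well. (A minor technicality is integrability of $\alpha \mapsto \mathrm{ES}_\alpha(X)$ with respect to $\nu$, which follows because $\rho_\phi(X)$ is finite by assumption and the Fubini calculation inside the proof of Lemma~\ref{Thm:SpectralRiskAlternativeRep} can be read both ways.)

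The three easier coherence axioms for $\mathrm{ES}_\alpha$ fall out from standard properties of the lower quantile function. If $X \le Y$ then $F_X \ge F_Y$ pointwise, so $F^{\leftarrow}_X \le F^{\leftarrow}_Y$ on $(0,1)$, giving monotonicity of $\mathrm{ES}_\alpha$. Positive homogeneity and translation invariance follow from $F^{\leftarrow}_{\lambda X}(u) = \lambda F^{\leftarrow}_X(u)$ for $\lambda \ge 0$ and $F^{\leftarrow}_{X+c}(u) = F^{\leftarrow}_X(u)+c$, which are consequences of Lemma~\ref{Thm:QuantileLemma}(v) applied to the increasing left-continuous maps $x\mapsto \lambda x$ and $x\mapsto x+c$. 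Comonotone additivity of $\mathrm{ES}_\alpha$ is the cleanest part: for comonotonic $X,Y$, Lemma~\ref{Thm:QuantAdditiveComon} gives $F^{\leftarrow}_{X+Y}(u) = F^{\leftarrow}_X(u)+F^{\leftarrow}_Y(u)$ for every $u\in(0,1)$, and integrating this identity over $(0,\alpha)$ and dividing by $-\alpha$ yields $\mathrm{ES}_\alpha(X+Y) = \mathrm{ES}_\alpha(X) + \mathrm{ES}_\alpha(Y)$. Integrating once more with respect to $\nu$ gives comonotone additivity of $\rho_\phi$.

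The main obstacle is subadditivity of $\mathrm{ES}_\alpha$, since the integrals $\int_0^\alpha F^{\leftarrow}_X(u)\,\mathrm{d}u$ do not add up nicely for general (non-comonotonic) $X,Y$. The plan here is to reduce $\mathrm{ES}_\alpha$ to an identity that is manifestly subadditive. Concretely, for any $\alpha$-quantile $q_\alpha(X) = F^{\leftarrow}_X(\alpha)$ one has the Rockafellar--Uryasev style representation
\begin{equation*}
\alpha\,\mathrm{ES}_\alpha(X) \;=\; -\mathbb{E}\bigl[X\,\mathbf{1}_{\{X\le q_\alpha(X)\}}\bigr] \;+\; q_\alpha(X)\bigl(\mathbb{P}(X\le q_\alpha(X)) - \alpha\bigr),
\end{equation*}
and a short calculation shows this equals $\sup\bigl\{-\mathbb{E}[X\,Z]\ :\ 0\le Z\le 1/\alpha,\ \mathbb{E}Z=1\bigr\}$. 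Since $Z\mapsto -\mathbb{E}[\cdot\,Z]$ is linear and the supremum of a family of linear functionals is sublinear, subadditivity of $\mathrm{ES}_\alpha$ follows. The verification of the equality between the quantile formula and the supremum — choosing $Z = \alpha^{-1}\mathbf{1}_{\{X<q\}} + \gamma\,\mathbf{1}_{\{X=q\}}$ with $\gamma$ tuned so that $\mathbb{E}Z=1$ — is where the bulk of the work lies, and it is the step I would allocate most time to; everything else is bookkeeping.
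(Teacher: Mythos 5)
Your high-level plan is exactly the paper's: pass from the risk-aversion-function form to the $\nu$-mixture of Expected Shortfalls via Lemma~\ref{Thm:SpectralRiskAlternativeRep}, get coherence from Lemma~\ref{Thm:ConvexCombiRisk} once $\mathrm{ES}_\alpha$ is known to be coherent, and get comonotone additivity from Lemma~\ref{Thm:QuantAdditiveComon} plus linearity of the integral. The monotonicity, positive-homogeneity and translation-invariance arguments for $\mathrm{ES}_\alpha$ via Lemma~\ref{Thm:QuantileLemma}(v) are also the same in substance (the paper routes them through the Value-at-Risk properties and the representation $\mathrm{ES}_\alpha(X) = \alpha^{-1}\int_0^\alpha \mathrm{VaR}_s(X)\,\mathrm{d}s$; that is cosmetic). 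The one place where you genuinely diverge is the only nontrivial step: subadditivity of $\mathrm{ES}_\alpha$. The paper (Theorem~\ref{Thm:EScoherent}) deliberately proves it by writing $\alpha\,\mathrm{ES}_\alpha(X) + \alpha\,\mathbb{E}X = \bar{S}(F^{\leftarrow}_X(\alpha),F_X)$ for the pinball scoring function $S(x,y)=(\mathbbm{1}_{\{y\le x\}}-\alpha)(x-y)$ and then exploiting $\mathcal{F}$-consistency of $S$ to compare with the misreport $F^{\leftarrow}_X(\alpha)+F^{\leftarrow}_Y(\alpha)$ — a choice that is thematically tied to the elicitability machinery of the thesis. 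You instead invoke the dual (Rockafellar--Uryasev) representation $\mathrm{ES}_\alpha(X)=\sup\{-\mathbb{E}[XZ]\ :\ 0\le Z\le \alpha^{-1},\ \mathbb{E}Z=1\}$ and read off sublinearity from the fact that a supremum of linear functionals is subadditive. Both are correct and standard; the scoring-function route is self-contained within the thesis's toolbox, while the dual route is shorter if one already accepts (or proves) the variational identity. One small slip in your writeup: with the constraint set $\{0\le Z\le \alpha^{-1},\ \mathbb{E}Z=1\}$ the supremum equals $\mathrm{ES}_\alpha(X)$, not $\alpha\,\mathrm{ES}_\alpha(X)$ — the maximizer $Z^*=\alpha^{-1}\mathbbm{1}_{\{X<q\}}+\gamma\mathbbm{1}_{\{X=q\}}$ with $\gamma\mathbb{P}(X=q)=1-\alpha^{-1}\mathbb{P}(X<q)$ gives $-\mathbb{E}[XZ^*]=\mathrm{ES}_\alpha(X)$ directly — so either drop the factor $\alpha$ on the left or rescale the constraints to $0\le Z\le 1$, $\mathbb{E}Z=\alpha$.
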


\begin{proof}
Using the previous lemma and the additivity of the integral, we obtain that all spectral measures of risk are comonotone. Coherence follows from Lemma~\ref{Thm:ConvexCombiRisk} and the coherence of $\mathrm{ES}$, which is shown in Theorem~\ref{Thm:EScoherent}, independently of the results of this subsection.
\end{proof}


\section{Examples of law-invariant risk measures} 
\label{Sec:VaRESandEVaR}

This section presents and discusses three measures of risk which are used in industry or studied in academia. All measures considered are law-invariant, hence they are interpreted as mappings on some class of distribution functions $\mathcal{F}$ and the notation $\rho (F)$ instead of $\rho(X)$ is used if $X$ has distribution function $F$. Moreover, this interpretation makes it possible to check whether the risk measures are elicitable. We start with Value at Risk, the most widely used risk measure in practice, and continue with Expected Shortfall which is proposed as an alternative to VaR. An introduction to Expectile Value at Risk, which recently received much attention in academia, concludes the section.

\subsection{Value at Risk} 
\label{Sec:ValueAtRisk}

For some $\alpha \in (0,1)$, the Value at Risk (VaR) at level $\alpha$ is a threshold such that the probability of the portfolio loss exceeding this threshold is equal or below $\alpha$. Consequently, the suitable mathematical object to represent the risk of a distribution function $F$ is one of its $\alpha$-quantiles, see Definition~\ref{Def:quantilefunction}. However, since quantiles are not always unique and risk measure conventions differ, as noted in Remark~\ref{Rem:DefinitionRisk}, there are different definitions of VaR and one of them is the following.

\begin{definition} \label{Def:ValueatRisk}
Fix $\alpha \in (0,1)$ and let $X$ have distribution $F$. Then
\begin{equation*}
\mathrm{VaR}_\alpha (F) := \mathrm{VaR}_\alpha (X) := - F^{\leftarrow} (\alpha)
\end{equation*}
is called the \textit{Value at Risk} at level $\alpha$ of $X$ (or $F$).
\end{definition} 

\begin{remark}
Typically, $\alpha$ will be some small value, like $1 \% $ or $5 \% $ so that VaR considers the left tail of the distribution function $F$. As mentioned above, other conventions are popular as well. Fissler and Ziegel~\cite{FissZieg} define VaR via $F^{\leftarrow} (\alpha)$ because they use opposite signs (higher values of $\rho$ represent less risk). Artzner et al.~\cite{ArtznerCoherent} use the same sign convention but define $\mathrm{VaR}_\alpha (X)$ via $ - F^{\rightarrow}_X (\alpha) =  F^{\leftarrow}_{-X} (1- \alpha)$, which coincides with Definition~\ref{Def:ValueatRisk} for continuous distributions $F$ (see Lemma~\ref{Thm:AppendixQuantileLR}).
\end{remark}

Value at Risk is widely used in practice for risk management, financial reporting, and computation of capital requirements. In particular, the Basel framework for the regulation of the banking industry as well as the Solvency II Directive for EU insurance regulation use VaR to compute certain capital requirements. For details we refer to McNeil et al.~\cite[Sec. 2.3]{McNeilRisk} and the references therein. We continue by studying the properties of Value at Risk. At first, we consider VaR for normal distributions.

\begin{example}[VaR for normal distributions]  \label{Thm:ExVaRforNormal}
Let $X$ be a random variable with distribution $\mathcal{N} (\mu, \sigma^2)$ and fix $\alpha \in (0,1)$. We denote the quantile function of $\mathcal{N}(0,1)$ by $\Phi^{-1}$ and hence
\begin{equation}  \label{Eqn:VaRforNormal}
\mathrm{VaR}_\alpha (X) = -\mu - \sigma \Phi^{-1} (\alpha) ,
\end{equation}
since the quantile function of $X$ is given by $p \mapsto \mu + \sigma \Phi^{-1}(p)$.
\end{example}

The following result shows that VaR satisfies the properties 1) to 3) of Definition~\ref{Def:CoherentRisk} and is also a comotone measure of risk. Due to the definition of VaR, it is an immediate consequence of previous findings concerning quantiles.

\begin{lemma}  \label{Thm:PropertiesOfVaR}
Fix $\alpha \in (0,1)$. Then the risk measure $\mathrm{VaR}_\alpha$ satisfies monotonicity, translation invariance and positive homogeneity. Moreover, it is comonotone.
\end{lemma}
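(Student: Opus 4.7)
The plan is to reduce each of the four claims to the properties of the lower quantile function $F^{\leftarrow}$ collected in Lemma~\ref{Thm:QuantileLemma} and the comonotone additivity formula in Lemma~\ref{Thm:QuantAdditiveComon}. Since $\mathrm{VaR}_\alpha(X) = -F_X^{\leftarrow}(\alpha)$ by definition, every assertion on $\mathrm{VaR}_\alpha$ translates directly into a statement about $F_X^{\leftarrow}(\alpha)$ with an opposite sign.

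For \textbf{monotonicity}, I would argue from the defining infimum: if $X \leq Y$ almost surely, then $\{Y \leq x\} \subseteq \{X \leq x\}$, hence $F_Y(x) \leq F_X(x)$ for every $x \in \mathbb{R}$. Consequently $\{x : F_X(x) \geq \alpha\} \supseteq \{x : F_Y(x) \geq \alpha\}$, so taking infima reverses the inclusion and gives $F_X^{\leftarrow}(\alpha) \leq F_Y^{\leftarrow}(\alpha)$. Multiplying by $-1$ yields $\mathrm{VaR}_\alpha(Y) \leq \mathrm{VaR}_\alpha(X)$.

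For \textbf{translation invariance} and \textbf{positive homogeneity}, I would apply Lemma~\ref{Thm:QuantileLemma}(v): for any increasing left-continuous $g$, if $g(X) =^d G$, then $G^{\leftarrow}(\alpha) = g(F_X^{\leftarrow}(\alpha))$. Choosing $g(x) = x + c$ (continuous, increasing) gives $F_{X+c}^{\leftarrow}(\alpha) = F_X^{\leftarrow}(\alpha) + c$, hence $\mathrm{VaR}_\alpha(X+c) = \mathrm{VaR}_\alpha(X) - c$. Choosing $g(x) = \lambda x$ for $\lambda > 0$ yields $F_{\lambda X}^{\leftarrow}(\alpha) = \lambda F_X^{\leftarrow}(\alpha)$, whence $\mathrm{VaR}_\alpha(\lambda X) = \lambda \, \mathrm{VaR}_\alpha(X)$; the case $\lambda = 0$ is verified directly since $\lambda X \equiv 0$ implies $F_0^{\leftarrow}(\alpha) = 0$.

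For the \textbf{comonotone} property, I would simply invoke Lemma~\ref{Thm:QuantAdditiveComon}: for comonotonic $X, Y \in \mathcal{X}$ one has $F_{X+Y}^{\leftarrow}(\alpha) = F_X^{\leftarrow}(\alpha) + F_Y^{\leftarrow}(\alpha)$, and multiplying by $-1$ gives $\mathrm{VaR}_\alpha(X+Y) = \mathrm{VaR}_\alpha(X) + \mathrm{VaR}_\alpha(Y)$. There is no genuine obstacle here; all four verifications are essentially one-line consequences of the quantile machinery already developed, so the proof is just a matter of citing the right auxiliary results cleanly and keeping track of signs.
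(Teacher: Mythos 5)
Your proof is correct and follows essentially the same approach as the paper: monotonicity from the reversal of cumulative distribution functions, translation invariance and positive homogeneity from Lemma~\ref{Thm:QuantileLemma}(v) applied to the affine maps, and comonotone additivity from Lemma~\ref{Thm:QuantAdditiveComon}. Your explicit treatment of the $\lambda = 0$ edge case is a small refinement that the paper glosses over.
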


\begin{proof}
Let $X,Y$ be random variables. Assume $X \leq Y$, then this implies $F_X \geq F_Y$ and hence $F^{\leftarrow}_X \leq F^{\leftarrow}_Y$, which gives $\mathrm{VaR}_\alpha (X) \geq \mathrm{VaR}_\alpha (Y)$. For the other two properties, note that the mappings $x \mapsto cx$ (with $c>0$) as well as $x \mapsto x -c$ are increasing and left-continuous. Consequently, they follow from Lemma~\ref{Thm:QuantileLemma} (v). If $X$ and $Y$ are comonotonic, Lemma~\ref{Thm:QuantAdditiveComon} implies that $\mathrm{VaR}_\alpha$ is comonotone. 
\end{proof}

In light of the previous lemma, only subadditivity or convexity are missing to make VaR a coherent measure of risk. However, VaR is not subadditive in general and thus fails to be a coherent risk measure. The following simple example shows that VaR is not convex and hence cannot be coherent. Similar examples can be found in \cite{ArtznerCoherent}, for instance.  Note that it is possible to construct similar examples for continuous distributions, for example by using independent losses with heavy-tailed distributions. We refer to \cite[Example 2.25]{McNeilRisk}.

\begin{example}  \label{Thm:ExVaRnotConvex}
Define the distribution $P := 0.04 \, \delta_{-1} + 0.96 \, \delta_r$ on $(\mathbb{R}, \mathcal{B}(\mathbb{R}))$ for some $r >0$. Let $X_1$ and $X_2$ be independent random variables having distribution $P$, which implies that $X_1 + X_2$ has distribution $ 0.0016 \, \delta_{-2} + 0.0768 \, \delta_{r-1} + 0.9216 \, \delta_{2r}$. Setting $\alpha = 0.05$ and using positive homogeneity gives
\begin{align*}
\mathrm{VaR}_\alpha (X_1 + X_2) = 1- r > -r = \mathrm{VaR}_\alpha (X_1) = \frac{1}{2} \mathrm{VaR}_\alpha (2 X_1) + \frac{1}{2} \mathrm{VaR}_\alpha (2 X_2),
\end{align*}
which shows that VaR violates convexity.
\end{example}

Although in general VaR is  not a convex measure of risk, there are special distributions such that this holds, for example the (multivariate) normal distribution. This well-known result is stated in the following lemma and is an immediate consequence of Example~\ref{Thm:ExVaRforNormal}.

\begin{lemma}
Fix $\alpha \in (0,\frac{1}{2} ]$ and let $\mathcal{X}$ be a space of random variables such that any pair $X, Y \in \mathcal{X}$ is jointly normally distributed. Then $\mathrm{VaR}_\alpha$ defined on $\mathcal{X}$ is a coherent measure of risk.
\end{lemma}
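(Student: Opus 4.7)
The plan is to invoke Lemma~\ref{Thm:PropertiesOfVaR}, which already gives monotonicity, translation invariance, and positive homogeneity for $\mathrm{VaR}_\alpha$ on any space of random variables. Hence coherence reduces to verifying subadditivity on the jointly-normal space $\mathcal{X}$, which is the only non-trivial property to establish.

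The workhorse is the explicit formula from Example~\ref{Thm:ExVaRforNormal}: for any $Z \in \mathcal{X}$ with $Z \sim \mathcal{N}(\mu_Z, \sigma_Z^2)$ we have $\mathrm{VaR}_\alpha(Z) = -\mu_Z - \sigma_Z \Phi^{-1}(\alpha)$. For $X, Y \in \mathcal{X}$, joint normality guarantees that $X+Y$ is again normal with mean $\mu_X + \mu_Y$ and variance $\sigma_{X+Y}^2 = \sigma_X^2 + \sigma_Y^2 + 2\,\mathrm{Cov}(X,Y)$, so the formula applies to $X+Y$ as well.

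The two ingredients I will combine are: (a) the Cauchy--Schwarz estimate $\mathrm{Cov}(X,Y) \leq \sigma_X \sigma_Y$, which gives $\sigma_{X+Y} \leq \sigma_X + \sigma_Y$; and (b) the sign observation $\Phi^{-1}(\alpha) \leq 0$ whenever $\alpha \leq 1/2$, so that $-\Phi^{-1}(\alpha) \geq 0$. Multiplying (a) by the nonnegative quantity $-\Phi^{-1}(\alpha)$ and adding $-(\mu_X + \mu_Y)$ yields
\begin{equation*}
\mathrm{VaR}_\alpha(X+Y) = -(\mu_X+\mu_Y) - \sigma_{X+Y}\,\Phi^{-1}(\alpha) \leq -(\mu_X+\mu_Y) - (\sigma_X+\sigma_Y)\,\Phi^{-1}(\alpha) = \mathrm{VaR}_\alpha(X) + \mathrm{VaR}_\alpha(Y),
\end{equation*}
which is exactly subadditivity.

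There is no real obstacle here; the only point requiring a moment of care is the restriction $\alpha \leq 1/2$, which is precisely what guarantees the sign of $\Phi^{-1}(\alpha)$ needed to turn the variance inequality into a VaR inequality. (For $\alpha > 1/2$ the inequality flips and the argument breaks down, which is consistent with the observation that extreme upper-tail ``VaR'' can fail to be subadditive even in the Gaussian setting.)
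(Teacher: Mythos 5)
Your argument matches the paper's proof step for step: reduce to subadditivity via Lemma~\ref{Thm:PropertiesOfVaR}, apply the closed-form $\mathrm{VaR}_\alpha(Z) = -\mu_Z - \sigma_Z \Phi^{-1}(\alpha)$ from Example~\ref{Thm:ExVaRforNormal}, bound $\sigma_{X+Y}$ by $\sigma_X + \sigma_Y$ via Cauchy--Schwarz, and exploit $\Phi^{-1}(\alpha) \leq 0$ for $\alpha \leq \tfrac{1}{2}$.

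The one place you gloss over something the paper handles explicitly is the degenerate case: joint normality of $(X,Y)$ guarantees that $X+Y$ is either normal \emph{or almost surely constant}, and the paper treats the constant case separately. Your claim that ``$X+Y$ is again normal'' is an overstatement under the usual convention that a normal law has $\sigma^2 > 0$. The omission is harmless in practice because your formula $-\mu - \sigma\Phi^{-1}(\alpha)$ still returns the correct value $-c$ when $\sigma = 0$ and $X+Y \equiv c$, and $\sigma_{X+Y} = 0 \leq \sigma_X + \sigma_Y$ trivially; but you should either note that the formula extends to $\sigma = 0$ or split off the degenerate case as the paper does, rather than silently assuming nondegeneracy.
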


\begin{proof}
Due to Lemma~\ref{Thm:PropertiesOfVaR}, only subadditivity needs to be shown. To this end, fix some $X,Y \in \mathcal{X}$ which are jointly normal by assumption. It is well-known that then $X + Y$ is either normally distributed or constant almost surely. For the first case the Cauchy-Schwarz inequality gives
\begin{align*}
\Var (X + Y) &\leq \Var (X) + \Var (Y) + 2 \vert \Cov (X, Y) \vert \\
&\leq \Var (X) + \Var(Y) + 2 \sqrt{ \Var (X)} \sqrt{\Var (Y)} ,
\end{align*}
implying $ \sigma_{X + Y} \leq \sigma_X + \sigma_Y$ for the standard deviations. Using Equation~(\ref{Eqn:VaRforNormal}) it follows that
\begin{align*}
\mathrm{VaR}_\alpha (X + Y) &= - \mu_{X} - \mu_{Y} - \Phi^{-1} (\alpha) \sigma_{X + Y} \\
&\leq - \mu_X - \mu_Y - \Phi^{-1} (\alpha) ( \sigma_X + \sigma_Y) = \mathrm{VaR}_\alpha (X) + \mathrm{VaR}_\alpha (Y) ,
\end{align*}
since $\Phi^{-1} (\alpha) \leq 0$ for $\alpha \leq \frac{1}{2}$. Finally, in the case where $X + Y = c$ a.s. for some $c \in \mathbb{R}$, we have $\mu_X = - \mu_Y + c$ and $\sigma_{X + Y} = 0$, which gives
\begin{equation*}
\mathrm{VaR}_\alpha (X + Y) = c \leq c - \Phi^{-1} (\alpha) ( \sigma_X + \sigma_Y) = \mathrm{VaR}_\alpha (X) + \mathrm{VaR}_\alpha (Y) ,
\end{equation*}
using $\Phi^{-1} (\alpha) \leq 0$ again.
\end{proof}

This result can be extended to hold for random variables which are jointly elliptically distributed. A detailed treatment can again be found in \cite[Thm. 8.28]{McNeilRisk}. The next result is an immediate consequence of Theorem~\ref{Thm:QuantilesElicitable} and Remark~\ref{Rem:RevelationPrinciple}.

\begin{theorem}
For $\alpha \in (0,1)$, consider the $\mathrm{VaR}$ functional on a set of distribution functions $\mathcal{F}$, i.e. $\mathrm{VaR}_\alpha : \mathcal{F} \rightarrow \mathbb{R}$. If all elements in $\mathcal{F}$ have unique $\alpha$-quantiles, it follows that $\mathrm{VaR}_\alpha$ is elicitable.
\end{theorem}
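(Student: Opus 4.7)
The plan is to observe that this is a direct corollary of two results already established in the thesis, and to present it as such. First, I would invoke Theorem~\ref{Thm:QuantilesElicitable} to conclude that the lower-quantile functional $T_q: \mathcal{F} \to \mathbb{R}$, $F \mapsto F^{\leftarrow}(\alpha)$ is elicitable, since by assumption every $F \in \mathcal{F}$ has a unique $\alpha$-quantile. An explicit strictly $\mathcal{F}$-consistent scoring function is given there by $S_q(x,y) = (\mathbbm{1}_{\{y \leq x\}} - \alpha)(g(x) - g(y))$ for any strictly increasing $\mathcal{F}$-integrable $g$.

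Next, I would observe that by Definition~\ref{Def:ValueatRisk} the VaR functional factors as $\mathrm{VaR}_\alpha = \sigma \circ T_q$, where $\sigma: \mathbb{R} \to \mathbb{R}$, $x \mapsto -x$ is a bijection with inverse $\sigma^{-1} = \sigma$. The Revelation Principle (Proposition~\ref{Thm:RevelationPrinciple}) together with Remark~\ref{Rem:RevelationPrinciple} then yields that $\mathrm{VaR}_\alpha$ is elicitable with respect to $\mathcal{F}$, and a strictly $\mathcal{F}$-consistent scoring function is obtained as
\begin{equation*}
S(x,y) := S_q(\sigma^{-1}(x), y) = S_q(-x,y) = (\mathbbm{1}_{\{y \leq -x\}} - \alpha)(g(-x) - g(y)).
\end{equation*}

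There is no substantial obstacle here: once one recognizes that VaR is, up to a sign flip, just the $\alpha$-quantile, both the elicitability statement and the explicit consistent scoring function follow mechanically from results already proved. The only thing worth remarking on is that the uniqueness-of-quantiles hypothesis is exactly what is needed to apply Theorem~\ref{Thm:QuantilesElicitable}, and that it is preserved under $\sigma$, so no additional assumption on $\mathcal{F}$ is required.
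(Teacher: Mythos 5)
Your proposal is correct and matches the paper's approach exactly: the paper states this theorem as an immediate consequence of Theorem~\ref{Thm:QuantilesElicitable} and Remark~\ref{Rem:RevelationPrinciple}, which is precisely the factorization through the quantile functional and the sign-flip bijection that you spell out. Nothing is missing; you have simply made explicit the two-line argument the paper leaves implicit.
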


To complete this subsection, we consider the acceptance set of VaR. For simplicity, assume that all random variables in $\mathcal{X}$ have continuous, strictly increasing distribution functions and fix $\alpha \in (0, 1)$. For $X =^d F$ we have by definition $\mathrm{VaR}_\alpha (X) \leq 0 \Leftrightarrow F^{\leftarrow} (\alpha) \geq 0$ and this is equivalent to $\alpha (1- F(0) ) \geq (1- \alpha) F(0)$ due to monotonicity of $F$. Recalling Definition~\ref{Def:AcceptanceSet}, it follows that
\begin{equation*}
\mathcal{A}_{ \mathrm{VaR}_\alpha } := \left\lbrace X \in \mathcal{X} \, \Big| \, \frac{ \mathbb{P}( X>0 ) }{ \mathbb{P}( X \leq 0) } \geq \frac{1 - \alpha}{\alpha} \right\rbrace
\end{equation*}
is the acceptance set of VaR. Economically speaking, a position is acceptable in terms of Value at Risk if the probability of gain exceeds the probability of loss by a certain factor. \\
\par
This subsection shows that Value at Risk has some desirable risk measure properties and is moreover elicitable. However, it also has some disadvantages as discussed in \cite{ArtznerCoherent} among others. It is not a coherent and not even a convex measure of risk due to its lack of convexity. This lack can cause serious problems in practice, since it is impossible to guarantee that the risk measured by VaR decreases if a position is diversified. Another serious disadvantage is illustrated by its acceptance set. It shows that Value at Risk only controls the probability of occurring losses (versus the probability of occurring gains). All losses which are worse than VaR are irrelevant and thus remain undetected, which may have disastrous consequences. Both of these well-known drawbacks of Value at Risk led to the introduction of Expected Shortfall, which is discussed in the following subsection.

\subsection{Expected Shortfall} 
\label{Sec:ExpectedShortfall}

Although the concept of Expected Shortfall (ES) has been given many names in the literature, it is always based on the expectation of the tail of a random variable. A similar measure called `tail conditional expectation' is mentioned by Artzner et al.~\cite{ArtznerCoherent} and other names include `Conditional Value at Risk' or `Average Value at Risk' in Rockafellar and Uryasev~\cite{Rockafellar} or F\"ollmer and Schied~\cite{FoellmerSchied}, respectively. Further development is due to Acerbi and Tasche~\cite{AcerbiTascheLong,AcerbiTascheShort} among others. This subsection mainly presents their results.

\begin{definition}  \label{Def:ExpectedShortfall}
Fix $\alpha \in (0,1)$ and let $X$ have distribution function $F$ and finite first moment. Then
\begin{equation*}
\mathrm{ES}_\alpha (F) := \mathrm{ES}_\alpha (X) := - \frac{1}{\alpha} \int_{0}^{\alpha} F^{\leftarrow} (u) \, \mathrm{d}u
\end{equation*}
is called the \textit{Expected Shortfall} at level $\alpha$ of $X$ (or $F$).
\end{definition}

\begin{remark}
An immediate consequence of the definition is the representation
\begin{equation}  \label{Eqn:ESaverageVaR}
\mathrm{ES}_\alpha (X) = \frac{1}{\alpha} \int_{0}^{\alpha} \mathrm{VaR}_s (X) \, \mathrm{d}s ,
\end{equation}
which is the reason why ES is sometimes called `Average Value at Risk'. Note that the same identity is obtained if VaR is defined using $F^{\rightarrow}$ instead of $F^{\leftarrow}$, since both quantile functions coincide for all but a countable number of points. Finally, for any $\alpha \in (0,1)$ we obtain the inequality
\begin{equation}  \label{Eqn:ESVaRInequality}
\mathrm{ES}_\alpha (X) \geq \mathrm{VaR}_\alpha (X) ,
\end{equation}
which follows from (\ref{Eqn:ESaverageVaR}) and the monotonicity of VaR.
\end{remark}

\begin{remark}
As clarified in \cite{AcerbiTascheShort}, one reason why Expected Shortfall is considered to be more reasonable than Value at Risk is given by the type of question the risk measures answer. While VaR answers the question `What is the minimum loss incurred in the $\alpha 100 \%$ worst cases of the portfolio?', ES is an answer to `What is the expected loss incurred in the $\alpha 100 \%$ worst cases of the portfolio?'. Like VaR, it makes sense to consider ES for small values of $\alpha$.
\end{remark}

Definition~\ref{Def:ExpectedShortfall} is similar to the one used by Fissler and Ziegel~\cite{FissZieg}, however Acerbi and Tasche~\cite{AcerbiTascheLong,AcerbiTascheShort} use Representation~(\ref{Eqn:ESAcerbiDefinition}). The next lemma shows that both definitions are in fact equivalent. The result and its proof can be found in \cite{AcerbiTascheLong} as well as in McNeil et al.~\cite[Prop. 8.13]{McNeilRisk}.

\begin{lemma}
For $\alpha \in (0,1)$ and an integrable random variable $X$ we have
\begin{equation}  \label{Eqn:ESAcerbiDefinition}
\mathrm{ES}_\alpha (X) = - \frac{1}{\alpha} \big[ \mathbb{E} X \mathbbm{1}_{ \lbrace X \leq F^{\leftarrow}_X (\alpha) \rbrace } + F^{\leftarrow}_X (\alpha) \big( \alpha - \mathbb{P}( X \leq F^{\leftarrow}_X (\alpha) ) \big) \big].
\end{equation}
\end{lemma}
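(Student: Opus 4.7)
The plan is to convert the integral of the quantile function in Definition~\ref{Def:ExpectedShortfall} into an expectation of $X$ restricted to its lower tail, paying careful attention to possible atoms of $F_X$ at the quantile $F^{\leftarrow}_X(\alpha)$. Set $q := F^{\leftarrow}_X(\alpha)$ and let $U \sim \mathcal{U}(0,1)$ on some probability space. The quantile transform yields $F^{\leftarrow}_X(U) =^d X$, and consequently every expectation involving $X$ can be rewritten as a Lebesgue integral of $F^{\leftarrow}_X$ on $(0,1)$.

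First, I would use Lemma~\ref{Thm:QuantileLemma}(i), which gives $F^{\leftarrow}_X(u) \leq q$ if and only if $u \leq F_X(q)$. Applying this to the quantile representation produces
\begin{equation*}
\mathbb{E} X \mathbbm{1}_{\lbrace X \leq q \rbrace} = \mathbb{E} F^{\leftarrow}_X(U) \mathbbm{1}_{\lbrace F^{\leftarrow}_X(U) \leq q \rbrace} = \mathbb{E} F^{\leftarrow}_X(U) \mathbbm{1}_{\lbrace U \leq F_X(q) \rbrace} = \int_0^{F_X(q)} F^{\leftarrow}_X(u) \, \mathrm{d} u.
\end{equation*}
By Lemma~\ref{Thm:QuantileLemma}(ii) we have $F_X(q) \geq \alpha$, so the interval $[\alpha, F_X(q)]$ is well defined (and may collapse to a point when $F_X$ is continuous at $q$).

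The key observation is that $F^{\leftarrow}_X$ is constant on $[\alpha, F_X(q)]$ with value $q$. Indeed, for any $u$ in this interval, $u \leq F_X(q)$ gives $F^{\leftarrow}_X(u) \leq q$ by Lemma~\ref{Thm:QuantileLemma}(i), while $u \geq \alpha$ combined with the monotonicity of $F^{\leftarrow}_X$ and the definition $q = F^{\leftarrow}_X(\alpha)$ gives $F^{\leftarrow}_X(u) \geq q$. Hence
\begin{equation*}
\int_\alpha^{F_X(q)} F^{\leftarrow}_X(u) \, \mathrm{d} u = q \bigl( F_X(q) - \alpha \bigr).
\end{equation*}

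Subtracting this from the previous display yields
\begin{equation*}
\int_0^\alpha F^{\leftarrow}_X(u) \, \mathrm{d} u = \mathbb{E} X \mathbbm{1}_{\lbrace X \leq q \rbrace} - q \bigl( F_X(q) - \alpha \bigr) = \mathbb{E} X \mathbbm{1}_{\lbrace X \leq q \rbrace} + q \bigl( \alpha - \mathbb{P}(X \leq q) \bigr),
\end{equation*}
and multiplying by $-1/\alpha$ gives the claimed identity. The only delicate point is handling the possible jump of $F_X$ at $q$ (when $F_X(q) > \alpha$), which is exactly what the correction term $q(\alpha - \mathbb{P}(X \leq q))$ accounts for; everywhere else the argument is a direct consequence of the monotone structure of $F^{\leftarrow}_X$ and the identity $F^{\leftarrow}_X(U) =^d X$, so I do not expect any serious obstacle beyond bookkeeping this atom case.
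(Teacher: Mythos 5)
Your proof is correct and follows essentially the same route as the paper: both arguments apply the quantile transform $F^{\leftarrow}_X(U) =^d X$, use Lemma~\ref{Thm:QuantileLemma}(i) to pass between $\{X \leq q\}$ and $\{U \leq F_X(q)\}$, and isolate the contribution of the interval $[\alpha, F_X(q)]$ on which $F^{\leftarrow}_X$ is constant equal to $q$. The only cosmetic difference is that the paper phrases the decomposition as a disjoint union of sets while you subtract the tail integral directly; your spelling out of why $F^{\leftarrow}_X \equiv q$ on $[\alpha, F_X(q)]$ (combining the monotonicity lower bound with the Lemma~(i) upper bound) is a small clarification the paper leaves implicit.
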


\begin{proof}
Fix $\alpha \in (0,1)$ and let $X$ be some integrable random variable. Starting with the definition of ES, let $U$ be a random variable on $(\Omega, \mathscr{A}, \mathbb{P})$ with $U =^d \mathcal{U}( [0,1])$ under $\mathbb{P}$. This implies $F^{\leftarrow}_X (U) =^d X$ under $\mathbb{P}$ (see for instance \cite[Lemma A.23]{FoellmerSchied}) and hence we obtain
\begin{equation*}  
\mathrm{ES}_\alpha (X) = - \frac{1}{\alpha} \int_{0}^{\alpha} F^{\leftarrow}_X (u) \, \mathrm{d}u = - \frac{1}{\alpha} \mathbb{E} F^{\leftarrow}_X (U) \mathbbm{1}_{\lbrace U \leq \alpha \rbrace}.
\end{equation*}
Now consider the set $\lbrace U \leq \alpha \rbrace$. Part (i) of Lemma~\ref{Thm:QuantileLemma} implies that $F^{\leftarrow}_X (U) \leq F^{\leftarrow}_X (\alpha)$ if and only if $U \leq F_X( F^{\leftarrow}_X (\alpha))$, implying the decomposition
\begin{equation}  \label{Eqn:ESsetdecomposition}
\lbrace F^{\leftarrow}_X (U) \leq F^{\leftarrow}_X (\alpha) \rbrace = \lbrace U \leq \alpha \rbrace \uplus \lbrace \alpha < U \leq F_X ( F^{\leftarrow}_X (\alpha) ) \rbrace .
\end{equation}
Since the sets on the right-hand side are disjoint, the identity carries over to indicator functions of the sets. This gives
\begin{align*}
\mathbb{E} F^{\leftarrow}_X (U) \mathbbm{1}_{\lbrace U \leq \alpha \rbrace} &= \mathbb{E}  F^{\leftarrow}_X (U) \mathbbm{1}_{ \lbrace X \leq F^{\leftarrow}_X (\alpha) \rbrace } - \mathbb{E}  F^{\leftarrow}_X (U) \mathbbm{1}_{ \lbrace \alpha < U \leq F_X ( F^{\leftarrow}_X (\alpha) ) \rbrace } \\
&= \mathbb{E}  X \mathbbm{1}_{ \lbrace X \leq F^{\leftarrow}_X (\alpha) \rbrace } - F^{\leftarrow}_X (\alpha) \mathbb{P} (\alpha < U \leq F_X ( F^{\leftarrow}_X (\alpha) ) ) \\
&= \mathbb{E}  X \mathbbm{1}_{ \lbrace X \leq F^{\leftarrow}_X (\alpha) \rbrace } + F^{\leftarrow}_X (\alpha) \big[ \alpha - F_X (F^{\leftarrow}_X (\alpha) ) \big],
\end{align*}
since $F^{\leftarrow}_X (\alpha) = F^{\leftarrow}_X (U)$ must hold on the set $\lbrace \alpha < U \leq F_X ( F^{\leftarrow}_X (\alpha) ) \rbrace$.
\end{proof}

\begin{remark}  \label{Rem:ESconditionalVaR}
If $X$ is a continuously distributed random variable, Lemma~\ref{Thm:QuantileLemma} (ii) implies $\alpha = F_X ( F^{\leftarrow}_X (\alpha))$ and hence the left-hand side of (\ref{Eqn:ESsetdecomposition}) reduces to the set $\lbrace U \leq \alpha \rbrace$. As a consequence, Representation~(\ref{Eqn:ESAcerbiDefinition}) simplifies to
\begin{equation}  \label{Eqn:ESconditinoalVaR}
\mathrm{ES}_\alpha (X) = - \frac{1}{\alpha} \mathbb{E} X \mathbbm{1}_{ \lbrace X \leq F^{\leftarrow}_X (\alpha) \rbrace } =  \mathbb{E} [ -X \mid
- X \geq \mathrm{VaR}_\alpha (X) ] ,
\end{equation}
which justifies the name `tail conditional expectation' used by Acerbi~\cite{AcerbiSpectral} as well as the term `Conditional Value at Risk' (CVaR). Note that some authors distinguish between ES and CVaR while others use both terms interchangeably. If a distinction is made, the right-hand side of (\ref{Eqn:ESconditinoalVaR}) is used to define CVaR. However, defining CVaR this way does not produce a coherent measure of risk, unless all distributions are continuous. For a counterexample see for instance Acerbi and Tasche~\cite[Example 5.4]{AcerbiTascheLong}.
\end{remark}

The following theorem shows that ES can be regarded as an improvement of VaR, in the sense that it is a coherent measure of risk. The result and its proof can be found in \cite{AcerbiTascheLong} and \cite[Example 2.26]{McNeilRisk}. We provide our own proof, which uses the scoring functions for quantiles from Theorem~\ref{Thm:QuantilesElicitable} to show subadditivity.

\begin{theorem}  \label{Thm:EScoherent}
Fix $\alpha \in (0,1)$. Then $\mathrm{ES}_\alpha$ is a coherent and comonotone measure of risk.
\end{theorem}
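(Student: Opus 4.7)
The plan is to verify the four coherence axioms together with comonotonicity. Three of them---monotonicity, positive homogeneity, translation invariance---follow directly from the corresponding properties of the lower quantile function, exactly as in Lemma~\ref{Thm:PropertiesOfVaR}: $X \leq Y$ pointwise gives $F^{\leftarrow}_X \leq F^{\leftarrow}_Y$ on $(0,1)$, so integrating against $-1/\alpha$ over $(0,\alpha)$ reverses the inequality, while Lemma~\ref{Thm:QuantileLemma}~(v) applied to the increasing left-continuous maps $y \mapsto y+c$ and $y \mapsto \lambda y$ handles translation invariance and positive homogeneity. Comonotonicity is just as short: if $X$ and $Y$ are comonotonic, Lemma~\ref{Thm:QuantAdditiveComon} yields $F^{\leftarrow}_{X+Y}(u) = F^{\leftarrow}_X(u) + F^{\leftarrow}_Y(u)$ for every $u \in (0,1)$, and linearity of the integral in Definition~\ref{Def:ExpectedShortfall} gives $\mathrm{ES}_\alpha(X+Y) = \mathrm{ES}_\alpha(X) + \mathrm{ES}_\alpha(Y)$.

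The main obstacle is subadditivity, and here I would follow the hint and exploit the quantile scoring function $S(x,y) = (\mathbbm{1}_{\{y \leq x\}} - \alpha)(x-y)$ from Theorem~\ref{Thm:QuantilesElicitable} with $g = \mathrm{id}$, which is $\mathcal{F}$-integrable because first moments exist. A short algebraic expansion gives the identity $\bar{S}(x, F_X) = \mathbb{E}(x-X)^{+} - \alpha x + \alpha \mathbb{E}X$ for every $x \in \mathbb{R}$. Evaluating at the lower quantile $x^{*} := F^{\leftarrow}_X(\alpha)$ and substituting the Acerbi--Tasche representation~(\ref{Eqn:ESAcerbiDefinition}) for $\mathbb{E}[X \mathbbm{1}_{\{X \leq x^{*}\}}]$ collapses everything to $\bar{S}(x^{*}, F_X) = \alpha \,\mathrm{ES}_\alpha(X) + \alpha \mathbb{E}X$. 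Combining this with the consistency inequality $\bar{S}(x, F_X) \geq \bar{S}(x^{*}, F_X)$ from Theorem~\ref{Thm:QuantilesElicitable}, and dividing by $\alpha$, produces the Rockafellar--Uryasev inequality
\begin{equation*}
-x + \tfrac{1}{\alpha}\,\mathbb{E}(x - X)^{+} \;\geq\; \mathrm{ES}_\alpha(X) \qquad \text{for every } x \in \mathbb{R},
\end{equation*}
with equality at $x = x^{*}$.

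Subadditivity then falls out of the pointwise bound $(a+b)^{+} \leq a^{+} + b^{+}$. Writing $x^{*}_X, x^{*}_Y$ for the lower $\alpha$-quantiles of $X$ and $Y$ and applying the $\geq$ direction above to $X+Y$ at the point $x = x^{*}_X + x^{*}_Y$, together with the equality case for $X$ and $Y$ separately, yields
\begin{align*}
\mathrm{ES}_\alpha(X+Y)
&\;\leq\; -(x^{*}_X + x^{*}_Y) + \tfrac{1}{\alpha}\,\mathbb{E}\bigl((x^{*}_X - X) + (x^{*}_Y - Y)\bigr)^{+} \\
&\;\leq\; \bigl[-x^{*}_X + \tfrac{1}{\alpha}\,\mathbb{E}(x^{*}_X - X)^{+}\bigr] + \bigl[-x^{*}_Y + \tfrac{1}{\alpha}\,\mathbb{E}(x^{*}_Y - Y)^{+}\bigr] \\
&\;=\; \mathrm{ES}_\alpha(X) + \mathrm{ES}_\alpha(Y).
\end{align*}
The one technicality worth flagging is that Theorem~\ref{Thm:QuantilesElicitable} is stated under uniqueness of the $\alpha$-quantile, but what is needed here is only the non-strict inequality $\bar{S}(x,F) \geq \bar{S}(F^{\leftarrow}(\alpha),F)$; inspecting the proof of that theorem shows it is established from~(\ref{Eqn:DistFunctionInequality}) alone and hence remains valid for every $F$ with finite first moment, which is exactly the domain on which $\mathrm{ES}_\alpha$ is defined.
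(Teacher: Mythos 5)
Your proof is correct and ultimately relies on the same mechanism as the paper's: consistency of the pinball scoring function $S(x,y) = (\mathbbm{1}_{\{y\leq x\}} - \alpha)(x-y)$ together with a positivity/subadditivity observation. The difference is one of packaging. The paper applies the consistency inequality $\bar{S}(F^{\leftarrow}_X(\alpha) + F^{\leftarrow}_Y(\alpha), F_{X+Y}) \geq \bar{S}(F^{\leftarrow}_{X+Y}(\alpha), F_{X+Y})$ and then splits the resulting expectation into two pieces, applying the set-optimality inequality $\mathbb{E}(c-X)\mathbbm{1}_A \leq \mathbb{E}(c-X)\mathbbm{1}_{\{X\leq c\}}$ to each piece; this inequality is the "dual" form of the pointwise bound $(a+b)^+ \leq a^+ + b^+$ (it amounts to choosing the set $A = \{X+Y \leq c_X + c_Y\}$ instead of the optimal set $\{X\leq c_X\}$). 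You instead make the variational structure explicit by first deriving the Rockafellar--Uryasev representation $\mathrm{ES}_\alpha(X) = \min_x\bigl(-x + \tfrac{1}{\alpha}\mathbb{E}(x-X)^+\bigr)$ via the identity $\bar{S}(x,F_X) = \mathbb{E}(x-X)^+ - \alpha x + \alpha\mathbb{E}X$, and then use pointwise subadditivity of $(\cdot)^+$ directly. This route is somewhat cleaner and more modular (the R--U formula is of independent interest), whereas the paper's route stays closer to the elicitability machinery without ever naming the minimization formula. Your remark at the end is worth keeping: the paper cites Theorem~\ref{Thm:QuantilesElicitable}, which is stated under uniqueness of $\alpha$-quantiles, but only non-strict consistency is used and that part of the proof requires only the inequality $F(t-)\leq\alpha$, so no uniqueness assumption is actually needed for the subadditivity argument to go through.
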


\begin{proof}
In Lemma~\ref{Thm:PropertiesOfVaR} it is shown that VaR satisfies monotonicity, positive homogeneity, translation invariance and comonotonicity. Due to the Identity~(\ref{Eqn:ESaverageVaR}), these properties carry over to ES. It remains to be shown that ES is subadditive. To this end, fix $\alpha \in (0,1)$ and let $X$ and $Y$ be integrable random variables on $(\Omega, \mathscr{A}, \mathbb{P})$. Moreover, observe that for any $A \in \mathscr{A}$ and $c \in \mathbb{R}$ we have
\begin{equation}  \label{Eqn:ESsetInequality}
\mathbb{E} (c - X) \mathbbm{1}_A \leq \mathbb{E} (c -X) \mathbbm{1}_{ \lbrace X \leq c \rbrace } ,
\end{equation}
and the same holds for $X$ replaced by $Y$. Moreover, define the scoring function $S(x,y) := (\mathbbm{1}_{\lbrace y \leq x \rbrace} - \alpha ) (x - y )$, which is consistent for $F^{\leftarrow}(\alpha)$ due to Theorem~\ref{Thm:QuantilesElicitable}. Using Representation~(\ref{Eqn:ESAcerbiDefinition}) for $\mathrm{ES}_\alpha$ gives
\begin{align*}
\mathrm{ES}_\alpha (X + Y) + \mathbb{E} (X + Y) &= - \frac{1}{\alpha} \mathbb{E} ( X + Y - F^{\leftarrow}_{X+Y} (\alpha) ) ( \mathbbm{1}_{ \lbrace X + Y \leq F^{\leftarrow}_{X+Y} (\alpha) \rbrace } - \alpha ) \\
&= \frac{1}{\alpha} \bar{S} ( F^{\leftarrow}_{X+Y} (\alpha) , F_{X + Y} ) \\
&\leq \frac{1}{\alpha} \bar{S} ( F^{\leftarrow}_{X} (\alpha) + F^{\leftarrow}_{Y} (\alpha) , F_{X + Y} )  \\
&= \frac{1}{\alpha} \mathbb{E} (F^{\leftarrow}_X (\alpha) - X) ( \mathbbm{1}_{ \lbrace X + Y \leq F^{\leftarrow}_{X} (\alpha) + F^{\leftarrow}_{Y} (\alpha) \rbrace } - \alpha ) \\
&\quad+  \frac{1}{\alpha} \mathbb{E} (F^{\leftarrow}_Y (\alpha) - Y) ( \mathbbm{1}_{ \lbrace X + Y \leq F^{\leftarrow}_{X} (\alpha) + F^{\leftarrow}_{Y} (\alpha) \rbrace } - \alpha ) \\
&\leq \frac{1}{\alpha} \mathbb{E} (F^{\leftarrow}_X (\alpha) - X) ( \mathbbm{1}_{ \lbrace X \leq F^{\leftarrow}_{X} (\alpha)  \rbrace } - \alpha ) \\
&\quad+  \frac{1}{\alpha} \mathbb{E} (F^{\leftarrow}_Y (\alpha) - Y) ( \mathbbm{1}_{ \lbrace  Y \leq F^{\leftarrow}_{Y} (\alpha) \rbrace } - \alpha ) \\
&= \mathrm{ES}_\alpha (X) + \mathrm{ES}_\alpha (Y) + \mathbb{E} (X + Y),
\end{align*}
where the first inequality uses consistency of $S$ and the second inequality is an application of (\ref{Eqn:ESsetInequality}). Subtracting $\mathbb{E}(X + Y)$ shows that ES is subadditive.
\end{proof}

The next Example is part of McNeil et al.~\cite[Example 2.14]{McNeilRisk}.

\begin{example}[ES for normal distributions]
Similar to Example~\ref{Thm:ExVaRforNormal}, we compute the Expected Shortfall at level $\alpha \in (0,1)$ of a random variable $X =^d \mathcal{N}(\mu, \sigma^2)$. Using either Equation~(\ref{Eqn:ESAcerbiDefinition}) or Remark~\ref{Rem:ESconditionalVaR} we obtain
\begin{equation*}
\mathrm{ES}_\alpha (X) = - \mu  - \sigma \frac{1}{\alpha} \mathbb{E} \left( \frac{X - \mu}{\sigma} \mathbbm{1}_{ \lbrace (X - \mu)/\sigma \leq (F^{\leftarrow}_X (\alpha) - \mu)/\sigma \rbrace } \right),
\end{equation*}
which together with Example~\ref{Thm:ExVaRforNormal} shows that $\mathrm{ES}_\alpha (X) = - \mu + \sigma  \mathrm{ES}_\alpha (N)$ for a random variable $N =^d \mathcal{N}(0,1)$. Now denote the quantile function of $N$ via $\Phi^{-1}$ and use (\ref{Eqn:ESAcerbiDefinition}) to calculate
\begin{align*}
\mathrm{ES}_\alpha (N) = - \frac{1}{\alpha} \int_{-\infty}^{\Phi^{-1} (\alpha)} x \varphi (x) \, \mathrm{d}x = \left. \frac{1}{\alpha} \varphi (x) \, \right|^{x=\Phi^{-1} (\alpha)}_{x= - \infty} = \frac{\varphi ( \Phi^{-1} (\alpha) )}{\alpha} ,
\end{align*}
which finally implies
\begin{equation*}
\mathrm{ES}_\alpha (X) = - \mu + \sigma \frac{\varphi ( \Phi^{-1} (\alpha) )}{\alpha} .
\end{equation*}
\end{example}

Considering the previous results, it seems that Expected Shortfall is preferable compared to Value at Risk. However, ES also has some drawbacks. Most obviously, the distribution of the considered position is required to have a finite first moment, so ES is not universally applicable. Secondly, the nice properties of ES are achieved at the cost of higher mathematical complexity compared to VaR. Finally, when defined as a functional on a `rich enough' set of distribution functions, ES fails to be elicitable. All proofs of this fact show that the necessary condition of convex level sets as stated in Proposition~\ref{Thm:LevelSetCond} is not satisfied. Concrete counterexamples are provided by Gneiting~\cite[Thm. 11]{GneitingPoints} using discrete distributions and by Weber~\cite[Example 3.10]{WeberConsistency} using a mixture of discrete and continuous distributions. In the next example only continuous distributions are used to construct a situation where a level set of ES is not convex.

\begin{example}[Non-elicitability of Expected Shortfall]  \label{Thm:ExESnotElicitable}
Fix $\alpha < 2/3$ and define two distribution functions $F_1, F_2$ via their densities $f_1, f_2$, which are given by
\begin{align*}
f_1 (x) &:= \frac{\alpha}{2} \mathbbm{1}_{ [-2,-1] } (x) + \frac{\alpha}{2}  \mathbbm{1}_{ [1,2] } (x) + (1-\alpha) \mathbbm{1}_{ (2,3] } (x) , \\
f_2 (x) &:= \frac{3 \alpha}{2} \mathbbm{1}_{ [-1/2, 1 ] } (x) + \big( 1 - \frac{3\alpha}{2} \big) \mathbbm{1}_{ (1,2] } (x).
\end{align*}
This definition immediately implies $F^{\leftarrow}_1 (\alpha) = 2$ and $F^{\leftarrow}_2 (\alpha) = \frac{1}{2}$. Now let $\mathcal{F}$ be a convex class of continuous distribution functions with finite first moments which contains $F_1$ and $F_2$. We consider $\mathrm{ES}_\alpha$ as a functional on $\mathcal{F}$ and use the simplified version of ES as stated in Remark~\ref{Rem:ESconditionalVaR}, since all members of $\mathcal{F}$ are continuous. This implies $ \mathrm{ES}_\alpha (F_1) = \mathrm{ES}_\alpha (F_2) = 0$. If we define $F:= \frac{1}{2} (F_1 + F_2)$ we have $F^{\leftarrow} (\alpha) = 1$ and $\mathrm{ES}_\alpha$ can be computed as the mean of two scaled uniform distributions on the intervals $[-2,-1]$ and $[-1/2,1]$. In detail we obtain
\begin{equation*}
\mathrm{ES}_\alpha (F) = - \frac{1}{\alpha} \left( \frac{\alpha}{4} \left( \frac{-2-1}{2} \right) + \frac{3 \alpha}{4} \left( \frac{1 - 1/2}{2} \right) \right) = \frac{3}{8} - \frac{3}{16} = \frac{3}{16} ,
\end{equation*}
showing that $\mathrm{ES}_\alpha ( \frac{1}{2} (F_1 + F_2) ) \neq \frac{1}{2} ( \mathrm{ES}_\alpha (F_1) + \mathrm{ES}_\alpha (F_2) )$. Hence,  $\mathrm{ES}_\alpha$ cannot be elicitable relative to the class $\mathcal{F}$ due to Proposition~\ref{Thm:LevelSetCond}. As noted in Remark~\ref{Rem:FnotElicitable}, this does not imply that ES is non-elicitable for all choices of $\mathcal{F}$. For instance, $\mathrm{ES}_\alpha$ is elicitable relative to some class $\mathcal{F}$ if all members of $\mathcal{F}$ have the same $\alpha$-quantile, see also Lemma~\ref{Thm:SpectralCondElicit}.
\end{example}

In addition to the previous example and the mentioned references, Ziegel~\cite{ZiegelCoherence} demonstrates that all spectral measures of risk given in Definition~\ref{Def:SpectralRiskMeasure} fail to be elicitable.

\subsection{Expectile Value at Risk} 
\label{Sec:ExpectileVaR}

Since every random variable with finite first moment has a unique $\tau$-expectile, it is possible to consider expectiles as measures of risk. Guided by the insights on Value at Risk and the non-elicitability of spectral risk measures, Ziegel~\cite{ZiegelCoherence} poses the question whether expectiles are the only coherent elicitable risk measures. The affirmative answer is given by Steinwart et al.~\cite{SteinwartPasinWilliam} and Bellini and Bignozzi~\cite{BelliniBignozzi} and due to this result, expectiles have received increasing attention in the literature. This subsection gives a short introduction which is mainly based on Bellini and Bernardino~\cite{BelliniBernardino}.

\begin{definition}  \label{Def:ExpectileVaR}
Let $X$ be a random variable with distribution function $F$ having finite first moment. For $\tau \in (0,1)$, let $e_\tau (F)$ be the $\tau$-expectile as in Definition~\ref{Def:Expectiles}. Then
\begin{equation*}
\mathrm{EVaR}_\tau (X) := \mathrm{EVaR}_\tau (F) := - e_\tau (F)
\end{equation*}
is called the \textit{Expectile Value at Risk} at level $\tau$ of $X$ (or $F$).
\end{definition} 

\begin{remark}
Similar to Value at Risk and Expected Shortfall, it makes sense to consider the Expectile Value at Risk for small values of $\tau \in (0,1)$. The reason for this is that a small $\tau$ increases the value of the right-hand side of the expectile Identity~(\ref{Eqn:ExpectileDefinition}) and thus gives more weight to the left tail of the distribution under consideration.
\end{remark}

The following theorem shows that EVaR is a sensible risk measure choice, at least from a theoretical point of view, as stated and proved in \cite{BelliniBignozzi} and \cite[Prop. 8.25]{McNeilRisk}.

\begin{theorem}  \label{Thm:EVaRiscoherent}
Fix $\tau \in (0, \frac{1}{2}]$. Then $\mathrm{EVaR}_\tau$ is a coherent measure of risk.
\end{theorem}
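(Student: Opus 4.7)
The plan is to verify the four axioms of Definition~\ref{Def:CoherentRisk} in turn. Positive homogeneity and translation invariance are free: Lemma~\ref{Thm:ExpectileProperties}(i) gives $e_\tau(sY+t) = s\,e_\tau(Y)+t$ for $s \geq 0$, $t \in \mathbb{R}$, so $\mathrm{EVaR}_\tau(\lambda X) = -e_\tau(\lambda X) = \lambda\,\mathrm{EVaR}_\tau(X)$ and $\mathrm{EVaR}_\tau(X+c) = \mathrm{EVaR}_\tau(X) - c$. Notice that only these two properties (and not subadditivity) are used in a way that is symmetric in $\tau$ vs.\ $1-\tau$; the constraint $\tau \leq 1/2$ will not enter here.

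For monotonicity I would use the oriented strict identification function $V(x,y) = |\mathbbm{1}_{\{y \leq x\}} - \tau|(x-y)$ from Lemma~\ref{Thm:ExpectileIdentifiable}, which by the computation in its proof satisfies $\bar V(x,F) = (1-\tau)\mathbb{E}(x-X)^+ - \tau\mathbb{E}(X-x)^+$ for $X =^d F$. Set $c := e_\tau(X)$ so that $\bar V(c,F_X) = 0$. If $X \leq Y$ a.s.\ then $(c-Y)^+ \leq (c-X)^+$ and $(Y-c)^+ \geq (X-c)^+$ pointwise, hence $\bar V(c,F_Y) \leq \bar V(c,F_X) = 0$; by orientation this means $c \leq e_\tau(Y)$, i.e.\ $\mathrm{EVaR}_\tau(Y) \leq \mathrm{EVaR}_\tau(X)$.

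The real work is subadditivity, $e_\tau(X+Y) \geq e_\tau(X) + e_\tau(Y)$. Abbreviate $c_X := e_\tau(X)$, $c_Y := e_\tau(Y)$, $c := c_X + c_Y$ and $Z := X+Y$. By orientation again, it suffices to show $\bar V(c,F_Z) \leq 0$, i.e.\ $(1-\tau)\mathbb{E}(c-Z)^+ \leq \tau\,\mathbb{E}(Z-c)^+$. A direct attack via $(a+b)^+ \leq a^+ + b^+$ fails, because the bounds on the two sides pull in the same direction. The trick is to exploit the identity $a^+ - (-a)^+ = a$ with $a = c-Z = (c_X-X) + (c_Y-Y)$, giving
\begin{equation*}
\mathbb{E}(c-Z)^+ - \mathbb{E}(Z-c)^+ \;=\; \sum_{U\in\{X,Y\}}\bigl[\mathbb{E}(c_U-U)^+ - \mathbb{E}(U-c_U)^+\bigr].
\end{equation*}
Plug in the defining expectile identities $(1-\tau)\mathbb{E}(c_U-U)^+ = \tau\,\mathbb{E}(U-c_U)^+$ on the right and multiply through by $(1-\tau)$; after collecting terms, the target inequality reduces to
\begin{equation*}
(1-2\tau)\bigl[\mathbb{E}(Z-c)^+ - \mathbb{E}(X-c_X)^+ - \mathbb{E}(Y-c_Y)^+\bigr] \leq 0.
\end{equation*}
The bracket is nonpositive by subadditivity of $(\cdot)^+$, and the factor $1-2\tau$ is nonnegative exactly because $\tau \leq 1/2$; this is where the hypothesis enters and cannot be dropped.

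The main obstacle is thus conceptual rather than computational: finding the right rearrangement that turns a one-sided target inequality into a two-sided difference on which the expectile equations can be applied. Once this manipulation is identified, the role of $\tau \leq 1/2$ becomes transparent as a single sign check, and the proof closes in a few lines.
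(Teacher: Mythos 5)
Your proposal is correct and takes essentially the same route as the paper: positive homogeneity and translation invariance from Lemma~\ref{Thm:ExpectileProperties}(i), monotonicity by evaluating the two sides of the expectile identity at $e_\tau(Y)$ (you phrase this compactly through the oriented identification function $V$, but it is the same comparison), and subadditivity from exactly the four ingredients the paper uses — the identity $a^+ - (-a)^+ = a$, the defining expectile equations for $X$ and $Y$, subadditivity of $(\cdot)^+$, and the sign of $1-2\tau$. The paper bounds the difference $\Delta\mathsf{E}$ below by zero while you rearrange the target into $(1-2\tau)\bigl[\mathbb{E}(Z-c)^+ - \mathbb{E}(X-c_X)^+ - \mathbb{E}(Y-c_Y)^+\bigr] \leq 0$; these are cosmetic reorderings of the same decomposition, and your packaging is slightly tighter in that it makes the role of $\tau\leq 1/2$ a single visible sign check.
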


\begin{proof}
In the following, fix $\tau \in (0, \frac{1}{2}]$ and let $X,Y$ be some integrable random variables. At first, note that positive homogeneity and translation invariance follow from the first part of Lemma~\ref{Thm:ExpectileProperties}. For monotonicity, assume that $X \leq Y$ holds, which implies the inequality
\begin{align*}
\tau \mathbb{E} ( X - e_\tau (Y) )^+ \leq \tau \mathbb{E} (Y - e_\tau (Y) )^+ &= (1- \tau) \mathbb{E} ( e_\tau (Y) - Y)^+  \\
&\leq (1- \tau) \mathbb{E} ( e_\tau (Y) - X)^+.
\end{align*}
From the proof of Lemma~\ref{Thm:ExpectileExists} it is known that the left-hand side of the expectile Identity~(\ref{Eqn:ExpectileDefinition}) is decreasing in $x$ while the right-hand side is increasing in $x$. The previous calculation shows that the left term is smaller than the right term if $x= e_\tau (Y)$ is plugged in. Consequently, we must have $e_\tau (X) \leq e_\tau (Y)$ and thus $\mathrm{EVaR}_\tau (X) \geq \mathrm{EVaR}_\tau (Y)$. In order to prove subadditivity, note first that the identity $(x -y)^+ - (y - x)^+ = x - y$ holds for any $x,y \in \mathbb{R}$. As a consequence, $\tau$-expectiles satisfy the relation
\begingroup
\allowdisplaybreaks[0]	
\begin{align}
(1- \tau) \mathbb{E} ( e_\tau (X)  - X)^+ &= \tau \mathbb{E} (X - e_\tau (X) )^+ \nonumber \\
&= \tau \mathbb{E} ( e_\tau (X) - X )^+ + \tau ( \mathbb{E} X - e_\tau (X) )  \nonumber \\
\Leftrightarrow \qquad \tau ( \mathbb{E} X - e_\tau (X) ) &= (1- 2 \tau) \mathbb{E} ( e_\tau (X) - X)^+  ,  \label{Eqn:ExpectileModIdentity} 
\end{align}
\endgroup
and the same holds for $X$ replaced by $Y$. Now take the expectile Identity~(\ref{Eqn:ExpectileDefinition}) for  $X + Y$ and plug in $x = e_\tau (X) + e_\tau (Y)$. The difference between the left- and right-hand side is then given by
\begin{align*}
\Delta \mathsf{E} := \tau \mathbb{E} ( X + Y - e_\tau (X) - e_\tau (Y) )^+ - (1-\tau) \mathbb{E} ( e_\tau (X) + e_\tau (Y) - X - Y)^+ \\
= (2 \tau - 1) \mathbb{E} (e_\tau (X) + e_\tau (Y) - X - Y)^+ + \tau \mathbb{E} ( X + Y - e_\tau (X) - e_\tau (Y) ).
\end{align*}
Since $x \mapsto x^+$ is a subadditive function and $\tau \leq \frac{1}{2}$ we obtain
\begin{align}  \label{Eqn:EVaRsubEst}
\Delta \mathsf{E} \geq (2 \tau - 1) &\big[ \mathbb{E}(e_\tau (X) - X)^+ + \mathbb{E}( e_\tau (Y) - Y)^+ \big]  \\
+ \tau &\big[ \mathbb{E} X - e_\tau (X)  + \mathbb{E} Y - e_\tau (Y)  \big] = 0 \nonumber ,
\end{align}
where Equation~(\ref{Eqn:ExpectileModIdentity}) is used in the last step. For $\tau \leq \frac{1}{2}$ this implies
\begin{equation*}
\tau \mathbb{E} ( X + Y - e_\tau (X) - e_\tau (Y) )^+  \geq (1-\tau) \mathbb{E} ( e_\tau (X) + e_\tau (Y) - X - Y)^+  ,
\end{equation*}
and we follow the same arguments as were used to show monotonicity in the beginning of the proof. This gives $e_\tau (X) + e_\tau(Y) \leq e_\tau (X + Y)$, which implies subadditivity of $\mathrm{EVaR}_\tau$.
\end{proof}

\begin{remark}[EVaR is not comonotone]  \label{Rem:EVaRnotcomon}
Note that EVaR is only a coherent but not a comonotone measure of risk. This is shown by Delbaen~\cite[Remark 6]{DelbaenExpectiles} and also remarked by Acerbi and Szekely~\cite[Sec. 3.2]{AcerbiSzekely}. To understand why EVaR lacks this property, we take a closer look at the proof of Theorem~\ref{Thm:EVaRiscoherent}. More precisely, we consider under which conditions on $X$ and $Y$ the estimate in (\ref{Eqn:EVaRsubEst}) is strict. To this end, note that the subadditivity of $x \mapsto x^+$ is used in order to show (\ref{Eqn:EVaRsubEst}). Moreover, observe that for $a,b \neq 0$ we have $(a+ b)^+ < a^+ + b^+$ if and only if $a$ and $b$ have different sign. Consequently, for a strict inequality in (\ref{Eqn:EVaRsubEst}) it is sufficient to find random variables $X$ and $Y$ which have different sign on a set with positive probability (by Lemma~\ref{Thm:ExpectileProperties} (i) we assume w.l.o.g. that $e_\tau (X)$ and $e_\tau (Y)$ are zero). The choice $N =^d \mathcal{N}(0,1)$, $X := N$ and $Y := \exp (N)$ satisfies this condition and $X$ and $Y$ are comonotonic since they are increasing functions of the same random variable. Finally, a strict inequality in (\ref{Eqn:EVaRsubEst}) implies $e_\tau (X) + e_\tau(Y) < e_\tau (X + Y)$, but since $X$ and $Y$ are comonotonic this shows that EVaR cannot be a comonotone risk measure.
\end{remark}

The next theorem states that Expectile Value at Risk is elicitable. It is an immediate consequence of Theorem~\ref{Thm:ExpectileElicitable} and Remark~\ref{Rem:RevelationPrinciple}.

\begin{theorem}
Fix $\tau \in (0,1)$ and let  $\mathcal{F}$  be a set of distribution functions with finite first moments. If we consider $\mathrm{EVaR}$ as a functional on $\mathcal{F}$, i.e. $\mathrm{EVaR}_\tau : \mathcal{F} \rightarrow \mathbb{R}$, then it is elicitable.
\end{theorem}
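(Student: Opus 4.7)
The plan is essentially a two-line invocation of results already established in the thesis. First I would unpack the definition of Expectile Value at Risk: by Definition~\ref{Def:ExpectileVaR}, for every $F \in \mathcal{F}$ we have $\mathrm{EVaR}_\tau(F) = -e_\tau(F)$, so $\mathrm{EVaR}_\tau = g \circ T$, where $T : \mathcal{F} \to \mathbb{R}$, $F \mapsto e_\tau(F)$ is the $\tau$-expectile functional and $g : \mathbb{R} \to \mathbb{R}$, $x \mapsto -x$ is a (continuously differentiable) bijection with inverse $g^{-1}(x) = -x$.

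Next I would apply Theorem~\ref{Thm:ExpectileElicitable}, which asserts that the $\tau$-expectile functional $T$ is elicitable on any class $\mathcal{F}$ of distribution functions with finite first moments, and produces the explicit family of strictly $\mathcal{F}$-consistent scoring functions
\begin{equation*}
S(x,y) = \vert \mathbbm{1}_{\lbrace y \leq x \rbrace} - \tau \vert \, \big( f(y) - f(x) - f'(x)(y - x) \big),
\end{equation*}
for any strictly convex, differentiable, $\mathcal{F}$-integrable function $f$.

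Finally I would invoke the revelation principle (Proposition~\ref{Thm:RevelationPrinciple}(ii)) applied to this bijection $g$: it yields that $T_g = g \circ T = \mathrm{EVaR}_\tau$ is elicitable, and a strictly $\mathcal{F}$-consistent scoring function is obtained as
\begin{equation*}
S_g(x,y) = S(g^{-1}(x), y) = \vert \mathbbm{1}_{\lbrace y \leq -x \rbrace} - \tau \vert \, \big( f(y) - f(-x) + f'(-x)(y + x) \big),
\end{equation*}
which completes the argument.

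There is essentially no obstacle: both Theorem~\ref{Thm:ExpectileElicitable} and Proposition~\ref{Thm:RevelationPrinciple} are already proved in the thesis, and the only thing to verify is that the bijection $g(x) = -x$ trivially preserves the integrability assumptions, which is immediate since the class $\mathcal{F}$ on which $\mathrm{EVaR}_\tau$ is defined coincides with that on which $e_\tau$ is defined (distributions with finite first moment). Hence the entire proof consists of identifying $\mathrm{EVaR}_\tau$ as a bijective transformation of the expectile functional and quoting the two earlier results.
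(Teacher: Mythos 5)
Your proof is correct and follows exactly the route the paper indicates: the paper states the result ``is an immediate consequence of Theorem~\ref{Thm:ExpectileElicitable} and Remark~\ref{Rem:RevelationPrinciple},'' which is precisely the combination of the expectile elicitability theorem and the revelation principle applied to $g(x)=-x$ that you use. Your write-up simply spells out the details (and the explicit transformed scoring function) that the paper leaves implicit.
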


Finally, in order to interpret the risk measure EVaR, we take a look at its acceptance set and compare this set to the acceptable positions for VaR (which is also done in \cite{BelliniBernardino}). Let $X$ be an integrable random variable and $\tau \in (0, \frac{1}{2}]$. Recall Definition~\ref{Def:AcceptanceSet} and the familiar fact that the left-hand side of the expectile Identity~(\ref{Eqn:ExpectileDefinition}) is decreasing in $x$. Therefore, we have $\mathrm{EVaR}_\tau (X) \leq 0 \Leftrightarrow e_\tau (X) \geq 0$ and this is equivalent to $ \tau \mathbb{E} X^+ \geq (1- \tau) \mathbb{E}(-X)^+$. This shows that
\begin{equation*}
\mathcal{A}_{ \mathrm{EVaR}_\tau } = \left\lbrace X \in \mathcal{L}^1 \, \Big| \, \frac{\mathbb{E} X^+}{- \mathbb{E}X^-} \geq \frac{1- \tau}{\tau} \right\rbrace
\end{equation*}
is the acceptance set of EVaR. Hence, an intuitive interpretation is the following: A position is acceptable in terms of EVaR as long as the ratio of expected gains and expected losses exceeds a certain threshold.

\subsection{Comparison of the different risk measures} 

Due to the lack of coherence of Value at Risk, Expected Shortfall presents itself as a coherent alternative, although only for integrable random variables. If the elicitability of risk measures is also considered, VaR becomes attractive again, since ES fails to be elicitable. However, one is not forced to make a choice between elicitability and coherence, since the Expectile Value at Risk represents a third alternative satisfying both properties. Moreover, as argued in \cite{BelliniBernardino}, EVaR behaves like VaR and ES for real-world data and its acceptance set allows for an intuitive interpretation. All things considered, it seems reasonable to choose EVaR as a replacement for VaR. However, this discussion takes a new turn if the result of Fissler and Ziegel~\cite{FissZieg} is taken into account, who are able to show that ES is jointly elicitable with VaR. Although joint elicitability is slightly more complex, this eliminates one of the biggest advantages of EVaR. Additionally, as discussed in Remark~\ref{Rem:EVaRnotcomon}, EVaR fails to be a comonotone measure of risk which can lead to serious allocation problems (see also Remark~\ref{Rem:ComonotoneInterpret}). 
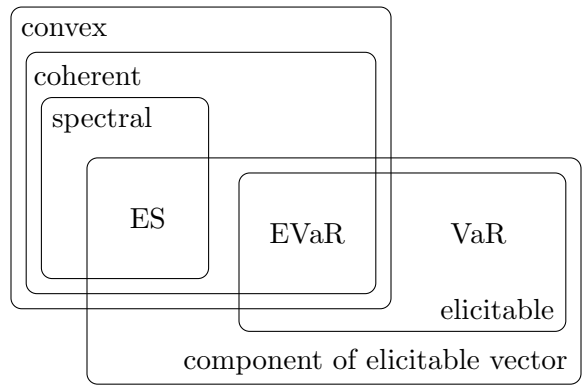
\begin{figure}[h]
\centering
\begin{tikzpicture}[rounded corners=4pt]
\draw (0,0) rectangle (5,-4);
\draw (0.7,-0.3) node {convex};
\draw (0.2,-0.6) rectangle (4.8,-3.8);
\draw (1.01,-0.9) node {coherent};
\draw (0.4,-1.2) rectangle (2.6,-3.6);
\draw (1.2,-1.5) node {spectral};
\draw (3,-2.2) rectangle (7.3,-4.3);
\draw (6.4,-4) node {elicitable};
\draw (1,-2) rectangle (7.5,-5);
\draw (4.8,-4.7) node {component of elicitable vector};
\draw (1.8,-2.8) node {ES};
\draw (3.9,-3) node {EVaR};
\draw (6.15,-3) node {VaR};
\end{tikzpicture}
\caption{Relationship between elicitability and selected risk measure properties, together with the three risk measures Expected Shortfall (ES), Value at Risk (VaR), and Expectile Value at Risk (EVaR).}
\label{Fig:RiskMeasureRelations}
\end{figure}
In contrast, ES is a comonotone risk measure. From a theoretical point of view, we thus conclude that ES is the most convincing (law-invariant) measure of risk. Naturally, spectral measures of risk with a discrete spectrum compare equally well. This conclusion is based on the joint elicitability of spectral risk measures, which is proved in the next section. Figure~\ref{Fig:RiskMeasureRelations} provides a graphical overview of the properties and risk measures discussed in this section. For a detailed discussion of the advantages and drawbacks of the presented risk measures, also including further aspects, we refer to Emmer et al.~\cite{EmmerKratzTasche}.

\section{Joint elicitability of spectral measures of risk} 

This section proves joint elicitability of spectral measures of risk and quantiles, which in particular implies that the pair (VaR, ES) is elicitable. A first approach to prove the latter result is found in Acerbi and Szekely~\cite[Sec. 3.3]{AcerbiSzekely}. The authors assume all $F \in \mathcal{F}$ to be continuous and use Identity~(\ref{Eqn:ESconditinoalVaR}) to construct an identification function for (VaR, ES), from which they are then able to construct a family of scoring functions. However, they only show that the integrated score has a local minimum at (VaR, ES) which is not enough to prove strict consistency. Besides, they assume that there is a $W \in \mathbb{R}$ such that $\mathrm{ES}_\alpha (F) < W \mathrm{VaR}_\alpha (F)$ is satisfied for all $F \in \mathcal{F}$.

A rigorous proof of the elicitability of (VaR, ES) without such an assumption is done by Fissler and Ziegel~\cite{FissZieg}, who in fact show an even stronger result. Loosely speaking, they demonstrate that every spectral risk measure having a spectral measure with finite support is jointly elicitable together with all quantiles it uses. We begin with assumptions and notation necessary to prove the result. \\
\par
For the rest of this section, let $\mathcal{F}$ be a class of continuous distribution functions having finite first moments. Moreover, let $\mathsf{O} \subseteq \mathbb{R}$ be a set containing the support of all corresponding random variables. The representation of spectral measures of risk as given in Lemma~\ref{Thm:SpectralRiskAlternativeRep} leads to the following definition.

\begin{definition}  \label{Def:SpectralRiskFunctional}
Fix $k \in \mathbb{N}$, $k > 1$ and let $p_i, q_i \in (0, 1]$ for $i=1, \ldots, k-1$ be such that the $q_i$ are pairwise distinct and the $p_i$ satisfy $\sum_{i=1}^{k-1} p_i = 1$. Then 
\begin{equation*}
T_k : \mathcal{F} \rightarrow \mathbb{R}, \quad F \mapsto T_k (F) := - \sum_{i=1}^{k-1} \mathrm{ES}_{q_i} (F) p_i
\end{equation*}
is called \textit{spectral risk measure functional} of order $k$, determined by $(p_i, q_i)_{i=1, \ldots, k-1}$.
\end{definition}

If $q_i =1$ for some $i$, the convention $\mathrm{ES}_1 (X) := - \mathbb{E} X$ is used. Moreover, the elements in $\mathcal{F}$ are continuous, hence the representation of ES given in Remark~\ref{Rem:ESconditionalVaR} leads to
\begin{equation}  \label{Eqn:ESforContinuous}
T_k (F) = \sum_{i=1}^{k-1} \frac{p_i}{q_i} \int_{-\infty}^{ F^{\leftarrow} (q_i) } y \, \mathrm{d}F(y)
\end{equation}
for any $F \in \mathcal{F}$. Finally, note that the spectral risk measure functional represents a negative spectral measure of risk. The reason lies in the different sign conventions for risk measures used in \cite{FissZieg} and this work, see also Remark~\ref{Rem:DefinitionRisk}. This is not a problem, since it is possible to use the revelation principle to adapt the result to the risk measure convention used here, as done in Corollary~\ref{Thm:JointElicitVaRES}. The following theorem is the first part of~\cite[Thm. 5.2]{FissZieg}.

\begin{theorem}   \label{Thm:SpectralRiskElicitable}
Let $T_k$ be a spectral risk measure functional of order $k$ determined by $(p_i, q_i)_{i=1, \ldots, k-1}$ which satisfies $q_i < 1$ for $i=1, \ldots, k-1$. Moreover, define the functionals $T_i (F) := F^{\leftarrow} (q_i)$, $i=1, \ldots, k-1$ and set
\begin{equation*}
T : \mathcal{F} \rightarrow \mathsf{A} , \quad F \mapsto T(F) := (T_1(F), \ldots , T_k(F) )^\top
\end{equation*}
for $\mathsf{A} := T_1(\mathcal{F}) \times \ldots \times T_k(\mathcal{F}) \subset \mathbb{R}^k$. For $a \in \mathbb{R}$ and $i=1, \ldots, k$  define the sets
\begin{align*}
&\mathsf{A}_i^\prime := \lbrace x \in \mathbb{R} \mid \exists z \in \mathsf{A} : x = z_i \rbrace \quad \text{ and}\\
&\mathsf{A}_{i,a}^\prime := \lbrace x \in \mathbb{R} \mid \exists z \in \mathsf{A} : x = z_i, a = z_k \rbrace .
\end{align*}
Then the function $S : \mathsf{A} \times \mathsf{O} \rightarrow \mathbb{R}$ defined via
\begin{align*}
S(x,y) &:= \sum_{i=1}^{k-1} \big[ ( \mathbbm{1}_{\lbrace y \leq x_i \rbrace} - q_i ) g_i (x_i) - \mathbbm{1}_{\lbrace y \leq x_i \rbrace} g_i (y) \big] \\
&\quad+ g_k (x_k) \left( x_k + \sum_{i=1}^{k-1} \frac{p_i}{q_i} ( \mathbbm{1}_{\lbrace y \leq x_i \rbrace} (x_i - y) - q_i x_i )  \right) - G_k (x_k) 
\end{align*}
is a scoring function for $T$, where $g_i : \mathsf{A}_i^\prime \rightarrow \mathbb{R}$, $i = 1, \ldots, k-1$ are some functions such that $\mathbbm{1}_{(- \infty , u ]} g_i$ is $\mathcal{F}$-integrable for any $u \in \mathsf{A}_i^\prime$ and the functions $G_k, g_k : \mathsf{A}_k^\prime \rightarrow \mathbb{R}$ satisfy $G_k^\prime = g_k$. \\
\par
If $G_k$ is convex and for all $i = 1, \ldots, k-1$ and $u \in \mathsf{A}_i^\prime$ the function
\begin{equation*}
H_{i,u} : \mathsf{A}_{i,u}^\prime \rightarrow \mathbb{R}, \quad v \mapsto v \frac{p_i}{q_i} g_k (u) + g_i (v)
\end{equation*}
is increasing, $S$ is an $\mathcal{F}$-consistent scoring function for $T$. If additionally, $G_k$ is strictly convex, $H_{i,u}$ is strictly increasing for any $u \in \mathsf{A}_i^\prime$, $i= 1, \ldots, k-1$ and all $q_i$-quantiles are unique, then $S$ is a strictly $\mathcal{F}$-consistent scoring function and $T$ is elicitable.
\end{theorem}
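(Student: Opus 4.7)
Fix $F \in \mathcal{F}$, let $Y$ be a random variable with distribution $F$, fix $x \in \mathsf{A}$, and set $t := T(F)$. Continuity of every $F \in \mathcal{F}$ combined with Lemma~\ref{Thm:QuantileLemma}(ii) gives $F(t_i) = q_i$ for $i \leq k-1$, while the representation~(\ref{Eqn:ESforContinuous}) yields $t_k = \sum_{i=1}^{k-1} \tfrac{p_i}{q_i} \mathbb{E} Y \mathbbm{1}_{\lbrace Y \leq t_i \rbrace}$. The first step is to compute $\bar{S}(t,F)$ and notice that the long bracket multiplying $g_k$ in the definition of $S$ has expectation \emph{exactly zero} at $x=t$: substituting $F(t_i) = q_i$ and invoking the spectral identity above, its $\mathbb{E}$-value reduces to $t_k - t_k = 0$. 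Consequently $\bar{S}(t,F) = -\sum_{i=1}^{k-1}\mathbb{E}\mathbbm{1}_{\lbrace Y \leq t_i \rbrace} g_i(Y) - G_k(t_k)$.

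The heart of the plan is a regrouping of $\bar{S}(x,F) - \bar{S}(t,F)$. Writing $Q_i(x_i) := (F(x_i)-q_i)x_i - \mathbb{E}(\mathbbm{1}_{\lbrace Y \leq x_i \rbrace} - \mathbbm{1}_{\lbrace Y \leq t_i \rbrace})Y$, a direct expansion yields
\begin{align*}
\bar{S}(x,F) - \bar{S}(t,F) &= \sum_{i=1}^{k-1}\bigl[(F(x_i)-q_i)g_i(x_i) - \mathbb{E}(\mathbbm{1}_{\lbrace Y \leq x_i \rbrace} - \mathbbm{1}_{\lbrace Y \leq t_i \rbrace})g_i(Y)\bigr]\\
&\quad + g_k(x_k)\Bigl[(x_k - t_k) + \sum_{i=1}^{k-1}\tfrac{p_i}{q_i}Q_i(x_i)\Bigr] - G_k(x_k) + G_k(t_k).
\end{align*}
Convexity of $G_k$ (with $G_k' = g_k$) gives $G_k(t_k) - G_k(x_k) \geq -g_k(x_k)(x_k - t_k)$, which exactly cancels the $(x_k - t_k)$ term inside the spectral bracket. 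Using linearity of the classical quantile-scoring expression in its ``weight function'', what remains rearranges into
\begin{equation*}
\bar{S}(x,F) - \bar{S}(t,F) \;\geq\; \sum_{i=1}^{k-1}\bigl[(F(x_i) - q_i)H_{i,x_k}(x_i) - \mathbb{E}(\mathbbm{1}_{\lbrace Y \leq x_i \rbrace} - \mathbbm{1}_{\lbrace Y \leq t_i \rbrace})H_{i,x_k}(Y)\bigr],
\end{equation*}
where $H_{i,x_k}(v) = g_i(v) + \tfrac{p_i}{q_i}\, g_k(x_k)\, v$ is precisely the auxiliary function from the hypothesis evaluated at $u = x_k$.

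Each summand on the right is $\bar{S}_{i}^{\mathrm{q}}(x_i,F) - \bar{S}_{i}^{\mathrm{q}}(t_i,F)$ for the quantile scoring function of Theorem~\ref{Thm:QuantilesElicitable} at level $q_i$ built from the increasing function $H_{i,x_k}$; that theorem renders each summand nonnegative, establishing $\mathcal{F}$-consistency of $S$. For strict consistency, an equality $\bar{S}(x,F) = \bar{S}(t,F)$ forces both the convexity bound and every quantile summand to be tight. Strict convexity of $G_k$ first pins down $x_k = t_k$, after which strict monotonicity of $H_{i,t_k}$ together with uniqueness of the $q_i$-quantile of $F$ lets the strict half of Theorem~\ref{Thm:QuantilesElicitable} force $x_i = t_i$ for every $i \leq k-1$, so $x = t$.

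The main obstacle is spotting the correct regrouping: the indicator terms $\mathbbm{1}_{\lbrace y \leq x_i \rbrace}$ feed into both the quantile block and the spectral block of $S$, and one has to recognise that the spectral block's $\tfrac{p_i}{q_i} g_k(x_k)\,\mathbbm{1}_{\lbrace y \leq x_i \rbrace} y$ contribution can be absorbed into the $g_i$-weighting of the $i$-th quantile scoring function. This absorption is exactly what the hypothesis on the auxiliary family $H_{i,u}$ is designed to encode, and convexity of $G_k$ is precisely the ingredient that makes the absorption lossless (and strictly lossy iff $x_k \neq t_k$).
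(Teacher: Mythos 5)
Your proof is correct, and it organizes the argument more economically than the paper does. Both proofs ultimately rest on the same two elementary facts: the quantile-scoring consistency bound, namely that $(F(x_i)-q_i)g(x_i)-\mathbb{E}(\mathbbm{1}_{\lbrace Y\leq x_i\rbrace}-\mathbbm{1}_{\lbrace Y\leq t_i\rbrace})g(Y)\geq 0$ for increasing $g$ when $F(t_i)=q_i$, and the subgradient inequality $G_k(t_k)-G_k(x_k)-g_k(x_k)(t_k-x_k)\geq 0$ for convex $G_k$. Where you diverge is the choice of the level $u$ at which $H_{i,u}$ is invoked. The paper rewrites $S$ by an add-and-subtract trick with a free parameter $w$, fixes $w=\min(x_k,t_k)$, splits $\bar{S}(x,F)-\bar{S}(t,F)$ into $\sum_i\xi_i+\mathsf{R}$ with the $\xi_i$ built from $H_{i,w}$, and then bounding $\mathsf{R}$ requires three separate ingredients: an integration-by-parts computation to show the spectral bracket is $\geq x_k-t_k$, the observation that the factor $g_k(x_k)-g_k(w)$ is nonnegative (the reason $w$ is taken to be the minimum), and the $G_k$-subgradient inequality. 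By taking $u=x_k$ instead, the factor $g_k(x_k)-g_k(w)$ vanishes identically, so the remainder collapses to the single subgradient inequality and the entire spectral contribution $\tfrac{p_i}{q_i}g_k(x_k)Q_i(x_i)$ is absorbed into the $i$-th quantile block via linearity of the scoring difference in its weight function, yielding the block with weight $H_{i,x_k}$. Your identity $\bar{B}(x,F)=(x_k-t_k)+\sum_i\tfrac{p_i}{q_i}Q_i(x_i)$ together with $Q_i(x_i)\geq 0$ is exactly what the paper's integration-by-parts step re-derives in disguise (it is quantile consistency for the weight $g=\mathrm{id}$), so nothing is lost by bypassing it. The strict-consistency argument also goes through: your decomposition after the convexity step is an exact identity with both summands nonnegative, so equality forces $x_k=t_k$ from strict convexity of $G_k$ and then $x_i=t_i$ from uniqueness of the $q_i$-quantiles together with strict monotonicity of $H_{i,x_k}$, invoking the strict half of Theorem~\ref{Thm:QuantilesElicitable}.
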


\begin{remark}
In Remark~\ref{Rem:MeanNecessaryCond} it is mentioned that strictly consistent scoring functions for the mean have a necessary structure. A similar statement can be proved for the previous theorem. More precisely, \cite[Thm. 5.2]{FissZieg} shows that in the situation of Theorem~\ref{Thm:SpectralRiskElicitable} and given a certain identification function, any strictly consistent scoring function for the functional $T$ is of the form $S(x,y) + a(y)$ for some $\mathcal{F}$-integrable function $a$.
\end{remark}

\begin{proof}
We do a proof similar to \cite{FissZieg}. For $S$ to be a scoring function only $\mathcal{F}$-integrability is needed, which follows from the integrability assumptions imposed on $g_i$, $i \in I_{k-1} := \lbrace 1, \ldots, k-1 \rbrace$. It remains to be shown that $S$ is (strictly) $\mathcal{F}$-consistent. To this end, suppose that $G_k$ is convex and for all $i \in I_{k-1}$ and $u \in \mathsf{A}_i^\prime$ the function $H_{i,u}$ is increasing. Fix $F \in \mathcal{F}$, let $x = (x_1, \ldots , x_k) \in \mathsf{A}$ and set $t = (t_1, \ldots, t_k) = T(F)$. For some $w \in \mathsf{A}_k^\prime$ we add the equation
\begin{align*}
\sum_{i=1}^{k-1} &( \mathbbm{1}_{\lbrace y \leq x_i \rbrace} - q_i ) \frac{p_i}{q_i} g_k (w) (x_i - y) \\
&= g_k (w) \sum_{i=1}^{k-1} \frac{p_i}{q_i} ( \mathbbm{1}_{\lbrace y \leq x_i \rbrace} (x_i - y) - q_i (x_i - y) ) \\
&=  g_k (w) \sum_{i=1}^{k-1} \frac{p_i}{q_i} ( \mathbbm{1}_{\lbrace y \leq x_i \rbrace} (x_i - y) - q_i x_i ) + g_k (w) y
\end{align*}
to the definition of $S$ and moreover add and subtract the term $g_k(w) x_k$. The resulting identity is
\begin{align*}
S(x,y) = &\sum_{i=1}^{k-1} \Big[ ( \mathbbm{1}_{\lbrace y \leq x_i \rbrace} - q_i ) \Big(g_i (x_i) + \frac{p_i}{q_i} g_k (w) (x_i - y) \Big) - \mathbbm{1}_{\lbrace y \leq x_i \rbrace} g_i (y) \Big] \\
&+ (g_k (x_k) - g_k (w) )  \left( x_k + \sum_{i=1}^{k-1} \frac{p_i}{q_i} ( \mathbbm{1}_{\lbrace y \leq x_i \rbrace} (x_i - y) - q_i x_i )  \right) \\
&- G_k(x_k) + g_k(w) (x_k - y).
\end{align*}
Using this representation and setting $w := \min (x_k, t_k)$ gives $\bar{S}(x,F) - \bar{S}(t,F) = \sum_{i=1}^{k-1} \xi_i + \mathsf{R}$,  where $\xi_i$ and $\mathsf{R}$  are given by
\begin{align*}
\xi_i = ( F(x_i) - q_i ) \Big( g_i (x_i) + \frac{p_i}{q_i} g_k (w) x_i \Big) - \int_{t_i}^{x_i} g_i (y) + \frac{p_i}{q_i} g_k (w) y \, \mathrm{d} F(y)
\end{align*}
for $i \in I_{k-1}$ and
\begin{align*}
\mathsf{R} &= ( g_k (x_k) - g_k (w) ) \left( x_k + \sum_{i=1}^{k-1} \frac{p_i}{q_i} ( \int_{-\infty}^{x_i} x_i - y \, \mathrm{d} F(y) - q_i x_i) \right) \\
&\quad - G_k (x_k) + G_k (t_k) + g_k (w) (x_k - t_k).
\end{align*}
In order to obtain the terms $\xi_i$, $i \in I_{k-1}$, we use that $F(t_i) = q_i$, which follows from the continuity of $F$. For $\mathsf{R}$ the identity
\begin{equation}  \label{Eqn:Tkcancelsout}
t_k + \sum_{i=1}^{k-1} \frac{p_i}{q_i} ( \int_{-\infty}^{t_i} t_i - y \, \mathrm{d} F(y) - q_i t_i ) = t_k - \sum_{i=1}^{k-1} \frac{p_i}{q_i} \int_{-\infty}^{t_i} y \, \mathrm{d} F(y) = 0
\end{equation}
is applied, which is an implication of (\ref{Eqn:ESforContinuous}). The proof concludes by showing that both $\xi_i$ and $\mathsf{R}$ are nonnegative. \\
\par
Regarding $\xi_i$, pick any $i \in I_{k-1}$ and suppose $t_i < x_i$. The term $\xi_i$ contains $H_{i,w}$ in the integral as well as in parenthesis, and this function is increasing by assumption. Using $F(t_i) = q_i$  this implies
\begin{align}  \label{Eqn:BoundForXi}
\xi_i &= (F(x_i) - q_i) H_{i,w} (x_i) - \int_{t_i}^{x_i} H_{i,w} (y) \, \mathrm{d} F(y)  \nonumber \\
&\geq ( F(x_i) - q_i ) H_{i,w} (x_i) - (F(x_i) - F(t_i) ) H_{i,w} (x_i) = 0 ,
\end{align}
and the same holds true if $t_i > x_i$ is assumed. Consequently, $\xi_i \geq 0$ for $i \in I_{k-1}$ and if all $q_i$-quantiles are unique and all $H_{i,w}$ are strictly increasing, the strict inequality $\xi_i > 0$ follows. \\
\par
Turning to $\mathsf{R}$, the first step is to use integration by parts in order to transform all integrals for $i \in I_{k-1}$ as follows
\begin{align*}
\int_{-\infty}^{x_i} x_i - y \, \mathrm{d} F(y) &= -\int_{-\infty}^{t_i} y \, \mathrm{d} F(y) - \int_{t_i}^{x_i} y \, \mathrm{d} F(y) + F(x_i) x_i \\
&= - \int_{- \infty}^{t_i} y \, \mathrm{d} F(y) + t_i F(t_i) + \int_{t_i}^{x_i} F(y) \, \mathrm{d} y.
\end{align*}
Using this identity together with Equation~(\ref{Eqn:ESforContinuous}) shows that the term in big parenthesis equals
\begin{equation}  \label{Eqn:BracketsRep}
x_k - t_k + \sum_{i=1}^{k-1} p_i ( t_i - x_i + \frac{1}{q_i} \int_{t_i}^{x_i} F(y) \, \mathrm{d} y ) \geq x_k - t_k ,
\end{equation}
where the inequality follows from the fact that
\begin{equation*}
\int_{t_i}^{x_i} \frac{F(y)}{q_i} \, \mathrm{d} y = \int_{t_i}^{x_i} \frac{F(y)}{F(t_i)} \, \mathrm{d} y \geq \int_{t_i}^{x_i} \mathrm{d} y = x_i - t_i
\end{equation*}
for $t_i \neq x_i$ and any $i \in I_{k-1}$. Since $G_k$ is assumed convex, $g_k$ is increasing. Consequently, $g_k (x_k) - g_k (w)$ is nonnegative and hence (\ref{Eqn:BracketsRep}) implies that the first summand of $\mathsf{R}$ is nonnegative. This gives
\begin{align*}
\mathsf{R} &\geq ( g_k (x_k) - g_k (w) ) (x_k - t_k) - G_k (x_k) + G_k(t_k) + g_k (w) (x_k - t_k) \\
&= G_k (t_k) - G_k (x_k) - g_k (x_k) (t_k - x_k) \geq 0  ,
\end{align*}
since $G_k$ is convex and $G_k^\prime = g_k$. The last inequality is strict if $G_k$ is strictly convex and $t_k \neq x_k$. All in all, $S$ is a (strictly) $\mathcal{F}$-consistent scoring function.
\end{proof}

\pagebreak

\begin{remark}
It would be desirable to modify the proof such that $S$ remains (strictly) consistent for discontinuous distribution functions in $\mathcal{F}$. One idea to do so is to apply the same techniques as in Example~\ref{Thm:ExQuantilesElicitable} and the proof of Theorem~\ref{Thm:QuantilesElicitable} in order to bound $\xi_i$ in (\ref{Eqn:BoundForXi}). However, there would still be a problem concerning $\mathsf{R}$, since (\ref{Eqn:Tkcancelsout}) is not always valid for discontinuous distributions. Nevertheless, different techniques can be used to show that that $S$ remains (strictly) consistent for discontinuous distribution functions, see Corollary~\ref{Thm:SpectralRiskElicitableII}.
\end{remark}

The rest of this section presents two corollaries from \cite{FissZieg} which consider two special cases of functionals. The first is concerned with the restriction $q_i < 1$ in Theorem~\ref{Thm:SpectralRiskElicitable} and deals with spectral measures of risk having spectral mass at $1$. This means that it considers a spectral risk measure functional $T_k$ determined by $(p_i, q_i)_{i=1, \ldots, k-1}$, for which $q_i = 1$ for one $i$.  If this is the case and $k=2$, $T_k$ coincides with the mean functional, which is elicitable with scoring functions given in Example~\ref{Thm:ExMeanIsElicitable}. Hence, we suppose that for one $j = 1 , \ldots, k-1$ we have $q_j = 1$ and $p_j \in (0,1)$. Without loss of generality we assume $j = k-1$. The following result is a modified version of \cite[Corollary 5.4]{FissZieg} and we add a proof.

\begin{cor}
Let $T_k$ be a spectral risk measure functional of order $k$ determined by $(p_i, q_i)_{i=1, \ldots, k-1}$, such that $q_{k-1} = 1$ and $p_{k-1} \in (0,1)$. Moreover, define the functionals $T_i (F) := F^{\leftarrow} (q_i)$, $i=1, \ldots, k-2$ and $T_{k-1} (F) := M(F) := \int y \, \mathrm{d}F(y)$. Then the following assertions hold:
\begin{enumerate}[label=(\roman*)]
\item The functional $T:= (T_1, \ldots, T_k)^\top$ is elicitable.
\item Let $S_1$ be an $\mathcal{F}$-consistent scoring function for $\tilde{T} = (T_1, \ldots, T_{k-2}, \tilde{T}_{k-1})^\top$, where $\tilde{T}_{k-1}$ is a spectral risk measure functional of order $k-1$ determined by $(p_i / (1-p_{k-1}), q_i)_{i=1, \ldots, k-2}$. If additionally $S_2$ is an $\mathcal{F}$-consistent scoring function for the mean, an $\mathcal{F}$-consistent scoring function for $T$ as defined in (i) is given by
\begin{align*}
S (x, y) := S_1( x_1, \ldots, x_{k-2},  (x_k - p_{k-1} x_{k-1}) / (1- p_{k-1}), y) + S_2 (x_{k-1}, y) .
\end{align*}
The function $S$ is strictly consistent if $S_1$ and $S_2$ are strictly consistent.
\end{enumerate}
\end{cor}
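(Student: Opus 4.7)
The plan is to reduce the claim to Theorem~\ref{Thm:SpectralRiskElicitable} and the revelation principle via a simple algebraic rewriting. The key observation is that with the convention $\mathrm{ES}_1(F) = -M(F)$, the assumption $q_{k-1}=1$ lets us split off the mean component:
\begin{equation*}
T_k(F) = -\sum_{i=1}^{k-2} \mathrm{ES}_{q_i}(F) \, p_i + p_{k-1} M(F) = (1-p_{k-1}) \tilde{T}_{k-1}(F) + p_{k-1} M(F),
\end{equation*}
where $\tilde{T}_{k-1}(F) := -\sum_{i=1}^{k-2} \mathrm{ES}_{q_i}(F) \, p_i/(1-p_{k-1})$ is itself a spectral risk measure functional of order $k-1$ determined by $(p_i/(1-p_{k-1}), q_i)_{i=1,\ldots,k-2}$, all of whose levels satisfy $q_i < 1$.

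Next I would invoke Theorem~\ref{Thm:SpectralRiskElicitable} (which is applicable because all remaining levels are strictly less than $1$) to conclude that the vector $\tilde{T} = (T_1, \ldots, T_{k-2}, \tilde{T}_{k-1})^\top$ is elicitable with strictly $\mathcal{F}$-consistent scoring function $S_1$. Since the mean functional $M$ is elicitable by Example~\ref{Thm:ExMeanIsElicitable} with some strictly $\mathcal{F}$-consistent scoring function $S_2$, Lemma~\ref{Thm:AllCompElicitable}(i) yields that the concatenated functional $T' := (T_1, \ldots, T_{k-2}, \tilde{T}_{k-1}, M)^\top$ is elicitable, with $(x,y) \mapsto S_1(x_1,\ldots,x_{k-1},y) + S_2(x_k,y)$ being strictly $\mathcal{F}$-consistent.

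To pass from $T'$ to $T = (T_1,\ldots,T_{k-2}, M, T_k)^\top$, I would use the linear bijection $g : \mathbb{R}^k \to \mathbb{R}^k$ defined by
\begin{equation*}
g(z_1,\ldots,z_k) := \bigl( z_1,\ldots,z_{k-2},\, z_k,\, (1-p_{k-1}) z_{k-1} + p_{k-1} z_k \bigr),
\end{equation*}
whose inverse is $g^{-1}(x_1,\ldots,x_k) = \bigl( x_1,\ldots,x_{k-2},\, (x_k - p_{k-1} x_{k-1})/(1-p_{k-1}),\, x_{k-1} \bigr)$. By construction $T = g \circ T'$, so Proposition~\ref{Thm:RevelationPrinciple}(ii) immediately gives (i), and moreover that $(S_1 + S_2) \circ (g^{-1}, \mathrm{id})$ is strictly $\mathcal{F}$-consistent for $T$. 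Substituting the explicit form of $g^{-1}$ into this composition yields exactly the formula claimed for $S$ in (ii); the only bookkeeping is to verify that the image sets match up correctly, which is automatic since $g$ is a global bijection of $\mathbb{R}^k$.

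The main obstacle is not really conceptual but rather organisational: one has to set up the bijection $g$ so that the positions of the mean component and the spectral component are swapped correctly (in $T'$ the mean sits in slot $k$, whereas in $T$ it sits in slot $k-1$). Once this is done, both parts (i) and (ii) drop out of Theorem~\ref{Thm:SpectralRiskElicitable}, Lemma~\ref{Thm:AllCompElicitable} and Proposition~\ref{Thm:RevelationPrinciple} with no further work, and strict consistency of the constructed $S$ is inherited directly from the strict consistency of $S_1$ and $S_2$.
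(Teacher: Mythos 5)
Your proposal is essentially identical to the paper's proof: you use the same decomposition $T_k = (1-p_{k-1})\tilde{T}_{k-1} + p_{k-1}M$, invoke Theorem~\ref{Thm:SpectralRiskElicitable} and Lemma~\ref{Thm:AllCompElicitable}(i) in the same way, and construct the same bijection $g$ (with $\lambda = p_{k-1}$) to conclude via the revelation principle. The bookkeeping of where the mean and spectral components sit after applying $g$ is likewise handled identically.
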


\begin{proof}
We only need to show that $S$ is (strictly) $\mathcal{F}$-consistent. Let $T_k$ determined by $(p_i, q_i)_{i=1, \ldots, k-1}$ be as required and define $\tilde{p}_i := p_i / (1-\lambda)$ for $i \in I_{k-2}$ and $\lambda := p_{k-1} \in (0,1)$. This gives the representation $T_k = (1 - \lambda) \tilde{T}_{k-1} + \lambda M$, where $\tilde{T}_{k-1}$ is a spectral risk measure functional of order $k-1$ determined by $(\tilde{p}_i, q_i)_{i=1, \ldots, k-2}$. Now define $\tilde{T}_i (F) = F^{\leftarrow}(q_i)$ for $i \in I_{k-2}$ and use Theorem~\ref{Thm:SpectralRiskElicitable} to obtain a (strictly) $\mathcal{F}$-consistent scoring function $S_1 : \mathbb{R}^{k-1} \times \mathsf{O} \rightarrow \mathbb{R}$ for $\tilde{T} = (\tilde{T}_1, \ldots, \tilde{T}_{k-1})^\top$. Due to Lemma~\ref{Thm:AllCompElicitable}, a consistent scoring function for the functional $(\tilde{T}, M)$ is given by $S_1 + S_2$, where $S_2$ is a consistent scoring function for $M$. This scoring function is strictly consistent if $S_1$ and $S_2$ are strictly consistent. The proof concludes by applying the revelation principle stated in Proposition~\ref{Thm:RevelationPrinciple}. To this end, define
\begin{align*}
g: \, &\mathbb{R}^k \rightarrow \mathbb{R}^k, \quad x \mapsto g(x) := (x_1, \ldots, x_{k-2}, x_k, (1-\lambda) x_{k-1} + \lambda x_k )^\top \text{ and} \\
g^{-1}: \, &\mathbb{R}^k \rightarrow \mathbb{R}^k, \quad x \mapsto g^{-1}(x) := (x_1, \ldots, x_{k-2}, (x_k - \lambda x_{k-1}) / (1- \lambda), x_{k-1} )^\top  ,
\end{align*}
where $g^{-1}$ is the inverse function of $g$. As a consequence, we obtain $ g( (\tilde{T}, M) ) = ( \tilde{T}_1, \ldots \tilde{T}_{k-2}, M, T_k )^\top = T$ and due to Proposition~\ref{Thm:RevelationPrinciple}, (strictly) $\mathcal{F}$-consistent scoring functions for $g( (\tilde{T}, M) )$ are given by $S(x,y) = (S_1 + S_2)( g^{-1} (x), y)$, concluding the proof.
\end{proof}

Finally, we consider the special case (VaR, ES) and state which scoring functions are (strictly) consistent for this functional. In light of Equation~(\ref{Eqn:ESVaRInequality}), this functional can only take values in $\mathsf{A}_0 := \lbrace x \in \mathbb{R}^2 \mid x_1 \leq x_2 \rbrace$. The following result is part of \cite[Corollary 5.5]{FissZieg} and we add our own proof since the risk measures ES and VaR have different sign in this work, see also Remark~\ref{Rem:DefinitionRisk}.

\begin{cor}  \label{Thm:JointElicitVaRES}
For $\alpha \in (0,1)$ define the functional $T(F) := ( \mathrm{VaR}_\alpha (F), \mathrm{ES}_\alpha (F))^\top .$ Then $T$ is elicitable and $\mathcal{F}$-consistent scoring functions $S : \mathsf{A}_0 \times \mathsf{O} \rightarrow \mathbb{R}$ are given by
\begin{align*}
S(x_1, x_2, y) &= ( \mathbbm{1}_{\lbrace y \leq - x_1 \rbrace} - \alpha) g_1 (- x_1) - \mathbbm{1}_{\lbrace y \leq - x_1 \rbrace} g_1 (y) \\
&\quad+ g_2 (- x_2) \Big( x_1 - x_2 + \frac{1}{\alpha}  \mathbbm{1}_{\lbrace y \leq - x_1 \rbrace} ( -x_1 - y) \Big) - G_2 (- x_2) ,
\end{align*}
where $g_1$ is increasing, $G_2$ is differentiable, convex, and increasing, $g_2 = G^\prime_2$ and $\mathbbm{1}_{(-\infty, u]} g_1 $ is $\mathcal{F}$-integrable for any $u \in \mathbb{R}$. If additionally $g_1$ is strictly increasing, $G_2$ is strictly convex and strictly increasing and all $\alpha$-quantiles are unique, $S$ is strictly consistent.
\end{cor}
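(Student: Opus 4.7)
The plan is to derive this as a direct corollary of Theorem~\ref{Thm:SpectralRiskElicitable} combined with the revelation principle (Proposition~\ref{Thm:RevelationPrinciple}). The key observation is that for $k=2$, $p_1 = 1$, $q_1 = \alpha$, the functional produced by Theorem~\ref{Thm:SpectralRiskElicitable} is precisely $(T_1, T_2)(F) = (F^\leftarrow(\alpha), -\mathrm{ES}_\alpha(F)) = (-\mathrm{VaR}_\alpha(F), -\mathrm{ES}_\alpha(F))$. The corollary's functional therefore equals $g \circ (T_1, T_2)$ where $g(\tilde x_1, \tilde x_2) = (-\tilde x_1, -\tilde x_2)$, which is a smooth bijection equal to its own inverse.

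Second, I would write down explicitly the scoring function furnished by Theorem~\ref{Thm:SpectralRiskElicitable} with these parameters: since the sum in the second block has only one term ($i=1$) and $p_1/q_1 = 1/\alpha$, the expression $\tilde x_k + \sum \frac{p_i}{q_i}(\mathbbm{1}_{\{y \leq \tilde x_i\}}(\tilde x_i - y) - q_i \tilde x_i)$ collapses to $\tilde x_2 - \tilde x_1 + \frac{1}{\alpha}\mathbbm{1}_{\{y \leq \tilde x_1\}}(\tilde x_1 - y)$. Substituting $\tilde x_i = -x_i$ via the revelation principle turns this into $-x_2 + x_1 + \frac{1}{\alpha}\mathbbm{1}_{\{y \leq -x_1\}}(-x_1 - y)$, which together with the quantile block reproduces exactly the formula for $S(x_1, x_2, y)$ stated in the corollary.

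Third, I would check that the conditions on $g_1, g_2, G_2$ in the corollary match the hypotheses of Theorem~\ref{Thm:SpectralRiskElicitable}. The integrability condition on $\mathbbm{1}_{(-\infty,u]} g_1$ is identical. For $\mathcal{F}$-consistency the theorem requires the auxiliary function $H_{1,u}(v) = v \cdot \frac{1}{\alpha} g_2(u) + g_1(v)$ to be increasing for every admissible $u$; since $g_1$ is assumed increasing, this reduces to $g_2(u) \geq 0$, i.e.\ to $G_2$ being increasing. For strict consistency, strict monotonicity of $g_1$ plus $g_2(u) > 0$ (equivalent to $G_2$ strictly increasing) yields strict monotonicity of $H_{1,u}$, and strict convexity of $G_2$ together with uniqueness of $\alpha$-quantiles supplies the remaining hypotheses of the theorem. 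The elicitability of $(\mathrm{VaR}_\alpha, \mathrm{ES}_\alpha)$ then follows from Proposition~\ref{Thm:RevelationPrinciple}(ii), since the revelation principle preserves (strict) $\mathcal{F}$-consistency under any bijection, and here the bijection $g$ additionally maps the action domain of Theorem~\ref{Thm:SpectralRiskElicitable} onto $\mathsf{A}_0$ by the inequality $\mathrm{VaR}_\alpha(F) \leq \mathrm{ES}_\alpha(F)$ from~(\ref{Eqn:ESVaRInequality}).

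No step should present a serious obstacle; the only point that requires genuine care is bookkeeping around the sign flip, in particular verifying that monotonicity of $H_{1,u}$ in the variables of Theorem~\ref{Thm:SpectralRiskElicitable} really does correspond to monotonicity of $g_1$ and $G_2$ on the natural domains appearing in the corollary, since the substitution $\tilde x_i = -x_i$ reverses orientation and the domains $\mathsf{A}_i'$, $\mathsf{A}_{i,a}'$ have to be tracked accordingly.
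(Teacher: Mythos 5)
Your proposal is correct and follows essentially the same route as the paper's own proof: apply Theorem~\ref{Thm:SpectralRiskElicitable} with $k=2$, $p_1 = 1$, $q_1 = \alpha$ to obtain a (strictly) $\mathcal{F}$-consistent scoring function for $(-\mathrm{VaR}_\alpha, -\mathrm{ES}_\alpha)^\top = (F^\leftarrow(\alpha), -\mathrm{ES}_\alpha)^\top$, verify that $g_1$ increasing and $g_2 \geq 0$ make $H_{1,u}$ increasing, and then push through the sign-flip bijection $(\tilde x_1, \tilde x_2) \mapsto (-\tilde x_1, -\tilde x_2)$ via the revelation principle, tracking that this bijection carries $\{\tilde x_1 \geq \tilde x_2\}$ onto $\mathsf{A}_0$. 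The only cosmetic caveat is that "$H_{1,u}$ increasing reduces to $g_2(u) \geq 0$" is really "$g_2(u) \geq 0$ together with $g_1$ increasing is sufficient," which is what you in fact use, and which matches the paper's own wording.
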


\begin{proof}
Fix $\alpha \in (0,1)$ and observe that $- \mathrm{ES}_\alpha$ is a spectral risk measure functional of order $2$ determined by $( 1, \alpha)$. Hence, if all requirements are met, Theorem~\ref{Thm:SpectralRiskElicitable} proves the (strict) $\mathcal{F}$-consistency of the scoring function
\begin{equation*}
\tilde{S} : \lbrace x \in \mathbb{R}^2 \mid x_1 \geq x_2 \rbrace \times \mathsf{O} \rightarrow \mathbb{R}, \quad (x,y) \mapsto \tilde{S}(x,y) := S (-x_1, -x_2, y)
\end{equation*}
for the functional $(- \mathrm{VaR}_\alpha, - \mathrm{ES}_\alpha)^\top$. Theorem~\ref{Thm:SpectralRiskElicitable} is indeed applicable since the function $H_{1,u} : [u, \infty) \rightarrow \mathbb{R}$ is increasing for any $u \in \mathbb{R}$ due to the fact that $g_1$ is increasing and $g_2$ is positive. Moreover, $\tilde{S}$ is strictly consistent if $g_1$ and $G_2$ are even strictly increasing and $G_2$ is strictly convex. An application of Proposition~\ref{Thm:RevelationPrinciple} with $g : \lbrace x \in \mathbb{R}^2 \mid x_1 \geq x_2 \rbrace \rightarrow \mathsf{A}_0$, $(x_1, x_2) \mapsto (-x_1, -x_2)$ now implies that $S$ is (strictly) consistent for $( \mathrm{VaR}_\alpha , \mathrm{ES}_\alpha )^\top$, which shows the claim.
\end{proof}

\section{Comparative backtesting} 

The previous sections in this chapter are concerned with measures of risk and the question whether they are elicitable. The aim of this section is to clarify why elicitability is a desirable property of risk measures. To do so, we summarize the use of elicitability for comparative backtesting, building on the results on forecast ranking and Diebold-Mariano tests which are discussed in Section~\ref{Sec:ForecastAndBacktest}. Moreover, the arguments presented here can be found in Fissler et al.~\cite{FissZiegGneit} as well as in Nolde and Ziegel~\cite{NoldeZiegel}. \\
\par
If the risk of a position $Y$ is supposed to be measured by a risk measure $\rho$, for example Value at Risk, it is necessary to select a procedure which outputs an estimate of $\rho (Y)$ for given input data. It is then common to check if the used model, or more general the risk estimation procedure, is fit to correctly quantify the risk $\rho(Y)$. This aim is often achieved by \textit{backtesting} the model or estimation procedure. As described in \cite{FissZiegGneit}, a traditional backtest is designed to test the hypothesis $H_0$ : \textit{The used risk measurement procedure is correct}. It is thus meant to assess whether a good estimator for $\rho (Y)$ is used by the risk manager. Therefore, if the null hypothesis is rejected, the risk manager wants to rethink the measurement approach. If the hypothesis is not rejected, the procedure is not changed. As remarked by Acerbi and Szekely~\cite{AcerbiSzekely}, elicitability is not needed for this traditional approach and tests for this situation are found in McNeil et al.~\cite[Sec. 9.3]{McNeilRisk} and \cite{AcerbiSzekely}. However, when the aim is to compare two different forecasts for $\rho(Y)$, elicitability of risk measures becomes essential (see also the discussion in Subsection~\ref{Sec:ForecastRankRemark}). This approach is called \textit{comparative backtesting}.
\par
One example where the comparison of risk measurement procedures is important is the regulation of the financial industry. There, a comparative backtest can be applied to compare the risk measurement procedure used by a financial institution with another benchmark procedure which was devised by some regulator. While the firm may prefer its own risk measurement system, the regulator wants to avoid the use of a misspecified procedure. Such a procedure might lower capital requirements, giving the firm an unfair advantage over its competitors. Moreover, it might lead to risky positions which remain hidden from the regulator and jeopardize financial stability. It is thus convenient for a regulator to have a test at hand with which it can be checked if an internal model is at least as good as the benchmark model.
\par
In order to elaborate on this situation, we consider two risk measurement procedures in the following which output forecasts of $\rho(Y_t)$ for the time periods $t=1, \ldots, n$. One is the procedure which is employed by a hypothetical financial institution and is thus called `internal procedure'. The other one is the procedure of the regulator who is in charge of supervising this firm and its risk management. It is thus called `standard procedure'. We assume that $\rho$ is elicitable and fix a scoring function $S$ which is strictly consistent for $\rho$. In this setting, we now present the \textit{three-zone approach} as proposed by Fissler et al.~\cite{FissZiegGneit}. At first, the test statistic for a Diebold-Mariano test is repeated. Similar to Subsection~\ref{Sec:CompBacktest}, define
\begin{equation*}
\mathsf{T}_n :=  \sqrt{ \frac{n}{\hat{\sigma}^2_n} } \mathsf{K}_n = \frac{1}{ \sqrt{\hat{\sigma}^2_n  n} } \sum_{t=1}^{n} ( S( \hat{x}_t, y_t) - S(\hat{z}_t , y_t) ),
\end{equation*}
where $(\hat{x}_t)_{t=1, \ldots,n}$ are forecasts of the internal procedure, $(\hat{z}_t)_{t=1, \ldots,n}$ are forecasts of the standard procedure and $(y_t)_{t=1, \ldots,n}$ are realizations of the portfolio variable $Y$ at time points $t=1, \ldots, n$. Furthermore, assume that asymptotic normality as stated in Theorem~\ref{Thm:CLTforStationary} holds for the statistic $\mathsf{T}_n$. The following two hypotheses are proposed by \cite{FissZiegGneit}:
\begin{itemize}
\item[] $H_0^1$ : The risk measure estimates of the internal procedure are \textit{at most as good} as the ones from the standard procedure.
\item[] $H_0^2$ : The risk measure estimates of the internal procedure are \textit{at least as good} as the ones from the standard procedure.
\end{itemize}
Similar to Subsection~\ref{Sec:CompBacktest} we reformulate the hypotheses into statements concerning the expected value of the mean score differences. Firstly, the hypothesis $H_0^1$ is stated as $\mathbb{E} \mathsf{K}_n \geq 0$. Due to the asymptotic normality of $\mathsf{K}_n$, a test of $H_0^1$ would have a rejection region given by $(-\infty , C_1]$, where $C_1 < 0$ if the level of significance is below $0.5$. Repeating this argument for $H_0^2$ gives $\mathbb{E} \mathsf{K}_n \leq 0$ and again, a rejection region is given by $[C_2, \infty)$, where $C_2 > 0$ holds. All in all, the two hypotheses partition the real line into three zones given by $(-\infty, C_1]$, $(C_1, C_2)$ and $[C_2, \infty)$. This results in the following interpretation given by \cite{FissZiegGneit}.
\begin{itemize}
\item All values less or equal $C_1$ make up the \textbf{green zone}. If the test statistic falls into this interval, the hypothesis $H_0^1$ is rejected and the risk measure estimates of the internal procedure are considered more accurate than the estimates of the standard procedure. The comparative backtest is passed.
\item All values in $(C_1, C_2)$ constitute the \textbf{yellow zone}. If the test statistic takes values in this interval, it is not possible to reject either $H_0^1$ or $H_0^2$. Consequently, it is not clear which procedure is preferable. A conservative approach would argue that the internal procedure was not proven to be better, hence the backtest should be failed. Alternatively, it can be argued that it is not clear whether the standard procedure is better, hence the backtest should be passed.
\item All values greater or equal $C_2$ form the \textbf{red zone}. If the test statistic lies in this interval, the hypothesis $H_0^2$ is rejected and it is thus assumed that the risk measure estimates of the internal procedure are worse than the estimates of the standard procedure. The comparative backtest is failed.
\end{itemize}

Using this approach, a financial regulator can compare internal procedures of different institutions to a benchmark. If a firm fails the test, the regulator can require the firm to adapt the benchmark procedure in order to ensure that risk measure estimates are of sufficient quality.

To the best of our knowledge, this three-zone approach is not applied in practice right now, however there are simulation studies for different settings. While Fissler et al.~\cite{FissZiegGneit} do comparisons based on i.i.d. data, Nolde and Ziegel~\cite{NoldeZiegel} use the extension of the Diebold-Mariano framework by Giacomini and White~\cite{GiacominiWhite} (as mentioned in Subsection~\ref{Sec:ForecastRankRemark}) and consider data generated by a stochastic process. Finally, it is important to note that all drawbacks and/or remarks which are mentioned in Subsection~\ref{Sec:ForecastRankRemark} remain valid for comparative backtesting of risk measures.

\chapter{Further topics and discussion}
\label{Chapter4}

This chapter contains further topics related to elicitability and identifiability. At first, functionals with elicitable components are studied and a well-known characterization of scoring functions is presented. Moreover, a special class of functionals is introduced, which can be used to generalize the results of Fissler and Ziegel~\cite{FissZieg} as stated in Theorem~\ref{Thm:SpectralRiskElicitable}. A discussion about the presented results as well as the remaining open problems finishes the chapter and this thesis.

\section{Functionals with elicitable components} 
\label{Sec:FunctionalsComponents}

This section considers functionals $T : \mathcal{F} \rightarrow \mathsf{A} \subseteq \mathbb{R}^k$, for which all component functionals $T_1, \ldots, T_k$ are elicitable. As shown in Proposition~\ref{Thm:LevelSetCond}, a necessary condition for elicitability is the `convexity of level sets', and as pointed out in Remark~\ref{Rem:LevelSetSufficient}, this condition is also sufficient in the one-dimensional case, as long as certain assumptions are met. For higher dimensions a similar characterization of elicitability is unknown, thus an intuitive starting point to study higher order elicitability is to consider functionals for which all components are elicitable. This section presents one characterization of such functionals which is due to \cite{FissZieg}. \\
\par
For $k > 1$ let $T_i : \mathcal{F} \rightarrow \mathsf{A}_i \subseteq \mathbb{R}$, $i=1, \ldots, k$,  be functionals and define
\begin{equation}  \label{Eqn:FunctionalTDef}
T : \mathcal{F} \rightarrow \mathsf{A} \subseteq \mathsf{A}_1 \times \ldots \times \mathsf{A}_k , \quad F \mapsto T(F) := (T_1(F), \ldots, T_k(F))^\top .
\end{equation}
A standing assumption of this section is $\mathsf{A} = T(\mathcal{F})$, such that $T$ is surjective. Lemma~\ref{Thm:AllCompElicitable} gives a sufficient condition for the elicitability of $T$: If all $T_i$, $i=1, \ldots, k$, are elicitable, then $T$ is elicitable and (strictly) $\mathcal{F}$-consistent scoring functions are given by $\sum_{i=1}^{k} S_i (x_i, y)$, where $S_i$ is a (strictly) $\mathcal{F}$-consistent scoring function for $T_i$. Following \cite{FissZieg}, we call scoring functions for $T$ having this structure \textit{separable}. The central question which arises is: Are all strictly consistent scoring functions for $T$ separable?
\par
The most important tool to answer this question is Osband's principle (see Theorem~\ref{Thm:OsbandPrinciple1} and Theorem~\ref{Thm:OsbandPrinciple2}), which states that under certain conditions there is a matrix-valued mapping $h: \intr( \mathsf{A} ) \rightarrow \mathbb{R}^{k \times k}$ such that
\begin{equation*}
\nabla \bar{S}(x,F) = h(x) \bar{V}(x,F)
\end{equation*}
holds for all $x \in \intr (\mathsf{A})$ and $F \in \mathcal{F}$. In order to calculate $S$ using Osband's principle, it is assumed that $T_i$ is identifiable with oriented strict $\mathcal{F}$-identification function $V_i$ for $i=1, \ldots, k$. This implies that $T$ is identifiable with oriented strict $\mathcal{F}$-identification function
\begin{equation}  \label{Eqn:IdentFunctionDef}
V : \mathsf{A} \times \mathsf{O} \rightarrow \mathbb{R}^k, \quad (x,y) \mapsto V(x,y):= (V_1 (x_1, y) ,\ldots, V_k(x_k,y) )^\top  ,
\end{equation}
see part (ii) and (iii) of Lemma~\ref{Thm:AllCompElicitable}. Moreover, the following assumption which is due to \cite{FissZieg} is imposed on $V$ and $\mathcal{F}$.

\phantomsection
\begin{assumption2}[V4] \label{As:V4}
Let Assumption (\nameref{As:V3}) hold. For all $r \in \lbrace 1, \ldots, k \rbrace$ and for all $t \in \intr (\mathsf{A}) \cap T(\mathcal{F})$ there are $F_1, F_2 \in \mathcal{F}$ with $T(F_1) = T(F_2) = t$ such that
\begin{equation*}
\partial_l \bar{V}_l (t,F_1) = \partial_l \bar{V}_l (t, F_2) \quad \text{for } l \in \lbrace 1, \ldots, k \rbrace \backslash \lbrace r\rbrace \text{ and} \quad \partial_r \bar{V}_r (t, F_1) \neq \partial_r \bar{V}_r (t,F_2)
\end{equation*}
hold.
\end{assumption2}

Similar to \cite{FissZiegArxiv}, the following two examples calculate the partial derivatives of $\bar{V}(\cdot,F)$ for quantiles and expectiles in order to show how Assumption~(\nameref{As:V4}) looks like in these special cases. They illustrate that Assumption~(\nameref{As:V4}) is not only a condition on the identification function $V$, but also on the richness of the class $\mathcal{F}$.

\begin{example}[Assumption (\nameref{As:V4}) for quantiles]
Assume that all $F \in \mathcal{F}$ have continuous densities with respect to the Lebesgue measure. Let $T$ be defined as in (\ref{Eqn:FunctionalTDef}) and suppose $T_i (F) = F^{\leftarrow}(\alpha_i)$ for $\alpha_i \in (0,1)$, $i \in I_k := \lbrace 1, \ldots, k\rbrace$. Moreover, let $V$ be defined as in (\ref{Eqn:IdentFunctionDef}). As mentioned in Subsection~\ref{Sec:QuantileReg}, an $\mathcal{F}$-identification function for $F^{\leftarrow}(\alpha_i)$ is defined via $V_i(x_i,y) := \mathbbm{1}_{\lbrace y \leq x_i \rbrace} - \alpha_i$. For any $F \in \mathcal{F}$ and $i \in I_k$ we calculate the derivative $\partial_i \bar{V}_i (x, F) = (F(x_i) - \alpha_i)^\prime = f(x_i)$, where $f$ denotes the density of $F$. Hence, Assumption~(\nameref{As:V4}) states that for any $r \in I_k$ and $t \in \intr(\mathsf{A})$ there exist $F_1, F_2 \in \mathcal{F}$ having the same $\alpha_i$-quantiles $t_i$, $i \in I_k$, and such that their densities coincide at $t_i$, $i \in I_k \backslash \lbrace r \rbrace$, but not at $t_r$.
\end{example}

\begin{example}[Assumption (\nameref{As:V4}) for expectiles]
Given the situation of the previous example, assume additionally that all members of $\mathcal{F}$ have finite first moments. Let $T$ be defined as in (\ref{Eqn:FunctionalTDef}) and suppose $T_i (F) = e_{\tau_i} (F)$ for $\tau_i \in (0,1)$, $i \in I_k$. As shown in Lemma~\ref{Thm:ExpectileIdentifiable}, $V_i (x_i,y) := \vert \mathbbm{1}_{\lbrace y \leq x_i \rbrace} - \tau_i \vert (x_i - y)$ defines a strict $\mathcal{F}$-identification function for the $\tau_i$-expectile. As above, let $f$ denote the density of $F$ and calculate the derivative
\begin{align*}
\partial_i \bar{V}_i (x, F) &= \frac{\mathrm{d}}{\mathrm{d}x} \Big( \tau_i \int_{x}^{\infty} (x-y) f(y) \, \mathrm{d}y + (1-\tau_i) \int_{-\infty}^{x} (x - y) f(y) \, \mathrm{d}y \Big) \\
&= \frac{\mathrm{d}}{\mathrm{d}x} \Big( - \tau_i \int_{x}^{\infty} y f(y) \, \mathrm{d}y - (1-\tau_i) \int_{-\infty}^{x} y f(y) \, \mathrm{d}y \Big) \\
&\quad+ \frac{\mathrm{d}}{\mathrm{d}x} \big( x ( \tau_i (1- F(x)) + (1-\tau_i) F(x)) \big)  = (1 - 2 \tau_i) F(x) + \tau_i ,
\end{align*}
which implies $\partial_i \bar{V}_i (x, F) = (1 - 2 \tau_i) F(x_i) + \tau_i $ for any $i \in I_k$. Consequently, Assumption~(\nameref{As:V4}) is more complicated for expectiles, but there is no obvious reason why it should not be satisfied for certain classes $\mathcal{F}$.
\end{example}

\begin{remark}  \label{Rem:ExpectationNotSeparable}
A simple but well-known situation where Assumption~(\nameref{As:V4}) cannot be satisfied for any class $\mathcal{F}$ occurs when $T$ consists of expectations or ratios of expectations with the same denominator. To see this, let $g : \mathsf{O} \rightarrow \mathbb{R}^k$ and $q : \mathsf{O} \rightarrow (0, \infty)$ be $\mathcal{F}$-integrable functions and set $T_i (F) := \bar{g}_i(F) / \bar{q}(F)$ for $i=1, \ldots, k$. Then an identification function for $T_i$ is given by $V_i(x_i,y) = g_i(y) - x_i q(y)$ and the derivative of $\bar{V}_i(x_i,F)$ depends only on $F$. Hence, Assumption~(\nameref{As:V4}) can never be satisfied for such functionals. At the same time there exist strictly $\mathcal{F}$-consistent scoring functions for $T$ which are not separable. One example of such a scoring function is obtained by choosing the (strictly convex) function $f(x) = \exp( - \sum_{i=1}^{k} x_i )$ in Theorem~\ref{Thm:GenExpectElicit}.
\end{remark}

In view of the previous examples and remarks, the next aim is to show that functionals as defined in (\ref{Eqn:FunctionalTDef}) which satisfy Assumption~(\nameref{As:V4}) only admit separable strictly consistent scoring functions. To this end, a corollary of Osband's principle is needed in order to study the structure of the function $h$.

\begin{cor}[Fissler and Ziegel {\cite[Corollary 3.3]{FissZieg}}]  \label{Thm:CorForOsband}
Let $T: \mathcal{F} \rightarrow \mathsf{A} \subseteq \mathbb{R}^k$ be a surjective, elicitable, and identifiable functional with strict $\mathcal{F}$-identification function $V : \mathsf{A} \times \mathsf{O} \rightarrow \mathbb{R}^k$ and strictly $\mathcal{F}$-consistent scoring function $S : \mathsf{A} \times \mathsf{O} \rightarrow \mathbb{R}$. If $S$ and $V$ satisfy the Assumptions~(\nameref{As:V1}), (\nameref{As:V3}), and (\nameref{As:S2}) and $h$ is the function from Theorem~\ref{Thm:OsbandPrinciple1}, the second-order derivatives satisfy
\begin{align}  \label{Eqn:SymmetryOfDeriv}
\partial_m \partial_l \bar{S}(x,F) &= \sum_{i=1}^{k} \partial_m h_{li} (x) \bar{V}_i (x,F) + h_{li} (x) \partial_m \bar{V}_i(x,F) \\
&= \sum_{i=1}^{k} \partial_l h_{mi} (x) \bar{V}_i (x,F) + h_{mi} (x) \partial_l \bar{V}_i (x,F) = \partial_l \partial_m \bar{S}(x,F) \nonumber
\end{align}
for all $l,m \in \lbrace 1, \ldots, k \rbrace$, for all $F \in \mathcal{F}$ and almost all $x \in \intr (\mathsf{A})$. In particular, the identity holds for $x=T(F) \in \intr( \mathsf{A})$. 
\end{cor}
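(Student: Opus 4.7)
The plan is to derive both equalities by differentiating the first-order identity of Osband's principle (Theorem~\ref{Thm:OsbandPrinciple1}) once more and then invoking the symmetry of mixed partial derivatives. Under the strengthened hypotheses (\nameref{As:V3}) and (\nameref{As:S2}), Theorem~\ref{Thm:OsbandPrinciple1} not only produces the uniquely determined matrix-valued function $h$ satisfying $\partial_l \bar{S}(x,F) = \sum_{i=1}^k h_{li}(x) \bar{V}_i(x,F)$ for all $x \in \intr(\mathsf{A})$ and $F \in \mathcal{F}$, but also ensures that $h$ is locally Lipschitz continuous on $\intr(\mathsf{A})$. This locally Lipschitz regularity is the key additional input beyond plain continuity that unlocks the corollary.

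First, I would fix $F \in \mathcal{F}$ and $l \in \{1,\ldots,k\}$ and differentiate the Osband identity with respect to $x_m$. By Rademacher's theorem, a locally Lipschitz function on an open subset of $\mathbb{R}^k$ is differentiable almost everywhere, so each $h_{li}$ has partial derivatives $\partial_m h_{li}$ defined for a.e. $x \in \intr(\mathsf{A})$. Combining this with Assumption~(\nameref{As:V3}), which gives continuous differentiability of $\bar{V}(\cdot,F)$, the product rule applies termwise and yields
\begin{equation*}
\partial_m \partial_l \bar{S}(x,F) = \sum_{i=1}^{k} \partial_m h_{li}(x)\, \bar{V}_i(x,F) + h_{li}(x)\, \partial_m \bar{V}_i(x,F)
\end{equation*}
for a.e. $x \in \intr(\mathsf{A})$. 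Here I would also note that the left-hand side is well defined a.e.\ because Assumption~(\nameref{As:S2}) grants the gradient $\nabla \bar{S}(\cdot,F)$ locally Lipschitz continuity, so again Rademacher supplies the required differentiability of $\partial_l \bar{S}(\cdot,F)$. This establishes the first equality.

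For the second equality, I would invoke Schwarz's theorem on the equality of mixed partial derivatives. Wherever $\bar{S}(\cdot,F)$ is twice continuously differentiable, one has $\partial_m \partial_l \bar{S}(x,F) = \partial_l \partial_m \bar{S}(x,F)$. The desired a.e.\ symmetry then follows from the fact that the expressions derived above provide two representations of the same almost-everywhere defined Hessian entry. For the ``in particular'' clause, Assumption~(\nameref{As:S2}) states exactly that $\bar{S}(\cdot,F)$ is twice continuously differentiable at $t = T(F) \in \intr(\mathsf{A})$, so Schwarz applies pointwise there and the chain of equalities holds specifically at $x = T(F)$, without appeal to the a.e.\ argument.

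The main technical obstacle is the weak regularity of $h$: since $h$ is only locally Lipschitz and not necessarily $C^1$, the derivatives $\partial_m h_{li}$ are merely a.e.\ defined, which forces the ``almost all $x$'' qualifier in the statement. The strengthened hypothesis~(\nameref{As:S2}) at the single point $t = T(F)$ is precisely what is needed to single out that distinguished point from the almost-everywhere statement, and it is this asymmetry between the global a.e.\ regularity and the pointwise $C^2$ regularity at $T(F)$ that has to be tracked carefully throughout the argument.
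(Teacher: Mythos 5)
Your proposal follows essentially the same route as the paper's proof: apply Theorem~\ref{Thm:OsbandPrinciple1} under (\nameref{As:V1}), (\nameref{As:V3}), (\nameref{As:S2}) to obtain a locally Lipschitz $h$; invoke Rademacher's theorem to get a.e.\ differentiability of $h$ and of $\nabla\bar{S}(\cdot,F)$; differentiate the Osband identity by the product rule to get the first equality; and appeal to Schwarz's theorem for the symmetry, with the pointwise ``in particular'' clause at $x=T(F)$ coming from the $C^2$ part of (\nameref{As:S2}).

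One step deserves tightening. You write that the a.e.\ symmetry ``follows from the fact that the expressions derived above provide two representations of the same almost-everywhere defined Hessian entry,'' but that phrasing presupposes what is to be shown: that $\partial_m\partial_l\bar{S}(x,F)$ and $\partial_l\partial_m\bar{S}(x,F)$ coincide a.e. The classical Schwarz theorem you quote (``wherever $\bar{S}$ is $C^2$'') does not apply directly, because in the a.e.\ regime we only know the Hessian \emph{exists} a.e.\ via Rademacher, not that it is continuous. What is actually needed is the variant of Schwarz for $C^{1,1}$ functions — if $\nabla\bar{S}(\cdot,F)$ is locally Lipschitz, the a.e.-defined Hessian agrees a.e.\ with the distributional Hessian and is therefore symmetric a.e. The paper handles this by citing a suitable reference (Grauert and Fischer, Satz~3.3); you should either cite such a result or briefly give the distributional argument, rather than treating the symmetry as definitional.
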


\begin{proof}
We argue similar to \cite{FissZieg}. By using Assumptions~(\nameref{As:V1}), (\nameref{As:V3}), and (\nameref{As:S2}), it is possible to apply  Theorem~\ref{Thm:OsbandPrinciple1} and obtain a locally Lipschitz continuous $h$ such that $\nabla \bar{S}(x,F) = h(x) \bar{V}(x,F)$ holds for any $F \in \mathcal{F}$. Due to Assumption~(\nameref{As:S2}), $ \nabla \bar{S}(\cdot, F)$ is also locally Lipschitz continuous, hence $h$ and $\nabla \bar{S}(\cdot,F)$ are differentiable for a.e. $x \in \intr (\mathsf{A})$ by Rademacher's theorem (see Theorem~\ref{Thm:AppendixRademacher}). Since $\bar{S}(\cdot, F)$ has differentiable partial derivatives almost everywhere, Schwarz's theorem on the symmetry of second derivatives (see for instance Grauert and Fischer~\cite[Satz 3.3]{GrauertFischer}) gives (\ref{Eqn:SymmetryOfDeriv}) for any $F \in \mathcal{F}$ and a.e. $x \in \intr (\mathsf{A})$. Since (\nameref{As:S2}) requires $\bar{S}(\cdot,F)$ to be twice continuously differentiable in $t = T(F) \in  \intr( \mathsf{A})$, Schwarz's theorem shows that (\ref{Eqn:SymmetryOfDeriv}) holds also in $t$.
\end{proof}

Using Assumption~(\nameref{As:V4}) and Corollary~\ref{Thm:CorForOsband}, we prove the following proposition which is the first part of \cite[Prop. 4.2]{FissZieg}. It states that under certain conditions the connection function $h$ from Osband's principle has a simple diagonal structure.

\begin{prop}  \label{Thm:FunchisDiagonal}
For $r=1, \ldots, k$ let $T_r : \mathcal{F} \rightarrow \mathsf{A}_r \subseteq \mathbb{R}$ be an elicitable and identifiable functional with oriented strict $\mathcal{F}$-identification function $V_r : \mathsf{A} \times \mathsf{O} \rightarrow \mathbb{R}$. Let $T$ be defined as in (\ref{Eqn:FunctionalTDef}) with strict $\mathcal{F}$-identification function $V$ as defined in (\ref{Eqn:IdentFunctionDef}) and strictly $\mathcal{F}$-consistent scoring function $S: \mathsf{A} \times \mathsf{O} \rightarrow \mathbb{R}$. Moreover, let Assumptions (\nameref{As:V1}), (\nameref{As:V4}), and (\nameref{As:S2}) hold, suppose $\mathsf{A}$ is connected and define the set
\begin{equation*}
\intr (\mathsf{A})_r^\prime := \lbrace x \in \mathbb{R} \mid \exists z \in \intr ( \mathsf{A} ) : x = z_r \rbrace. 
\end{equation*}
Then the function $h$  from Theorem~\ref{Thm:OsbandPrinciple1} satisfies the following:
\begin{enumerate}[label=(\roman*)]
\item For $r=1, \ldots, k$ there are functions $g_r : \intr (\mathsf{A})_r^\prime \rightarrow (0, \infty)$ such that $h_{rr} (x) = g_r (x_r)$ for any $x \in \intr (\mathsf{A})$.
\item For $r,l \in \lbrace 1, \ldots, k \rbrace$ and $r \neq l$ we have $h_{rl} (x) = 0$ for all $x \in \intr (\mathsf{A})$.
\end{enumerate}
\end{prop}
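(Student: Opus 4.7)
The strategy is to exploit Corollary \ref{Thm:CorForOsband} (the symmetry of mixed second derivatives of $\bar{S}$) together with the very special structure of the componentwise identification function $V$: since $V_i(x,y) = V_i(x_i,y)$ depends on $x$ only through $x_i$, the sums appearing in Osband's identities collapse via $\partial_m \bar{V}_i(x,F) = \delta_{mi}\,\partial_m \bar{V}_m(x,F)$. This collapse is the engine that drives both parts of the proof.

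I would begin with part (ii). Evaluating the symmetry identity \eqref{Eqn:SymmetryOfDeriv} at the special point $x = t = T(F)$, every term involving $\bar{V}_i(t,F)$ vanishes, and only the surviving diagonal derivative $\partial_i \bar{V}_i(t,F)$ contributes; the identity reduces to
\[
h_{lm}(t)\,\partial_m \bar{V}_m(t,F) \;=\; h_{ml}(t)\,\partial_l \bar{V}_l(t,F).
\]
Fixing $l \neq m$, I invoke Assumption (\nameref{As:V4}) with $r = l$ to obtain $F_1, F_2 \in \mathcal{F}$ with $T(F_1) = T(F_2) = t$ for which $\partial_m \bar{V}_m(t,F_1) = \partial_m \bar{V}_m(t,F_2)$ (since $m \neq l$) while $\partial_l \bar{V}_l(t,F_1) \neq \partial_l \bar{V}_l(t,F_2)$. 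Writing the above identity for $F_1$ and $F_2$ and subtracting yields $h_{ml}(t)\,(\partial_l\bar{V}_l(t,F_1) - \partial_l\bar{V}_l(t,F_2)) = 0$, and hence $h_{ml}(t) = 0$. Surjectivity of $T$ onto $\mathsf{A}$ then extends this conclusion to every $t \in \intr(\mathsf{A})$, giving part (ii).

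Next, I turn to part (i). With the diagonal structure established, Osband's principle reads $\partial_l \bar{S}(x,F) = h_{ll}(x)\,\bar{V}_l(x_l,F)$. Applying the symmetry of mixed partials again, but now at a general $x$ (not necessarily $T(F)$), I get
\[
\partial_m h_{ll}(x)\,\bar{V}_l(x_l,F) \;=\; \partial_l h_{mm}(x)\,\bar{V}_m(x_m,F)
\]
for $l \neq m$, valid for almost every $x \in \intr(\mathsf{A})$ and every $F \in \mathcal{F}$. To pin down each side separately, at a fixed good point $x$ I would use surjectivity and the openness of $\intr(\mathsf{A})$ to select $y \in \intr(\mathsf{A})$ with $y_l = x_l$ and $y_m \neq x_m$, together with an $F \in \mathcal{F}$ satisfying $T(F) = y$. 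For this $F$, $\bar{V}_l(x_l,F) = 0$ while $\bar{V}_m(x_m,F) \neq 0$ by the strict identification property; the displayed identity therefore forces $\partial_l h_{mm}(x) = 0$. Since this holds for almost every $x$ and since $h$ is locally Lipschitz (by Theorem \ref{Thm:OsbandPrinciple1} under (\nameref{As:V3}) and (\nameref{As:S2})), a standard Fubini/Rademacher argument upgrades the a.e.\ vanishing of $\partial_l h_{mm}$ to honest independence of $h_{mm}$ from $x_l$ for every $l \neq m$. Continuity of $h$ together with connectedness of $\mathsf{A}$ then lets me define $g_m: \intr(\mathsf{A})_m^\prime \to \mathbb{R}$ unambiguously by $h_{mm}(x) = g_m(x_m)$.

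The strictly positive range of $g_r$ is the part I expect to be delicate. At $t = T(F) \in \intr(\mathsf{A})$, Assumption (\nameref{As:S2}) yields a Hessian of $\bar{S}(\cdot,F)$ at $t$; by Step 1 it is diagonal, and a direct computation using $\bar{V}_l(t,F) = 0$ shows the $l$-th diagonal entry equals $g_l(t_l)\,\partial_l \bar{V}_l(t,F)$. Strict consistency makes this Hessian positive semi-definite, while orientation of $V_l$ forces $\partial_l \bar{V}_l(t,F) \geq 0$, and (\nameref{As:V4}) provides a distribution for which this derivative is nonzero, hence strictly positive. This already gives $g_r(t_r) \geq 0$ on $\intr(\mathsf{A})_r^\prime$. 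The main obstacle is to upgrade $\geq 0$ to $> 0$: a zero of $g_r$ on a set of positive measure would make $\partial_r \bar{S}(\cdot,F)$ vanish on a hyperplane slab, and integrating along $e_r$ via Lemma \ref{Thm:SufficientCondElicit}(i) would produce a flat stretch in $\Psi_{F,e_r}$, contradicting the uniqueness of the minimum coming from strict consistency; isolated zeros are then excluded by combining continuity of $g_r$ with the richness of $\mathcal{F}$ furnished by (\nameref{As:V1}), which lets me perturb $F$ to probe $g_r(x_r^0)$ through the product $g_r(x_r^0)\,\bar{V}_r(x_r^0,F)$ for many $F$ simultaneously. This positivity step is where the orientation of the $V_l$ enters essentially and is, I anticipate, the technically heaviest part of the proof.
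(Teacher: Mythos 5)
Your argument for part (ii) is essentially the paper's. For the diagonal dependence in part (i) you arrive at the same key identity, $\partial_m h_{ll}(x)\bar{V}_l(x_l,F) = \partial_l h_{mm}(x)\bar{V}_m(x_m,F)$, but isolate the coefficients differently: you pick a single $F$ with $T_l(F)=x_l$ and $T_m(F)\neq x_m$, so strict identification kills the left side and leaves $\partial_l h_{mm}(x)\,\bar{V}_m(x_m,F)=0$, whereas the paper invokes Assumption~(\nameref{As:V1}) to produce $k$ distributions whose vectors $\bar{V}(x,F_1),\dots,\bar{V}(x,F_k)$ are linearly independent and reads off that both coefficients vanish. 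Your choice is a genuine and slightly more economical alternative; both routes carry the same implicit subtlety that the almost-everywhere exceptional set from Corollary~\ref{Thm:CorForOsband} may depend on $F$ while $F$ is then chosen as a function of $x$.

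The genuine gap is in the positivity of $g_r$. Your observation that the Hessian of $\bar{S}(\cdot,F)$ at $t=T(F)$ is diagonal with entries $g_l(t_l)\,\partial_l\bar{V}_l(t_l,F)$, hence PSD, and that orientation gives $\partial_l\bar{V}_l(t_l,F)\geq 0$ while (\nameref{As:V4}) supplies an $F$ with $\partial_l\bar{V}_l(t_l,F)\neq 0$, does cleanly yield $g_r\geq 0$; the paper does not take that route. But your upgrade to $g_r>0$ does not work as sketched. Ruling out a zero of $g_r$ on an interval by a flat stretch of $\Psi_{F,e_r}$ is sound only if you seat $T_r(F)$ at the left endpoint of the interval so that the flat stretch contains $s=0$, contradicting uniqueness of the minimum; a flat stretch bounded away from the origin is perfectly compatible with a unique minimum. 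And an isolated zero (or a nowhere-dense zero set) produces no flat stretch at all, while your proposal to ``probe'' $g_r(x_r^0)$ by varying $F$ in the product $g_r(x_r^0)\bar{V}_r(x_r^0,F)$ cannot distinguish $g_r(x_r^0)=0$ from $g_r(x_r^0)>0$: the scalar $g_r(x_r^0)$ is fixed and only the second factor changes. The paper's route is different: using Lemma~\ref{Thm:SufficientCondElicit}(i) with $v$ the $r$-th unit vector, $\Psi'_{F,e_r}(s)=g_r(t_r+s)\bar{V}_r(t_r+s,F)$ must be positive for small $s>0$ and negative for small $s<0$, so orientation of $V_r$ forces $g_r(t_r+s)>0$ for all small $s\neq 0$; surjectivity of $T$ together with openness of $\intr(\mathsf{A})_r'$ then recenters this to cover $t_r$ itself. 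Replace the Hessian-plus-ad-hoc step with that sign-of-derivative argument; it is precisely there that the orientation hypothesis earns its keep.
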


\begin{proof}
Following the proof of \cite{FissZieg}, we begin by showing property (ii) and define $I_k := \lbrace 1, \ldots, k \rbrace$. By construction, the identification function $V$ satisfies
\begin{equation}  \label{Eqn:Chara1CrossDiffzero}
\partial_r \bar{V}_l (x,F) = 0 \quad \text{for any } r,l \in I_k, \, r \neq l \text{ and any } F \in \mathcal{F}, \, x \in \intr (\mathsf{A}) .
\end{equation}
Using this fact and Equation~(\ref{Eqn:SymmetryOfDeriv}) gives
\begin{equation}  \label{Eqn:Chara2partialdiff}
h_{lr} (t) \partial_r \bar{V}_r (t , F)  = h_{rl} (t) \partial_l \bar{V}_l (t, F)
\end{equation}
for any $r,l \in I_k$, $r \neq l$ and any $F \in \mathcal{F}$ such that $ T(F) = t \in \intr (\mathsf{A})$. If for any $t \in \intr(\mathsf{A})$  and $r \in I_k$ distribution functions $F_1, F_2 \in \mathcal{F}$ are chosen according to Assumption~(\nameref{As:V4}), Equation~(\ref{Eqn:Chara2partialdiff}) implies 
\begin{align*}
h_{lr} (t) \partial_r \bar{V}_r (t, F_1 ) = h_{rl} (t) \partial_l \bar{V}_l (t, F_1) = h_{rl} (t) \partial_l \bar{V}_l(t, F_2) = h_{lr} (t) \partial_r \bar{V}_r (t, F_2)
\end{align*}
for any $l \in I_k$, $l \neq r$. Consequently, we must have $h_{lr} (t) = 0$ for $l, r \in I_k$, $r \neq l$ and due to the surjectivity of $T$ we repeat this argument for any $t \in \intr (\mathsf{A})$, which gives $h_{lr} = 0$ for $l, r \in I_k$, $r \neq l$.\\
For the first property, observe that part (ii) together with (\ref{Eqn:Chara1CrossDiffzero}) implies that Equation~(\ref{Eqn:SymmetryOfDeriv}) simplifies to
\begin{equation}  \label{Eqn:Chara3linIndarg}
\sum_{i=1}^{k} ( \partial_l h_{ri} (x)  - \partial_r h_{li} (x) ) \bar{V}_i (x, F) = 0
\end{equation}
for any $F \in \mathcal{F}$, $l, r \in I_k$, $r \neq l$ and a.e. $x \in \intr (\mathsf{A})$. Assumption~(\nameref{As:V1}) gives that for a.e. $x \in \intr (\mathsf{A})$ there are $F_1, \ldots, F_k \in \mathcal{F}$ such that the vectors $\bar{V}(x, F_1), \ldots, \bar{V}(x,F_k)$ are linearly independent. If the matrix having these vectors as columns is called $\mathbb{V}(x)$, Equation~(\ref{Eqn:Chara3linIndarg}) implies
\begin{equation*}
( \partial_l h_{ri} (x)  - \partial_r h_{li} (x) )_{i \in I_k}^\top \mathbb{V}(x) = 0
\end{equation*}
and since $\mathbb{V}(x)$ has full rank, it follows that $\partial_l h_{ri} (x) = \partial_r h_{li} (x)$ for any $l, r, i \in I_k$ and a.e. $x \in \intr (\mathsf{A})$. Setting $i=r$ and using (ii) this gives $\partial_l h_{rr}(x) = \partial_r h_{lr} (x) = 0$, showing that $h_{rr}$ is constant in $x_l$ for any $r,l \in I_k$, $r \neq l$ and a.e. $x \in \intr (\mathsf{A})$. Since $\mathsf{A}$ is connected and $h$ is continuous, there is some function $g_r : \intr (\mathsf{A})^\prime_r \rightarrow \mathbb{R}$ such that $h_{rr} (x) = g_r (x_r)$ holds for all $x \in \intr (\mathsf{A})$ and $r \in I_k$. It remains to be shown that the functions $g_r$, $r \in I_k$, are strictly positive. Using Lemma~\ref{Thm:SufficientCondElicit} (i) and the strict consistency of $S$, it follows that for any $v \in \mathbb{S}^{k-1}$ and $F \in \mathcal{F}$, $t=T(F)$ the function $\Psi_{F,v} (s) := \bar{S}(t + sv, F)$ has a unique global minimum at $s=0$. Choosing $F \in \mathcal{F}$ with $T(F) = t \in \intr(\mathsf{A})$ and using the diagonal structure of $h$ to compute the derivative gives
\begin{equation*}
\Psi_{F,v}^\prime (s) = v^\top \nabla \bar{S}(t + sv, F) = \sum_{r=1}^{k} g_r (t_r + s v_r) \bar{V}_r (t_r + s v_r, F) v_r ,
\end{equation*}
which has to be positive for $s>0$ and negative for $s < 0$ if $s$ is small enough. For $r \in I_k$ we let $v_r$ be the $r$-th unit vector and conclude that $\Psi_{F,v_r}^\prime (s) = g_r (t_r + s) \bar{V}_r (t_r + s, F) > 0$ for small enough $s > 0$. Since $V_r$ is an oriented strict identification function, this gives $g_r (t_r + s) > 0$. Similarly, $g_r (t_r + s) > 0$ must hold for small enough $s <0$.  Due to the surjectivity of $T$, this argument is repeated for any $t_r \in \intr (\mathsf{A})_r^\prime$ and hence $g_r$ must be strictly positive for any $r \in I_k$.
\end{proof}

The next corollary shows that all scoring functions are separable if Assumption~(\nameref{As:V4}) is satisfied. It uses the diagonal structure of $h$ shown in Proposition~\ref{Thm:FunchisDiagonal} together with the pointwise version of Osband's principle, Theorem~\ref{Thm:OsbandPrinciple2}. The result and its proof can be found in \cite[Prop. 4.2 (ii)]{FissZieg} and since  Theorem~\ref{Thm:OsbandPrinciple2} is used the assumptions are slightly modified.

\begin{cor}
Given the situation of Proposition~\ref{Thm:FunchisDiagonal}, assume furthermore that $\intr (\mathsf{A})$ is a hyperrectangle and that Assumptions (\nameref{As:F1}), (\nameref{As:VS1}) and (\nameref{As:B1}) are satisfied. Then $S$ is a strictly $\mathcal{F}$-consistent scoring function for $T$ if and only if it is of the form
\begin{equation*}
S(x,y) = \sum_{i=1}^{k} S_i (x_i ,y)  
\end{equation*}
for almost all $(x,y) \in \mathsf{A} \times \mathsf{O}$, where for any $i=1, \ldots, k$ the function $S_i$ is a strictly $\mathcal{F}$-consistent scoring function for $T_i$.
\end{cor}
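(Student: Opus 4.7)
The ``if'' direction is essentially immediate from Lemma~\ref{Thm:AllCompElicitable}~(i): if each $S_i$ is strictly $\mathcal{F}$-consistent for $T_i$, then the sum $\sum_i S_i(x_i,y)$ is strictly $\mathcal{F}$-consistent for $T=(T_1,\ldots,T_k)^\top$. Adding a term $a(y)$ depending only on $y$ preserves strict consistency by Lemma~\ref{Thm:TransformOfScoring}~(i) with $\lambda=1$, so the claimed form indeed yields a strictly consistent scoring function up to an $\mathcal{F}$-integrable function of $y$.

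For the ``only if'' direction, my plan is to combine Proposition~\ref{Thm:FunchisDiagonal} with the pointwise version of Osband's principle (Theorem~\ref{Thm:OsbandPrinciple2}). First, I would verify that all hypotheses required to invoke Theorem~\ref{Thm:OsbandPrinciple2} are in place: $\mathcal{F}$ is convex (from the setup of Proposition~\ref{Thm:FunchisDiagonal}), $\intr(\mathsf{A})$ is a hyperrectangle by assumption, $T$ is surjective by the standing assumption $\mathsf{A}=T(\mathcal{F})$, and $V$ as defined in~(\ref{Eqn:IdentFunctionDef}) inherits Assumption~(\nameref{As:V1}) from the hypothesis while Assumptions~(\nameref{As:V2}), (\nameref{As:S1}) are implied by (\nameref{As:V3}) (contained in (\nameref{As:V4})) and (\nameref{As:S2}) respectively; Assumptions (\nameref{As:F1}), (\nameref{As:VS1}), (\nameref{As:B1}) and \ref{Thm:AssumptionLocSV} are postulated.

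Proposition~\ref{Thm:FunchisDiagonal} provides the matrix $h$ from Osband's principle in the diagonal form $h_{rl}(x)=\delta_{rl}\,g_r(x_r)$ with $g_r:\intr(\mathsf{A})_r'\to(0,\infty)$. Plugging this into the representation~(\ref{Eqn:OsbandsPrincipleII}) of $S$, the double sum collapses because only the terms with $m=r$ survive, and for those terms $h_{rr}$ evaluated at $(x_1,\ldots,x_{r-1},v,z_{r+1},\ldots,z_k)$ equals $g_r(v)$, which does not depend on the other coordinates. Likewise, by construction~(\ref{Eqn:IdentFunctionDef}), $V_r$ evaluated at that point reduces to $V_r(v,y)$, which only depends on $v$ and $y$. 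Therefore, for almost all $(x,y)\in\mathsf{A}\times\mathsf{O}$,
\begin{equation*}
S(x,y)\;=\;\sum_{r=1}^{k}\int_{z_r}^{x_r}g_r(v)\,V_r(v,y)\,\mathrm{d}v\;+\;a(y),
\end{equation*}
where $a$ is $\mathcal{F}$-integrable and $z\in\intr(\mathsf{A})$ is fixed. Defining $S_r(x_r,y):=\int_{z_r}^{x_r}g_r(v)V_r(v,y)\,\mathrm{d}v$ for $r\geq 2$ and $S_1(x_1,y):=\int_{z_1}^{x_1}g_1(v)V_1(v,y)\,\mathrm{d}v+a(y)$ yields the desired separable decomposition.

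It remains to argue that each $S_i$ is strictly $\mathcal{F}$-consistent for $T_i$. The most direct route is Lemma~\ref{Thm:SufficientCondElicit}~(ii) applied componentwise: each $S_i(\cdot,y)$ is continuously differentiable with $\partial_i S_i(x_i,y)=g_i(x_i)V_i(x_i,y)$, and since $g_i$ is strictly positive and $V_i$ is an oriented strict $\mathcal{F}$-identification function for $T_i$, the derivative $\partial_i S_i$ is itself an oriented strict $\mathcal{F}$-identification function for $T_i$. (Alternatively, one can argue directly: fix $F$ and $j\neq i$, set $x_j=T_j(F)$, and apply strict consistency of $S$ to conclude that $x_i\mapsto\bar{S}_i(x_i,F)$ has a unique global minimum at $T_i(F)$.) The main subtlety I anticipate is bookkeeping around the integrability and the almost-everywhere qualifier coming from Theorem~\ref{Thm:OsbandPrinciple2}, and making sure the domination/smoothness hypotheses of Lemma~\ref{Thm:SufficientCondElicit}~(ii) are transferred from the hypotheses on $V$ and $S$ without loss — but these are routine checks rather than substantive difficulties, since the heavy lifting has already been done in Proposition~\ref{Thm:FunchisDiagonal} and Theorem~\ref{Thm:OsbandPrinciple2}.
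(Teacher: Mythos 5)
Your overall strategy is the paper's: Lemma~\ref{Thm:AllCompElicitable}~(i) for sufficiency, and Proposition~\ref{Thm:FunchisDiagonal} plus Theorem~\ref{Thm:OsbandPrinciple2} to obtain the separable decomposition $S(x,y)=\sum_r\int_{z_r}^{x_r}g_r(v)V_r(v,y)\,\mathrm{d}v+a(y)$ a.e. The decomposition step is correct, and redistributing $a(y)$ among the $S_i$ (the paper takes $a/k$ in each, you put it all in $S_1$) is immaterial by Lemma~\ref{Thm:TransformOfScoring}~(i).

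The gap is in your primary route for showing that each $S_i$ is strictly consistent for $T_i$. You claim that $S_i(\cdot,y)$ is continuously differentiable with derivative $g_i(\cdot)V_i(\cdot,y)$, so that Lemma~\ref{Thm:SufficientCondElicit}~(ii) applies. This does not hold under the corollary's hypotheses: $V_i(\cdot,y)$ is only required to be locally bounded and locally Lebesgue-integrable (Assumption~\ref{Thm:AssumptionLocSV} and~(\nameref{As:B1})), not continuous in $x_i$, and indeed for the prototype case of quantiles $V_i(x_i,y)=\mathbbm{1}_{\{y\le x_i\}}-\alpha_i$ is discontinuous. Then $S_i(\cdot,y)=\int_{z_i}^{\cdot}g_iV_i(\cdot,y)$ is absolutely continuous and differentiable a.e., but not $C^1$ as required by Lemma~\ref{Thm:SufficientCondElicit}~(ii). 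The uniform domination $\sup_{x}\lvert\partial_i S_i(x,\cdot)\rvert\le h$ with $h$ $\mathcal{F}$-integrable is likewise not a consequence of the stated local boundedness assumptions. These are not bookkeeping issues; the hypotheses of Lemma~\ref{Thm:SufficientCondElicit}~(ii) are simply unavailable.

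Your parenthetical alternative is exactly the paper's argument and is the one you should promote to the main route. Concretely: fix $F\in\mathcal{F}$ with $t=T(F)$, fix $r$, and pick $x_r\neq t_r$ with $\tilde{x}:=(t_1,\ldots,t_{r-1},x_r,t_{r+1},\ldots,t_k)^\top\in\mathsf{A}$. Since the decomposition gives
\begin{equation*}
\bar{S}_r(x_r,F)-\bar{S}_r(t_r,F)=\int_{t_r}^{x_r}g_r(v)\bar{V}_r(v,F)\,\mathrm{d}v=\bar{S}(\tilde{x},F)-\bar{S}(t,F),
\end{equation*}
strict $\mathcal{F}$-consistency of $S$ forces the right-hand side to be strictly positive, which is precisely strict $\mathcal{F}$-consistency of $S_r$ for $T_r$. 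This avoids any pointwise smoothness of $V_i(\cdot,y)$ and uses only the expectation-level regularity that is actually assumed.
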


\begin{proof}
At first, note that the function $(x,y) \mapsto \sum_{i=1}^{k} S_i (x_i ,y)$ is strictly $\mathcal{F}$-consistent due to Lemma~\ref{Thm:AllCompElicitable} (i). To show that this representation is necessary, observe that all requirements are met to apply Proposition~\ref{Thm:FunchisDiagonal} as well as Theorem~\ref{Thm:OsbandPrinciple2}. This implies the representation
\begin{equation*}
S(x,y) = \sum_{i=1}^{k} \int_{z_i}^{x_i} g_i (v) V_i (v, y) \, \mathrm{d}v  + a(y)
\end{equation*}
for a.e. $(x,y) \in \mathsf{A} \times \mathsf{O}$, where $z \in \intr (\mathsf{A})$ and $a$ is some $\mathcal{F}$-integrable function. If we fix $r \in I_k$ and define the scoring function
\begin{equation*}
S_r : \mathsf{A}_r \times \mathsf{O} \rightarrow \mathbb{R}, \quad (x,y) \mapsto S_r(x,y) := \int_{z_r}^{x} g_r (v) V_r (v,y) \, \mathrm{d}v  + \frac{1}{k} a(y),
\end{equation*}
we only need to show that $S_r$ is strictly $\mathcal{F}$-consistent for $T_r$. To this end, let $F \in \mathcal{F}$ be arbitrary, set $t= T(F)$ and let $x_r \in \mathsf{A}_r$, $x_r \neq t_r$ be such that $\tilde{x} := (t_1, \ldots, t_{r-1} , x_r, t_{r+1}, \ldots, t_k)^\top \in \mathsf{A}$ holds. Then the strict $\mathcal{F}$-consistency of $S$ implies
\begin{align*}
\bar{S}_r (x_r, F) - \bar{S}_r (t_r,F) = \int_{t_r}^{x_r} g_r (v) \bar{V}_r (v,F) \, \mathrm{d}v = \bar{S} ( \tilde{x}, F) - \bar{S}(t,F) > 0,
\end{align*}
finishing the proof.
\end{proof}

In view of the previous corollary, we conclude that (under certain assumptions) any strictly $\mathcal{F}$-consistent scoring function for a vector consisting of quantiles or expectiles is separable. Note that it is necessary to assume that $\mathcal{F}$ is rich enough, in order to guarantee that Assumption~(\nameref{As:V4}) is satisfied. Moreover, the domain $\mathsf{A}$ cannot be chosen arbitrarily. To see this, assume $k=2$ and define $T_1 (F) := F^{\leftarrow}(\alpha_1)$ and $T_2(F) := F^{\leftarrow} (\alpha_2)$ for $0 < \alpha_1 < \alpha_2 < 1$. Since $s \mapsto F^{\leftarrow} (s)$ is monotone, we have $T_1 (F)  < T_2 (F)$ for any $F \in \mathcal{F}$, which implies that the functional $(T_1, T_2)^\top$ can only take values in $\tilde{\mathsf{A}} := \lbrace x \in \mathbb{R}^2 \mid x_1 < x_2 \rbrace$. Since in the previous proof, Theorem~\ref{Thm:OsbandPrinciple2} is used and $T$ is assumed to be surjective, the separability of $S$ can only be shown for sets $\mathsf{A} \times \mathsf{O}$ where $\intr (\mathsf{A})$ is a hyperrectangle contained in $\tilde{\mathsf{A}}$ (see also Remark~\ref{Rem:WhyHyperrectangle}). Naturally, the same reasoning applies to expectiles since they are also monotonic, see Lemma~\ref{Thm:ExpectileProperties} (ii).
\par
As discussed in Remark~\ref{Rem:ExpectationNotSeparable}, a separability statement as in the previous Corollary cannot hold for functionals consisting of ratios of expectations with the same denominator. However, the techniques of the proof of Proposition~\ref{Thm:FunchisDiagonal} can be used to prove a characterization of strictly consistent scoring functions for such functionals. This can be found in Fissler and Ziegel~\cite[Prop. 4.4 (ii)]{FissZieg} and a similar result is stated in Frongillo and Kash~\cite[Thm. 13]{FrongilloKash}.

\section{Perspectives on non-elicitable functionals} 

This thesis considers two important non-elicitable functionals, namely variance and Expected Shortfall, which are both part of an elicitable vector. For the variance, it is relatively simple to show this using the revelation principle, while the situation is rather complicated for ES. This section discusses two concepts other than the revelation principle which give arguments as to why a non-elicitable functional is jointly elicitable. The first concept argues that both variance and ES are elicitable if restricted to certain subclasses of $\mathcal{F}$ and conjectures that this can be extended to $\mathcal{F}$. The second one shows that variance and ES can be written as mean scores of strictly $\mathcal{F}$-consistent scoring functions and proves that such functionals are always jointly elicitable. Before presenting any details, it should be remarked that, although both perspectives are well suited for variance and ES, there are other functionals for which it is unknown if they fit into any of the two concepts. For instance, as shown by Heinrich~\cite{HeinrichMode}, the mode functional is not elicitable and it is not known whether it is part of an elicitable vector or not.

\subsection{Conditional elicitability} 

For both variance and ES it can be argued that they fail to be elicitable because of their dependence on other functionals, namely the mean and the quantile. In view of this dependence, it is natural to define a concept which calls a functional $T$ elicitable \textit{conditional} on an other elicitable functional $T_1$. Such a definition is proposed by Emmer et al. \cite{EmmerKratzTasche} and we present the slightly more general formulation of \cite{FissZieg}.

\begin{definition}[Conditional elicitability/identifiability]  \label{Def:CondElicitability}
A functional $T: \mathcal{F} \rightarrow \mathsf{A} \subseteq \mathbb{R}$ is called \textit{conditionally elicitable of order} $k$ if there are $k-1$ elicitable functionals $T_i : \mathcal{F} \rightarrow \mathsf{A}_i \subset \mathbb{R}$, $i = 1, \ldots, k-1$, such that for any $(x_1, \ldots, x_{k-1} )^\top \in \mathsf{A}_1 \times \ldots \times \mathsf{A}_{k-1}$ the restriction of $T$ to the class
\begin{equation}  \label{Eqn:DefRestrictClass}
\mathcal{F}_{ (x_1, \ldots, x_{k-1}) } := \lbrace F \in \mathcal{F} \mid T_1(F) = x_1, \ldots , T_{k-1} (F) = x_{k-1} \rbrace
\end{equation}
is elicitable. Similarly, it is \textit{conditionally identifiable of order} $k$ if there are $k-1$ identifiable functionals $T_i$, $i= 1,\ldots, k-1$, such that the restriction of $T$ to any class $\mathcal{F}_{ (x_1, \ldots, x_{k-1}) }$ is identifiable.
\end{definition}

It is shown in Example~\ref{Thm:ExVariNotElicitable} that the variance functional is not elicitable for certain choices of $\mathcal{F}$. However, in view of Example~\ref{Thm:ExVariElicitableForCenter}, it is straightforward to show that the variance is conditionally elicitable, a fact which is also mentioned in \cite{EmmerKratzTasche}. 

\begin{example}[Conditional elicitability of the variance]  \label{Thm:ExVarianceCondElicit}
Let $\mathcal{F}$ be a class of distribution functions having finite second moments and let $T$ be the variance functional as defined in (\ref{Eqn:VariFunctionalDef}). If we choose $z \in \mathbb{R}$ and define the subclass $\mathcal{F}_z := \lbrace F \in \mathcal{F} \mid  \int y \, \mathrm{d}F(y) = z \rbrace$, we obtain $T_{\vert \mathcal{F}_z } (F) = \int (y - z)^2 \, \mathrm{d}F(y)$. Since this is an expectation, the restricted variance functional is elicitable and (strictly) $\mathcal{F}_z$-consistent scoring functions are given in Theorem~\ref{Thm:GenExpectElicit}. This shows that the variance is conditionally elicitable of order 2.
\end{example}

Similar to Expected Shortfall, spectral risk measures (and thus also their functional counterpart as introduced in Definition~\ref{Def:SpectralRiskFunctional}) fail to be elicitable for large enough classes $\mathcal{F}$ (see Ziegel~\cite{ZiegelCoherence}). However, similar to the variance, ES is conditionally elicitable, which is shown for continuous distribution functions in \cite{EmmerKratzTasche}. We extend this result to spectral measures of risk and drop the continuity condition.

\begin{lemma}[Conditional elicitability of spectral risk measure functionals]  \label{Thm:SpectralCondElicit}
Let $\mathcal{F}$ be a class of distribution functions having finite first moments. Fix $k \geq 2$ and let $T$ be a spectral risk measure functional of order $k$ determined by $(p_i, q_i)_{i=1, \ldots, k-1}$ which satisfies $q_i < 1$ for $i=1, \ldots, k-1$. If all distributions in $\mathcal{F}$ have unique $q_i$-quantiles, then $T$ is conditionally elicitable of order $k$. 
\end{lemma}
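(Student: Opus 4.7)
The plan is to condition on the $k-1$ quantiles used in the definition of the spectral risk measure functional. More precisely, I would set $T_i : \mathcal{F} \to \mathbb{R}$, $T_i(F) := F^{\leftarrow}(q_i)$ for $i = 1, \ldots, k-1$. Since every $F \in \mathcal{F}$ has unique $q_i$-quantiles, Theorem~\ref{Thm:QuantilesElicitable} applies and each $T_i$ is elicitable on $\mathcal{F}$. It then suffices to show that for any $(x_1, \ldots, x_{k-1}) \in \mathsf{A}_1 \times \cdots \times \mathsf{A}_{k-1}$ the restriction $T|_{\mathcal{F}_{(x_1, \ldots, x_{k-1})}}$ of $T$ to the class $\mathcal{F}_{(x_1, \ldots, x_{k-1})}$ defined in (\ref{Eqn:DefRestrictClass}) is elicitable.

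To verify this, I would use the general representation of Expected Shortfall from Equation~(\ref{Eqn:ESAcerbiDefinition}): for $F \in \mathcal{F}_{(x_1, \ldots, x_{k-1})}$, writing $F^{\leftarrow}(q_i) = x_i$ one obtains
\begin{equation*}
\mathrm{ES}_{q_i}(F) = -\frac{1}{q_i}\Big[\int y\, \mathbbm{1}_{\{y \leq x_i\}} \, \mathrm{d}F(y) + x_i(q_i - F(x_i))\Big].
\end{equation*}
Plugging this into the definition of $T$ and rearranging, all terms involving $F$ collapse into a single expectation:
\begin{equation*}
T(F) = \int h(y)\, \mathrm{d}F(y) + c, \qquad h(y) := \sum_{i=1}^{k-1} \frac{p_i}{q_i}(y - x_i)\mathbbm{1}_{\{y \leq x_i\}}, \quad c := \sum_{i=1}^{k-1} p_i x_i,
\end{equation*}
where $h$ and $c$ depend only on the fixed vector $(x_1, \ldots, x_{k-1})$, not on $F$. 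The function $h$ is $\mathcal{F}$-integrable because every $F \in \mathcal{F}$ has a finite first moment (and $h$ is bounded by a linear function of $|y|$).

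With this representation in hand, the remaining step is routine: Theorem~\ref{Thm:GenExpectElicit}, applied with $k = 1$, the function $h$ above, and denominator $q \equiv 1$, shows that $F \mapsto \bar{h}(F)$ is an elicitable functional on $\mathcal{F}_{(x_1, \ldots, x_{k-1})}$. The revelation principle (Proposition~\ref{Thm:RevelationPrinciple}) applied to the bijection $g(z) := z + c$ then yields elicitability of $T|_{\mathcal{F}_{(x_1, \ldots, x_{k-1})}}$, and strictly consistent scoring functions can be obtained explicitly from those in Theorem~\ref{Thm:GenExpectElicit} by composition with $g^{-1}$. This establishes conditional elicitability of order $k$ in the sense of Definition~\ref{Def:CondElicitability}.

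There is no real obstacle here; the only mildly delicate point is handling the possible atom of $F$ at $x_i$, which is why (\ref{Eqn:ESAcerbiDefinition}) rather than the simpler continuous-case formula (\ref{Eqn:ESforContinuous}) must be used. Uniqueness of the $q_i$-quantile is used only to guarantee that the conditioning functionals $T_i$ are themselves elicitable via Theorem~\ref{Thm:QuantilesElicitable}; on the restricted class the value of $F(x_i)$ need not equal $q_i$, but that is exactly why the additional correction term $x_i(q_i - F(x_i))$ in (\ref{Eqn:ESAcerbiDefinition}) is needed, and it enters $T(F)$ linearly in $F$ so that the reduction to a single expectation goes through without issue.
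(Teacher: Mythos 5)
Your proof is correct and takes essentially the same approach as the paper: choose the quantiles $T_i(F)=F^{\leftarrow}(q_i)$ as the conditioning functionals, use the general formula~(\ref{Eqn:ESAcerbiDefinition}) so that the restricted functional collapses to an expectation, and invoke Theorem~\ref{Thm:GenExpectElicit}. The only difference is cosmetic: the paper keeps the constant $\sum_i p_i x_i$ inside the integrand (since $\mathcal{F}$ consists of probability distributions, $c=\bar{c}(F)$), so it never needs the detour through the revelation principle; your appeal to Proposition~\ref{Thm:RevelationPrinciple} to absorb the constant $c$ is harmless but redundant.
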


\begin{proof}
Suppose all distributions in $\mathcal{F}$ have unique $q_i$-quantiles. Define the functionals $T_i (F) := F^{\leftarrow} (q_i)$ for $i=1, \ldots, k-1$, which are all elicitable relative to $\mathcal{F}$ due to Theorem~\ref{Thm:QuantilesElicitable}. For any $(x_1, \ldots, x_{k-1})^\top \in \mathbb{R}^{k-1}$ let $\mathcal{F}_{(x_1, \ldots, x_{k-1})}$ be the restricted class as defined in (\ref{Eqn:DefRestrictClass}). Denoting the restriction of $T$ to this class via $T'$, and using Identity~(\ref{Eqn:ESAcerbiDefinition}) gives
\begin{align*}
T' (F) =  - \sum_{i=1}^{k-1} \mathrm{ES}_{q_i} (F) p_i 
= \sum_{i=1}^{k-1}  \frac{p_i}{q_i} \int_\mathsf{O} y \mathbbm{1}_{ \lbrace y \leq x_i \rbrace } + x_i ( q_i - \mathbbm{1}_{\lbrace y \leq x_i \rbrace} ) \, \mathrm{d}F(y) ,
\end{align*}
which shows that the restricted functional reduces to an expectation. It is therefore elicitable and (strictly) $\mathcal{F}_{(x_1, \ldots, x_{k-1})}$-consistent scoring functions for $T'$ are given in Theorem~\ref{Thm:GenExpectElicit}. Consequently, $T$ is conditionally elicitable of order $k$.
\end{proof}

\begin{remark}  \label{Rem:PlugInFunctionals}
There is one single way to describe both the variance functional as well as the spectral risk measure functional. To see this, let $T_i : \mathcal{F} \rightarrow \mathsf{A}_i$, $i=1, \ldots, k-1$ be elicitable functionals and define the function $H : \mathsf{A}_1 \times \ldots \times \mathsf{A}_{k-1} \times \mathsf{O} \rightarrow \mathbb{R} $. Moreover, suppose that $H(x_1, \ldots, x_{k-1}, \cdot) $ is $\mathcal{F}$-integrable for any choice of $x_i \in \mathsf{A}_i$, $i=1,\ldots, k-1$. Then the functional $T$ considered in Example~\ref{Thm:ExVarianceCondElicit} and Lemma~\ref{Thm:SpectralCondElicit} can be written as $T(F) = \bar{H}(T_1(F), \ldots, T_{k-1} (F), F)$ for a certain choice of $H$ and $T_i$. This representation guarantees that the restricted functional reduces to an expected value in both cases and is thus a sufficient condition for conditional elicitability of order $k$.
\end{remark}

The next example shows conditional elicitability of a functional which is different from the functionals considered in Example~\ref{Thm:ExVarianceCondElicit} and Lemma~\ref{Thm:SpectralCondElicit}. It shows that Definition~\ref{Def:CondElicitability} is not only applicable to functionals $T$ for which $T(F)$ is an expectation, but also to certain transformations of elicitable functionals.

\begin{example}[Sum of functionals]
Let $\mathcal{F}$ be a class of distribution functions. Define two elicitable functionals $T_i : \mathcal{F} \rightarrow \mathsf{A}_i$, $i=1,2$, and set $T := T_1 + T_2$. For any $z \in \mathsf{A}_2$ define the subclass $\mathcal{F}_z := \lbrace F \in \mathcal{F} \mid T_2(F) = z \rbrace$. This gives $T_{\vert \mathcal{F}_z } (F) = T_1(F) + z$, which is an affine transformation of the elicitable functional $T_1$. Consequently, as noted in Remark~\ref{Rem:RevelationPrinciple}, the revelation principle immediately implies that $T_{\vert \mathcal{F}_z }$ is elicitable and thus $T$ is conditionally elicitable of order 2. This argument can naturally be extended to show that the sum of $k$ different functionals is conditionally elicitable of order $k$. Instead of sums of elicitable functionals, other transformations can also be considered as long as $T_{\vert \mathcal{F}_z }$ can be connected to $T_1$ via a bijection in order to apply the revelation principle.
\end{example}

As mentioned in \cite{FissZieg} without proof, every conditionally identifiable functional $T$ is part of an identifiable vector of functionals. Hence, we say that conditional identifiability implies joint identifiability. We state the precise result and prove it in the next proposition, using an additional integrability assumption. This assumption is needed, because Definition~\ref{Def:CondElicitability} only guarantees the existence of $\mathcal{F}_{ (x_1, \ldots, x_{k-1}) }$-identification functions. That these functions are integrable for the whole class $\mathcal{F}$ is not ensured, but necessary for $\mathcal{F}$-identification functions.

\begin{prop}
Let $T : \mathcal{F} \rightarrow \mathsf{A}$ be conditionally identifiable of order $k$. Moreover, suppose that for each subclass $\mathcal{F}_{(x_1, \ldots, x_{k-1}) }$ as defined in (\ref{Eqn:DefRestrictClass}) there exists an $\mathcal{F}$-integrable strict identification function. Then there is an identifiable functional $T' : \mathcal{F} \rightarrow \mathsf{A}' \subseteq \mathbb{R}^k$ such that $ T_k' = T$.
\end{prop}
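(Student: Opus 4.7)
The plan is to construct the desired functional explicitly as $T'(F) := (T_1(F), \ldots, T_{k-1}(F), T(F))^\top$, where $T_1, \ldots, T_{k-1}$ are the identifiable functionals provided by Definition~\ref{Def:CondElicitability}, and to verify its identifiability by piecing together identification functions for the components. Set $\mathsf{A}' := T_1(\mathcal{F}) \times \ldots \times T_{k-1}(\mathcal{F}) \times \mathsf{A}$, which trivially contains the image of $T'$; the last component of $T'$ is by construction equal to $T$.

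For the identification function, I would proceed as follows. For each $i \in \lbrace 1, \ldots, k-1 \rbrace$ pick a strict $\mathcal{F}$-identification function $V_i : \mathsf{A}_i \times \mathsf{O} \rightarrow \mathbb{R}$ for $T_i$, which exists by hypothesis. For each tuple $(x_1, \ldots, x_{k-1}) \in \mathsf{A}_1 \times \ldots \times \mathsf{A}_{k-1}$, use conditional identifiability plus the additional integrability assumption to select a strict $\mathcal{F}_{(x_1, \ldots, x_{k-1})}$-identification function $\tilde{V}_{(x_1, \ldots, x_{k-1})} : \mathsf{A} \times \mathsf{O} \rightarrow \mathbb{R}$ for $T_{\vert \mathcal{F}_{(x_1, \ldots, x_{k-1})}}$ which is $\mathcal{F}$-integrable. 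From these, define
\begin{equation*}
V_k : \mathsf{A}' \times \mathsf{O} \rightarrow \mathbb{R}, \quad (x_1, \ldots, x_k, y) \mapsto \tilde{V}_{(x_1, \ldots, x_{k-1})} (x_k, y)
\end{equation*}
and then
\begin{equation*}
V' : \mathsf{A}' \times \mathsf{O} \rightarrow \mathbb{R}^k, \quad (x,y) \mapsto (V_1(x_1, y), \ldots, V_{k-1}(x_{k-1}, y), V_k(x,y))^\top .
\end{equation*}
The $\mathcal{F}$-integrability of $V'$ follows componentwise from the integrability of each $V_i$ and the integrability assumption imposed on $\tilde{V}_{(x_1, \ldots, x_{k-1})}$.

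It remains to verify the strict identification property. Fix $F \in \mathcal{F}$ and $x \in \mathsf{A}'$ and suppose $\bar{V}'(x,F) = 0$. The vanishing of the first $k-1$ components together with the strict $\mathcal{F}$-identification property of $V_1, \ldots, V_{k-1}$ forces $x_i = T_i(F)$ for $i=1, \ldots, k-1$, which in turn means $F \in \mathcal{F}_{(T_1(F), \ldots, T_{k-1}(F))}$. With these $x_i$ substituted into the definition of $V_k$, the vanishing of the last component reads $\overline{\tilde{V}_{(T_1(F), \ldots, T_{k-1}(F))}}(x_k, F) = 0$, and the strict identification property of $\tilde{V}_{(T_1(F), \ldots, T_{k-1}(F))}$ on the subclass $\mathcal{F}_{(T_1(F), \ldots, T_{k-1}(F))}$ (to which $F$ belongs) yields $x_k = T(F)$. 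Altogether $x = T'(F)$. The converse direction — that $\bar{V}'(T'(F),F) = 0$ — is immediate from the identification properties of $V_1, \ldots, V_{k-1}$ and $\tilde{V}_{(T_1(F), \ldots, T_{k-1}(F))}$.

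The main technical obstacle that I expect is purely bookkeeping: one must be careful that the conditional identification functions, which are a priori only defined and well-behaved on their respective subclasses $\mathcal{F}_{(x_1, \ldots, x_{k-1})}$, can be glued into a single function $V_k$ defined on the whole domain $\mathsf{A}' \times \mathsf{O}$ and integrable against every $F \in \mathcal{F}$. This is precisely what the supplementary $\mathcal{F}$-integrability hypothesis in the statement buys us, which is why it had to be added on top of Definition~\ref{Def:CondElicitability}; conversely, no continuity, compatibility, or measurability of the selection $(x_1, \ldots, x_{k-1}) \mapsto \tilde{V}_{(x_1, \ldots, x_{k-1})}$ is needed, since the identification property only has to be checked pointwise in $F$.
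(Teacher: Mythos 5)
Your proof is correct and follows essentially the same route as the paper: both define $T'$ by appending the $k-1$ conditioning functionals, glue the family of $\mathcal{F}_{(x_1,\ldots,x_{k-1})}$-identification functions into a single last component of the identification function (which is where the extra $\mathcal{F}$-integrability hypothesis is used), and check strict identification by observing that the first $k-1$ coordinates pin down the subclass to which $F$ belongs, after which the last coordinate determines $T(F)$. The only differences are notational and that you spell out both directions of the identification equivalence, which the paper leaves implicit.
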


\begin{proof}
Let $T$ be conditionally identifiable with corresponding functionals $T_i : \mathcal{F} \rightarrow \mathsf{A}_i \subseteq \mathbb{R}$ and  strict $\mathcal{F}$-identification functions $V_i$ for $T_i$, $i=1, \ldots, k-1$. Moreover, let $\mathcal{F}_{ (x_1, \ldots, x_{k-1}) } $ be defined as in Equation~(\ref{Eqn:DefRestrictClass}). Then for any $(x_1, \ldots, x_{k-1})^\top \in \mathsf{A}_1 \times \ldots \times \mathsf{A}_{k-1} $ there exists a strict $\mathcal{F}_{ (x_1, \ldots, x_{k-1}) }$-identification function 
\begin{equation*}
V: \mathsf{A} \times \mathsf{O} \rightarrow \mathbb{R}, \quad (z,y) \mapsto V(x_1, \ldots, x_{k-1} ; z, y)
\end{equation*}
for $T$. By assumption, it is even possible to choose $V$ such that it is $\mathcal{F}$-integrable. We thus define the functional
\begin{equation*}
T' : \mathcal{F} \rightarrow  \mathsf{A}_1 \times \ldots \times \mathsf{A}_{k-1} \times \mathsf{A} , \quad F \mapsto T'(F) := (T_1(F), \ldots , T_{k-1} (F) , T(F) )^\top ,
\end{equation*}
which has a strict $\mathcal{F}$-identification function given by
\begin{align*}
V' &: \mathsf{A}_1 \times \ldots \times \mathsf{A}_{k-1} \times \mathsf{A} \times \mathsf{O} \rightarrow \mathbb{R}^k , \\
(x,y) &\mapsto V'(x,y) := (V_1(x_1,y), \ldots , V_{k-1}(x_{k-1}, y) , V(x_1, \ldots, x_{k-1} ; x_k , y) )^\top .
\end{align*}
To see this, observe that the first $k-1$ components of $\bar{V}'(x,F)$ are zero if and only if $F \in \mathcal{F}_{ (x_1, \ldots, x_{k-1}) } $ is satisfied. But for such $F$, $ \bar{V}(x_1, \ldots, x_{k-1} ; x_k, F)$ is zero if and only if $x_k = T(F)$, showing that $T'$ is as desired.
\end{proof}

The examples of variance and ES raise the question whether conditional elicitability implies joint elicitability. Unfortunately, it is not possible to apply the technique of the previous proof to scoring functions. To see this, let $S(x_1, \ldots, x_{k-1} ; \cdot, \cdot)$ be a strictly $\mathcal{F}_{ (x_1, \ldots, x_{k-1}) }$-consistent scoring function for $T$. When concerned with identification, it is irrelevant how $\bar{V}(x_1, \ldots, x_{k-1}; \cdot , F )$ behaves for $F \notin \mathcal{F}_{ (x_1, \ldots, x_{k-1}) } $. For scoring functions however, the values of $\bar{S}(x_1, \ldots, x_{k-1}, \cdot, F)$ for $F \notin \mathcal{F}_{ (x_1, \ldots, x_{k-1}) }$ might be high or low, making it difficult (or even impossible) to establish consistency. Nevertheless, there is also no known functional which disproves this conjecture, hence, as stated in \cite{FissZieg}, this is an open question.

\subsection{Non-elicitable functionals with special structure} 
\label{Sec:GeneralizedFZ}

This section continues to discuss variance and Expected Shortfall and presents a unified approach which can be used to show joint elicitability in both cases. Recall that Remark~\ref{Rem:PlugInFunctionals} discusses functionals which can be represented as $T(F) = \bar{H} (T_1(F), \ldots, T_{k-1}(F),F)$ for elicitable functionals $T_1, \ldots, T_{k-1}$ and an integrable function $H$. This subsection begins by showing that such functionals are not only conditionally elicitable, but even part of an elicitable vector of functionals if $H$ is a strictly consistent scoring function for $(T_1, \ldots, T_{k-1})^\top$. This fact is then applied to prove a generalization of Theorem~\ref{Thm:SpectralRiskElicitable}.

\begin{remark}
A result similar to the one in the following proposition is stated by Frongillo and Kash~\cite{FrongilloKashON}. The author of this thesis found this reference a few days before submission. The formulation of the proposition, the stated proof, and all following results were developed independently of~\cite{FrongilloKashON}.
\end{remark}

\begin{prop}  \label{Thm:FuncPlusMinScore}
Let $T_1 : \mathcal{F} \rightarrow \mathsf{A}_1 \subseteq \mathbb{R}^{k-1}$ be an elicitable functional with $\mathcal{F}$-consistent scoring functions $S_1, \ldots, S_n$. Define a second functional $T_2$ via
\begin{equation*}
T_2 : \mathcal{F} \rightarrow \mathsf{A}_2 \subseteq \mathbb{R}^n,  \quad F \mapsto (\bar{S}_1 (T_1(F) , F), \ldots, \bar{S}_n(T_1(F), F))^\top .
\end{equation*}
If $S_1, \ldots, S_n$ are all strictly $\mathcal{F}$-consistent, the functional $T : \mathcal{F} \rightarrow \mathsf{A}_1 \times \mathsf{A}_2$, $F \mapsto (T_1(F), T_2(F))^\top$ is elicitable. Moreover,  $\mathcal{F}$-consistent scoring functions for $T$ are given by
\begin{align*}
S (x, y) &=  - \sum_{i=1}^{n} \partial_i f(x_k, \ldots, x_{k+n-1}) (S_i(x_1, \ldots, x_{k-1}, y) - x_{k+i-1}) \\
&\quad-f(x_k, \ldots, x_{k+n-1}) + \sum_{i=1}^{n} c_i S_i(x_1, \ldots, x_{k-1}, y)  , 
\end{align*}
where $c \in \mathbb{R}^n$ and $f : \mathsf{A}_2 \rightarrow \mathbb{R}$ is a differentiable and convex function such that $ \partial_i f \leq c_i$ holds for $i=1, \ldots, n$. $S$ is strictly consistent if $S_1, \ldots, S_n$ are strictly consistent, $f$ is strictly convex, and $\partial_i f < c_i$ is satisfied.
\end{prop}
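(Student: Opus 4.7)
The plan is to verify $\mathcal{F}$-consistency directly by expanding $\bar{S}(x,F) - \bar{S}(T(F),F)$ and rearranging it into two transparently non-negative contributions: one coming from the $\mathcal{F}$-consistency of each $S_i$ for $T_1$, the other from the convexity of $f$. First I would observe that $S(x,\cdot)$ is $\mathcal{F}$-integrable for each fixed $x$, since the coefficients $-\partial_i f(x_2)$, $c_i$, and $-f(x_2)$ do not depend on $y$, and the only $y$-dependent terms are $S_i(x_1,y)$, which are $\mathcal{F}$-integrable by assumption.

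Next, fix $F \in \mathcal{F}$ and set $t := T(F) = (t_1,t_2)$ with $t_1 = T_1(F)$ and $t_{k+i-1} = \bar{S}_i(t_1,F)$ for $i=1,\ldots,n$. Split $x = (x_1,x_2)$ analogously and abbreviate $a_i := \bar{S}_i(x_1,F)$. By the definition of $T_2$ the first summand in $\bar{S}(t,F)$ vanishes, so $\bar{S}(t,F) = -f(t_2) + \sum_i c_i t_{k+i-1}$. Writing $a_i - x_{k+i-1} = (a_i - t_{k+i-1}) + (t_{k+i-1} - x_{k+i-1})$ in the middle term and regrouping, I expect to arrive at the identity
\begin{equation*}
\bar{S}(x,F) - \bar{S}(t,F) = \sum_{i=1}^{n} \big(c_i - \partial_i f(x_2)\big)\big(a_i - t_{k+i-1}\big) + \big[f(t_2) - f(x_2) - \nabla f(x_2)^{\top}(t_2 - x_2)\big].
\end{equation*}

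From here consistency is immediate: in the first sum, $c_i - \partial_i f(x_2) \ge 0$ by hypothesis and $a_i - t_{k+i-1} = \bar{S}_i(x_1,F) - \bar{S}_i(t_1,F) \ge 0$ by $\mathcal{F}$-consistency of $S_i$ for $T_1$, while the bracketed term is non-negative by convexity of $f$ and the subtangent inequality. For strict consistency I would split into two cases according to whether $x_1 = t_1$. If $x_1 \neq t_1$, strict $\mathcal{F}$-consistency of each $S_i$ gives $a_i > t_{k+i-1}$, and combining with $c_i > \partial_i f(x_2)$ makes every summand in the first sum strictly positive. If $x_1 = t_1$ but $x_2 \neq t_2$, then $a_i = t_{k+i-1}$, the first sum vanishes, and the bracketed term is strictly positive by strict convexity of $f$.

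The only mildly delicate step is discovering the algebraic rearrangement that cleanly separates the two non-negative contributions; once the splitting is in hand, the sign analysis is mechanical and requires no regularity assumption beyond differentiability of $f$. I do not anticipate any real obstacle beyond keeping track of the indices between the $T_1$-block $x_1,\ldots,x_{k-1}$ and the $T_2$-block $x_k,\ldots,x_{k+n-1}$, and ensuring that the pivot value $\bar{S}_i(t_1,F) = t_{k+i-1}$ is exploited at exactly the right moment so that $\bar{S}(t,F)$ collapses to its simple form.
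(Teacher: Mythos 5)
Your proposal is correct and takes essentially the same route as the paper: expand $\bar{S}(x,F)-\bar{S}(t,F)$, regroup so that the pivot $\bar{S}_i(t_1,F)=t_{k+i-1}$ kills the cross terms, and read off the two non-negative pieces as $\sum_i(c_i-\partial_i f)(\bar{S}_i(x_1,F)-\bar{S}_i(t_1,F))$ plus the subtangent gap of $f$. Your explicit case split ($x_1\neq t_1$ versus $x_1=t_1,\ x_2\neq t_2$) in the strict-consistency step is tidier than the paper's somewhat compressed statement, but the underlying argument is the same.
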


\begin{remark}
Note that it is always possible to find a function $f$ which satisfies the requirements of Proposition~\ref{Thm:FuncPlusMinScore}. In the one-dimensional case we pick any $c \geq 0$, and a strictly convex function $f$ satisfying $f' < c$ is given by $ x \mapsto \exp(-x)$. Another choice for $c > 0$ is given by $ g(x) = c x^2 /(1 + \vert x \vert)$, see also~\cite[Corollary 2.16]{FissZiegArxiv}. Similarly, if $n>1$, we select $c \in \mathbb{R}^n$ with positive components and choose $x \mapsto \exp( - \sum_{i=1}^{n} x_i )$. If $c$ has strictly positive components, we again use $g$ to construct $x \mapsto g(\Vert x \Vert)$. This mapping is strictly convex due to the strict convexity of $g$ and $\Vert \cdot \Vert$ and the fact that $g$ is strictly increasing on $(0, \infty)$.
\end{remark}

\begin{proof}
Firstly, observe that $\mathcal{F}$-integrability of $S$ follows from $\mathcal{F}$-integrability of $S_1, \ldots, S_n$. To show consistency, fix $F \in \mathcal{F}$, define $t:= T(F) = (T_1(F), T_2(F))^\top$, and choose $ x \in  \mathsf{A}_1 \times \mathsf{A}_2$. Moreover, fix $c \in \mathbb{R}^n$ and let $f$ be a convex function such that $\partial_i f \leq c_i$ for $i \in I_n := \lbrace 1, \ldots,n \rbrace$ holds. Note that the definition of $t$ implies  $\bar{S}_i ( t_1, \ldots, t_{k-1}, F) = t_{k+i-1}$ for any $i \in I_n$. This gives
\begin{equation*}
\bar{S}(x,F) - \bar{S}(t,F) = f(t_k, \ldots, t_{k+n-1}) - f(x_k, \ldots, x_{k+n-1}) + \sum_{i=1}^{n} \mathsf{R}_i , 
\end{equation*}
where for any $i \in I_n$ it holds that
\begin{align}  \label{Eqn:MinScoreEst} 
\mathsf{R}_i &= - \partial_i f(x_k, \ldots, x_{k+n-1})(  \bar{S}_i(x_1, \ldots, x_{k-1}, F) - x_{k+i-1}) \nonumber \\
&\quad+ \partial_i f(t_k, \ldots, t_{k+n-1}) ( \bar{S}_i (t_1, \ldots, t_{k-1}, F) - t_{k+i-1}) \nonumber \\
&\quad+ c_i ( \bar{S}_i (x_1, \ldots, x_{k-1},F) - \bar{S}_i (t_1, \ldots, t_{k-1}, F) ) \nonumber \\
&= - \partial_i f(x_k, \ldots, x_{k+n-1}) ( t_{k+i-1} - x_{k+i-1}) \nonumber \\
&\quad+ (c_i - \partial_i f(x_k, \ldots, x_{k+n-1}))( \bar{S}_i (x_1, \ldots, x_{k-1},F) - \bar{S}_i (t_1, \ldots, t_{k-1}, F) ) \nonumber \\
&\geq - \partial_i f(x_k, \ldots, x_{k+n-1}) ( t_{k+i-1} - x_{k+i-1}) .
\end{align}
Notice that Inequality~(\ref{Eqn:MinScoreEst}) follows from $\partial_i f \leq c_i$ and the fact that $S_i$ is a scoring function for $T_1$ for $i \in I_n$. The convexity of $f$ gives $\sum_{i=1}^{n} \mathsf{R}_i  \geq f(x_k, \ldots, x_{k+n-1}) -f(t_k, \ldots, t_{k+n-1})$, proving that $S$ is an $\mathcal{F}$-consistent scoring function for $T$. If we assume that $S_1, \ldots, S_n$ are strictly $\mathcal{F}$-consistent and $\partial_i f < c_i$, $i\in I_n$, is satisfied, we obtain a strict inequality in (\ref{Eqn:MinScoreEst}). If we additionally assume that $f$ is strictly convex, then also the estimate for $\sum_{i=1}^{n} \mathsf{R}_i$ is strict, hence, these three conditions ensure strict consistency of $S$.
\end{proof}

In the following we only consider the interesting special case $n=1$. In this situation, the functional $T$ takes the simpler form
\begin{equation*}
T : \mathcal{F} \rightarrow  \mathsf{A}_1 \times \mathsf{A}_2 , \quad F \mapsto T(F) = (T_1 (F) , \bar{S}_1 (T_1(F), F) )^\top
\end{equation*}
and the (strictly) consistent scoring functions $S$ for $T$ simplify to
\begin{equation*}
S(x,y) = - f(x_k) - f'(x_k) (S_1 (x_1, \ldots, x_{k-1}, y) - x_k) + cS_1 (x_1, \ldots, x_{k-1}, y) ,
\end{equation*}
where $c \in \mathbb{R}$ and $f : \mathsf{A}_2 \rightarrow \mathbb{R}$ is a differentiable and convex function such that $ f' \leq c$ holds. The rest of this section applies Proposition~\ref{Thm:FuncPlusMinScore} in order to show the joint elicitability of the two functionals variance and Expected Shortfall.

\begin{example}
Let $\mathcal{F}$ be a class of distribution functions with finite second moments. If we define $T_1 : \mathcal{F} \rightarrow \mathbb{R}$, $ F \mapsto \int y \, \mathrm{d}F(y)$, a strictly $\mathcal{F}$-consistent scoring function for $T_1$ is given by $S(x,y) := (x-y)^2$. The functional $T_2 (F) := \bar{S}(T_1(F), F)$ is then the variance functional, which is not elicitable (see Example~\ref{Thm:ExVariNotElicitable} and Remark~\ref{Rem:FnotElicitable}). However, the functional $(T_1, T_2)^\top$ is elicitable, which is shown in Example~\ref{Thm:ExMAndVElicitable}, but also follows directly from Proposition~\ref{Thm:FuncPlusMinScore}. 
\end{example}

\begin{remark}
It is possible to consider an equivalent formulation of the consistent scoring functions of Proposition~\ref{Thm:FuncPlusMinScore}. For any choice of strictly convex $f : \mathsf{A}_2 \rightarrow \mathbb{R}$ and $c \in \mathbb{R}$ with $f^\prime < c$ we define $x \mapsto \tilde{f}(x) := f(x) - c x$ and this function is again strictly convex and also strictly decreasing. Moreover, we may add any consistent scoring function for $T_1$ to the scoring function $S$ without affecting its consistency. Hence, the (strictly) consistent scoring functions for $T$ given in Proposition~\ref{Thm:FuncPlusMinScore} take the form
\begin{equation*}
S(x, y) = -f(x_k) - f^\prime (x_k) ( S_1(x_1, \ldots, x_{k-1}, y) - x_k) + S_* (x_1, \ldots, x_{k-1}, y) ,
\end{equation*}
where $f : \mathsf{A}_2 \rightarrow \mathbb{R}$ is differentiable, strictly convex, and strictly decreasing and $S_* : \mathsf{A}_1 \times \mathsf{O} \rightarrow \mathbb{R}$ is another consistent scoring function for $T_1$. If $T_1$ is also identifiable with strict identification function $V_1$, it is clear that $V : \mathsf{A}_1 \times \mathsf{A}_2 \times \mathsf{O} \rightarrow \mathbb{R}^2$ given by
\begin{equation}  \label{Eqn:IdentFuncPlusMin}
V(x, y) = ( V_1(x_1, \ldots, x_{k-1}, y) , S_1(x_1, \ldots, x_{k-1}, y) - x_k )^\top
\end{equation}
is a strict $\mathcal{F}$-identification function for $T$. Note that this construction does not require $S$ to be a scoring function. 
\end{remark}

Proposition~\ref{Thm:FuncPlusMinScore} is not only of theoretical interest, since it can also be used to extend the joint elicitability of spectral risk measure functionals to classes $\mathcal{F}$ containing discontinuous distribution functions. This extension is based on the fact that the Expected Shortfall at level $\alpha \in (0,1)$ of $F$ is the minimizer of $\bar{S}(\cdot, F)$, where $S$ is the (strictly) consistent scoring function for the $\alpha$-quantile determined by the choice $g(x) = x$ in Theorem~\ref{Thm:QuantilesElicitable}. This argument is also used in the proof of Theorem~\ref{Thm:EScoherent} and is well-known in risk management (see for instance Rockafellar and Uryasev~\cite[Sec. 4]{Rockafellar}).

\begin{cor}  \label{Thm:SpectralRiskElicitableII}
Let $\mathcal{F}$ be a class of distribution functions having finite first moments. Define the functionals $T_1, \ldots, T_{k-1}, T_k$ and $T$ as in Theorem~\ref{Thm:SpectralRiskElicitable}, but on the larger class $\mathcal{F}$. If all members of $\mathcal{F}$ have unique $q_i$-quantiles, then $T$ is elicitable. Moreover, an $\mathcal{F}$-consistent scoring function $S : \mathsf{A} \times \mathsf{O} \rightarrow \mathbb{R}$ for $T$ is given by
\begin{align*}
S(x,y) = &-f(x_k) + f'(x_k) \left( x_k + \sum_{i=1}^{k-1} \frac{p_i}{q_i} \big( (\mathbbm{1}_{ \lbrace y \leq x_i \rbrace } - q_i) x_i -  \mathbbm{1}_{ \lbrace y \leq x_i \rbrace } y \big) \right) \\
 &+ c \sum_{i=1}^{k-1} \frac{p_i}{q_i} \big( (\mathbbm{1}_{ \lbrace y \leq x_i \rbrace } - q_i) x_i -  \mathbbm{1}_{ \lbrace y \leq x_i \rbrace } y \big) ,
\end{align*}
where $c \in \mathbb{R}$ and $f$ is convex and differentiable and satisfies $-f' \leq c$. If all $q_i$-quantiles are unique and $f$ is strictly convex with $-f' < c$, then $S$ is strictly $\mathcal{F}$-consistent.
\end{cor}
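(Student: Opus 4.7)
The plan is to deduce the corollary from Proposition~\ref{Thm:FuncPlusMinScore} combined with the revelation principle (Proposition~\ref{Thm:RevelationPrinciple}), exploiting the fact that negative Expected Shortfall at level $q$ is the minimum value of a suitably scaled strictly consistent scoring function for the $q$-quantile, an observation also used in the proof of Theorem~\ref{Thm:EScoherent}. To this end, set $T' := (T_1, \ldots, T_{k-1})^\top$ and define
\[
A(x_1, \ldots, x_{k-1}, y) := \sum_{i=1}^{k-1} \frac{p_i}{q_i}\big[(\mathbbm{1}_{\lbrace y \leq x_i\rbrace} - q_i) x_i - \mathbbm{1}_{\lbrace y \leq x_i\rbrace} y\big].
\]
First I would show that $A$ is a strictly $\mathcal{F}$-consistent scoring function for $T'$: since
\[
A(x_1, \ldots, x_{k-1}, y) + y = \sum_{i=1}^{k-1} \frac{p_i}{q_i}(\mathbbm{1}_{\lbrace y \leq x_i\rbrace} - q_i)(x_i - y),
\]
this expression arises by taking the scoring function from Theorem~\ref{Thm:QuantilesElicitable} (with $g(x) = x$) for each $T_i = F^\leftarrow(q_i)$, scaling by the positive constants $p_i/q_i$, and finally adding the $\mathcal{F}$-integrable summand $-y$. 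Strict consistency is then preserved at each step by iterated application of Lemma~\ref{Thm:TransformOfScoring} (i) and Lemma~\ref{Thm:AllCompElicitable} (i), using that all $q_i$-quantiles are unique.

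The key computational step is to verify that $\bar{A}(T'(F), F) = -T_k(F)$ for every $F \in \mathcal{F}$. Writing $t_i := F^{\leftarrow}(q_i)$ and integrating $A$ componentwise against $F$ yields
\[
\bar{A}(T'(F), F) = \sum_{i=1}^{k-1} \frac{p_i}{q_i}\big[(F(t_i) - q_i) t_i - \mathbb{E}\mathbbm{1}_{\lbrace Y \leq t_i\rbrace} Y\big] = -\sum_{i=1}^{k-1} \frac{p_i}{q_i}\big[t_i(q_i - F(t_i)) + \mathbb{E}\mathbbm{1}_{\lbrace Y \leq t_i\rbrace}Y\big],
\]
which by Representation~(\ref{Eqn:ESAcerbiDefinition}) of Expected Shortfall equals $\sum_{i=1}^{k-1} p_i \mathrm{ES}_{q_i}(F) = -T_k(F)$. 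This computation is the main obstacle: in the proof of Theorem~\ref{Thm:SpectralRiskElicitable} continuity of $F$ was used via Equation~(\ref{Eqn:ESforContinuous}) to discard the boundary contribution $t_i(q_i - F(t_i))$, but here this term generally does not vanish. Using~(\ref{Eqn:ESAcerbiDefinition}) instead of~(\ref{Eqn:ESforContinuous}) shows that the boundary correction is exactly absorbed, which is precisely what permits the extension beyond the continuous case.

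Having identified $A$ as a strictly $\mathcal{F}$-consistent scoring function for $T'$ with $\bar{A}(T'(F), F) = -T_k(F)$, an application of Proposition~\ref{Thm:FuncPlusMinScore} with $n=1$ yields $\mathcal{F}$-consistent scoring functions for the vector $(T_1, \ldots, T_{k-1}, -T_k)^\top$ of the form
\[
S'(x, y) = -\tilde{f}(x_k) - \tilde{f}'(x_k)\big(A(x_1, \ldots, x_{k-1}, y) - x_k\big) + \tilde{c}\, A(x_1, \ldots, x_{k-1}, y),
\]
where $\tilde{f}$ is convex and differentiable with $\tilde{f}' \leq \tilde{c}$, and which is strictly consistent when $\tilde{f}$ is strictly convex with $\tilde{f}' < \tilde{c}$. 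Finally I would invoke Proposition~\ref{Thm:RevelationPrinciple} with the continuously differentiable bijection $g(x_1, \ldots, x_k) := (x_1, \ldots, x_{k-1}, -x_k)$ to pass from $(T_1, \ldots, T_{k-1}, -T_k)^\top$ to $T$. Setting $f(\cdot) := \tilde{f}(-\cdot)$ and $c := \tilde{c}$, a direct substitution in $S' \circ g^{-1}$ reproduces exactly the scoring function displayed in the statement of the corollary; the conditions $\tilde{f}$ convex with $\tilde{f}' \leq \tilde{c}$ translate into $f$ convex with $-f' \leq c$, and the strict versions translate analogously, completing the proof.
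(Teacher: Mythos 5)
Your proof is correct and follows essentially the same route as the paper: you build the scoring function $A$ (the paper's $S'$) for $(T_1,\ldots,T_{k-1})^\top$ via Theorem~\ref{Thm:QuantilesElicitable}, Lemma~\ref{Thm:TransformOfScoring}~(i), and Lemma~\ref{Thm:AllCompElicitable}~(i), verify $\bar{A}(T'(F),F) = -T_k(F)$ via Representation~(\ref{Eqn:ESAcerbiDefinition}), apply Proposition~\ref{Thm:FuncPlusMinScore} with $n=1$, and then use the revelation principle with the sign flip $x_k \mapsto -x_k$ together with the substitution $f := \tilde{f}(-\cdot)$. The only difference from the paper is a cosmetic reordering of the steps used to obtain $A$ (paper: subtract $q_i y$ and scale per component, then sum; you: sum the scaled quantile scores, then subtract $y$ once), which produces the identical function.
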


\begin{proof}
For any $i \in I_{k-1} := \lbrace 1, \ldots, k-1 \rbrace$ we have $q_i \in (0,1)$ and $T_i(F) = F^{\leftarrow}(q_i)$ and set $\mathsf{A}_i := T_i (\mathcal{F})$. Theorem~\ref{Thm:QuantilesElicitable} and the fact that all members of $\mathcal{F}$ have finite first moments imply that the function $\tilde{S}_i(x,y) = (\mathbbm{1}_{ \lbrace y \leq x \rbrace } - q_i) (x-y)$ is an $\mathcal{F}$-consistent scoring function for $T_i$. By Lemma~\ref{Thm:TransformOfScoring} (i), it is possible to first subtract the function $y \mapsto q_i y$ from $\tilde{S}_i$ and then scale it with $p_i / q_i$ and the result remains $\mathcal{F}$-consistent. Consequently, for any $i \in I_{k-1}$, the function 
\begin{equation*}
S_i : \mathsf{A}_i \times \mathsf{O} \rightarrow \mathbb{R}, \quad (x,y) \mapsto S_i(x,y) = \frac{p_i}{q_i} (\mathbbm{1}_{ \lbrace y \leq x \rbrace } - q_i) x -  \frac{p_i}{q_i} \mathbbm{1}_{ \lbrace y \leq x \rbrace } y
\end{equation*}
is $\mathcal{F}$-consistent for $T_i$. It is strictly consistent if the $q_i$-quantile is unique for all $F \in \mathcal{F}$. It follows from Lemma~\ref{Thm:AllCompElicitable} that the function $S' (x,y) := \sum_{i=1}^{k-1} S_i(x_i, y)$ is an $\mathcal{F}$-consistent scoring function for the functional $(T_1, \ldots, T_{k-1})^\top$. The next step is to represent the spectral risk measure functional $T_k$ by using $S'$. Equation~(\ref{Eqn:ESAcerbiDefinition}) leads to the representation
\begin{align*}
-T_k (F) &= \sum_{i=1}^{k-1} \mathrm{ES}_{q_i} (F) p_i \\
&= \sum_{i=1}^{k-1} \frac{p_i}{q_i} \int_{\mathsf{O}} - \mathbbm{1}_{ \lbrace y \leq F^{\leftarrow}(q_i) \rbrace } y + (\mathbbm{1}_{ \lbrace y \leq F^{\leftarrow}(q_i) \rbrace } - q_i) F^{\leftarrow}(q_i)  \, \mathrm{d}F(y) \\ 
&= \int_{\mathsf{O}} \sum_{i=1}^{k-1} \frac{p_i}{q_i} (\mathbbm{1}_{ \lbrace y \leq F^{\leftarrow}(q_i) \rbrace } - q_i) F^{\leftarrow}(q_i) - \frac{p_i}{q_i} \mathbbm{1}_{ \lbrace y \leq F^{\leftarrow}(q_i) \rbrace } y \, \mathrm{d}F(y) \\
&= \int_{\mathsf{O}} \sum_{i=1}^{k-1} S_i ( F^{\leftarrow}(q_i), y) \, \mathrm{d}F(y) \\
&= \bar{S}'( F^{\leftarrow}(q_1), \ldots, F^{\leftarrow}(q_{k-1}) , F ) ,
\end{align*}
which shows that Proposition~\ref{Thm:FuncPlusMinScore} is applicable with $n=1$ and gives $\mathcal{F}$-consistent scoring functions for the functional $T' := (T_1, \ldots, T_{k-1} , -T_k)^\top$. Choosing a convex and differentiable function $\tilde{f}$ and a constant $c \in \mathbb{R}$ such that $\tilde{f}' \leq c$ holds, implies that
\begin{align*}
S''(x,y) :=  - \tilde{f}(x_k) - \tilde{f}'(x_k) ( S'(x_1, \ldots, x_{k-1}, y) - x_k) + c S'(x_1, \ldots, x_{k-1}, y)
\end{align*}
is $\mathcal{F}$-consistent for $T'$. An application of the revelation principle as stated in Proposition~\ref{Thm:RevelationPrinciple} leads to an $\mathcal{F}$-consistent scoring function for $(T_1, \ldots, T_{k-1}, T_k)^\top$ given by
\begin{equation*}
S(x,y) :=  -\tilde{f}(-x_k) - \tilde{f}'(-x_k) ( S'(x_1, \ldots, x_{k-1}, y) + x_k) + c S'(x_1, \ldots, x_{k-1}, y) . 
\end{equation*}
If for all $F \in \mathcal{F}$ all $q_i$-quantiles are unique, strict $\mathcal{F}$-consistency carries over from $S_i$, $i \in I_{k-1}$ to $S'$ by Lemma~\ref{Thm:AllCompElicitable}. If we additionally assume that $\tilde{f}$ is strictly convex and satisfies $ \tilde{f}' < c$, strict $\mathcal{F}$-consistency carries over from $S'$ to $S''$ by Proposition~\ref{Thm:FuncPlusMinScore} and from $S''$ to $S$ by the revelation principle. Finally, if we define the (strictly) convex function $f$ via $f(x) = \tilde{f}(-x)$ and require $-f'  \leq c$ or $-f' < c$ instead of $\tilde{f}' \leq c$ or $ \tilde{f}' < c$, respectively, the function $S$ remains (strictly) $\mathcal{F}$-consistent and has the desired representation.
\end{proof}

\begin{remark}
We compare the strictly consistent scoring function of Theorem~\ref{Thm:SpectralRiskElicitable} to the one of Corollary~\ref{Thm:SpectralRiskElicitableII} and call them $S_1$ and $S_2$, respectively. At first we argue that the strict consistency of $S_2$ can be shown using Theorem~\ref{Thm:SpectralRiskElicitable}. To this end, let a strictly convex $f$ and $c \in \mathbb{R}$ be given and set $G_k = f$ as well as $g_r (v) = c (p_r / q_r ) v$ for $r=1, \ldots, k-1$. Since $g_k = f' > -c$ is fulfilled, we obtain that the function
\begin{equation*}
H_{r,u} (v) = v \frac{p_r}{q_r} g_k (u) + g_r (v) = v \frac{p_r}{q_r} ( g_k (u) + c )
\end{equation*}
is strictly increasing in $v$, hence Theorem~\ref{Thm:SpectralRiskElicitable} is applicable.

Conversely, we consider if strict consistency of $S_1$ can be shown using Corollary~\ref{Thm:SpectralRiskElicitableII}. To this end, let strictly increasing functions $g_1, \ldots, g_{k-1}$ and a strictly convex $G_k$ be given. In order to obtain the same representation as in Corollary~\ref{Thm:SpectralRiskElicitableII}, it is necessary to set $c=0$, $f=G_k$ and add the function
\begin{equation*}
S'(x_1, \ldots, x_{k-1},y) := \sum_{i=1}^{k-1}  ( \mathbbm{1}_{\lbrace y \leq x_i \rbrace} - q_i ) g_i (x_i) - \mathbbm{1}_{\lbrace y \leq x_i \rbrace} g_i (y) ,
\end{equation*}
which is a strictly consistent scoring function for the functional $(T_1, \ldots, T_{k-1})^\top$ as defined in Theorem~\ref{Thm:SpectralRiskElicitable}. Due to $c=0$ it is also needed that $g_k = f' > 0$ is fulfilled. However, the requirement that $H_{r,u}$ is strictly increasing does not suffice to guarantee this inequality. Hence, Corollary~\ref{Thm:SpectralRiskElicitableII} cannot be used in general to show the strict consistency of $S_1$, showing that the classes of strictly consistent scoring functions obtained from both results are different. This is intuitive, since Theorem~\ref{Thm:SpectralRiskElicitable} is designed to handle spectral risk measure functionals and uses the condition on $H_{r,u}$ to exploit the structure of strictly consistent scoring functions for quantiles. In contrast, the scoring functions of Corollary~\ref{Thm:SpectralRiskElicitableII} are derived by using Proposition~\ref{Thm:FuncPlusMinScore}, a result which holds for a larger class of functionals.
\end{remark}

It would be desirable to conclude this section with a characterization of all strictly consistent scoring functions for the functional $T$ given in Proposition~\ref{Thm:FuncPlusMinScore}. One idea to do this is to use the strict identification function given in (\ref{Eqn:IdentFuncPlusMin}) together with Osband's principle, similar to Section~\ref{Sec:FunctionalsComponents} or \cite[Thm. 5.2 (iii)]{FissZieg}. Unfortunately, a straightforward adaptation of arguments used in the proofs of these results is not fruitful, since we neither have a `simple' identification function as in Section~\ref{Sec:FunctionalsComponents}, nor can we exploit special properties of $S$ as in \cite[Thm. 5.2 (iii)]{FissZieg}. A suitable approach to tackle this problem is yet to be found.

\section{Discussion} 

In this thesis we reviewed results concerning higher order elicitability and identifiability and studied the connection of these concepts to quantitative risk management. We discussed the properties of the three risk measures Value at Risk, Expected Shortfall and Expectile Value at Risk and reached the conclusion that Expected Shortfall, or more general spectral measures of risk, possess the most satisfying properties. Taking elicitability into account, we showed that only the other two risk measures have this property. Intuitively, the reason for this is that the two properties of coherence and elicitability are conflicting, a fact already recognized by Weber~\cite{WeberConsistency}. Interestingly, Expectile Value at Risk is the only risk measure which manages the balancing act between both requirements. Nevertheless, Corollary~\ref{Thm:JointElicitVaRES} showed that moving from one-dimensional elicitability to higher order elicitability solves this problem, in the sense that Expected Shortfall is jointly elicitable with Value at Risk. More generally, Theorem~\ref{Thm:SpectralRiskElicitable} proves that all spectral measures of risk with discrete spectrum are part of an elicitable vector. The motivation for these results is to be able to perform comparative backtesting for Expected Shortfall estimates. However, joint elicitability implies that only joint estimates of Expected Shortfall and Value at Risk can be compared. The differences of such a test to a comparison of one-dimensional estimates demand further attention. Closely connected is the problem of choosing a suitable strictly consistent scoring function in applications.
\par
On the theoretical side of this thesis, we began by discussing new as well as classical results related to elicitability and identifiability. The most important one is the multivariate version of Osband's principle which is due to Fissler and Ziegel~\cite{FissZieg}. We presented a version on the level of expectations in Theorem~\ref{Thm:OsbandPrinciple1} and a pointwise version in Theorem~\ref{Thm:OsbandPrinciple2}. By discussing and proving the latter, we drew attention to the difficulties appearing when moving from an integrated to a pointwise version. We proposed additional assumptions on the identification function $V$ as well as on the domain $\mathsf{A}$ and illustrated why they are reasonable. Nevertheless, we applied both versions of Osband's principle in order to show a characterization of functionals with elicitable components which was stated in \cite{FissZieg}. We continued by discussing further ideas related to non-elicitable functionals, one of which is conditional elicitability. While we are able to prove that conditional identifiability implies joint identifiability, we also recognize that a similar statement for elicitability requires more work. It is thus an interesting open question which requirements are needed such that conditionally elicitable functionals are also jointly elicitable. Moreover, we proved Proposition~\ref{Thm:FuncPlusMinScore}, which allowed us to argue that variance as well as Expected Shortfall are jointly elicitable because they are mean scores of elicitable functionals, namely mean and Value at Risk. In Corollary~\ref{Thm:SpectralRiskElicitable} we used this result to extend the joint elicitability of spectral risk measure functionals to classes containing discontinuous distribution functions. \par

Finally, there are many other topics as well as open problems which were not discussed in this thesis. One of the most important ones is definitely the problem of characterizing all elicitable functionals, possibly subject to some regularity conditions. The one-dimensional version of such a characterization was mentioned in Remark~\ref{Rem:LevelSetSufficient}, but not discussed in detail, since the proof of this result (as done in Steinwart et al.~\cite{SteinwartPasinWilliam}) is extensive enough to fill a separate thesis. In the multi-dimensional case, little is known on how elicitable functionals can be characterized, since the conditions of the one-dimensional case are not sufficient as shown by Frongillo and Kash~\cite[Example 1]{FrongilloKash}. Another important problem is to find necessary and sufficient conditions for non-elicitable functionals to be jointly elicitable. We mentioned a class of functionals having this property in Subsection~\ref{Sec:GeneralizedFZ}, but a thorough study of this topic does not exist yet. Maybe a starting point to study this problem consists of establishing general conditions such that the functionals defined in Remark~\ref{Rem:PlugInFunctionals} are part of an elicitable vector.


\begin{appendix}
\chapter{Auxiliary results}

In this chapter, we provide some results which are needed throughout the thesis, but may interfere with the flow of reading. If not stated otherwise, all proofs and examples are our own contribution.

\begin{theorem}[Skorohod representation]  \label{Thm:SkorohodRep}
Let $(X_n)_{n \in \mathbb{N}}$ be a sequence of real-valued random variables such that $X_n \rightarrow X$ in distribution as $n \rightarrow \infty$. Then there exist a probability space $(\Omega, \mathscr{A}, \mathbb{P})$ and real-valued random variables $Y, Y_1, Y_2, \ldots$ defined on it such that under $\mathbb{P}$
\begin{enumerate}[label=(\roman*)]
	\item $Y =^d X$, $Y_n =^d X_n$ for all $n \in \mathbb{N}$, and
	\item $Y_n \rightarrow Y$ almost surely.
\end{enumerate}
\end{theorem}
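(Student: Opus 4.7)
The plan is to give the classical explicit construction on the unit interval. I would take $(\Omega,\mathscr{A},\mathbb{P}) := ([0,1],\mathcal{B}([0,1]),\lambda)$ with Lebesgue measure, let $U(\omega) := \omega$ be the identity map (so that $U$ is uniformly distributed on $[0,1]$), and then define
\begin{equation*}
Y_n(\omega) := F_n^{\leftarrow}(U(\omega)), \qquad Y(\omega) := F^{\leftarrow}(U(\omega)),
\end{equation*}
where $F_n$ and $F$ denote the distribution functions of $X_n$ and $X$ respectively. Property (i) is then immediate from Lemma~\ref{Thm:QuantileLemma}(i): for any $x \in \mathbb{R}$,
\begin{equation*}
\mathbb{P}(Y_n \leq x) = \mathbb{P}(F_n^{\leftarrow}(U) \leq x) = \mathbb{P}(U \leq F_n(x)) = F_n(x),
\end{equation*}
and analogously for $Y$, so $Y_n =^d X_n$ and $Y =^d X$.

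The substantive part of the argument is (ii), namely showing $F_n^{\leftarrow}(u) \to F^{\leftarrow}(u)$ for $\lambda$-a.e.\ $u \in (0,1)$. Since $F^{\leftarrow}$ is monotone it has at most countably many discontinuities, so it suffices to prove convergence at every continuity point of $F^{\leftarrow}$. Fix such a $u$ and let $\varepsilon > 0$. I would use that the distribution function $F$ has at most countably many discontinuities in order to pick continuity points $x,y$ of $F$ with
\begin{equation*}
F^{\leftarrow}(u) - \varepsilon < x < F^{\leftarrow}(u) < y < F^{\leftarrow}(u) + \varepsilon
\end{equation*}
and, thanks to the continuity of $F^{\leftarrow}$ at $u$, arranged so that $F(x) < u < F(y)$ (here one uses Lemma~\ref{Thm:QuantileLemma}(i)(iii), together with the fact that $F^{\leftarrow}(u+\delta) > F^{\leftarrow}(u) > F^{\leftarrow}(u-\delta)$ would fail in the limit $\delta \to 0$ if $u$ is a continuity point).

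Since $x$ and $y$ are continuity points of $F$, convergence in distribution gives $F_n(x) \to F(x) < u$ and $F_n(y) \to F(y) > u$, so eventually $F_n(x) < u \leq F_n(y)$. Applying Lemma~\ref{Thm:QuantileLemma}(i) once more yields $x \leq F_n^{\leftarrow}(u) \leq y$ for all sufficiently large $n$, hence
\begin{equation*}
F^{\leftarrow}(u) - \varepsilon < x \leq \liminf_{n \to \infty} F_n^{\leftarrow}(u) \leq \limsup_{n \to \infty} F_n^{\leftarrow}(u) \leq y < F^{\leftarrow}(u) + \varepsilon.
\end{equation*}
Letting $\varepsilon \downarrow 0$ gives $F_n^{\leftarrow}(u) \to F^{\leftarrow}(u)$ at every continuity point $u$. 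Since the set of discontinuities of $F^{\leftarrow}$ is countable and hence $\lambda$-null, we obtain $Y_n \to Y$ almost surely under $\mathbb{P}$, completing the construction.

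The only real obstacle is the careful selection of the sandwiching continuity points $x,y$; in particular the strict inequality $F(x) < u$ (rather than $\leq$) at a continuity point of $F^{\leftarrow}$ needs a short argument, which is where the hypothesis that $u$ is a continuity point of $F^{\leftarrow}$ is actually used. Everything else is bookkeeping with the properties of $F^{\leftarrow}$ collected in Lemma~\ref{Thm:QuantileLemma}.
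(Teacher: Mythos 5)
The paper does not give a proof at all: it simply cites van der Vaart~\cite[Thm.\ 2.19]{vanderVaart} and Kallenberg~\cite[Thm.\ 4.30]{KallenbergProb}. Your construction --- the quantile transform $Y_n = F_n^{\leftarrow}(U)$, $Y = F^{\leftarrow}(U)$ on $([0,1],\mathcal{B}([0,1]),\lambda)$, followed by pointwise convergence of $F_n^{\leftarrow}$ to $F^{\leftarrow}$ at continuity points of $F^{\leftarrow}$ --- is exactly the argument used in the van der Vaart reference, so in substance your proof matches the route the paper points to, and it is correct. The only spot worth tightening is your parenthetical explaining the choice of $x$ and $y$: the strict inequality $F(x) < u$ is automatic for any $x < F^{\leftarrow}(u)$ by Lemma~\ref{Thm:QuantileLemma}(i) and has nothing to do with continuity of $F^{\leftarrow}$; the continuity hypothesis is used only on the right side, to guarantee the existence of some $y \in \bigl(F^{\leftarrow}(u), F^{\leftarrow}(u)+\varepsilon\bigr)$ with $F(y) > u$ (were $F$ equal to $u$ on the entire interval $\bigl(F^{\leftarrow}(u),F^{\leftarrow}(u)+\varepsilon\bigr)$, then $F^{\leftarrow}(u') \geq F^{\leftarrow}(u)+\varepsilon$ for every $u' > u$, contradicting right-continuity of $F^{\leftarrow}$ at $u$). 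Otherwise the bookkeeping is sound.
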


\begin{proof}
See for example van der Vaart~\cite[Thm. 2.19]{vanderVaart} or the more general version in Kallenberg~\cite[Thm. 4.30]{KallenbergProb}.
\end{proof}

\begin{lemma}  \label{Thm:AppendixConvexHull}
Let $0 \in \intr ( \conv (x_1, \ldots , x_{k+1} ) )$ for $x_i \in \mathbb{R}^k$, $i= 1, \ldots, k+1$ and fix an arbitrary $x_{k+2} \in \mathbb{R}^k$. Then there exist coefficients $\lambda_i >0$, $i=1, \ldots, k+2$ such that $0 = \sum_{i=1}^{k+2} \lambda_i x_i$ holds.
\end{lemma}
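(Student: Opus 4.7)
The plan is to exploit the topological interior condition through a small perturbation argument. First, I would observe that if $0 \in \intr(\conv(x_1, \ldots, x_{k+1}))$, then the convex hull has nonempty interior in $\mathbb{R}^k$, which forces $x_1, \ldots, x_{k+1}$ to be affinely independent; otherwise they would all lie in some affine hyperplane and their convex hull would have empty interior. Hence $\conv(x_1, \ldots, x_{k+1})$ is a $k$-simplex, and a standard fact from convex geometry identifies its topological interior with the open simplex $\{\sum_{i=1}^{k+1}\beta_i x_i : \beta_i > 0, \sum_{i=1}^{k+1}\beta_i = 1\}$, i.e.\ the set of points whose barycentric coordinates are all strictly positive.

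Next, since $0$ is an interior point, there exists an open ball $B(0,r) \subseteq \conv(x_1, \ldots, x_{k+1})$ for some $r > 0$; being open, $B(0,r)$ is in fact contained in $\intr(\conv(x_1, \ldots, x_{k+1}))$. I would then choose $\epsilon > 0$ small enough that $-\epsilon\, x_{k+2} \in B(0,r)$. This is possible for any $\epsilon > 0$ if $x_{k+2} = 0$, and for any $0 < \epsilon < r / \Vert x_{k+2} \Vert$ otherwise. By the previous paragraph, $-\epsilon\, x_{k+2}$ admits a representation
$$-\epsilon\, x_{k+2} = \sum_{i=1}^{k+1} \beta_i x_i$$
with $\beta_i > 0$ for all $i = 1, \ldots, k+1$. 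Rearranging yields
$$0 = \sum_{i=1}^{k+1} \beta_i x_i + \epsilon\, x_{k+2},$$
so setting $\lambda_i := \beta_i > 0$ for $i = 1, \ldots, k+1$ and $\lambda_{k+2} := \epsilon > 0$ produces the desired identity.

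The main potential obstacle is the justification of the strict positivity of the barycentric coordinates for points in the topological interior of the simplex, which hinges on affine independence of $x_1, \ldots, x_{k+1}$. Once that structural fact is established, the remainder is a short perturbation computation and no further tools are required.
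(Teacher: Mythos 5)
Your proof is correct. The argument differs from the paper's in execution, though both hinge on the same structural fact that interior points of the simplex have strictly positive barycentric coordinates. The paper works at the coefficient level: it fixes $\gamma_i>0$ with $\sum_{i=1}^{k+1}\gamma_i x_i=0$ and a representation $x_{k+2}=\sum_{i=1}^{k+1}\beta_i x_i$ (with $\beta_i$ of arbitrary sign, obtained merely from the fact that $x_1,\ldots,x_{k+1}$ span $\mathbb{R}^k$), then subtracts a sufficiently small multiple $\lambda_{k+2}\beta_i$ from each $\gamma_i$, choosing $\lambda_{k+2}=\tfrac{1}{2}\min\{\gamma_i/|\beta_i| : \beta_i\neq 0\}$ so that all coefficients stay positive. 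You instead work at the point level: you move $0$ slightly to $-\epsilon x_{k+2}$, use that this perturbed point is still in the interior, and read off its strictly positive barycentric coordinates directly. Your route requires you to explicitly justify the open-simplex characterization of the interior (which you do, via affine independence), whereas the paper invokes positivity of the $\gamma_i$ without elaboration and then avoids any further appeal to that characterization by an elementary $\epsilon$-of-room calculation. The two proofs are roughly equal in length and difficulty; yours is a bit more geometric, the paper's a bit more computational, and both are sound.
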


\begin{proof}
Due to $0 \in \intr ( \conv (x_1, \ldots , x_{k+1} ) )$ there are $\gamma_i > 0$, $i \in I_{k+1} := \lbrace 1, \ldots, k+1 \rbrace$ such that $\sum_{i=1}^{k+1} \gamma_i x_i = 0$ is fulfilled. Simultaneously, this condition implies that the set $\lbrace x_1, \ldots, x_{k+1} \rbrace$ contains $k$ linearly independent vectors and hence spans $\mathbb{R}^k$. We thus find some $\beta_i$, $i \in I_{k+1}$ such that the representation $x_{k+2} = \sum_{i=1}^{k+1} \beta_i x_i$ holds and use its coefficients to define $\lambda_{k+2} := \min \lbrace \gamma_i / \vert \beta_i \vert \mid \beta_i \neq 0, i \in I_{k+1} \rbrace / 2$. Moreover, we define $\lambda_i := \gamma_i - \lambda_{k+2} \beta_i$ for $i \in I_{k+1}$. For any $i \in I_{k+1}$ such that $\beta_i > 0$ we thus have $\lambda_i = \gamma_i - \lambda_{k+2} \beta_i \geq \gamma_i - \frac{\gamma_i \beta_i}{2 \vert \beta_i \vert} > 0$ and hence $\lambda_i > 0$ for all $i \in I_{k+1}$. Finally we calculate
\begin{equation*}
\sum_{i=1}^{k+2} \lambda_i x_i = \lambda_{k+2} x_{k+2} + \sum_{i=1}^{k+1} \gamma_i x_i - \lambda_{k+2} \sum_{i=1}^{k+1} \beta_i x_i = 0 ,
\end{equation*}
which proves that $\lambda_i$, $i \in I_{k+2}$ is as desired.
\end{proof}

For the following two results we remark that all matrix norms are equivalent, hence we do not explicitly choose one.

\begin{lemma}  \label{Thm:AppendixLinearInd}
For $k,d \in \mathbb{N}$, let $\mathbb{V} : \mathbb{R}^k \rightarrow \mathbb{R}^{d \times d}$ be a mapping which is continuous at a point $x \in \mathbb{R}^k$ and for which $\mathbb{V}(x)$ is an invertible matrix. Then there is an open set $U_x$ containing $x$ and such that $\mathbb{V}(z)$ is an invertible matrix for all $z \in U_x$.
\end{lemma}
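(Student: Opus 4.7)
The natural plan is to reduce invertibility to a scalar nonvanishing condition via the determinant. Recall that a square matrix $A \in \mathbb{R}^{d \times d}$ is invertible if and only if $\det(A) \neq 0$, and that the determinant, viewed as a map $\det : \mathbb{R}^{d \times d} \to \mathbb{R}$, is a polynomial in the $d^2$ matrix entries and hence continuous (with respect to any matrix norm, since all norms on finite-dimensional spaces are equivalent).

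Given this, I would form the composition $f := \det \circ \, \mathbb{V} : \mathbb{R}^k \to \mathbb{R}$. Continuity of $\mathbb{V}$ at $x$ together with continuity of $\det$ implies that $f$ is continuous at $x$. Since $\mathbb{V}(x)$ is invertible by hypothesis, $f(x) = \det(\mathbb{V}(x)) \neq 0$.

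Now I would invoke the standard fact that continuous real-valued functions which are nonzero at a point are nonzero on a neighborhood of that point: with $\varepsilon := \tfrac{1}{2} |f(x)| > 0$, continuity of $f$ at $x$ yields an open set $U_x \subseteq \mathbb{R}^k$ containing $x$ such that $|f(z) - f(x)| < \varepsilon$ for all $z \in U_x$. The reverse triangle inequality then gives $|f(z)| \geq |f(x)| - \varepsilon = \tfrac{1}{2} |f(x)| > 0$, so $\det(\mathbb{V}(z)) \neq 0$ on $U_x$, which is precisely the statement that $\mathbb{V}(z)$ is invertible for every $z \in U_x$.

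There is no real obstacle here; the whole argument rests on the two elementary facts that the determinant is continuous and that the set $\mathbb{R} \setminus \{0\}$ is open, so the preimage of this open set under the continuous map $f$ is an open neighborhood of $x$. The proof will be only a few lines.
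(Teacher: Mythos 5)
Your proof is correct and takes essentially the same approach as the paper: both pass to the composition $D := \det \circ \mathbb{V}$, use that the determinant is a polynomial (hence continuous) and that $D(x) \neq 0$, and then conclude that $D$ is nonzero on a neighborhood of $x$. The only stylistic difference is that the paper argues by contradiction via a sequence $z_n \to x$ with $D(z_n)=0$, whereas you give the direct $\varepsilon$-$\delta$ argument with $\varepsilon = \tfrac{1}{2}|D(x)|$, which is if anything cleaner; just note that your closing aside about the preimage of $\mathbb{R}\setminus\{0\}$ being open would require continuity of $f$ everywhere, not merely at $x$, but your main $\varepsilon$-$\delta$ step correctly uses only continuity at $x$.
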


\begin{proof}
Let $k,d \in \mathbb{N}$ and $\mathbb{V}$ be given and define the mapping $D : \mathbb{R}^k \rightarrow \mathbb{R}, z \mapsto \det ( \mathbb{V} (z) )$. Since $\mathbb{V}(x)$ is invertible we have $D(x) \neq 0$. Moreover, $D$ is a continuous mapping, because $\mathbb{V}$ is continuous and $A \mapsto \det(A)$ is a polynomial in the components of $A$. Now assume by means of contradiction that for any open set $U_x$ containing $x$ there is a $z \in U_x$ for which $\mathbb{V}(z)$ is singular. This implies that there is a sequence $(z_n)_{n \in \mathbb{N}}$ which satisfies $\det ( \mathbb{V}(z_n) ) = 0$ for all $n \in \mathbb{N}$ as well as $z_n \rightarrow x$. Using the continuity of $D$, we obtain $ D(x) = \lim_{n \rightarrow \infty} D(z_n) = 0$, which is a contradiction, and thus the claim is proved.
\end{proof}

\begin{lemma}  \label{Thm:AppendixInverseCont}
Let $k \in \mathbb{N}$ and $\mathrm{GL}(k) \subset \mathbb{R}^{k \times k}$ be the space of regular $k \times k$-matrices. The mapping $I: \mathrm{GL}(k) \rightarrow \mathrm{GL}(k)$ defined via $A \mapsto A^{-1}$ has continuously differentiable components.
\end{lemma}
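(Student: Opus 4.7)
The plan is to invoke Cramer's rule in order to express each entry of $A^{-1}$ as a rational function of the entries of $A$, and then to argue that on $\mathrm{GL}(k)$ the denominator never vanishes, so the quotient is smooth.

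More precisely, I would first recall that for any $A \in \mathrm{GL}(k)$,
\begin{equation*}
A^{-1} = \frac{1}{\det(A)} \, \operatorname{adj}(A) ,
\end{equation*}
where $\operatorname{adj}(A)$ denotes the adjugate (classical adjoint) of $A$. The $(i,j)$-entry of $\operatorname{adj}(A)$ equals $(-1)^{i+j} \det(A_{ji})$, where $A_{ji}$ is the $(k-1) \times (k-1)$ minor of $A$ obtained by deleting row $j$ and column $i$. By the Leibniz formula, the determinant of an $m \times m$ matrix is a polynomial in its $m^2$ entries, hence both $\det(A)$ and every entry of $\operatorname{adj}(A)$ are polynomial functions of the $k^2$ entries of $A$.

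Consequently, regarding $\mathbb{R}^{k \times k}$ as $\mathbb{R}^{k^2}$, the map $A \mapsto \det(A)$ is a polynomial $\mathbb{R}^{k^2} \to \mathbb{R}$, and each component of $A \mapsto \operatorname{adj}(A)$ is also such a polynomial. Polynomials are $C^\infty$, in particular continuously differentiable, on all of $\mathbb{R}^{k^2}$. The set $\mathrm{GL}(k)$ is by definition the preimage of $\mathbb{R} \setminus \lbrace 0 \rbrace$ under the continuous map $\det$, hence it is an open subset of $\mathbb{R}^{k \times k}$, and on this open set $\det(A)$ is bounded away from zero locally. Since the reciprocal $t \mapsto 1/t$ is $C^\infty$ on $\mathbb{R} \setminus \lbrace 0 \rbrace$, the composition $A \mapsto 1/\det(A)$ is $C^\infty$ on $\mathrm{GL}(k)$. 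Finally, each component of $I(A) = A^{-1}$ is the product of two continuously differentiable functions, namely $1/\det(A)$ and an entry of $\operatorname{adj}(A)$, and thus is itself continuously differentiable on $\mathrm{GL}(k)$.

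There is essentially no obstacle here: the only point requiring attention is to make the dependence of $\det$ and $\operatorname{adj}$ on the matrix entries explicit (via the Leibniz formula for the determinant) so that one can invoke smoothness of polynomials and of the reciprocal function. Everything else is an immediate application of the chain and product rules.
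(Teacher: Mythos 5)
Your proof is correct and follows essentially the same route as the paper: both invoke Cramer's rule to write each entry of $A^{-1}$ as a ratio of polynomials in the entries of $A$ with nonvanishing denominator $\det(A)$, and then conclude smoothness. You spell out the polynomial nature of $\det$ and $\operatorname{adj}$ and the openness of $\mathrm{GL}(k)$ a bit more explicitly than the paper, but there is no substantive difference in method.
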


\begin{proof}
For fixed $k \geq 2$ and $A \in \mathrm{GL}(k)$ the components of $A^{-1}$ can be represented using Cramer's rule (see for example Liesen and Mehrmann~\cite[Corollary 7.23]{LiesenMehrmann}), which sates that
\begin{equation*}
(A^{-1})_{ij} = \frac{ (-1)^{i+j} \det( A_{ -(i,j) } ) }{\det(A)}
\end{equation*}
where $A_{ -(i,j) } \in \mathbb{R}^{k-1 \times k-1}$ is the matrix obtained from $A$ by deleting the $i$-th row and $j$-th column. Since $\det(A) \neq 0$ holds for all $A \in \mathrm{GL}(k)$ we see that each component of $A^{-1}$ is a continuously differentiable mapping.
\end{proof}

The following result states the relation between the lower and upper quantile function.

\begin{lemma}  \label{Thm:AppendixQuantileLR}
Let $X$ be a real-valued random variable and let the lower and upper quantile functions $F^{\leftarrow}$ and $F^{\rightarrow}$ be defined as in Definition~\ref{Def:quantilefunction}. Then for any $\alpha \in (0,1)$ we have $F^{\leftarrow}_{-X} (\alpha) = - F^{\rightarrow}_X (1- \alpha)$.
\end{lemma}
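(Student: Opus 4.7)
The plan is to relate the two quantile functions through the distribution function of $-X$ and then exploit the standard characterization of quantiles in terms of sub-level and super-level properties of $F$. First I would compute the cumulative distribution function of $-X$: since $F_{-X}(x) = \mathbb{P}(-X \le x) = \mathbb{P}(X \ge -x) = 1 - \mathbb{P}(X < -x) = 1 - F_X((-x)-)$, where $F_X((-x)-) := \lim_{s \uparrow -x} F_X(s)$ denotes the left-hand limit. This identity will be the bridge between both sides.

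Next I would apply Lemma~\ref{Thm:QuantileLemma}(i) to $-X$ to get, for any $y \in \mathbb{R}$, the equivalence $F^{\leftarrow}_{-X}(\alpha) \le y \iff \alpha \le F_{-X}(y) \iff F_X((-y)-) \le 1-\alpha$. What I need on the other side is a matching characterization of the upper quantile: I claim that $F^{\rightarrow}_X(\beta) \ge x \iff F_X(x-) \le \beta$. This is a quick verification from the definition $F^{\rightarrow}_X(\beta) = \inf\{z : F_X(z) > \beta\}$: if $F_X(x-) \le \beta$, then every $z < x$ satisfies $F_X(z) \le F_X(x-) \le \beta$, so $F^{\rightarrow}_X(\beta) \ge x$; conversely, if $F_X(x-) > \beta$ there exists $z < x$ with $F_X(z) > \beta$, forcing $F^{\rightarrow}_X(\beta) \le z < x$.

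Finally I would chain these equivalences: for an arbitrary $y \in \mathbb{R}$,
\[
-F^{\rightarrow}_X(1-\alpha) \le y \iff F^{\rightarrow}_X(1-\alpha) \ge -y \iff F_X((-y)-) \le 1-\alpha \iff F^{\leftarrow}_{-X}(\alpha) \le y,
\]
where the first step is trivial, the second uses the upper-quantile characterization with $x=-y$ and $\beta=1-\alpha$, and the third is the equivalence established at the outset. Since two real numbers $a,b$ with $a \le y \iff b \le y$ for every $y$ must coincide (take $y=a$, then $y=b$), the identity $F^{\leftarrow}_{-X}(\alpha) = -F^{\rightarrow}_X(1-\alpha)$ follows.

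The main obstacle, and the only place that requires a small independent argument, is the upper-quantile characterization $F^{\rightarrow}_X(\beta) \ge x \iff F_X(x-) \le \beta$, because Lemma~\ref{Thm:QuantileLemma} only records the analogous equivalence for $F^{\leftarrow}$. Everything else is a routine manipulation of infima and the left-continuous jump of $F_X$ at $-x$, which must be handled carefully since $X$ is allowed to have atoms (so $F_X((-x)-)$ and $F_X(-x)$ may differ).
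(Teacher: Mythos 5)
Your proof is correct.  The chain of equivalences is sound: the computation $F_{-X}(y) = 1 - F_X((-y)-)$, the use of Lemma~\ref{Thm:QuantileLemma}(i) for $F^{\leftarrow}_{-X}$, and the auxiliary characterization $F^{\rightarrow}_X(\beta) \geq x \iff F_X(x-) \leq \beta$ are all verified carefully, and the final step (two reals agreeing on all half-lines $(-\infty,y]$ must coincide) is legitimate.  The route is genuinely different in packaging from the paper's.  The paper starts from the definition $F^{\leftarrow}_{-X}(\alpha) = \inf\{z : F_{-X}(z) \geq \alpha\}$, rewrites the defining condition as $\mathbb{P}(X < -z) \leq 1-\alpha$, pulls out the sign to turn the infimum into a supremum, and then proves by contradiction the sup--inf duality
\[
\sup \{ z \mid \mathbb{P}(X < z) \leq 1 - \alpha \} = \inf \{ z \mid \mathbb{P}(X \leq z) > 1 - \alpha \},
\]
which is exactly $F^{\rightarrow}_X(1-\alpha)$.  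You instead derive an order-theoretic (Galois-connection-style) characterization of the \emph{upper} quantile function mirroring the known one for $F^{\leftarrow}$, and then compare the two quantities through their lower sets.  The paper's argument is more self-contained and produces a reusable sup--inf lemma at the level of sets of reals; yours produces a reusable pointwise characterization of $F^{\rightarrow}_X$ (the natural dual of Lemma~\ref{Thm:QuantileLemma}(i)) which might be of independent use elsewhere in the thesis, at the cost of an extra equivalence-chaining step at the end.  Both proofs have to handle the atom at $-y$ via a left-hand limit $F_X((-y)-)$ versus the strict inequality $\mathbb{P}(X<z)$; the care you take there is exactly where the paper also spends its effort.
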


\begin{proof}
For any $\alpha \in (0,1)$ we calculate
\begin{align*}
F^{\leftarrow}_{-X} (\alpha) &= \inf \lbrace z \mid F_{-X} (z) \geq \alpha \rbrace \\
&= \inf \lbrace  z \mid \mathbb{P}(X < -z) \leq 1 - \alpha \rbrace \\
&= - \sup \lbrace z \mid \mathbb{P}(X < z) \leq 1 - \alpha \rbrace \\
&= - \inf \lbrace z \mid \mathbb{P}(X \leq z) > 1 - \alpha \rbrace  = - F^{\rightarrow}_X (1- \alpha) 
\end{align*}
and hence it remains to be shown that 
\begin{equation*}
s := \sup \lbrace z \mid \mathbb{P}(X < z) \leq 1 - \alpha \rbrace = \inf \lbrace z \mid \mathbb{P}(X \leq z) > 1 - \alpha \rbrace  =: i
\end{equation*}
is true. To prove this, suppose by way of contradiction that $s < i$. Then there is a $z \in (s,i)$ such that $\mathbb{P}( X \leq z) \leq 1-\alpha$ and $\mathbb{P}(X < z) > 1-\alpha$ is satisfied, which is a contradiction. If on the other hand we have $i < s$, then there is a $z \in (i,s)$  such that $\mathbb{P}(X \leq z) > 1-\alpha$ is fulfilled. But then we have for any $\varepsilon >0$ the inequality $\mathbb{P}(X < z + \varepsilon) \geq \mathbb{P}(X \leq z) > 1- \alpha$, which contradicts the definition of $s$.
\end{proof}

\begin{theorem}[Rademacher]  \label{Thm:AppendixRademacher}
If $U \subseteq \mathbb{R}^n$ is open and $f : U \rightarrow \mathbb{R}^k$ is Lipschitz continuous, then $f$ is differentiable almost everywhere in $ U$.
\end{theorem}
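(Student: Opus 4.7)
I would assume without loss of generality that $k=1$, since a vector-valued map is differentiable at a point iff each of its components is, and a countable union of null sets is null. So fix a Lipschitz function $f:U\to\mathbb{R}$ with Lipschitz constant $L$.

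The plan is to proceed in three main steps. First, I would establish that for every fixed direction $v\in\mathbb{S}^{n-1}$, the one-sided (and hence two-sided) directional derivative $\partial_v f(x)=\lim_{t\to 0}(f(x+tv)-f(x))/t$ exists at almost every $x\in U$. The key reduction here is Fubini: restricted to any line $x_0+\mathbb{R}v$ intersected with $U$, the map $t\mapsto f(x_0+tv)$ is Lipschitz on an open subset of $\mathbb{R}$, hence absolutely continuous, hence differentiable at a.e.\ $t$ by the classical one-dimensional theorem (monotone functions are differentiable a.e., and an AC function is the difference of two monotone AC functions). Combining with Fubini over the hyperplane $v^\perp$ gives that the set $N_v\subset U$ where $\partial_v f$ fails to exist has $n$-dimensional Lebesgue measure zero. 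One needs a mild measurability check (e.g.\ that $\limsup$ and $\liminf$ of difference quotients are Borel) but this is routine.

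The second and hardest step is to show that the directional derivatives are linear in $v$ almost everywhere. Fix a countable dense set $\{v_j\}\subset\mathbb{S}^{n-1}$ including the standard basis $e_1,\ldots,e_n$; off the null set $N:=\bigcup_j N_{v_j}$, every $\partial_{v_j}f(x)$ and every partial derivative $\partial_i f(x)$ exists. The claim is that for a.e.\ $x\in U$, $\partial_v f(x)=v\cdot\nabla f(x)$ for every $j$. I would prove this via a standard weak/distributional argument: for any $\varphi\in C_c^\infty(U)$, write the difference quotient identity
\begin{equation*}
\int_U \frac{f(x+tv_j)-f(x)}{t}\,\varphi(x)\,\mathrm{d}x = -\int_U f(x)\,\frac{\varphi(x)-\varphi(x-tv_j)}{t}\,\mathrm{d}x,
\end{equation*}
and pass to the limit $t\to 0$ using dominated convergence (justified because the left integrand is bounded by $L\|\varphi\|_\infty$ and converges a.e.\ by step one, and the right is $f$ times a uniformly bounded $C^\infty$ quantity converging to $-v_j\cdot\nabla\varphi$). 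This yields $\int_U \partial_{v_j}f\cdot\varphi = \int_U (v_j\cdot\nabla f)\cdot\varphi$ for all test $\varphi$, whence $\partial_{v_j}f(x)=v_j\cdot\nabla f(x)$ at a.e.\ $x$. Taking the countable intersection, there is a null set $N'\supseteq N$ outside of which this identity holds for all $j$ simultaneously.

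Finally, I would upgrade a.e.\ existence and linearity of directional derivatives along a dense sequence to full Fréchet differentiability a.e. Fix $x\notin N'$ and $\varepsilon>0$. For arbitrary unit $u\in\mathbb{S}^{n-1}$, pick $v_j$ with $\|u-v_j\|<\varepsilon/(3L)$, and estimate
\begin{equation*}
\Bigl|\frac{f(x+tu)-f(x)}{t}-u\cdot\nabla f(x)\Bigr|
\le \Bigl|\frac{f(x+tu)-f(x+tv_j)}{t}\Bigr|+\Bigl|\frac{f(x+tv_j)-f(x)}{t}-v_j\cdot\nabla f(x)\Bigr|+|(u-v_j)\cdot\nabla f(x)|.
\end{equation*}
The first term is at most $L\|u-v_j\|<\varepsilon/3$ by the Lipschitz bound, the third is at most $L\|u-v_j\|<\varepsilon/3$ since $|\nabla f(x)|\le L$, and the middle tends to $0$ as $t\to 0$ for the finitely many $v_j$ chosen. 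A compactness argument on $\mathbb{S}^{n-1}$ (finitely many $v_j$ form an $\varepsilon/(3L)$-net) makes the convergence uniform in $u$, which is precisely Fréchet differentiability of $f$ at $x$ with derivative $\nabla f(x)$. The main obstacle is step two — carrying out the distributional identification to conclude that the a.e.\ directional derivatives are genuinely linear — since steps one and three are essentially bookkeeping around the one-dimensional theorem and the Lipschitz bound.
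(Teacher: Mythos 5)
The paper does not supply a proof of Rademacher's theorem --- it simply cites Federer~\cite[Thm.~3.1.6]{Federer}. Your proposal is a correct and self-contained argument following the standard modern route (the one found in texts such as Evans and Gariepy's \emph{Measure Theory and Fine Properties of Functions}): reduce to scalar $f$; obtain directional derivatives almost everywhere by Fubini together with the one-dimensional differentiability of Lipschitz functions; identify $\partial_{v}f$ with $v\cdot\nabla f$ almost everywhere via the weak/distributional identity; and upgrade to Fr\'echet differentiability by a density-plus-compactness argument on $\mathbb{S}^{n-1}$ using the uniform Lipschitz bound. This is a genuinely more elementary and self-contained path than deferring to Federer's treatment, which embeds the statement in a considerably heavier geometric-measure-theoretic framework, and you correctly flag the distributional identification in step two as the crux.

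Two small remarks if you intend to write this out in full. First, the final step invokes $|\nabla f(x)|\le L$; the coordinate-wise bounds $|\partial_i f(x)|\le L$ alone only yield $\sqrt{n}\,L$, but the sharper constant does follow once you note that $|\nabla f(x)|=\sup_j|v_j\cdot\nabla f(x)|=\sup_j|\partial_{v_j}f(x)|\le L$ using the dense set of directions and the already-established linearity identity --- so say this explicitly. Second, the parenthetical ``one-sided (and hence two-sided)'' in step one is misleading: the one-dimensional theorem for Lipschitz (absolutely continuous) functions already delivers the two-sided derivative almost everywhere, so there is no additional argument needed to pass from one side to the other.
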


\begin{proof}
See for example Federer~\cite[Thm. 3.1.6]{Federer}.
\end{proof}

The following theorem is a version of Klenke~\cite[Thm. 6.28]{KlenkeProb} which gives sufficient conditions for interchanging integration and differentiation.

\begin{theorem}  \label{Thm:AppendixDiff}
For $k,d,m \in \mathbb{N}$, take sets $\mathsf{A} \subset \mathbb{R}^k$ and $\mathsf{O} \subset \mathbb{R}^d$ and equip $\mathsf{O}$ with the Borel $\sigma$-algebra $\mathcal{O}$. Fix a Borel measure $\mu$ on $(\mathsf{O}, \mathcal{O})$ and let $f : \mathsf{A} \times \mathsf{O} \rightarrow \mathbb{R}^m$ be a mapping with the following properties.
\begin{enumerate}[label=(\roman*)]
	\item For all $x \in \mathsf{A}$ the mapping $y \mapsto f(x,y)$ is $\mu$-integrable.
	\item For $\mu$-almost all $y \in \mathsf{O}$ the mapping $f(\cdot, y) : \intr (\mathsf{A}) \rightarrow \mathbb{R}^m$, $x \mapsto f(x,y)$ is continuously differentiable with partial derivatives $\frac{\partial f}{\partial x_i}$, $i= 1, \ldots, k$.
	\item There is a $\mu$-integrable function $h$ such that for all $x \in \mathsf{A}$ and all $i=1, \ldots, k$ we have $\vert \frac{\partial f}{\partial x_i}(x, \cdot) \vert \leq h$ $\mu$-almost everywhere.
\end{enumerate}
Then for any $x \in \intr (\mathsf{A})$ and $i \in \lbrace 1, \ldots, k \rbrace$ the mapping $y \mapsto \frac{\partial f}{\partial x_i} (x, y)$ is $\mu$-integrable and the function $F: \mathsf{A} \rightarrow \mathbb{R}^m , x \mapsto \int_{\mathsf{O}} f(x,y) \,\mathrm{d}  \mu ( y )$ is continuously differentiable with partial derivatives
\begin{equation*}
\frac{\partial F}{\partial x_i} (x) = \int_{\mathsf{O}} \frac{\partial f}{\partial x_i} (x,y) \, \mathrm{d} \mu ( y ).
\end{equation*}
\end{theorem}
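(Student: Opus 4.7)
The plan is to reduce the vector-valued case to the scalar case by componentwise argument (since both integration and differentiation commute with projection onto coordinates), and then to prove the scalar statement by a standard application of the dominated convergence theorem combined with the mean value theorem.

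First I would fix $x \in \intr(\mathsf{A})$ and an index $i \in \{1,\ldots,k\}$, and choose a small enough radius $r > 0$ so that the closed segment $\{x + t e_i : t \in [-r,r]\}$ is contained in $\intr(\mathsf{A})$. For any nonzero $t$ with $|t| \leq r$, I would write the difference quotient
\begin{equation*}
\frac{F(x + t e_i) - F(x)}{t} \;=\; \int_{\mathsf{O}} \frac{f(x+t e_i, y) - f(x, y)}{t} \, \mathrm{d}\mu(y),
\end{equation*}
which is legal by condition (i). For $\mu$-almost every $y$, the integrand converges to $\partial_i f(x, y)$ as $t \to 0$ by condition (ii). To apply dominated convergence, I would invoke the mean value theorem on the continuously differentiable function $s \mapsto f(x + s e_i, y)$ to obtain some $\theta(t,y) \in (0,1)$ with
\begin{equation*}
\frac{f(x+te_i,y) - f(x,y)}{t} \;=\; \partial_i f\bigl(x + \theta(t,y) t e_i, y\bigr),
\end{equation*}
which by condition (iii) is bounded in absolute value by the $\mu$-integrable function $h(y)$ for $\mu$-almost every $y$. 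Dominated convergence then yields both the $\mu$-integrability of $\partial_i f(x,\cdot)$ and the identity $\partial_i F(x) = \int_{\mathsf{O}} \partial_i f(x,y) \, \mathrm{d}\mu(y)$.

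To finish, I would establish the continuity of $\partial_i F$ on $\intr(\mathsf{A})$. Taking any sequence $x_n \to x$ in $\intr(\mathsf{A})$, the integrand $\partial_i f(x_n, y)$ converges to $\partial_i f(x,y)$ for $\mu$-almost every $y$ by the continuity statement in (ii), and it is again dominated by $h(y)$ via (iii). A second application of dominated convergence gives $\partial_i F(x_n) \to \partial_i F(x)$. Since this works for every index $i$, we get that all partial derivatives of $F$ exist and are continuous on $\intr(\mathsf{A})$, so $F$ is continuously differentiable there.

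The main obstacle, if any, is purely bookkeeping: condition (ii) holds only $\mu$-almost everywhere, so the null set on which differentiability or the bound in (iii) fails must be handled carefully, but since all conclusions are expressed as $\mu$-integrals this is harmless. The vector-valued case $m > 1$ is handled at the very start by noting that each component function $f_j$ satisfies (i)--(iii) with the same dominating function $h$, and the componentwise partial derivatives of $F$ equal the integrals of the componentwise partial derivatives of $f$.
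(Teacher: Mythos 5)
Your overall strategy — reduce to the scalar case, write the difference quotient of $F$ as an integral of difference quotients of $f$, control it via the one-dimensional mean value theorem and condition (iii), pass to the limit by dominated convergence, and then reapply dominated convergence for continuity of $\partial_i F$ — is exactly the argument the paper delegates to Klenke's Theorem~6.28, applied direction by direction. So the approach matches.

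One point where your closing remark misidentifies the difficulty: you say the almost-everywhere caveats are ``harmless $\ldots$ since all conclusions are expressed as $\mu$-integrals.'' The genuine subtlety is not in the conclusions but in the intermediate step where you invoke (iii) to bound $\vert \partial_i f\bigl(x + \theta(t,y)\,t\,e_i,\, y\bigr) \vert \leq h(y)$. Condition (iii) furnishes, for each fixed first argument $x'$, a $\mu$-null exceptional set $N_{x'}$ outside of which $\vert \partial_i f(x',y)\vert \leq h(y)$, and this null set is allowed to depend on $x'$. Since $x + \theta(t,y)\, t\, e_i$ depends on $y$, one cannot directly apply (iii) to it, because the union $\bigcup_{s\in[-r,r]} N_{x+se_i}$ is an uncountable union of null sets and need not be null. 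The standard repair — implicit in the Klenke reference — is to apply (iii) at a countable dense set of points on the segment $\{x + s e_i : s \in [-r,r]\}$, take the countable union of exceptional sets, and then use the continuity of $s \mapsto \partial_i f(x+se_i, y)$ guaranteed by (ii) to extend the bound $\vert \partial_i f(x+se_i, y)\vert \leq h(y)$ to all $s \in [-r,r]$ for every $y$ outside that single null set. (For the continuity step at the end, where the sequence $x_n \to x$ is countable, a simple countable union of null sets suffices.) With this inserted, the dominated convergence steps go through exactly as you wrote them.
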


\begin{proof}
Proceed as in the proof of \cite[Thm. 6.28]{KlenkeProb} for every partial derivative, using the $k$-dimensional mean value theorem of calculus. Moreover, use dominated convergence for the continuity of the partial derivatives.
\end{proof}

The following example shows that Assumption (iii) of Theorem~\ref{Thm:AppendixDiff} is indeed necessary to ensure differentiability of $F$.

\begin{example} \label{Thm:AppendixExCounterDiff}
This example shows that integration over a family of continuously differentiable functions does not necessarily deliver a continuously differentiable function. More precisely, define $f$ via
\begin{equation*}
f: \mathbb{R} \times (0,1] \rightarrow \mathbb{R}, \quad (x,y) \mapsto f(x,y) := \left( 1 - \frac{x^2}{y^2} \right)^3  \, \mathbbm{1}_{[-y,y]} (x).
\end{equation*}
For fixed $y$, and up to normalization, $x \mapsto f(x,y)$ is known as the triweight kernel used in kernel density estimation. The function $f$ is displayed in Figure~\ref{Fig:ExCounterDiff} for three choices of $y$. For any $y \in (0,1]$, the map $x \mapsto f(x,y)$ is continuously differentiable and for any $x \in \mathbb{R}$, the map $y \mapsto f(x,y)$ is integrable with respect to the Lebesgue measure. If we now define the new function $F$ via

\begin{figure}[ht]
\label{Fig:ExCounterDiff}
\centering
\includegraphics[width= 0.8\textwidth]{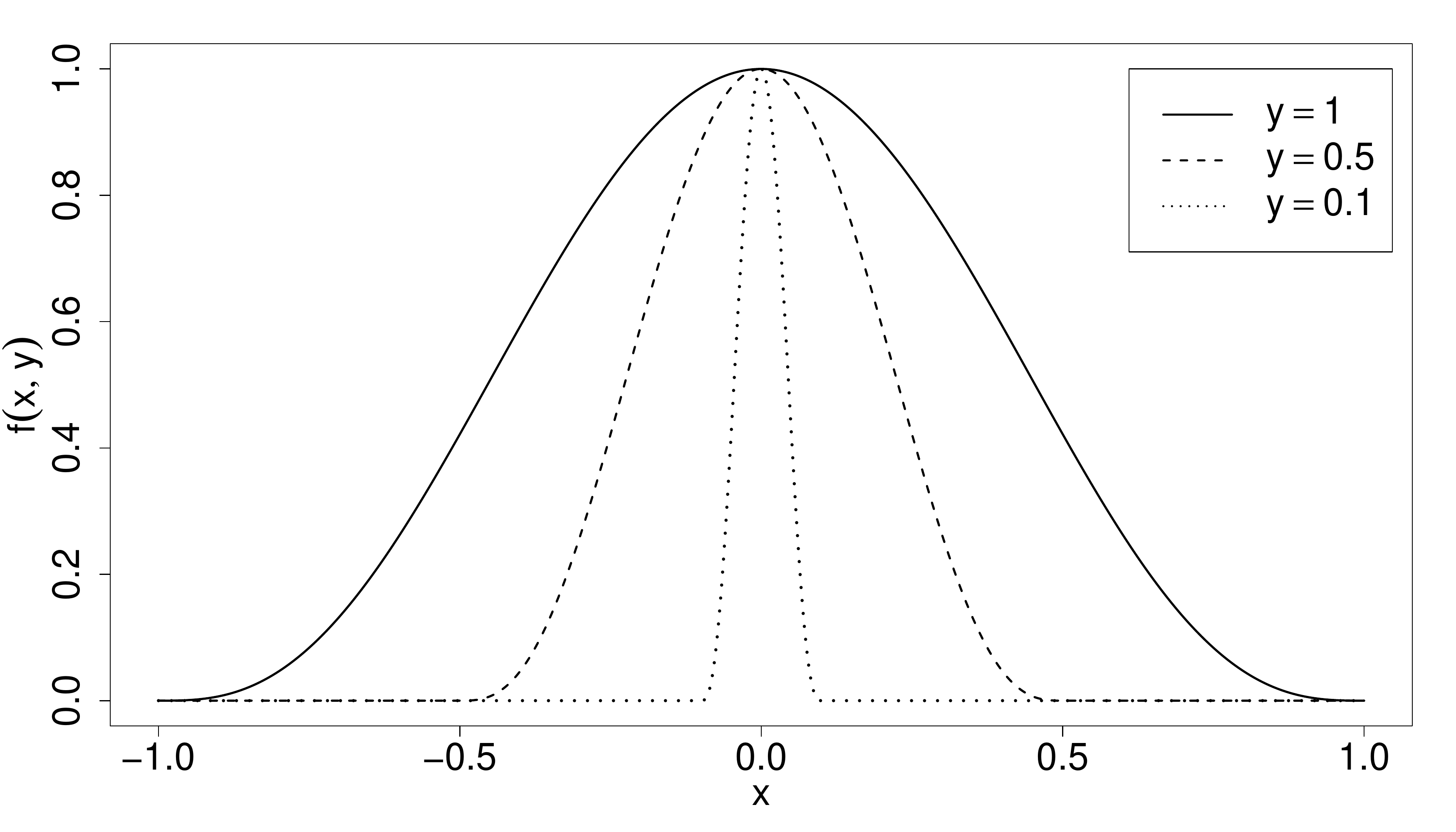}
\caption{Plot of the function $f(x,y)$ from Example~\ref{Thm:AppendixExCounterDiff} for different values of $y$.}
\end{figure}

\begin{equation*}
F: \mathbb{R} \rightarrow \mathbb{R}, \quad x \mapsto F(x) := \int_{0}^{1} f(x,y) \, \text{d} y,
\end{equation*}
then it fails to be differentiable in $x = 0$. In order to prove this, we start by explicitly computing $F$. Taking the indicator function into account, we obtain for any $0<x\leq 1$
\begin{align*}
F(x) = \int_{x}^{1} \left( 1 - \frac{x^2}{y^2} \right)^3 \, \text{d} y &= \left. y + 3 \frac{x^2}{y} - \frac{x^4}{y^3} + \frac{x^6}{5y^5} \, \right|_{y=x}^{y=1} \\
&= 1 - \frac{16}{5} x + 3x^2 - x^4 + \frac{1}{5} x^6 ,
\end{align*}
and analogously for any $-1 \leq x <0$ we calculate
\begin{equation*}
F(x) =  1 + \frac{16}{5} x + 3x^2 - x^4 + \frac{1}{5} x^6.
\end{equation*}
Finally, we observe that for any $x \notin [-1,1]$ it holds that $F(x) = 0$. All in all, we have the representation
\begin{equation*}
F(x) = \left\lbrace
\begin{array}{ll}
1 - \frac{16}{5} x + 3x^2 - x^4 + \frac{1}{5} x^6, & \, \text{if } x \in [0,1] \\
1 + \frac{16}{5} x + 3x^2 - x^4 + \frac{1}{5} x^6, & \, \text{if } x \in [-1,0) \\
0, & \, \text{else } 
\end{array}
\right.
\end{equation*}
and this shows, that $F$ is not differentiable in $x=0$. This happens although for any $x \in \mathbb{R}$ the first partial derivative is integrable with respect to $y$, and it holds that
\begin{equation*}
\int_{0}^{1}  \frac{\partial f}{\partial x} (x,y) \, \mathrm{d} y = - \text{sign}(x) \frac{16}{5}  + 6x - 4x^3 + \frac{6}{5}x^5
\end{equation*}
for $x \in [-1,1] \backslash \{0\}$. But the map $y \mapsto \sup_{x \in \mathbb{R}} \vert \partial f / \partial x \,(x,y) \vert$ is not integrable on $(0,1]$, and therefore differentiability cannot be carried over from $f$ to $F$ using dominated convergence.
\end{example}

\end{appendix}

\backmatter

\chapter{Notation}

\begin{longtable}{p{3cm}l}

$\mathbb{E}X$		& Expectation of the random variable $X$ \\
$\Var (X)$			& Variance of the random variable $X$ \\
$\mathbb{P},P, Q$	& Probability measures \\
$F_P$				& Cumulative distribution function $x \mapsto P( (-\infty, x])$ \\
$\intr(M)$			& Interior of the set $M$ \\
$\conv(M)$			& Convex hull of the set M (see below) \\
$\imge(A)$			& Image of linear mapping $A$ \\
$\ker(A)$			& Kernel of linear mapping $A$ \\
$\dim (A)$			& Dimension of the linear subspace $A \subseteq \mathbb{R}^k$ \\
$\det(M) $			& Determinant of the matrix $M \in \mathbb{R}^{k \times k}$ \\
$I_n$				& Index set $\lbrace 1, 2, \ldots, n \rbrace$ \\
$f(x,\cdot)$		& Mapping $y \mapsto f(x,y)$ \\
$f_{\vert M}$		& Restriction of the mapping $f$ to the set $M$ \\
$\mathbbm{1}_M$		& Characteristic function of the set $M$ \\
$\Vert x \Vert$		& Euclidean norm of $x \in \mathbb{R}^n$, i.e. $\sqrt{x_1^2 + \ldots +  x_n^2}$. \\
$\mathbb{S}^{n-1}$	& $(n-1)$-Sphere in $\mathbb{R}^n$, that is $\lbrace x \in \mathbb{R}^n \mid \Vert x \Vert = 1 \rbrace$. \\
$\ldots \,\mathrm{d}F(x)$ 	& Lebesgue-Stieltjes-Integral with respect to $F$ \\
$\bar{g}(F)$		& Lebesgue-Stieltjes-Integral of $g$ with respect to $F$  \\
$\nabla f$			& Gradient of the function $f$ \\
a.s.				& almost sure \\
a.e.				& almost every \\
w.l.o.g.			& without loss of generality \\
w.r.t.				& with respect to \\
$A \uplus B$		& Union of the sets $A$ and $B$ with $A \cap B = \emptyset$ \\
$F^{\leftarrow}$	& Lower quantile function of distribution function $F$, p. \pageref{Def:quantilefunction} \\
$F^{\rightarrow}$	& Upper quantile function of distribution function $F$, p. \pageref{Def:quantilefunction} \\
$e_\tau (F)$		& $\tau$-expectile of $F$, p. \pageref{Def:Expectiles} \\
$\sigma( \ldots )$	& $\sigma$-algebra generated by collection of mappings or sets \\
$\mathcal{B}(\mathbb{R})$	& Borel $\sigma$-algebra on $\mathbb{R}$ \\
$\mathcal{B}(A)$	& Borel $\sigma$-algebra on $A \subset \mathbb{R}$ \\
$\mathcal{U}([a,b])$	& Continuous uniform distribution on the interval $[a,b]$ \\
$\mathcal{N}(\mu, \sigma^2)$	& Normal distribution with expected value $\mu$ and variance $\sigma^2$\\
$\Phi$				& Distribution function of $\mathcal{N}(0,1)$ \\
$\delta_x$			& Dirac measure at the point $x$ \\
$f^+$				& Positive part of $f$, i.e. $f^+(x) = \max (f(x), 0)$ \\
$f^-$				& Negative part of $f$, i.e. $f^-(x) = \min (f(x), 0)$ \\
$P \otimes Q$		& Product measure of $P$ and $Q$ \\
$f(x+)$				& Right-hand limit of $f$ at $x$ \\
$f(x-)$				& Left-hand limit of $f$ at $x$ \\
$X =^d F$ 			& The random variable $X$ has distribution function $F$ \\
$X =^d Y$			& $X$ and $Y$ have the same distribution \\
$\mathscr{P}(A)$	& Power set of the set $A$ \\
$\mathrm{VaR}_\alpha (X)$ 	& Value at Risk at level $\alpha$ of $X$, p. \pageref{Def:ValueatRisk} \\
$\mathrm{EVaR}_\tau (X)$	& Expectile Value at Risk at level $\tau$ of $X$, p. \pageref{Def:ExpectileVaR} \\
$\mathrm{ES}_\alpha (X)$	& Expected Shortfall at level $\alpha$ of $X$, p. \pageref{Def:ExpectedShortfall} 

\end{longtable}

The \textit{convex hull} of a set $M$ is defined as
\begin{equation*}
\conv(M) := \left\lbrace \sum_{i=1}^{n} \lambda_i x_i \, \Big| \, n \in \mathbb{N}, \, x_1, \ldots, x_n \in M, \, \lambda_1, \ldots, \lambda_n \geq 0, \, \sum_{i=1}^{n} \lambda_i = 1 \right\rbrace .
\end{equation*}

\phantomsection
\addcontentsline{toc}{chapter}{Bibliography}
\bibliographystyle{plain}
\bibliography{ma.bbl}

\begin{thebibliography}{10}

\bibitem{AcerbiSpectral}
Carlo Acerbi.
\newblock Spectral measures of risk: A coherent representation of subjective
  risk aversion.
\newblock {\em J. Bank. Financ.}, 26(7):1505--1518, 2002.

\bibitem{AcerbiSzekely}
Carlo Acerbi and Balazs Szekely.
\newblock Backtesting {E}xpected {S}hortfall.
\newblock {\em Risk Mag.}, 27:76--81, 2014.

\bibitem{AcerbiTascheShort}
Carlo Acerbi and Dirk Tasche.
\newblock Expected {S}hortfall: A natural coherent alternative to {V}alue at
  {R}isk.
\newblock {\em Economic Notes}, 31(2):379--388, 2002.

\bibitem{AcerbiTascheLong}
Carlo Acerbi and Dirk Tasche.
\newblock On the coherence of {E}xpected {S}hortfall.
\newblock {\em J. Bank. Financ.}, 26(7):1487--1503, 2002.

\bibitem{ArtznerCoherent}
Philippe Artzner, Freddy Delbaen, Jean-Marc Eber, and David Heath.
\newblock Coherent measures of risk.
\newblock {\em Math. Finance}, 9(3):203--228, 1999.

\bibitem{BelliniBernardino}
Fabio Bellini and Elena~Di Bernardino.
\newblock Risk management with expectiles.
\newblock {\em The European Journal of Finance}, 23(6):487--506, 2017.

\bibitem{BelliniBignozzi}
Fabio Bellini and Valeria Bignozzi.
\newblock On elicitable risk measures.
\newblock {\em Quant. Finance}, 15(5):725--733, 2015.

\bibitem{DelbaenCoherent}
Freddy Delbaen.
\newblock Coherent risk measures on general probability spaces.
\newblock In {\em Advances in finance and stochastics: Essays in Honour of
  {D}ieter {S}ondermann}, pages 1--37. Springer, Berlin, 2002.

\bibitem{DelbaenExpectiles}
Freddy Delbaen.
\newblock A remark on the structure of expectiles.
\newblock {\em ArXiv e-prints}, July 2013.
\newblock URL \url{https://arxiv.org/pdf/1307.5881.pdf}.

\bibitem{DieboldMariano}
Francis~X. Diebold and Roberto~S. Mariano.
\newblock Comparing predictive accuracy.
\newblock {\em Journal of Business {\&} Economic Statistics}, 13(3):253--263,
  1995.

\bibitem{DurrettProb}
Richard~T. Durrett.
\newblock {\em Probability : theory and examples}.
\newblock Duxbury advanced series. Thomson Brooks/Cole, third edition, 2005.

\bibitem{EhmGneiting}
Werner Ehm, Tilmann Gneiting, Alexander Jordan, and Fabian Kr\"uger.
\newblock Of quantiles and expectiles: consistent scoring functions, {C}hoquet
  representations and forecast rankings.
\newblock {\em J. R. Stat. Soc. Ser. B. Stat. Methodol.}, 78(3):505--562, 2016.

\bibitem{EmmerKratzTasche}
Susanne Emmer, Marie Kratz, and Dirk Tasche.
\newblock What is the best risk measure in practice? {A} comparison of standard
  measures.
\newblock {\em J. Risk}, 18(2):31--60, 2015.

\bibitem{Federer}
Herbert Federer.
\newblock {\em Geometric measure theory}.
\newblock Die Grundlehren der mathematischen Wissenschaften, Band 153.
  Springer-Verlag New York Inc., New York, 1969.

\bibitem{FissZiegArxiv}
Tobias Fissler and Johanna~F. Ziegel.
\newblock Higher order elicitability and {O}sband's principle.
\newblock {\em ArXiv e-prints}, March 2015.
\newblock URL \url{https://arxiv.org/pdf/1503.08123v3.pdf}.

\bibitem{FissZieg}
Tobias Fissler and Johanna~F. Ziegel.
\newblock Higher order elicitability and {O}sband's principle.
\newblock {\em Ann. Statist.}, 44(4):1680--1707, 2016.

\bibitem{FissZiegGneit}
Tobias Fissler, Johanna~F. Ziegel, and Tilmann Gneiting.
\newblock {Expected Shortfall is jointly elicitable with Value at Risk -
  Implications for backtesting}.
\newblock {\em Risk Mag.}, pages 58--61, January 2016.

\bibitem{FoellmerSchiedConvex}
Hans F\"ollmer and Alexander Schied.
\newblock Convex measures of risk and trading constraints.
\newblock {\em Finance Stoch.}, 6(4):429--447, 2002.

\bibitem{FoellmerSchied}
Hans F{\"o}llmer and Alexander Schied.
\newblock {\em Stochastic Finance : An Introduction in Discrete Time}.
\newblock De Gruyter Textbook. Walter de Gruyter {\&} Co., Berlin, 4th edition,
  2016.

\bibitem{FrongilloKashON}
Rafael Frongillo and Ian~A. Kash.
\newblock On elicitation complexity.
\newblock January 2015.
\newblock Preprint, available at
  \url{https://www.microsoft.com/en-us/research/wp-content/uploads/2016/04/elic-complex.pdf}.

\bibitem{FrongilloKash}
Rafael Frongillo and Ian~A. Kash.
\newblock Vector-valued property elicitation.
\newblock {\em J. Mach. Learn. Res. Workshop Conf. Proc.}, 40:1--18, 2015.

\bibitem{GiacominiWhite}
Raffaella Giacomini and Halbert White.
\newblock Tests of conditional predictive ability.
\newblock {\em Econometrica}, 74(6):1545--1578, 2006.

\bibitem{GneitingPoints}
Tilmann Gneiting.
\newblock Making and evaluating point forecasts.
\newblock {\em J. Amer. Statist. Assoc.}, 106(494):746--762, 2011.

\bibitem{GneitingQuantiles}
Tilmann Gneiting.
\newblock Quantiles as optimal point forecasts.
\newblock {\em International Journal of Forecasting}, 27(2):197 -- 207, 2011.

\bibitem{GneitingRaftery}
Tilmann Gneiting and Adrian~E. Raftery.
\newblock Strictly proper scoring rules, prediction, and estimation.
\newblock {\em J. Amer. Statist. Assoc.}, 102(477):359--378, 2007.

\bibitem{GrauertFischer}
Hans Grauert and Wolfgang Fischer.
\newblock {\em Differential- und {I}ntegralrechnung. {II}}, volume~36 of {\em
  Heidelberger Taschenb\"ucher [Heidelberg Paperbacks]}.
\newblock Springer-Verlag, Berlin-New York, corrected edition, 1978.

\bibitem{HeinrichMode}
Claudio Heinrich.
\newblock The mode functional is not elicitable.
\newblock {\em Biometrika}, 101(1):245--251, 2014.

\bibitem{Hewitt}
Edwin Hewitt.
\newblock Integration by parts for {S}tieltjes integrals.
\newblock {\em Amer. Math. Monthly}, 67:419--423, 1960.

\bibitem{HuberRonchetti}
Peter~J. Huber and Elvezio~M. Ronchetti.
\newblock {\em Robust statistics}.
\newblock Wiley Series in Probability and Statistics. John Wiley \& Sons, Inc.,
  Hoboken, NJ, second edition, 2009.

\bibitem{KallenbergProb}
Olav Kallenberg.
\newblock {\em Foundations of modern probability}.
\newblock Probability and its Applications (New York). Springer-Verlag, New
  York, second edition, 2002.

\bibitem{KlenkeProb}
Achim Klenke.
\newblock {\em Probability theory}.
\newblock Universitext. Springer, London, second edition, 2014.

\bibitem{KoenkerQuantRegression}
Roger Koenker.
\newblock {\em Quantile regression}, volume~38 of {\em Econometric Society
  Monographs}.
\newblock Cambridge University Press, Cambridge, 2005.

\bibitem{KoenkerBassett}
Roger Koenker and Gilbert Bassett, Jr.
\newblock Regression quantiles.
\newblock {\em Econometrica}, 46(1):33--50, 1978.

\bibitem{KusuokaSpectral}
Shigeo Kusuoka.
\newblock On law invariant coherent risk measures.
\newblock In {\em Advances in mathematical economics, {V}ol.\ 3}, pages 83--95.
  Springer, Tokyo, 2001.

\bibitem{Lambert}
Nicolas Lambert.
\newblock Elicitation and evaluation of statistical forecasts, July 2013.
\newblock Preprint, available at
  \url{http://web.stanford.edu/~nlambert/papers/elicitation.pdf}.

\bibitem{LiesenMehrmann}
J{\"o}rg Liesen and Volker Mehrmann.
\newblock {\em Linear Algebra}.
\newblock Springer Undergraduate Mathematics Series. Springer International
  Publishing, first edition, 2015.

\bibitem{McNeilRisk}
Alexander~J. McNeil, R{\"u}diger Frey, and Paul Embrechts.
\newblock {\em Quantitative risk management}.
\newblock Princeton Series in Finance. Princeton University Press, Princeton,
  NJ, revised edition, 2015.

\bibitem{NeweyPowell}
Whitney~K. Newey and James~L. Powell.
\newblock Asymmetric least squares estimation and testing.
\newblock {\em Econometrica}, 55(4):819--847, 1987.

\bibitem{NoldeZiegel}
Natalia Nolde and Johanna~F. Ziegel.
\newblock Elicitability and backtesting: Perspectives for banking regulation.
\newblock {\em ArXiv e-prints}, August 2016.
\newblock URL \url{https://arxiv.org/pdf/1608.05498.pdf}.

\bibitem{OsbandReichel}
Kent Osband and Stefan Reichelstein.
\newblock Information-eliciting compensation schemes.
\newblock {\em Journal of Public Economics}, 27(1):107--115, 1985.

\bibitem{Rockafellar}
R.Tyrrell Rockafellar and Stanislav Uryasev.
\newblock Conditional value-at-risk for general loss distributions.
\newblock {\em J. Bank. Financ.}, 26(7):1443--1471, 2002.

\bibitem{ShumwayTimeSeries}
Robert~H. Shumway and David~S. Stoffer.
\newblock {\em Time series analysis and its applications}.
\newblock Springer Texts in Statistics. Springer, New York, third edition,
  2011.

\bibitem{SteinwartPasinWilliam}
Ingo Steinwart, Chlo{\'{e}} Pasin, Robert Williamson, and Siyu Zhang.
\newblock Elicitation and identification of properties.
\newblock {\em J. Mach. Learn. Res. Workshop Conf. Proc.}, 35:1--45, 2014.

\bibitem{vanderVaart}
A.~W. van~der Vaart.
\newblock {\em Asymptotic statistics}, volume~3 of {\em Cambridge Series in
  Statistical and Probabilistic Mathematics}.
\newblock Cambridge University Press, Cambridge, 1998.

\bibitem{WeberConsistency}
Stefan Weber.
\newblock Distribution-invariant risk measures, information, and dynamic
  consistency.
\newblock {\em Math. Finance}, 16(2):419--441, 2006.

\bibitem{ZiegelCoherence}
Johanna~F. Ziegel.
\newblock Coherence and elicitability.
\newblock {\em Math. Finance}, 26(4):901--918, 2016.

\end{thebibliography}

\end{document}